\theoremstyle{plain}
\newtheorem{proposition}{Proposition}[section]
\newtheorem{theorem}[proposition]{Theorem}
\newtheorem{lemma}[proposition]{Lemma}
\newtheorem{corollary}[proposition]{Corollary}
\theoremstyle{definition}
\newtheorem{example}[proposition]{Example}
\newtheorem{definition}[proposition]{Definition}
\newtheorem{observation}[proposition]{Observation}
\theoremstyle{remark}
\newtheorem{remark}[proposition]{Remark}
\DeclareMathOperator{\diam}{diam}
\DeclareMathOperator{\diag}{diag}
\DeclareMathOperator{\SL}{\mathsf{SL}}
\DeclareMathOperator{\PGL}{\mathsf{PGL}}
\DeclareMathOperator{\id}{id} 
\DeclareMathOperator{\Leb}{Leb}
\DeclareMathOperator{\Isom}{Isom}
\DeclareMathOperator{\dist}{d}
\DeclareMathOperator{\Bc}{\mathcal{B}}
\DeclareMathOperator{\Cc}{\mathcal{C}}
\DeclareMathOperator{\Fc}{\mathcal{F}}
\DeclareMathOperator{\Hc}{\mathcal{H}}
\DeclareMathOperator{\Lc}{\mathcal{L}}
\DeclareMathOperator{\Nb}{\mathbb{N}}
\DeclareMathOperator{\R}{\mathbb{R}}
\DeclareMathOperator{\Rb}{\mathbb{R}}
\DeclareMathOperator{\Zb}{\mathbb{Z}}
\DeclareMathOperator{\lox}{lox}
\newcommand{\abs}[1]{\left|#1\right|}
\newcommand{\norm}[1]{\left\|#1\right\|}
\newcommand{\wh}[1]{\widehat{#1}}
\newcommand{\mc}{\mathcal}
\newcommand{\mr}{\mathrm}
\newcommand{\ms}{\mathsf}
\newcommand{\mb}{\mathbb}
\newcommand{\mf}{\mathfrak}
\begin{document}

\title[Counting, mixing and equidistribution]{Counting, mixing and equidistribution for GPS systems with applications to relatively Anosov groups}
\author[Blayac]{Pierre-Louis Blayac}
\address{University of Michigan}
\author[Canary]{Richard Canary}
\address{University of Michigan}
\author[Zhu]{Feng Zhu}
\address{University of Wisconsin-Madison}
\author[Zimmer]{Andrew Zimmer}
\address{University of Wisconsin-Madison}
\thanks{Canary was partially supported by grant  DMS- 2304636 from the National Science Foundation.
Zhu was partially supported by an AMS--Simons travel grant.
Zimmer was partially supported by a Sloan research fellowship and grant DMS-2105580 from the National Science Foundation.}

\date{\today}
\keywords{}
\subjclass[2010]{}

\begin{abstract} 
We establish counting, mixing and equidistribution results for finite BMS measures on flow spaces associated to geometrically finite convergence group actions.
We show that, in particular, these results apply to flow spaces associated to relatively Anosov groups.

\end{abstract}

\maketitle

\tableofcontents

\section{Introduction}

In previous work \cite{BCZZ}, we introduced the theory of Patterson--Sullivan measures for Gromov--Patterson--Sullivan (GPS) systems.
We recall that a GPS system consists of a pair of cocycles for a discrete convergence
groups action which are related by  a Gromov product.
Given a GPS system we constructed Patterson--Sullivan measures, a flow space and a Bowen--Margulis--Sullivan (BMS) measure.
We established a Hopf--Tsuji--Sullivan dichotomy for these flows and their BMS measures.
In this paper, we obtain mixing and equidistribution results when the BMS measure is finite and the length spectrum is non-arithmetic.

In the case when the convergence group action is geometrically finite, we are able to establish a stronger equidistribution result which will allow us to obtain counting results. 
We apply our abstract results to obtain equidistribution and counting results for relatively Anosov groups in semisimple Lie groups
which generalize earlier work of Sambarino \cite{sambarino-quantitative,sambarino-dichotomy} for Anosov groups.
In the relatively Anosov setting such counting results were previously known only for rank-one semisimple Lie groups, or for special classes of Anosov groups, see the discussion after Corollary \ref{cor:counting for rel Anosovs in intro} for more details.

\subsection{Main results}

We will assume through the paper that $M$ is a compact metrizable space and \hbox{$\Gamma\subset \mathsf{Homeo}(M)$} is a non-elementary convergence group. 
A continuous cocycle $\sigma \colon \Gamma \times M \to \Rb$ defines a natural  \emph{period}
$$
\ell_\sigma(\gamma) := \begin{cases} 
\sigma(\gamma, \gamma^+) & \text{if } \gamma \text{ is loxodromic} \\
0 & \text{otherwise}
\end{cases}.
$$ 
One of the primary aims of this paper is to study counting results for this quantity. To that end, we restrict our study to the case when $\sigma$ is {\em proper},
that is 
$$
\lim_{n \to \infty} \ell_\sigma(\gamma_n)=+\infty
$$
whenever $\{\gamma_n\} \subset \Gamma$ is a sequence of distinct loxodromic elements whose repelling / attracting fixed points $\{ (\gamma_n^-, \gamma_n^+)\}$ are relatively compact in $M^{(2)} \subset M \times M$, the space of distinct pairs.

We also assume our cocycle is part of a {\em continuous  Gromov--Patterson--Sullivan (GPS) system}, which is a triple $(\sigma, \bar{\sigma}, G)$ where $\sigma, \bar{\sigma} \colon \Gamma \times M \to \Rb$
are continuous proper cocycles  and $G\colon M^{(2)} \to \Rb$ is a continuous function such that
\begin{equation}\label{eqn:GPS property}
\bar\sigma(\gamma,x)+\sigma(\gamma,y) = G(\gamma x,\gamma y)-G(x,y)
\end{equation}
for all $\gamma \in \Gamma$ and $(x,y) \in M^{(2)}$. Many geometrically important cocycles appear as part of a GPS system, see~\cite{BCZZ} for examples. 

When $\sigma$ is part of a GPS system, there is a natural way to (coarsely) define the \emph{$\sigma$-magnitude} $\norm{\gamma}_\sigma$ of an element $\gamma \in \Gamma$ and then define a \emph{critical exponent} by
$$
\delta_\sigma(\Gamma) : = \limsup_{R \to \infty} \frac{1}{R} \log \#\{ \gamma \in \Gamma : \norm{\gamma}_\sigma \leq R\},
$$
see Section~\ref{sec:background on PS} for details. Further, we define the {\em length spectrum} as 
$$\mathcal L(\sigma,\bar\sigma,G):=\{\ell_\sigma(\gamma)+\ell_{\bar\sigma}(\gamma): \gamma\in\Gamma\text{ is loxodromic}\}$$ and we say that $\mathcal L(\sigma,\bar\sigma,G)$ is {\em non-arithmetic} if it generates a dense subgroup of $\mathbb R$.

In the case when $\Gamma$ is geometrically finite we obtain the following counting result for periods of elements in $[\Gamma_{\lox}]^w$, the set of weak conjugacy classes of loxodromic elements (defined in Section~\ref{sec:equi for geom finite} below). 
If $\Gamma$ is torsion-free, then $[\Gamma_{\lox}]^w$ is simply the set of conjugacy classes of loxodromic elements. 

\begin{theorem}[Corollary~\ref{cor:counting for geomfin actions}]\label{thm:counting for geomfin actions}
Suppose $(\sigma, \bar{\sigma}, G)$ is a continuous GPS system for a geometrically finite convergence group $\Gamma\subset\mathsf{Homeo}(M)$ where $\delta:=\delta_\sigma(\Gamma)<+\infty$. If 
\begin{enumerate}
\item $\mathcal L(\sigma,\bar\sigma,G)$ is non-arithmetic and 
\item  $\delta_\sigma(P)<\delta$ for all maximal parabolic subgroups $P \subset \Gamma$, 
\end{enumerate}
then
$$
\#\{[\gamma]^w\in [\Gamma_{\lox}]^w : 0<\ell_\sigma(\gamma) \leq R\} \sim \frac{e^{\delta R}}{\delta R},
$$
i.e.\ the ratio of the two sides goes to 1 as $R \to +\infty$.
\end{theorem}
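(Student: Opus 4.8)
The plan is to derive this from mixing of the Bowen--Margulis--Sullivan flow, by a flow-box argument in the spirit of Margulis, carried out in the cusped setting following Roblin. Write $\mathcal M$ for the quotient flow space, $(\phi_t)$ for its flow, and $m$ for the BMS measure; recall that primitive weak conjugacy classes of loxodromic elements are in bijection with closed $\phi$-orbits, the class of a primitive $\gamma$ corresponding to an orbit of period $\ell_\sigma(\gamma)$. For a closed orbit $c$ of period $\ell(c)$ let $\mu_c$ denote the $\phi$-invariant probability measure it carries. The whole statement reduces to the equidistribution of weighted closed orbits,
\[
\delta\, e^{-\delta R}\sum_{\ell(c)\le R}\ell(c)\,\mu_c \ \longrightarrow\ \frac{m}{\norm{m}}
\qquad (R\to+\infty),
\]
in the weak-$*$ topology on finite Borel measures on $\mathcal M$: testing against the constant function $1$ gives $\delta e^{-\delta R}\sum_{\ell(c)\le R}\ell(c)\to 1$, and an Abel summation argument --- the same one extracting the prime number theorem $\pi(x)\sim x/\log x$ from $\psi(x)\sim x$ --- upgrades this to an asymptotic $e^{\delta R}/(\delta R)$ for the number of closed orbits of period at most $R$; the non-primitive classes in $[\Gamma_{\lox}]^w$ contribute only a lower-order term and may be discarded.

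Before the displayed limit is even meaningful one must know $m$ is finite, and here I would use hypothesis (2): decompose $\mathcal M$ into a compact core together with finitely many cuspidal regions, one per $\Gamma$-conjugacy class of maximal parabolic subgroup $P$, and estimate the BMS-mass of the depth-$\ge T$ part of the $P$-region by a sum over $P$ of terms decaying like $e^{-(\delta-\delta_\sigma(P))T}$ --- a Dal'bo--Otal--Peign\'e type bound, translated into the Patterson--Sullivan language of \cite{BCZZ}. Since $\delta_\sigma(P)<\delta$ this is summable, so $m(\mathcal M)<+\infty$. With finiteness in hand, hypothesis (1) is exactly the hypothesis of the paper's mixing theorem, so $(\phi_t)$ is mixing on $(\mathcal M,m)$, and the growth exponent governing the flow is $\delta$.

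The core is then standard. Fix a small flow box $B=\Sigma\times(-\epsilon,\epsilon)$ about an $m$-generic point, $\Sigma$ a local transversal; closed orbits of period in $(t-2\epsilon,t+2\epsilon)$ that meet $\Sigma$ correspond to the intersection $\phi_t(B)\cap B$, and the local product structure of $m$, inherited from the $\sigma$- and $\bar\sigma$-Patterson--Sullivan measures via the Gromov product $G$, converts $m(\phi_t(B)\cap B)$ into a suitably weighted count of such orbits. Mixing gives $m(\phi_t(B)\cap B)\to m(B)^2/\norm{m}$ as $t\to\infty$; integrating in $t$ over $[0,R]$, summing over a finite cover of (most of) $\mathcal M$ by such boxes, and keeping track of Jacobians yields the displayed equidistribution after the usual bookkeeping.

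The main obstacle is the bookkeeping near the cusps. In the convex-cocompact case it is routine: flow boxes can be chosen of uniform size and every closed orbit spends a definite proportion of its period in the compact core. In the geometrically finite case a closed orbit may spend an arbitrarily large fraction of its period deep in a cusp, where flow boxes degenerate, and one must show that the total \emph{weighted} count of such cusp-heavy orbits is $o(e^{\delta R})$, so that no mass escapes to the cusps and the leading constant is correct. This is where hypothesis (2) enters a second time, now quantitatively: the gap $\delta-\delta_\sigma(P)>0$ forces the number of closed orbits of period $\le R$ making an excursion of depth $\ge T$ into the $P$-cusp to be bounded by $e^{\delta R}$ times a quantity exponentially small in $T$, uniformly in $R$, so the cuspidal contributions can be discarded. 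Establishing this uniform non-escape-of-mass for the weighted closed-orbit measures in the abstract GPS framework is the substantive part of the proof, and is presumably the content of the stronger equidistribution result for geometrically finite actions referred to in the introduction.
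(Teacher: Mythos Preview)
Your outline is correct and matches the paper's approach essentially step for step: finiteness of the BMS measure from the parabolic gap (Section~\ref{sec:finite BMS}), mixing from non-arithmeticity (Section~\ref{sec:mixing}), equidistribution against compactly supported functions via a flow-box argument (Section~\ref{sec:equi}), the uniform non-escape-of-mass estimate you single out as the substantive part (Proposition~\ref{prop:bound on mR horoball}), and finally the passage from $\sum_{\ell(c)\le R}\ell(c)\sim e^{\delta R}/\delta$ to the orbit count via what you call Abel summation (Proposition~\ref{prop:GF equidistribution'}). The only cosmetic difference is that the paper works throughout with the unnormalized Lebesgue measures on axes rather than with $\ell(c)\mu_c$, and handles the primitive versus non-primitive bookkeeping in a slightly different order, but these are the same objects.
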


Theorem \ref{thm:counting for geomfin actions}, in combination with the main result of \cite{CZZ4}, will allow us to prove counting results for relatively Anosov representations.
We will describe these results in Section~\ref{subsec:rel anosov} below after developing the appropriate terminology to state them.

\begin{remark}
The condition that $\mathcal{L}(\sigma,\bar\sigma,G)$ is non-arithmetic holds whenever  $\Gamma$ contains a parabolic element. Moreover, it can be replaced with the
weaker assumption that the cross ratio spectrum $\mathcal{CR}$, see Section~\ref{sec:mixing}, is non-arithmetic. This weaker assumption holds whenever some
infinite path component of the limit set $\Lambda(\Gamma)$ contains a conical limit point, see Section~\ref{sec:NALS}.
In particular, in the geometrically finite case, the cross ratio spectrum is non-arithmetic whenever $\Gamma$ is not a virtually free uniform convergence group. 
\end{remark}

Motivated by previous work concerning specific GPS systems, the proof of Theorem~\ref{thm:counting for geomfin actions} is based on studying the dynamical properties of the  flow space associated to the GPS system.
We briefly introduce this flow space and state the dynamical results we prove. For more precise definitions, see Section~\ref{sec:background}.

Let $\Lambda(\Gamma) \subset M$ denote the limit set of $\Gamma$ and let $\Lambda(\Gamma)^{(2)}$ denote the space of distinct pairs in $\Lambda(\Gamma)$. 
One may define a flow space 
$$\tilde U_\Gamma:=\Lambda(\Gamma)^{(2)}\times\mathbb R$$
with flow 
$$
\psi^t(x,y,s)=(x,y,s+t),
$$
which corresponds via the Hopf parametrisation to the (nonwandering part of  the) unit tangent bundle of $\mb H^n$ when $\Gamma\subset\Isom(\mb H^n)$, $M=\partial\mb H^n$ and $\sigma$ is the Busemann cocycle from the hyperbolic metric.
Using this analogy, one then defines an action of $\Gamma$ on $\widetilde U_\Gamma$ by
$$\gamma(x,y,s)=(\gamma(x),\gamma(y), s+\sigma(\gamma,y)).$$ 
When $\sigma$ is part of a GPS system, the action of $\Gamma$ on $\tilde U_\Gamma$ is properly discontinuous (see \cite[Prop.\,10.2]{BCZZ}) and commutes with the flow, so $\psi^t$ descends to a flow
on the quotient
$$U_\Gamma:=\Gamma\backslash \tilde U_\Gamma$$
which we also denote $\psi^t$.

In the case of GPS systems associated to Anosov groups this flow space was constructed by Sambarino~\cite{sambarino-quantitative,sambarino15,sambarino-dichotomy} and in the case of GPS systems associated to transverse groups this flow space was constructed by Kim--Oh--Wang~\cite{KOW}.
In the general setting considered here, one can easily adapt the proof of~\cite[Th.\,9.1]{KOW} to show that $\Gamma$ acts properly discontinuously on $\tilde U_\Gamma$.

In our previous paper \cite{BCZZ}, we showed that if the Poincar\'e series associated to $\sigma$, given by
$$Q_\sigma(s)=\sum_{\gamma\in\Gamma} e^{-s\norm{\gamma}_\sigma},$$
diverges at the critical exponent $\delta_\sigma(\Gamma)$, then $U_\Gamma$ has a unique \emph{(BMS) Bowen--Margulis--Sullivan measure} $m_\Gamma$ and the flow 
$\psi^t \colon (U_\Gamma, m_\Gamma) \to (U_\Gamma, m_\Gamma)$ is ergodic and conservative (see Section~\ref{subsec:background on GPS systems} for the definition of $m_\Gamma$ and precise statements). 

In this paper, we further show that the flow is mixing when the BMS measure is finite and the length spectrum is non-arithmetic.

\begin{theorem}[see Theorem~\ref{thm:mixing} below]
\label{main mixing}
Suppose $(\sigma, \bar{\sigma}, G)$ is a continuous GPS system for a convergence group $\Gamma\subset\mathsf{Homeo}(M)$ where $\delta:=\delta_\sigma(\Gamma) < +\infty$ and $Q_\sigma(\delta) = +\infty$. 
If the BMS measure  $m_\Gamma$ is finite and the length spectrum $\mathcal{L}(\sigma,\bar\sigma,G)$  is non-arithmetic, then
the flow $\psi^t \colon (U_\Gamma,m_\Gamma) \to (U_\Gamma, m_\Gamma)$ is mixing. 
\end{theorem}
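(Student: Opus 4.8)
The plan is to adapt to the GPS framework the classical argument of Babillot for mixing of geodesic flows on negatively curved manifolds, as refined by Roblin and later transported to the Anosov and transverse settings in \cite{sambarino-dichotomy, KOW}. Since $\delta < +\infty$ and $Q_\sigma(\delta) = +\infty$, the results of \cite{BCZZ} already tell us that the flow $\psi^t$ on $(U_\Gamma, m_\Gamma)$ is conservative and ergodic; as $m_\Gamma$ is finite we may normalise it to a probability measure. It therefore suffices to prove the contrapositive: if $\psi^t$ is not mixing, then $\mathcal{L}(\sigma,\bar\sigma,G)$ is arithmetic.

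Assume $\psi^t$ is not mixing. After subtracting constants there are $t_n \to +\infty$ and $f, g \in L^2(m_\Gamma)$, orthogonal to the constants, with $\langle f \circ \psi^{t_n}, g \rangle \not\to 0$; passing to a subsequence, the Koopman operators $U_{t_n}$ converge in the weak operator topology to a contraction $Q$ that commutes with every $U_s$, preserves the orthocomplement of the constants, and is nonzero there. The two structural inputs we will take from \cite{BCZZ} are: (i) the BMS measure $m_\Gamma$ has a local product structure, disintegrating on a flow box as the product of the Patterson--Sullivan measure for $\bar\sigma$ (in the repelling endpoint), the Patterson--Sullivan measure for $\sigma$ (in the attracting endpoint), and Lebesgue measure in the flow direction, and since $\Gamma$ is non-elementary these Patterson--Sullivan measures are non-atomic; and (ii) $\psi^t$ uniformly contracts strong stable leaves (fixed attracting endpoint) as $t \to +\infty$ and strong unstable leaves (fixed repelling endpoint) as $t \to -\infty$. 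Using (ii) together with uniform continuity, we would show that $Qf$ is measurable with respect to the strong stable foliation, and, applying the same reasoning to $U_{t_n}^{*} = U_{-t_n}$, that $Q^{*}g$ is measurable with respect to the strong unstable foliation. Feeding this back into (i), using conservativity to control the limit operator and ergodicity to pass from a single flow box to all of $U_\Gamma$, we would conclude that $\psi^t$ admits a non-trivial measurable eigenfunction: there exist $\alpha \in \Rb \setminus \{0\}$ and a measurable $F \colon U_\Gamma \to S^1$ with $F \circ \psi^t = e^{i\alpha t} F$.

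The final step is the Hopf argument. Since $F \circ \psi^t = e^{i\alpha t} F$ and $\psi^t$ contracts strong stable leaves, $F$ is essentially constant along the strong stable foliation, and likewise along the strong unstable foliation; together with the local product structure and the continuity of $\sigma$, $\bar\sigma$ and $G$, this forces $F$ to agree a.e.\ with a continuous function whose lift to $\tilde U_\Gamma = \Lambda(\Gamma)^{(2)} \times \Rb$ is $\Gamma$-invariant of the form $(x,y,s) \mapsto \beta(x,y)\, e^{i\alpha s}$, with $\beta \colon \Lambda(\Gamma)^{(2)} \to S^1$ continuous and determined by the holonomies of the two foliations. Evaluating on the closed $\psi^t$-orbit of a loxodromic $\gamma \in \Gamma$, which has period $\ell_\sigma(\gamma)$, gives $e^{i\alpha \ell_\sigma(\gamma)} = 1$; applying this to $\gamma^{-1}$ as well and using the identity $\ell_{\bar\sigma}(\gamma) = \ell_\sigma(\gamma^{-1})$ (an immediate consequence of \eqref{eqn:GPS property}), we get $\ell_\sigma(\gamma) + \ell_{\bar\sigma}(\gamma) \in \tfrac{2\pi}{\alpha}\Zb$ for every loxodromic $\gamma$. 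Hence $\mathcal{L}(\sigma,\bar\sigma,G) \subset \tfrac{2\pi}{\alpha}\Zb$, contradicting non-arithmeticity. Running the same computation with an arbitrary quadruple of limit points in place of a closed orbit shows instead that the cross ratio spectrum $\mathcal{CR}$ lies in $\tfrac{2\pi}{\alpha}\Zb$, which yields the strengthening stated in the remark (since $\mathcal{L}(\sigma,\bar\sigma,G)$ is contained in the subgroup generated by $\mathcal{CR}$).

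The main obstacle is everything in the two middle steps that is \emph{not} soft functional analysis: transporting Babillot's lemmas --- written for the geodesic flow of a negatively curved manifold, where the strong (un)stable foliations are smooth and their holonomies manifestly absolutely continuous --- to the present purely topological convergence group setting. Concretely, one must extract from the GPS formalism of \cite{BCZZ} the local product structure of $m_\Gamma$, the quasi-invariance of the strong stable and strong unstable conditional measures under the flow and under holonomy, the uniform-contraction property, and the upgrade of a measurable eigenfunction to a continuous one; once these are in place, the remaining measure-theoretic arguments producing the eigenvalue and deducing arithmeticity follow Babillot and \cite{KOW} with only cosmetic changes.
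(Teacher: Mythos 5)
Your plan takes a genuinely different route from the paper. You propose unpacking the Babillot argument from scratch: assume non-mixing, extract a weak limit $Q$ of Koopman operators $U_{t_n}$, show $Qf$ is $W^{ss}$-measurable and $Q^*g$ is $W^{su}$-measurable, produce a non-trivial eigenfunction, upgrade it to a continuous one, and evaluate on closed orbits. The paper instead applies Coud\`ene's criterion as a black box: it suffices to show that any \emph{measurable} function which is both $W^{ss}$- and $W^{su}$-invariant is constant a.e.\ This criterion already encodes the entire Koopman-operator/weak-limit/eigenfunction machinery, so the paper never needs to construct a limit operator, never produces an eigenfunction, and never has to promote a measurable function to a continuous one. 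Having reduced to the bi-invariance statement, the paper uses Propositions~\ref{prop:Wss computation} and~\ref{prop:Wsu computation} to translate bi-invariance of $f$ into translation-invariance of the restriction $g(t) = \tilde{f}(x_0,y_0,t)$ under the closed subgroup $\mathsf H$ generated by cross ratios (Lemma~\ref{lem:closed subgroup}), and concludes via non-arithmeticity. The arithmetic constraint therefore comes from a.e.\ quadruples of limit points --- pure measure theory --- and not from evaluation on closed orbits.

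There are two places where your proposal has genuine gaps rather than cosmetic ones. First, you do not address which metric to use on $U_\Gamma$ to define the strong stable and unstable foliations and to make sense of ``uniform contraction.'' Unlike $\CAT(-1)$ or Hilbert geometry, there is no natural metric on $U_\Gamma$ here; the paper's solution (use \emph{any} metric on the one-point compactification $U_\Gamma \sqcup \{\infty\}$ and handle the escape-to-infinity case separately, as in Case~3 of Propositions~\ref{prop:Wss computation} and~\ref{prop:Wsu computation}) is a nontrivial observation highlighted in the introduction as a notable departure from prior work. In particular, the flow does \emph{not} uniformly contract stable leaves in the way your sketch assumes. Second, the upgrade of a measurable eigenfunction to a continuous one is not a ``cosmetic change'' --- it is precisely the kind of regularity step that requires geometric structure (e.g.\ absolute continuity of holonomies, which you yourself flag as absent here) and that the paper's Coud\`ene-based route is designed to avoid. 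Without that upgrade, your evaluation of $F$ on closed orbits is not available, since there is no reason a priori that the measurable eigenfunction is defined, let alone nonzero, on any particular closed orbit. The substitute the paper uses --- extracting the constraint from cross ratios of $\bar\mu^2 \otimes \mu^2$-generic quadruples via Fubini, then passing to all of $\mathcal{CR}$ by continuity of $G$ and full support of $\mu,\bar\mu$ --- is exactly the move needed to close this gap, and it is the heart of the argument rather than an afterthought.
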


In the geometrically finite setting, we are able to adapt arguments of Dal'bo--Otal--Peign\'e \cite{DOP} in the setting of
geometrically finite negatively curved manifolds to provide the following criterion for when the BMS measure to be finite. 

\begin{theorem}[see Theorem~\ref{BMS finite criterion} below]\label{thm:BMS finite criterion in intro}
Suppose  $(\sigma, \bar{\sigma}, G)$ is a continuous GPS system for a geometrically finite convergence group $\Gamma\subset\mathrm{Homeo}(M)$ where $\delta:=\delta_\sigma(\Gamma) < +\infty$.
If $\delta_\sigma(P)<\delta$ for any maximal parabolic subgroup $P$ of $\Gamma$, then $Q_\sigma(\delta) = +\infty$ and the BMS measure $m_\Gamma$  is finite.
\end{theorem}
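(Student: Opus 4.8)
The plan is to adapt the Dal'bo--Otal--Peign\'e argument \cite{DOP} to the GPS setting. The key point is that the BMS measure on $U_\Gamma$ can be written, via the natural fibration over the cusps, as a sum of contributions coming from a compact ``thick part'' and from the cusp neighborhoods associated to the finitely many conjugacy classes of maximal parabolic subgroups $P_1, \dots, P_k$. Since $\Gamma$ is geometrically finite, each end of $U_\Gamma$ is a standard cusp neighborhood modeled on $P_i \backslash \widetilde{U}_\Gamma^{P_i}$ for a horoball-like region, and the thick part is compact, hence has finite BMS mass automatically. So the whole question reduces to showing that the BMS mass of each cusp neighborhood is finite, and this is where the hypothesis $\delta_\sigma(P_i) < \delta$ enters.

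First I would set up the geometrically finite structure: fix a $\Gamma$-invariant collection of disjoint horoballs based at the bounded parabolic points, whose quotient gives the cusp neighborhoods, and recall (from the earlier sections, or the standard theory of geometrically finite convergence groups as in Bowditch) that $\Lambda(\Gamma)$ decomposes into conical limit points and bounded parabolic points, with the parabolic points being the fixed points of the $P_i$. Next I would give a formula, or at least an upper bound, for the BMS mass of a cusp neighborhood $C_i$ in terms of the Patterson--Sullivan densities $\mu_x$ and $\bar\mu_x$ and the parabolic group $P = P_i$: roughly, $m_\Gamma(C_i)$ is controlled by a series of the form $\sum_{p \in P} e^{(\text{something like } -\delta) \cdot (\text{distance-type quantity}(p))}$, which after the GPS cocycle identities becomes comparable to the Poincar\'e-type series $\sum_{p \in P} e^{-\delta \norm{p}_\sigma}$. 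The convergence of this last series is, by definition of the critical exponent of $P$, guaranteed as soon as $\delta > \delta_\sigma(P)$ --- here one also needs that the $P$-Poincar\'e series for a subgroup converges at exponents strictly above its own critical exponent, which follows because $P$ is (virtually) nilpotent/amenable and hence of divergence type at its critical exponent with polynomially controlled growth, or more simply because strict inequality of exponents always gives convergence by a standard comparison. Summing the finitely many finite cusp contributions with the finite thick-part contribution yields $m_\Gamma(U_\Gamma) < \infty$.

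For the divergence statement $Q_\sigma(\delta) = +\infty$, the cleanest route is the following: by the Hopf--Tsuji--Sullivan dichotomy established in \cite{BCZZ}, $Q_\sigma(\delta) = +\infty$ is equivalent to the flow being conservative and ergodic with respect to the (then unique) BMS measure, and also equivalent to the Patterson--Sullivan measure giving full mass to the conical limit set. In the geometrically finite case every limit point is either conical or bounded parabolic, and the bounded parabolic points form a countable set; so it suffices to show that the Patterson--Sullivan measure $\mu_x$ has no atoms at the parabolic fixed points. An atom at the fixed point $\xi_i$ of $P_i$ would, by the transformation rule for $\mu_x$ under $P_i$ and the quasi-invariance with Radon--Nikodym derivative $e^{-\delta \sigma(\cdot,\cdot)}$, force the series $\sum_{p \in P_i} e^{-\delta \norm{p}_\sigma}$ to converge AND the atom to be $P_i$-invariant with finite total mass --- but a nonzero $P_i$-invariant atom contradicts finiteness of $\mu_x$ unless $P_i$ is finite, which it is not. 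Alternatively, and perhaps more robustly, one shows directly via a shadow-lemma estimate that $\mu_x(\text{shadow of a horoball at depth } t)$ decays like $e^{-(\delta - \delta_\sigma(P_i)) t}$ (a Stratmann--Velani-type global measure formula), which simultaneously yields both non-atomicity and the finiteness of the cusp mass; I would likely present this unified estimate rather than two separate arguments.

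The main obstacle will be establishing the shadow lemma and the resulting ``global measure formula'' in the purely coarse GPS framework, where one does not have an actual Riemannian metric or genuine horoballs but only the cocycles $\sigma, \bar\sigma$ and the Gromov product $G$. One must define the right notion of ``depth into the cusp'' in terms of $\sigma$-magnitudes and the Gromov product $G$, check that the horoball-type regions are coarsely $P_i$-invariant and that the flow boxes fibering over them have the expected product structure for $m_\Gamma$, and control the error terms uniformly over the (finitely many) cusps --- the properness of $\sigma$ and $\bar\sigma$ is what makes these error terms bounded. Once the bookkeeping of replacing ``$d(x, px)$'' by ``$\norm{p}_\sigma$'' throughout the DOP argument is done carefully, and once the shadow lemma of \cite{BCZZ} (or a parabolic refinement of it) is invoked, the conclusion follows exactly as in the classical case; I expect no genuinely new difficulty beyond this translation, but it is the step requiring the most care.
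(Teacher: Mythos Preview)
Your overall architecture matches the paper's: decompose $U_\Gamma$ into a compact core and finitely many cusps (Section~\ref{sec:structure of the flow space}), show the Patterson--Sullivan measures have no atoms at bounded parabolic points, deduce $Q_\sigma(\delta)=+\infty$ from the Hopf--Tsuji--Sullivan dichotomy, and then bound the BMS mass of each cusp. Two points need correction.

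First, a minor one: the cusp mass is not comparable to $\sum_{p\in P}e^{-\delta\norm{p}_\sigma}$ but to $\sum_{p\in P}\norm{p}_\sigma e^{-\delta\norm{p}_\sigma}$. The extra $\norm{p}_\sigma$ comes from the length of the excursion of a flow line through the horoball (Lemma~\ref{horoball length bound}): the fiber $I_{x,y}$ over a pair $(x,y)$ with $x\in K$, $y\in\gamma K$ has Lebesgue length $\asymp\norm{\gamma}_\sigma$. This still converges when $\delta_\sigma(P)<\delta$, so the conclusion survives, but your formula as written is wrong.

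Second, and more seriously, your primary argument for non-atomicity is incorrect. An atom at the parabolic fixed point $\xi_i$ is perfectly consistent with the transformation rule: since $P_i$ fixes $\xi_i$ and $\sigma(\gamma,\xi_i)=0$ for every $\gamma\in P_i$ (Proposition~\ref{prop:basic properties}\eqref{item:proper implies positive periods}), the quasi-invariance equation is trivially satisfied at $\xi_i$ and yields no contradiction with finiteness of $\mu$---there is only one point in the orbit. Your fallback (a Stratmann--Velani-type global measure formula) is a legitimate route, but you have not indicated how to prove the parabolic shadow estimate in the GPS framework, and this is nontrivial. The paper instead argues directly from the Patterson construction (Proposition~\ref{no atoms}): it chooses the magnitude function carefully, uses the Patterson slowly-varying function $\chi$, exploits the double-coset decomposition $\Gamma=\Gamma_p\cdot\Gamma_0$ coming from cocompactness of $\Gamma_p$ on $(\Gamma\sqcup\Lambda(\Gamma))\smallsetminus\{p\}$ (Lemma~\ref{lem:parabolic cocompact}), and bounds $\mu_s$ of a shrinking neighborhood of $p$ by a tail of $\sum_k e^{-(\delta-\epsilon)\norm{\alpha_k}_\sigma}$. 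You should either carry out your shadow-lemma alternative in detail or adopt this construction-level argument.
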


As an application of mixing, we prove an equidistribution result for fixed points of loxodromic elements in terms of the Patterson--Sullivan measures associated to the cocycles in a GPS system. 
In our earlier work~\cite{BCZZ}, we proved that Patterson--Sullivan measures exist in the critical dimension and are unique when the Poincar\'e series diverges at its critical exponent, see Section~\ref{sec:background on PS} for details.
In the statement below,  $\Gamma_{\rm lox}$ denotes the set of loxodromic elements of $\gamma$ and
$\mathcal D_{\gamma^\pm}$ denotes the unit Dirac mass on $M^{(2)}$ based at the attracting/repelling fixed point of $\gamma\in\Gamma_{\rm lox}$.

\begin{theorem}[see Theorem~\ref{thm:equidistribution in paper} below] \label{thm:equidistribution}
Suppose $(\sigma, \bar{\sigma}, G)$ is a continuous GPS system for a convergence group $\Gamma\subset\mathsf{Homeo}(M)$ where $\delta:=\delta_\sigma(\Gamma) < +\infty$ and $Q_\sigma(\delta) = +\infty$. 
Let $\mu$ be the unique $\sigma$-Patterson--Sullivan measure of dimension $\delta$ and let $\bar\mu$ be the unique $\bar\sigma$-Patterson--Sullivan measure of dimension $\delta$.

If the BMS measure $m_\Gamma$ is finite and mixing, then
 \[
 \lim_{T \to \infty} \delta e^{-\delta T} \sum_{\substack{\gamma \in \Gamma_{\lox}\\ \ell_\sigma(\gamma)\leq T}}\mc D_{\gamma^-}\otimes \mc D_{\gamma^+} = \frac{1}{\norm{m_\Gamma}} e^{\delta G(x,y)}\bar\mu(x)\otimes \mu(y)
 \]
  in the dual of compactly supported continuous functions.
\end{theorem}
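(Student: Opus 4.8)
The plan is to deduce the equidistribution from mixing of the flow by the classical scheme (as in the work on equidistribution of closed geodesics in variable negative curvature), carried out in the GPS framework. The essential input beyond mixing is the local product structure of the BMS measure: on the flow space $\wt U_\Gamma=\Lambda(\Gamma)^{(2)}\times\Rb$ the BMS measure lifts to
\[
d\wt m_\Gamma(x,y,s)=e^{\delta G(x,y)}\,d\bar\mu(x)\,d\mu(y)\,ds,
\]
which is $\Gamma$--invariant by the GPS relation \eqref{eqn:GPS property} together with the quasi-invariance of the Patterson--Sullivan measures $\mu,\bar\mu$, and which descends to $m_\Gamma$ on $U_\Gamma$ (see Section~\ref{subsec:background on GPS systems}). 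I would fix a base point $(a^-,a^+,0)\in\wt U_\Gamma$ with $a^-\ne a^+$ and, for small parameters $\epsilon,\eta>0$, work with the flow box $\Omega=O^-\times O^+\times(-\eta,\eta)$, where $O^\pm$ denotes the $\epsilon$--neighbourhood of $a^\pm$. After shrinking $\epsilon$ I may assume $\overline{O^-}\cap\overline{O^+}=\emptyset$ and (using proper discontinuity of the $\Gamma$--action on $\wt U_\Gamma$) that $\Omega$ embeds into $U_\Gamma$, with image $[\Omega]$ of mass $m_\Gamma([\Omega])=\wt m_\Gamma(\Omega)=2\eta\int_{O^-\times O^+}e^{\delta G}\,d\bar\mu\,d\mu$.

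First I would apply mixing to the indicator $\mathbf{1}_{[\Omega]}$: sandwiching it from above and below by functions in $C_c(U_\Gamma)$ — legitimate for $\epsilon,\eta$ outside a Lebesgue-null set, for which $m_\Gamma$ charges neither $\partial[\Omega]$ nor its flow translates — the mixing hypothesis yields $m_\Gamma(\psi^{T}[\Omega]\cap[\Omega])\to m_\Gamma([\Omega])^2/\norm{m_\Gamma}$ as $T\to\infty$. Since $\psi^t$ commutes with the $\Gamma$--action and $\Omega$ embeds, the left-hand side unfolds to $\sum_{\gamma\in\Gamma}\wt m_\Gamma(\psi^{T}\Omega\cap\gamma\Omega)$, so the mixing statement becomes an asymptotic for this $\Gamma$--sum.

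The central step is to match this unfolded sum with a count of loxodromic elements. Using properness of the cocycles $\sigma,\bar\sigma$ and the quantitative north--south dynamics of the convergence action recorded in \cite{BCZZ} and the earlier sections, I would show that for $T$ large $\psi^{T}\Omega\cap\gamma\Omega\ne\emptyset$ forces $\gamma$ to be loxodromic with $\gamma^-$ close to $O^-$, $\gamma^+$ close to $O^+$, and $\abs{\ell_\sigma(\gamma)-T}<2\eta+o(1)$ — and, conversely, that every loxodromic $\gamma$ with $\gamma^-\in O^-$, $\gamma^+\in O^+$ and $\abs{\ell_\sigma(\gamma)-T}<2\eta$ contributes. (The ``swapped'' possibility $\gamma^+\in O^-$, $\gamma^-\in O^+$ is excluded because it would force $\sigma(\gamma,y)$ to be very negative rather than $\approx T$ on the relevant region.) Moreover, for a contributing $\gamma$ one has, in $(x,y,s)$--coordinates, $\psi^{T}\Omega\cap\gamma\Omega\approx O^-\times\gamma O^+\times I_\gamma$ with $\abs{I_\gamma}=\max(0,2\eta-\abs{T-\ell_\sigma(\gamma)})$, while the shadow/regularity estimates for $\mu$ give $\mu(\gamma O^+)\sim e^{-\delta\ell_\sigma(\gamma)}\mu(O^+)$ (here the factor $e^{-\delta\ell_\sigma(\gamma)}$ is the crucial source of exponential decay), $\bar\mu(O^-\cap\gamma O^-)=\bar\mu(O^-)$, and $G\approx G(a^-,a^+)$ on this region. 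Hence $\wt m_\Gamma(\psi^{T}\Omega\cap\gamma\Omega)\sim e^{\delta G(a^-,a^+)}\bar\mu(O^-)\mu(O^+)e^{-\delta\ell_\sigma(\gamma)}\abs{I_\gamma}$, uniformly over the contributing $\gamma$ as $\epsilon\to0$. Summing, and combining with the mixing limit and the value of $m_\Gamma([\Omega])$, I expect to obtain
\[
e^{-\delta T}\!\!\sum_{\substack{\gamma\in\Gamma_{\lox}\\ \gamma^-\in O^-,\ \gamma^+\in O^+}}\!\!\max\bigl(0,2\eta-\abs{T-\ell_\sigma(\gamma)}\bigr)\ \xrightarrow[T\to\infty]{}\ \frac{4\eta^{2}\,e^{\delta G(a^-,a^+)}\bar\mu(O^-)\mu(O^+)}{\norm{m_\Gamma}}.
\]

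Finally, a standard Tauberian (Karamata-type) argument removes the triangular kernel: letting $\eta\to0$, this limit upgrades to
\[
\#\{\gamma\in\Gamma_{\lox}:\gamma^-\in O^-,\ \gamma^+\in O^+,\ \ell_\sigma(\gamma)\le T\}\ \sim\ \frac{e^{\delta G(a^-,a^+)}}{\delta\,\norm{m_\Gamma}}\,\bar\mu(O^-)\,\mu(O^+)\,e^{\delta T}.
\]
Given $f\in C_c(M^{(2)})$, I would then partition a compact neighbourhood of $\supp(f)\cap\Lambda(\Gamma)^{(2)}$ into small product boxes with $(\bar\mu\otimes\mu)$--null boundaries, apply this asymptotic box by box, and pass to the limit as the mesh tends to $0$; the resulting Riemann sums converge to $\tfrac1{\norm{m_\Gamma}}\int f(x,y)\,e^{\delta G(x,y)}\,d(\bar\mu\otimes\mu)$, which is exactly the claimed limit of $\delta e^{-\delta T}\sum_{\ell_\sigma(\gamma)\le T}\mc D_{\gamma^-}\otimes\mc D_{\gamma^+}$ evaluated at $f$. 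The main obstacle is the central step: identifying precisely which $\gamma$ contribute to $\sum_\gamma\wt m_\Gamma(\psi^{T}\Omega\cap\gamma\Omega)$ and estimating each term requires the effective north--south and shadow estimates for the convergence action against the GPS cocycles, and is exactly where properness of both $\sigma$ and $\bar\sigma$ enters in an essential way; the mixing input, the unfolding, the Tauberian step, and the Riemann-sum passage are comparatively soft.
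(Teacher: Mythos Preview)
Your outline is correct and follows the same broad strategy as the paper: apply mixing to small flow boxes, unfold to a $\Gamma$--sum, use a closing lemma to identify the contributing $\gamma$ as loxodromics with fixed points in the box and period near $T$, and use the product structure of $\tilde m$ together with the GPS relation to estimate each term. The paper packages this slightly differently and avoids two of your steps. First, rather than applying mixing at a single time $T$ and then invoking a Tauberian argument to pass from the triangular-kernel asymptotic to the count up to $T$, the paper integrates the mixing correlation $\tilde m(A\cap\psi^{-t}\gamma A)$ against $e^{\delta t}\,dt$ over $t\in[T_2,T]$; this directly produces a sum over $\gamma$ with $\ell_\sigma(\gamma)\le T+O(\epsilon)$, so no Tauberian step is needed. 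Second, rather than proving the limit directly, the paper first shows (via the a priori bound $\#\{\gamma:\norm{\gamma}_\sigma\le R\}\le Ce^{\delta R}$) that $\{\delta e^{-\delta T}\tilde m_T\}$ is relatively compact, fixes an arbitrary subsequential limit $\tilde m'$, and proves $\tilde m=\tilde m'$ on product boxes with null boundary; a $\pi$--system argument then replaces your explicit Riemann sums. Your route works too, but note that the Tauberian passage from the kernel asymptotic to the raw count is not quite ``letting $\eta\to0$'': it needs either the a priori growth bound on $N(T)$ (which you have from \cite{BCZZ}) or the observation that the kernel asymptotic holds for \emph{all} $\eta>0$ so that one can approximate indicators. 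The paper's time--integration trick is a clean way to sidestep this.
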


The above theorem can be expressed as an equidistribution result for closed orbits of the geodesic flow on $U_\Gamma$.
For every $R$, let $\tilde m_R$ be the sum of Lebesgue measures on axes of loxodromic elements of $\Gamma$ with period at most $R$, 
which is a locally finite measure on $M^{(2)}\times \Rb$.  
Denote by $m_R$ the quotient measure  on $U_\Gamma$.
Then the conclusion of Theorem~\ref{thm:equidistribution} can be reformulated as
$$
 \lim_{R \to \infty} \delta e^{-\delta R}\int fdm_R = \frac{1}{\norm{m_\Gamma}} \int fdm_\Gamma$$
 for any continuous function $f \colon U_\Gamma \to \Rb$ with compact support.

In the context of geometrically finite convergence groups, we establish the following stronger equidistribution result, which is needed to obtain our counting results.

\begin{theorem}[see Theorem~\ref{GF equidistribution} below]\label{GF equidistribution in intro}
Suppose $(\sigma, \bar{\sigma}, G)$ is a continuous GPS system for a geometrically finite convergence group  $\Gamma\subset\mathsf{Homeo}(M)$ where
$\delta : = \delta_\sigma(\Gamma)<+\infty$. If
\begin{enumerate}
\item $\delta_\sigma(P)<\delta$ for all maximal parabolic subgroups $P \subset \Gamma$, and
\item $\mathcal L(\sigma,\bar\sigma,G)$ is non-arithmetic,
\end{enumerate}
 then 
 $$
 \lim_{R \to \infty} \delta e^{-\delta R}\int fdm_R = \frac{1}{\norm{m_\Gamma}} \int fdm_\Gamma$$
 for any bounded continuous function $f \colon U_\Gamma \to \Rb$. 
\end{theorem}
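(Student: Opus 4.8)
The plan is to deduce the conclusion from the weak-$*$ equidistribution already recorded in Theorem~\ref{thm:equidistribution}, by establishing a uniform non-escape-of-mass estimate near the cusps. First I would check that the hypotheses of the earlier results are met: assumption~(1) together with Theorem~\ref{thm:BMS finite criterion in intro} gives $Q_\sigma(\delta)=+\infty$ and $\norm{m_\Gamma}<+\infty$; assumption~(2) and the finiteness of $m_\Gamma$ then give mixing of $\psi^t$ by Theorem~\ref{main mixing}; and finiteness together with mixing places us in the setting of Theorem~\ref{thm:equidistribution}. Consequently
\[
\lim_{R\to\infty}\delta e^{-\delta R}\int f\, dm_R=\frac{1}{\norm{m_\Gamma}}\int f\, dm_\Gamma
\]
for every \emph{compactly supported} continuous $f\colon U_\Gamma\to\Rb$ (here each $m_R$ is a finite measure supported on the finitely many closed orbits of period $\leq R$, and $m_\Gamma$ is finite, so both sides make sense for bounded continuous $f$ as well). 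The only thing left is to remove the compact-support hypothesis.

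Since $\Gamma$ is geometrically finite, there is a system $C_1,\dots,C_k\subset U_\Gamma$ of disjoint cusp neighbourhoods, one for each conjugacy class of maximal parabolic subgroup, with compact complement; write $C_j(t)$ for the part of $C_j$ at depth $\geq t$. Given a bounded continuous $f$ with $\norm{f}_\infty\leq 1$ and a cutoff $\chi$ that is compactly supported, takes values in $[0,1]$, and equals $1$ off $\bigcup_j C_j(t)$, split $f=\chi f+(1-\chi)f$. The $\chi f$ term is handled by the compactly supported case above; the $(1-\chi)f$ term contributes at most $\delta e^{-\delta R}m_R\big(\bigcup_j C_j(t)\big)$ to the left side and at most $m_\Gamma\big(\bigcup_j C_j(t)\big)/\norm{m_\Gamma}$ to the right side, and the latter tends to $0$ as $t\to\infty$ because $m_\Gamma$ is finite. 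Thus the theorem follows once we prove the uniform tail bound: for every $\eta>0$ there is $t_0$ with
\[
\limsup_{R\to\infty}\ \delta e^{-\delta R}\, m_R\Big(\textstyle\bigcup_{j} C_j(t_0)\Big)<\eta .
\]

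To prove the tail bound I would work with the lift $\tilde m_R$ on $\tilde U_\Gamma=\Lambda(\Gamma)^{(2)}\times\Rb$ and a fundamental domain, reducing $m_R(C_j(t))$ to the $\tilde m_R$-mass, inside a fundamental domain for the maximal parabolic $P_j$, of the $P_j$-invariant region $H_j(t)\subset\tilde U_\Gamma$ of points whose underlying geodesic penetrates to depth $\geq t$ the horoball based at the parabolic fixed point $p_j$ of $P_j$. By definition $\tilde m_R$ is the sum of Lebesgue measures along the axes $\{(\gamma^-,\gamma^+,s):s\in\Rb\}$ of loxodromic $\gamma$ with $\ell_\sigma(\gamma)\leq R$; after replacing $\gamma$ by a suitable conjugate, such an axis meets $H_j(t)$ only if $\gamma$ ``crosses'' a parabolic element $q\in P_j$ with $\norm{q}_\sigma\geq 2t-O(1)$, and when it does the length of the intersection is $O(\norm{q}_\sigma)$. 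Combining this with the $\sigma$-shadow lemma for the Patterson--Sullivan measures of the GPS system from \cite{BCZZ} and the coarse additivity $\norm{\gamma_0 q}_\sigma \approx \norm{\gamma_0}_\sigma+\norm{q}_\sigma$ for the decomposition $\gamma\sim\gamma_0 q$, $q\in P_j$, one bounds $\delta e^{-\delta R}m_R(C_j(t))$ by a sum of the form
\[
\delta e^{-\delta R}\sum_{\gamma_0}\ \sum_{\substack{q\in P_j,\ \norm{q}_\sigma\geq 2t\\ \norm{\gamma_0}_\sigma+\norm{q}_\sigma\leq R+O(1)}}\norm{q}_\sigma\ \lesssim\ \delta\sum_{q\in P_j,\ \norm{q}_\sigma\geq 2t}\norm{q}_\sigma\, e^{-\delta\norm{q}_\sigma},
\]
where the last step uses the growth bound $\#\{\gamma\in\Gamma:\norm{\gamma}_\sigma\leq S\}\lesssim e^{\delta S}$, itself a consequence of the shadow lemma. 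The right-hand side is independent of $R$ and decays geometrically as $t\to\infty$, because $\delta_\sigma(P_j)<\delta$ forces $\sum_{q\in P_j}\norm{q}_\sigma\, e^{-\delta\norm{q}_\sigma}<+\infty$. Summing over the finitely many cusps and taking $t_0$ large gives the tail bound. This is the closed-orbit analogue of the cusp-excursion estimates of Dal'bo--Otal--Peign\'e~\cite{DOP} underlying Theorem~\ref{thm:BMS finite criterion in intro}, applied along axes rather than to the BMS measure, and I expect those arguments to transfer to the GPS setting with only cosmetic changes.

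The main obstacle is exactly this uniform tail estimate. The delicate point is that a single period-$\leq R$ closed geodesic can make an arbitrarily deep --- hence arbitrarily long --- excursion into a cusp, so one cannot bound $m_R(C_j(t))$ by ``(number of geodesics)$\times$(bounded length)''; the total time spent deep in the cusp and the number of geodesics that go that deep must be controlled together. The shadow/return-word decomposition is designed precisely to decouple these effects, trading a deep excursion of a period-$\leq R$ geodesic for a large parabolic element $q$ and a shorter loxodromic $\gamma_0$, after which the hypothesis $\delta_\sigma(P_j)<\delta$ guarantees that the resulting parabolic sum not only converges but has a geometrically small tail that absorbs the $e^{\delta R}$ normalisation. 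A secondary nuisance is that the coarse additivity of $\norm{\cdot}_\sigma$ and the correspondence between excursion depth and the size of the crossed parabolic element hold only up to additive errors depending on the cusp; tracking these uniformly over the finitely many cusps and over $R$ is routine given the estimates already in \cite{BCZZ} and in the proof of Theorem~\ref{thm:BMS finite criterion in intro}, but must be done with care.
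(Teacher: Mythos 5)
Your proposal is correct and follows essentially the same route as the paper's Section~\ref{sec:equi for geom finite}: verify the hypotheses of Theorem~\ref{thm:equidistribution} via Theorem~\ref{thm:BMS finite criterion in intro} and Theorem~\ref{main mixing}, split $f=\chi f+(1-\chi)f$ with a compactly supported cutoff equal to $1$ off a union of cusp neighbourhoods, and then establish the uniform tail bound $\limsup_R \delta e^{-\delta R} m_R(\text{cusp}) < \eta$ by decomposing each excursion through a horoball in terms of a parabolic element $q\in P_j$ and a shorter loxodromic, using the excursion-length estimate, coarse additivity of $\norm{\cdot}_\sigma$, and the orbit growth bound from~\cite[Prop.\,6.3]{BCZZ} to reduce to the convergence of $\sum_{q\in P_j}\norm{q}_\sigma e^{-\delta\norm{q}_\sigma}$ guaranteed by $\delta_\sigma(P_j)<\delta$. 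This is exactly the content of Proposition~\ref{prop:bound on mR horoball} and its supporting Lemmas~\ref{horoball length bound},~\ref{lem:additive estimate for Aalpha}, and~\ref{lem:size of A_alpha}, so no further comment is needed.
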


The counting result in Theorem~\ref{thm:counting for geomfin actions} is obtained by applying a slightly modified version of Theorem \ref{GF equidistribution in intro} (Proposition \ref{prop:GF equidistribution'})
to the constant function with value 1.

\subsection{Applications to relatively Anosov groups}\label{subsec:rel anosov} 

We now develop the terminology necessary to explain how to obtain counting result for relatively Anosov subgroups of $\mathsf{SL}(d,\Rb)$ from Theorem \ref{thm:counting for geomfin actions}.
In Section~\ref{sec:semisimple Lie groups} we describe the same result  in the more general setting of semisimple Lie groups. 

Let $\mathfrak{sl}(d,\Rb) = \mathfrak k \oplus \mathfrak p$ denote the standard Cartan decomposition of the Lie algebra of $\SL(d,\Rb)$, 
where  $\mathfrak k$ is the Lie algebra of $\mathsf{SO}(d)$ and $\mathfrak p$ consists of symmetric matrices with trace zero.
 Let $\mathfrak{a} \subset \mathfrak{p}$ denote the standard Cartan subalgebra consisting of diagonal matrices with trace zero and let 
$$
\mathfrak{a}^+ = \{ {\rm diag}(a_1,\dots, a_d) \in \mathfrak a : a_1 \geq a_2 \geq \cdots \geq a_d \} 
$$
denote the standard choice of positive Weyl chamber. The associated simple roots are 
$$
\Delta = \{ \alpha_1,\dots, \alpha_{d-1}\}
$$
where $\alpha_j({\rm diag}(a_1,\dots, a_d)) = a_j - a_{j+1}$. The \emph{Cartan projection} $\kappa \colon \SL(d,\Rb) \to \mathfrak{a}^+$ is given by 
$$
\kappa(\gamma) = {\rm diag}\big( \log \sigma_1(\gamma), \dots, \log \sigma_d(\gamma)\big)
$$
where $\sigma_1(\gamma) \geq \cdots \geq \sigma_d(\gamma)$ are the singular values of $\gamma$ and the  \emph{Jordan projection} $\lambda \colon \SL(d,\Rb) \to \mathfrak{a}^+$ is given by 
$$
\lambda(\gamma) = {\rm diag}\big( \log \lambda_1(\gamma), \dots, \log \lambda_d(\gamma)\big)
$$
where $\lambda_1(\gamma) \geq \cdots \geq \lambda_d(\gamma)$ are the moduli of the generalized eigenvalues of $\gamma$.

Given $\phi \in \mathfrak{a}^*$, one can define the \emph{$\phi$-period} of $\gamma \in \SL(d,\Rb)$ by 
$$
\ell^\phi(\gamma) := \phi(\lambda(\gamma))
$$
and the \emph{$\phi$-magnitude} by $\phi(\kappa(\gamma))$. Also, given a discrete subgroup $\Gamma \subset \SL(d,\Rb)$ and $\phi \in \mathfrak{a}^*$, one can define a (possibly infinite) critical exponent 
$$
\delta^\phi(\Gamma) : = \limsup_{R \to \infty} \frac{1}{R} \log \#\{ \gamma \in \Gamma : \phi(\kappa(\gamma)) \leq R\}. 
$$

Given a subset $\theta \subset \Delta$, let $\Fc_\theta$ denote the partial flag manifold associated to $\theta$, i.e.\ $\Fc_\theta$ is the set of partial flags with subspaces of dimensions 
$\{ j : \alpha_j \in \theta\}$. When $\theta$ is symmetric (i.e.\ $\alpha_j \in \theta \Leftrightarrow \alpha_{d-j} \in \theta$), a discrete subgroup $\Gamma\subset \mathsf{SL}(d,\Rb)$ is {\em $\mathsf{P}_\theta$-relatively Anosov} if $\Gamma$ (as an abstract group) is relatively hyperbolic with respect to a finite collection $\mathcal P$ of finitely generated subgroups of $\Gamma$ and there exists a $\Gamma$-equivariant embedding of the
Bowditch boundary $\partial(\Gamma,P)$ into $\Fc_\theta$ with good dynamical properties.

Associated to $\theta \subset \Delta$ is a natural subspace of $\mathfrak a$ defined by 
$$\mathfrak{a}_\theta:=\{a\in\mathfrak{a} : \beta(a)=0\text{ if } \beta\in\Delta-\theta\}.$$  Then $\mathfrak{a}_\theta^*$ is generated by $\{\omega_j|_{\mathfrak{a}_\theta} :\alpha_j\in \theta\}$ where $\omega_j \in \mathfrak{a}^*$ is the fundamental weight associated to $\alpha_j$ and satisfies 
$$
\omega_j({\rm diag}(a_1,\dots, a_d)) = a_1+\dots + a_j. 
$$
Hence we can identify $\mathfrak{a}_\theta^*$ as a subspace of $\mathfrak a^*$.

In Section~\ref{sec:semisimple Lie groups} we will show that if $\Gamma\subset \mathsf{SL}(d,\Rb)$ is $\mathsf{P}_\theta$-relatively Anosov, $\phi \in \mathfrak a_\theta^*$ and $\delta^\phi(\Gamma) < +\infty$, then there exists a GPS system on the limit set of $\Gamma$ whose periods are exactly the $\phi$-lengths defined above (in fact we will show that this is more generally true for the wider class of $\mathsf P_\theta$-transverse groups in semisimple Lie groups).
We then use results in~\cite{CZZ4} to verify that the conditions of Theorem~\ref{thm:counting for geomfin actions} are satisfied and obtain the following counting result. 

\begin{corollary}[see Section~\ref{sec:semisimple Lie groups} below]\label{cor:counting for rel Anosovs in intro}
Suppose $\theta \subset \Delta$ is symmetric and \hbox{$\Gamma\subset \mathsf{SL}(d,\Rb)$} is  $\mathsf{P}_\theta$-relatively Anosov with respect to $\mathcal P$. If 
$\phi\in\mathfrak{a}_\theta^*$ and $\delta:=\delta^\phi(\Gamma)<+\infty$, then
$$
\#\{[\gamma]^w\in [\Gamma_{\lox}]^w :0<\phi(\lambda(\gamma))  \leq R\} \sim \frac{e^{\delta R}}{\delta R}.
$$
\end{corollary}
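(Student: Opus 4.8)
The strategy is to realize the $\phi$-length spectrum of a $\mathsf P_\theta$-relatively Anosov group $\Gamma$ as the period function of an explicit continuous GPS system on its limit set, and then invoke Theorem~\ref{thm:counting for geomfin actions}. The first step is to construct the GPS system. Since $\theta$ is symmetric, one has the usual pair of equivariant boundary maps $\xi_\theta \colon \partial(\Gamma,\mathcal P) \to \Fc_\theta$ and $\xi_{\theta^*} \colon \partial(\Gamma,\mathcal P) \to \Fc_{\theta^*}$, whose images are transverse, so that $\Lambda(\Gamma) := \xi_\theta(\partial(\Gamma,\mathcal P))$ is the limit set of a $\mathsf P_\theta$-transverse convergence group action of $\Gamma$ on $\Fc_\theta$. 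For $\phi \in \mathfrak a_\theta^*$ I would write $\phi = \sum_{\alpha_j \in \theta} c_j \omega_j|_{\mathfrak a_\theta}$ and build the cocycles $\sigma_\phi, \bar\sigma_\phi \colon \Gamma \times \Lambda(\Gamma) \to \Rb$ out of the corresponding Busemann/Iwasawa cocycles on $\Fc_\theta$ and $\Fc_{\theta^*}$; concretely, $\sigma_\phi(\gamma, \xi_\theta(x)) = \phi(\beta(\gamma,\xi_\theta(x)))$ where $\beta$ is the $\mathfrak a_\theta$-valued Busemann cocycle, and similarly for $\bar\sigma_\phi$ using $\theta^*$ and the duality $\iota$ on $\mathfrak a$. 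The Gromov function $G$ is then the $\phi$-component of the natural $\mathfrak a_\theta$-valued Gromov product on $\Fc_\theta^{(2)}$ (built from the cross-ratio of flags, or equivalently from the Gromov product of the associated Iwasawa cocycles), and the GPS identity \eqref{eqn:GPS property} is a formal consequence of the cocycle/Gromov-product relation. One checks continuity (the maps $\xi_\theta$, $\xi_{\theta^*}$ are continuous and the Iwasawa projection is smooth) and properness of $\sigma_\phi, \bar\sigma_\phi$: properness should follow from the transversality of the boundary maps together with a compactness argument, since a sequence of loxodromic elements with fixed-point pairs staying in a compact subset of $\Lambda(\Gamma)^{(2)}$ has $\kappa(\gamma_n)$ (hence $\lambda(\gamma_n)$, up to bounded error, by Benoist-type estimates) escaping to infinity in the relevant directions when $\phi$ is a positive combination of the $\omega_j|_{\mathfrak a_\theta}$. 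I expect this construction, together with verification that $\ell_{\sigma_\phi}(\gamma) = \phi(\lambda(\gamma))$, to be essentially the content already promised in Section~\ref{sec:semisimple Lie groups}, so this step is bookkeeping modulo the properness argument.

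The second step is to verify the two hypotheses of Theorem~\ref{thm:counting for geomfin actions} for this GPS system. That $\Gamma$ is geometrically finite as a convergence group on $\Lambda(\Gamma) \subset \Fc_\theta$ follows from the fact that, for relatively Anosov groups, the bounded parabolic points of the convergence action are exactly the images of the peripheral subgroups and the conical limit points are the images of the remaining boundary points; this is part of the relatively Anosov package. That $\delta = \delta^\phi(\Gamma) = \delta_{\sigma_\phi}(\Gamma)$ requires identifying the $\sigma_\phi$-magnitude $\norm{\gamma}_{\sigma_\phi}$ with $\phi(\kappa(\gamma))$ up to a uniformly bounded additive error — this again is a comparison between the abstract magnitude defined from the GPS system and the Cartan projection, and should be handled exactly as in the Anosov case. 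The finiteness $\delta < +\infty$ is our standing hypothesis. The two remaining conditions — non-arithmeticity of $\mathcal L(\sigma_\phi, \bar\sigma_\phi, G)$ and $\delta_{\sigma_\phi}(P) < \delta$ for every maximal parabolic $P$ — are precisely where I would cite \cite{CZZ4}: that paper establishes the gap property $\delta^\phi(P) < \delta^\phi(\Gamma)$ for peripheral subgroups of relatively Anosov groups (under $\delta^\phi(\Gamma) < \infty$, with $\phi$ in the relevant cone), and non-arithmeticity follows because $\Gamma$ contains a parabolic element (each peripheral $P$ is infinite and its action on $\Lambda(\Gamma)$ has a bounded parabolic fixed point, so $\Gamma$ is not a uniform convergence group on $\Lambda(\Gamma)$), invoking the Remark after Theorem~\ref{thm:counting for geomfin actions}.

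With both hypotheses in hand, the third step is simply to apply Theorem~\ref{thm:counting for geomfin actions}, which gives
$$
\#\{[\gamma]^w \in [\Gamma_{\lox}]^w : 0 < \ell_{\sigma_\phi}(\gamma) \le R\} \sim \frac{e^{\delta R}}{\delta R},
$$
and then to substitute $\ell_{\sigma_\phi}(\gamma) = \phi(\lambda(\gamma))$ from the first step to obtain the stated asymptotic. One small point to address: the set $[\Gamma_{\lox}]^w$ of weak conjugacy classes is defined relative to the convergence action on $\Lambda(\Gamma)$, and one should note that an element $\gamma \in \Gamma$ acts loxodromically on $\Fc_\theta$ precisely when $\lambda_j(\gamma) > \lambda_{j+1}(\gamma)$ for all $j$ with $\alpha_j \in \theta$ (i.e.\ $\gamma$ is $\mathsf P_\theta$-proximal), so $[\Gamma_{\lox}]^w$ is exactly the set of weak conjugacy classes of $\mathsf P_\theta$-proximal elements — which is the natural indexing set for a $\phi$-length count. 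The \textbf{main obstacle} is the first step, specifically pinning down properness of $\sigma_\phi$ and the identification $\ell_{\sigma_\phi}(\gamma) = \phi(\lambda(\gamma))$, $\norm{\gamma}_{\sigma_\phi} \doteq \phi(\kappa(\gamma))$; everything after that is an invocation of Theorem~\ref{thm:counting for geomfin actions} and the input from \cite{CZZ4}.
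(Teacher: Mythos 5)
Your overall plan matches the paper's: build a GPS system $(\sigma_\phi,\bar\sigma_\phi,\phi\circ G_\theta)$ on $\Lambda_\theta(\Gamma)$ whose periods are $\phi(\lambda(\gamma))$, verify the two hypotheses of Theorem~\ref{thm:counting for geomfin actions}, and conclude. The construction of the GPS system, the identification $\ell_{\sigma_\phi}(\gamma)=\phi(\lambda(\gamma))$ and $\norm{\gamma}_{\sigma_\phi}\doteq\phi(\kappa(\gamma))$ (via Quint's lemma), and the citation of \cite{CZZ4} for the parabolic gap $\delta^\phi(P)<\delta^\phi(\Gamma)$ all agree with what the paper does in Section~\ref{sec:semisimple Lie groups}.

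However, your argument for non-arithmeticity has a genuine gap. You claim non-arithmeticity ``because $\Gamma$ contains a parabolic element... so $\Gamma$ is not a uniform convergence group,'' invoking the Remark after Theorem~\ref{thm:counting for geomfin actions}. But a $\mathsf P_\theta$-relatively Anosov group need not have any parabolic elements: the peripheral structure $\mathcal P$ may be empty, or consist entirely of finite subgroups, in which case $\Gamma$ is an ordinary Anosov group acting as a uniform convergence group on its limit set, and it can certainly be virtually free (e.g.\ an Anosov free group). In that case neither Proposition~\ref{prop:parabolic NALS} nor the non-virtually-free criterion in Proposition~\ref{prop:CR spectrum nonarthimetic} applies, and your argument breaks down. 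The paper avoids this by establishing non-arithmeticity directly from Lie theory (Proposition~\ref{prop:transverse groups non-arithmetic}, via Proposition~\ref{prop:Nonarithmeticity Anosov}): one passes to a free, $\mathsf P_\theta$-relatively Anosov ping-pong subgroup $\Gamma'\subset\Gamma$, uses the characterization from \cite{CZZ4} to see that $\psi=\phi+\iota^*\phi$ is positive on the limit cone of $\Gamma'$, and then invokes Benoist's density theorem for the additive group generated by Jordan projections of a Zariski-dense subgroup (after reducing to a semisimplification via \cite{GGKW}) to conclude that $\{\psi(\lambda(\gamma)):\gamma\in\Gamma'\}$ generates a dense subgroup of $\Rb$. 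This works uniformly, with or without parabolics, and is the step your outline is missing.
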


Corollary \ref{cor:counting for rel Anosovs in intro} was previously known only when $\ms G$ is rank-one (see Roblin~\cite{roblin}), when 
$\Gamma$ is the image of a relatively Anosov representation of a finitely generated torsion-free Fuchsian group \cite{BCKM}, and when $\Gamma$ is a $\{\alpha_1,\alpha_{d-1}\}$-Anosov group of $\PGL(d,\R)$ preserving a properly convex domain 
in $\mathbb P(\mathbb R^d)$ and $\phi=\omega_1+\omega_{d-1}$ \cite{BZ,zhu-ergodicity}.

\begin{remark} 
The concurrent work of Kim and Oh \cite{KO24} contains a proof, using different techniques, that the BMS measure associated to a relatively Anosov group is finite and that the flow is mixing with respect to BMS measure in this setting. 
\end{remark}

\subsection{Historical remarks}\label{sec:historial remarks}

There is a long history of using dynamical methods to obtain asymptotic counting results for the number of closed orbits of a flow.
The first counting result of the form of Theorem \ref{thm:counting for geomfin actions} was established by Huber \cite{huber} for cocompact Fuchsian groups, as an application of Selberg's trace formula.
In his Ph.D.\ thesis, Margulis \cite{margulis} established mixing, counting and equidistribution results for negatively curved manifolds, and his proofs provide the template for much subsequent work.
More generally, Margulis' work applies to all flows $\phi^t$ on closed manifolds $M$ that are \emph{Anosov}, i.e.\ for which there is an invariant splitting of the tangent bundle of $M$ into the flow direction, 
a ``stable'' sub-bundle which is exponentially contracted by the flow, and a ``unstable'' sub-bundle which is exponentially dilated.

Margulis' approach, combined with the theory of Patterson--Sullivan measures \cite{Patterson,sullivan}, was used in Roblin's work on (not necessarily cocompact) discrete isometry groups of ${\rm CAT}(-1)$ spaces \cite{roblin}.
This is the approach we use here.
In the context of discrete subgroups of Lie groups, which is our main application, this method was also used to obtain counting results for certain subgroups of $\PGL(d,\R)$ 
preserving convex domains of $\mb P(\R^d)$: these results use the geometry of these domains, and are for a certain length function called the Hilbert length~\cite{zhu-ergodicity,BZ}.
The combination of Margulis's ideas and Patterson--Sullivan theory was also used by Ricks \cite{ricks}, building on earlier work of Knieper \cite{Knieper}, to prove counting results for groups acting on rank-one ${\rm CAT}(0)$ spaces.
Note that these convex projective and rank-one ${\rm CAT}(0)$ settings are not encompassed by the present work, as the natural boundary action in these cases are not necessarily convergence actions, due to the presence of flats.

Related counting results can also be obtained using local mixing results. Chow--Sarkar \cite{chow-sarkar} establish such results in the Anosov case, which were used to obtain counting results in affine symmetric spaces by Edwards--Lee--Oh \cite{ELO}.

Turning to other dynamical approaches: Parry and Pollicott \cite[Th.\,2]{PP83} used symbolic dynamics and the thermodynamic formalism to establish counting results for Axiom A flows, which are generalizations of Anosov flows to compact spaces other than manifolds,
and Pollicott extended this to an even more general class of flows on compact spaces called metric Anosov or Smale flows~\cite[Th.\,8]{Pollicott87}.
Sambarino \cite{sambarino-quantitative,sambarino15} established counting, mixing and equidistribution results for Anosov groups isomorphic to the fundamental groups of
negatively curved manifolds $M$, by showing that the length functions coming from Anosov groups corresponds to periods of a reparametrization of the geodesic flow on $T^1M$, and this reparametrization is a metric Anosov flow, where counting results are already known.
With additional results established by Bridgeman--Canary--Labourie--Sambarino \cite{BCLS}, Sambarino's arguments generalize to all Anosov groups.  A similar idea was used earlier by Benoist \cite[Cor.\,5.7]{CD1} in the setting of Anosov representations acting cocompactly on strictly convex projective domains. Later Chow--Fromm~\cite{chow-fromm} established a general result that implies Sambarino's counting result for Anosov groups.

Lalley \cite{lalley} used symbolic dynamics and a renewal theorem to establish counting and
equidistribution results for convex cocompact Fuchsian groups. 
This approach was generalized by Dal'bo and Peign\'e \cite{dalbo-peigne}
to the setting of geometrically finite negatively curved manifolds with free fundamental group.
This approach was later implemented in Bray--Canary--Kao--Martone \cite{BCKM} for relatively Anosov representations of finitely generated torsion-free Fuchsian groups.

Finally: stronger quantitative mixing results can be used to obtain finer counting estimates. 
Recently, Delarue--Monclair--Sanders~\cite{DMS} obtained exponential mixing results in the $\{\alpha_1,\alpha_{d-1}\}$-Anosov case, yielding a counting estimate with an exponential error term.

\subsection{Outline of paper}

In Section~\ref{sec:background}, we recall some results from the theory of convergence group actions and also some results from \cite{BCZZ} about GPS systems. 

The first part of the paper, Sections \ref{sec:stable and unstable manifolds}, \ref{sec:mixing} and \ref{sec:equi},
follows a classical strategy to prove equidistribution of closed geodesics (Theorem~\ref{thm:equidistribution}), combining Margulis' ideas and Patterson--Sullivan theory.
This portion of the work relies heavily, and follows fairly quickly, from the machinery developed in \cite{BCZZ}.
In Section~\ref{sec:NALS}, we adapt several classical criteria for non-arithmeticity of length spectrum to our setting.

In Section~\ref{sec:stable and unstable manifolds}, we construct natural stable/unstable manifolds in $U_\Gamma$.
In Section~\ref{sec:mixing}, we use these stable/unstable manifolds and Coud\`ene's criterion for mixing \cite{coudene-mixing} to establish mixing (under a non-arithmeticity assumption).
Finally, in Section~\ref{sec:equi}, we combine the mixing property, the explicit product structure of the BMS measure, and a closing lemma (Lemma~\ref{lem:closing}), to establish equidistribution of closed geodesics.
In contrast to Roblin's \cite{roblin} work in the setting of ${\rm CAT}(-1)$ spaces, we do not prove nor use an equidistribution of double $\Gamma$-orbits.

One notable difference with previous works concerns our construction of stable and unstable manifolds.
Unlike previous settings such as those of ${\rm CAT}(-1)$ spaces or Hilbert geometries (see \cite[Fact\,6.11 \& Prop.\,6.14]{blayac-ps}), we do not have a natural metric on the flow space $U_\Gamma$ to use to check that our algebraic definition matches Coud\`ene's metric definition.
We address this issue by observing that in fact any metric on the one-point compactification of $U_\Gamma$ does the job.

The second part of the paper specializes to geometrically finite groups, and is more involved than the first.
To be able to apply the results of the first part to geometrically finite groups,
we first show in Section~\ref{sec:structure of the flow space} that $U_\Gamma$ decomposes into a union of a compact part and finitely many cusp-like parts, which are quotients of horoballs-like sets in $\tilde U_\Gamma$. Using this decomposition,  in Section~\ref{sec:finite BMS} we prove Theorem~\ref{thm:BMS finite criterion in intro}, which gives a criterion guaranteeing that geometrically finite groups are divergent and have finite BMS measure. In Section~\ref{sec:equi for geom finite}, we prove Theorem~\ref{GF equidistribution in intro}. As in  Roblin \cite[Th.\,5.2]{roblin}, the proof is based on estimating the measure of ``cusps'' in $U_\Gamma$ with respect to the measure $m_R$ appearing in the statement of the theorem. 

Finally, in Section~\ref{sec:semisimple Lie groups} we explain how to apply our results in  the setting of transverse and  relatively Anosov subgroups of semisimple Lie groups.


\section{Background}\label{sec:background} 


\subsection{Convergence group actions}
\label{convergence groups}

When $M$ is a compact metrizable space, a subgroup $\Gamma \subset \mathsf{Homeo}(M)$ is called a  (discrete) \emph{convergence group} if for every sequence 
$\{\gamma_n\}$ of distinct elements in $\Gamma$, there exist points $x,y \in M$ and a subsequence $\{\gamma_{n_j}\}$  such that $\gamma_{n_j}|_{M \smallsetminus \{y\}}$ converges locally uniformly to $x$. 
This notion was first introduced by Gehring and Martin \cite{GM87}, and then further developed in \cite{tukia-convergence,Bowditch99}.
Recall the following classification of elements of $\Gamma$.

\begin{lemma}[{\cite[Th.\,2B]{tukia-convergence}}]
\label{fact:classif loxparell}
If $\Gamma \subset \mathsf{Homeo}(M)$ is a convergence group, then every element $\gamma\in \Gamma$ is one of
 \begin{itemize}
  \item \emph{loxodromic}: it has two fixed points $\gamma^+$ and $\gamma^-$ in $M$ such that $\gamma^{\pm n}|_{M \smallsetminus \{\gamma^{\mp}\}}$ converges locally uniformly to $\gamma^{\pm}$, 
  \item \emph{parabolic}: it has one fixed point $p\in M$ such that $\gamma^{\pm n}|_{M \smallsetminus \{p\}}$ converges locally uniformly to $p$, or 
  \item \emph{elliptic}: it has finite order.
 \end{itemize}
\end{lemma}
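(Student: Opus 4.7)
The plan is to reproduce Tukia's classical argument: use the convergence property to extract subsequential limits of the powers $\{\gamma^n\}$, identify these limits as fixed points of $\gamma$, and then bootstrap subsequential convergence to convergence of the entire sequence.

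First I would dispose of the elliptic case by fiat: if $\gamma$ has finite order, declare it elliptic. Otherwise $\{\gamma^n\}_{n\in\mathbb{Z}}$ is an infinite family of distinct elements, so applying the convergence property to $\{\gamma^n\}_{n>0}$ yields a subsequence $\gamma^{n_k}$ with $n_k\to+\infty$ and $\gamma^{n_k}|_{M\smallsetminus\{a\}}\to b$ locally uniformly for some $a,b\in M$. Applying the convergence property again to $\{\gamma^{-n_k}\}$ and passing to a further (unnamed) subsequence, we get $\gamma^{-n_k}|_{M\smallsetminus\{a'\}}\to b'$ for some $a',b'\in M$. Using the identities $\gamma^{n_k+1}=\gamma\circ\gamma^{n_k}=\gamma^{n_k}\circ\gamma$ together with continuity of $\gamma$, and comparing the limits off a finite exceptional set, one forces $\gamma(a)=a$ and $\gamma(b)=b$; symmetrically $\gamma^{-1}$ fixes $a'$ and $b'$, and hence so does $\gamma$.

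Next I would argue $\{a',b'\}=\{a,b\}$ with attracting and repelling roles interchanged. For instance, if $b'\neq a$, then on one hand $\gamma^{n_k}(b')\to b$, and on the other hand $\gamma^{n_k}(b')=b'$ because $\gamma(b')=b'$; hence $b'=b$. A symmetric argument using $\gamma^{-n_k}(a)$ yields $a'=a$, and in the degenerate configuration these identifications collapse all four points. This produces the dichotomy: if $a\neq b$, set $\gamma^+:=b$ and $\gamma^-:=a$ (loxodromic case); if $a=b$, set $p:=a$ (parabolic case). To upgrade from the subsequence $\{n_k\}$ to the full sequence, I would show that any subsequence of $\{\gamma^n\}_{n>0}$ admits a subsubsequence converging in the above sense with limit a fixed point of $\gamma$; since $\gamma$ has at most the two fixed points already identified (with attracting/repelling roles pinned down by the previous step), any such subsequential limit must coincide with $b$ locally uniformly off $a$, forcing $\gamma^n|_{M\smallsetminus\{a\}}\to b$ along the full sequence. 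The same reasoning applied to $\{\gamma^{-n}\}$ gives the analogous statement for negative powers.

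The main obstacle is the bookkeeping for multiple subsequences together with ruling out additional fixed points: one must verify that every candidate limit point is among the (at most two) fixed points of $\gamma$ and that attracting/repelling roles are assigned consistently. The key leverage is that the convergence property applies to \emph{every} infinite subsequence of $\{\gamma^n\}$, which rigidly constrains all possible limits and licenses the passage from subsequential to full-sequence convergence.
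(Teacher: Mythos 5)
The paper does not prove this lemma; it cites it directly to Tukia (\cite[Th.\,2B]{tukia-convergence}), so there is no internal proof to compare against. Your sketch does follow the standard outline of Tukia's argument: dispose of finite order, extract convergent subsequences of powers, show the subsequential limit points are fixed by $\gamma$, split on whether the attracting and repelling points coincide, and then pass from a subsequence to the full sequence by compactness plus uniqueness of limits.

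There is, however, a genuine gap at the step where you determine $\{a',b'\}$ from $\{a,b\}$. Your argument only shows $b'\in\{a,b\}$ and (symmetrically) $a\in\{a',b'\}$; it does not pin down the pairing. In particular it does not exclude the configuration $b'=b$, $a'=a$ with $a\neq b$, i.e.\ both $\gamma^{n_k}|_{M\smallsetminus\{a\}}\to b$ and $\gamma^{-n_k}|_{M\smallsetminus\{a\}}\to b$. That configuration is not ``degenerate'' and does not collapse the four points; it must be ruled out by a separate argument, for instance: pick $x\notin\{a,b\}$ and a compact neighborhood $\bar U$ of $b$ avoiding $a$; for large $k$ one has $\gamma^{n_k}(x)\in\bar U$, and then $x=\gamma^{-n_k}(\gamma^{n_k}(x))$ would be forced into any neighborhood of $b$, contradicting $x\neq b$. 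Relatedly, the ``upgrade to the full sequence'' step needs more than the fact that $\gamma$ has at most two fixed points: you also need that the attracting/repelling assignment is the \emph{same} along every convergent subsequence of $\{\gamma^n\}_{n>0}$, which does not follow merely from the limits being fixed points. You flag this as ``bookkeeping,'' but it is precisely the technical heart of Tukia's theorem and must be carried out rather than asserted.
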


Given a convergence group we define the following (cf.\ \cite{Bowditch99,tukia-convergence}):
\begin{enumerate}
\item \label{item:limit set} The \emph{limit set} $\Lambda(\Gamma)$ is the set of points $x \in M$ for which there exist $y \in M$ and a sequence $\{\gamma_n\}$ in $\Gamma$ so that $\gamma_n|_{M \smallsetminus \{y\}}$ converges locally uniformly to $x$.
(Note that fixed points of non-elliptic elements of $\Gamma$ are in the limit set.)
\item A point $x \in \Lambda(\Gamma)$ is a \emph{conical limit point} if there exist distinct points $a,b \in M$ and a sequence of elements $\{\gamma_n\}$ in $\Gamma$ where $\lim_{n \to \infty} \gamma_n(x) = a$ and $\lim_{n \to \infty} \gamma_n(y) = b$ for all $y \in M \smallsetminus \{x\}$. 
\item A point $p \in \Lambda(\Gamma)$ is a \emph{bounded parabolic point} if no element of
$$\Gamma_p:={\rm Stab}_\Gamma(p)$$
is loxodromic and  $\Gamma_p$ acts cocompactly on $\Lambda(\Gamma) \smallsetminus \{p\}$.

\end{enumerate}
We say that a convergence group $\Gamma$ is \emph{non-elementary} if $\Lambda(\Gamma)$ contains at least 3 points. 
In this case $\Lambda(\Gamma)$ is the smallest $\Gamma$-invariant closed subset of $M$ (see \cite[Th.\,2S]{tukia-convergence}). Finally, a non-elementary convergence group $\Gamma$ is \emph{geometrically finite} if every point in $\Lambda(\Gamma)$ is either a conical limit point or a bounded parabolic point. 
The stabilizers of the bounded parabolic points are called the \emph{maximal parabolic subgroups} of $\Gamma$.

{\bf For the remainder of our paper, we will assume all of our convergence groups are non-elementary.}

In the rest of this section, we recall several results about convergence groups that we will need later. We first recall a closing lemma due to Tukia.

 \begin{lemma}[{Tukia \cite[Cor.\,2E]{tukia-convergence}}]\label{lem:char of loxodromic} 
 Suppose $\Gamma \subset \mathsf{Homeo}(M)$ is a convergence group.
 If $\{\gamma_n\} \subset \Gamma$ is a sequence where 
 $\gamma_{n}|_{M \smallsetminus \{b\}}$ converges locally uniformly to $a$  and $a \neq b$, then for $n$ sufficiently large $\gamma_n$ is loxodromic,
 $\gamma_n^+\to a$ and $\gamma_n^- \to b$. 
\end{lemma}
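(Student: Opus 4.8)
The statement to prove is Tukia's closing lemma (Lemma~\ref{lem:char of loxodromic}): given a convergence group $\Gamma \subset \mathsf{Homeo}(M)$ and a sequence $\{\gamma_n\}$ with $\gamma_n|_{M \smallsetminus \{b\}} \to a$ locally uniformly and $a \neq b$, one wants to conclude that for large $n$, $\gamma_n$ is loxodromic with $\gamma_n^+ \to a$ and $\gamma_n^- \to b$. The plan is to exploit the "source--sink" dynamics already encoded in the hypothesis: $a$ behaves like an attracting fixed point and $b$ like a repelling one, and one expects a genuine fixed point of $\gamma_n$ to appear near each of them by a contraction/degree argument, after which Lemma~\ref{fact:classif loxparell} forces $\gamma_n$ to be loxodromic.

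First I would fix small disjoint closed metric balls $\bar B(a,\epsilon)$ and $\bar B(b,\epsilon)$ (using a metric on $M$), and also observe that, by applying the convergence-group hypothesis to the inverse sequence $\{\gamma_n^{-1}\}$ and passing to a subsequence, we may assume $\gamma_n^{-1}|_{M \smallsetminus \{a'\}} \to b'$ locally uniformly for some points $a', b'$; a short argument using that $\gamma_n \to a$ off $b$ identifies $a' = a$ and $b' = b$ (if $\gamma_n^{-1}(K) \to b'$ for $K$ a compact neighborhood of $a$, then $\gamma_n \gamma_n^{-1} = \id$ on $K$ pins things down, and uniqueness of limits forces the identification). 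Then for $n$ large: $\gamma_n$ maps $M \smallsetminus B(b,\epsilon)$ into $B(a,\epsilon)$, and $\gamma_n^{-1}$ maps $M \smallsetminus B(a,\epsilon)$ into $B(b,\epsilon)$; equivalently $\gamma_n\big(M \smallsetminus B(b,\epsilon)\big) \subset B(a,\epsilon) \subset M \smallsetminus \bar B(b,\epsilon)$, so $\gamma_n$ maps the compact set $\bar B(a,\epsilon)$ strictly inside itself.

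Next I would extract the fixed point. Since $\gamma_n(\bar B(a,\epsilon)) \subset B(a,\epsilon)$, the nested intersection $\bigcap_{k \geq 0} \gamma_n^k(\bar B(a,\epsilon))$ is a nonempty compact $\gamma_n$-invariant set contained in every $\bar B(a, \epsilon)$; letting $\epsilon \to 0$ — or rather, running the same argument with $\gamma_n^{-1}$ and $\bar B(b,\epsilon)$ — one gets a $\gamma_n$-invariant nonempty compact set near $a$ and another near $b$. To actually produce a \emph{fixed point} rather than just an invariant set, I would note that $\gamma_n|_{\bar B(a,\epsilon)}$, having image in the interior, must (by the classification in Lemma~\ref{fact:classif loxparell} applied to $\gamma_n$ as an element of the convergence group) have a fixed point: an elliptic element has finite order and cannot map a ball strictly into itself; a parabolic element has a unique fixed point $p$ with $\gamma_n^{\pm k} \to p$ off $p$, which would have to lie in both $\bar B(a,\epsilon)$ and $\bar B(b,\epsilon)$, impossible since these are disjoint; hence $\gamma_n$ is loxodromic, and its two fixed points $\gamma_n^+, \gamma_n^-$ are characterized by the locally uniform convergence $\gamma_n^{\pm k}|_{M \smallsetminus \{\gamma_n^\mp\}} \to \gamma_n^\pm$. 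Finally I would match up the labels: the forward-contraction picture ($\gamma_n$ sucks $\bar B(a,\epsilon)$ into itself) forces $\gamma_n^+ \in \bar B(a,\epsilon)$, and the backward picture forces $\gamma_n^- \in \bar B(b,\epsilon)$; since $\epsilon$ was arbitrary and this holds for all large $n$, we conclude $\gamma_n^+ \to a$ and $\gamma_n^- \to b$.

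**Main obstacle.** The delicate point is the subsequence issue: the hypothesis only gives the dynamics of $\{\gamma_n\}$, not of $\{\gamma_n^{-1}\}$, so a priori one only controls the "sink" $a$ and not the "source" $b$. I would handle this by showing the conclusion is really about each $\gamma_n$ individually once $n$ is large — first establish loxodromicity and the location of $\gamma_n^+$ from the forward dynamics alone (the contraction $\gamma_n(\bar B(a,\epsilon)) \subset B(a,\epsilon)$ only needs the hypothesis as stated), then deduce control of $\gamma_n^-$ from the fact that a loxodromic element's repelling point is where $\gamma_n^{-k}$ converges, and $\gamma_n^{-1}(\bar B(a,\epsilon)^c)$ must escape $\bar B(a,\epsilon)^c$ because otherwise iterating contradicts $\gamma_n^k \to a$ off $b$. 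Concretely, if $\gamma_n^-$ did not lie near $b$, pick $y \neq b$ with $y$ near $\gamma_n^-$; then $\gamma_n^k(y)$ stays near $\gamma_n^-$ (away from $a$) for all $k$, contradicting $\gamma_n^k(y) \to a$. This avoids any appeal to the convergence property of $\{\gamma_n^{-1}\}$ and keeps the argument uniform in $n$. A secondary routine check is that "maps a closed ball into its interior" genuinely rules out elliptic and parabolic behavior, which is immediate from Lemma~\ref{fact:classif loxparell}.
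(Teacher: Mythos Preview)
The paper does not give its own proof of this lemma: it is quoted directly from Tukia \cite[Cor.\,2E]{tukia-convergence} as a black-box input. So there is no ``paper's proof'' to compare against, only Tukia's original.

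Your argument is essentially the standard one and is correct. One simplification: the ``main obstacle'' you flag is not really an obstacle. Once you know $\gamma_n(M\smallsetminus B(b,\epsilon))\subset B(a,\epsilon)$ from the hypothesis, taking complements and applying the homeomorphism $\gamma_n^{-1}$ immediately gives $\gamma_n^{-1}(M\smallsetminus B(a,\epsilon))\subset B(b,\epsilon)$; there is no need to invoke the convergence-group property for the inverse sequence $\{\gamma_n^{-1}\}$ or to pass to subsequences. With both contractions in hand, your elimination of the elliptic and parabolic cases via Lemma~\ref{fact:classif loxparell} goes through cleanly (using $\#M\ge 3$, which is the standing non-elementary assumption), and the location of $\gamma_n^{\pm}$ follows as you describe. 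Also, a minor wording issue: when you write ``letting $\epsilon\to 0$'' you should instead fix $\epsilon$ and show that for all large $n$ (depending on $\epsilon$) the fixed points lie in the respective $\epsilon$-balls; then quantify over $\epsilon$ at the end to get the convergence $\gamma_n^\pm\to a,b$.
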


Lemmas~\ref{fact:classif loxparell} and \ref{lem:char of loxodromic} have the following immediate  consequence.

\begin{lemma}[{\cite[Prop.\,3.2]{Bowditch99}}]\label{lem: parab conical disjoint}
If $\Gamma \subset \mathsf{Homeo}(M)$ is a convergence group, and
the stabilizer of $x\in M$ is infinite and contains no loxodromic elements, then $x$ is not a conical limit point.
In particular,  parabolic fixed points are not conical limit points.
\end{lemma}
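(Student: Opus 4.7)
The plan is to argue by contradiction using Tukia's closing lemma (Lemma~\ref{lem:char of loxodromic}).

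First I would establish that any sequence $\{\eta_n\}$ of distinct elements of $\Gamma_x$ converges, along a subsequence, locally uniformly to $x$ on $M\setminus\{x\}$. By the convergence property, pass to a subsequence with $\eta_n|_{M\setminus\{q_0\}}\to p_0$ locally uniformly. If $p_0\neq q_0$, Lemma~\ref{lem:char of loxodromic} would conclude that $\eta_n$ is eventually loxodromic, contradicting the hypothesis on $\Gamma_x$. Hence $p_0=q_0$, and since $\eta_n(x)=x$ for all $n$, this common value must be $x$. This proves the ``in particular'' clause about parabolic fixed points directly if we can also rule out conicality.

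Now suppose for contradiction that $x$ is conical, witnessed by a sequence $\{\gamma_n\}\subset\Gamma$ with $\gamma_n(x)\to a$ and $\gamma_n|_{M\setminus\{x\}}\to b$ locally uniformly for distinct $a,b\in M$; the standard duality then gives $\gamma_n^{-1}|_{M\setminus\{b\}}\to x$. I first observe that $\gamma_n\notin\Gamma_x$ for all but finitely many $n$: otherwise the previous paragraph applied to that subsequence would force $b=x$, but then $\gamma_n(x)=x$ gives $a=x=b$, contradicting $a\neq b$. After possibly replacing $\gamma_n$ by $h\gamma_n$ for some $h\in\Gamma\setminus\Gamma_x$ (which changes the limits to $(h(a),h(b))$ and so can achieve $h(b)\neq x$), I may further assume $b\neq x$. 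Since $b\neq x$, Lemma~\ref{lem:char of loxodromic} applied directly to $\{\gamma_n\}$ gives that $\gamma_n$ is eventually loxodromic with $\gamma_n^+\to b$ and $\gamma_n^-\to x$; moreover $\gamma_n^-\neq x$ because otherwise $\gamma_n\in\Gamma_x$.

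The remaining step is to convert these loxodromic elements of $\Gamma$, whose repelling fixed points accumulate at $x$ from outside, into a loxodromic element of $\Gamma$ that actually fixes $x$, contradicting the hypothesis on $\Gamma_x$. This is the main technical obstacle of the proof: one must bridge between ``fixed points approach $x$'' and ``fixes $x$ exactly''. The standard way is a closing/ping-pong argument exploiting both dynamical regimes established so far, namely the loxodromic source--sink behavior of $\gamma_n$ near $(b,x)$ and the parabolic-type contraction of $M\setminus\{x\}$ onto $x$ by elements of $\Gamma_x$; concretely one pairs $\gamma_n$ with an element $\eta\in\Gamma_x$ and carefully arranges the product (or a commutator-type composition) so that the resulting element has two separated source/sink points in $M$, one of which is $x$ itself, and then invokes Lemma~\ref{lem:char of loxodromic} once more to extract the desired loxodromic in $\Gamma_x$.
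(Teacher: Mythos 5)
The paper does not actually prove this lemma: it labels it as Bowditch~[Prop.\,3.2] and states only that it is an ``immediate consequence'' of Lemmas~\ref{fact:classif loxparell} and~\ref{lem:char of loxodromic}. So there is no paper proof to compare against, and I assess your argument on its own merits.

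Your steps through Tukia's lemma are all correct: any escaping sequence in $\Gamma_x$ must converge, locally uniformly on $M\setminus\{x\}$, to $x$ (otherwise Lemma~\ref{lem:char of loxodromic} would produce a loxodromic in $\Gamma_x$, and $\eta_n(x)=x$ pins the common source/sink to $x$); $\gamma_n\notin\Gamma_x$ eventually; after replacing $\gamma_n$ by $h\gamma_n$ for a suitable $h\notin\Gamma_x$ you may assume $b\neq x$; and then Lemma~\ref{lem:char of loxodromic} makes $\gamma_n$ eventually loxodromic with $\gamma_n^+\to b$, $\gamma_n^-\to x$, and $\gamma_n^-\neq x$ since $\gamma_n\notin\Gamma_x$. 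These are all sound.

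The final step is a genuine gap, and you acknowledge as much by calling it ``the main technical obstacle'' and leaving it as a sketch. More importantly, the route you gesture at --- pair $\gamma_n$ with an $\eta\in\Gamma_x$ and arrange a product or commutator to be a loxodromic lying in $\Gamma_x$ --- has a concrete obstruction your sketch does not address. An element $g\in\Gamma$ lies in $\Gamma_x$ iff $g(x)=x$, so a word in the $\gamma_n^{\pm1}$ and elements of $\Gamma_x$ lies in $\Gamma_x$ only when a specific coincidence among the points $\gamma_n^{\pm1}(x)$ in their $\Gamma_x$-orbits holds, and nothing in the ping-pong set-up arranges this. For instance, $\eta_m\gamma_n$ (for $n$ fixed, $m\to\infty$) converges locally uniformly to $x$ away from the singularity $\gamma_n^{-1}(x)\neq x$, so Lemma~\ref{lem:char of loxodromic} does make it eventually loxodromic with attracting fixed point tending to $x$ and repelling tending to $\gamma_n^{-1}(x)$ --- but such an element fixes $x$ only if it already lies in $\Gamma_x$, which it does not, so its attracting fixed point is again merely \emph{near} $x$, not equal to it. This is exactly what your step~5 already produced, and it does not contradict the hypothesis on $\Gamma_x$. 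Passing from ``loxodromics with fixed points accumulating at $x$'' to ``a loxodromic fixing $x$'' requires a genuinely different idea than the ping-pong you describe, and that idea is the actual content of the cited result.
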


If $\Gamma$ is geometrically finite, there are only finitely many conjugacy classes of subgroups stabilizing a parabolic fixed point.

\begin{lemma}[{\cite[Th.\,1B]{tukia-conical}}]\label{lem:finitely many parabs}
 If $\Gamma \subset \mathsf{Homeo}(M)$ is a geometrically finite convergence group, then there are finitely many $\Gamma$-orbits of parabolic fixed points.
\end{lemma}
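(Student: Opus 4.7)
My plan is to argue by contradiction. Suppose there exist infinitely many $\Gamma$-orbits of parabolic fixed points, and pick orbit representatives $p_1,p_2,\ldots$ lying in pairwise distinct $\Gamma$-orbits. By compactness of $M$, after passing to a subsequence I may assume $p_n\to p_\infty\in \Lambda(\Gamma)$; since $\Gamma$ is geometrically finite, $p_\infty$ is either a conical limit point or a bounded parabolic point.

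The key case is when $p_\infty$ is conical. Then by definition there exist a sequence $\{\eta_n\}\subset\Gamma$ and distinct points $a,b\in M$ with $\eta_n(p_\infty)\to a$ and $\eta_n|_{M\smallsetminus\{p_\infty\}}\to b$ locally uniformly. Since $p_n\to p_\infty$ and $p_n\neq p_\infty$ for all but possibly one $n$ (they lie in distinct orbits), a diagonal argument extracts indices $n_k\to\infty$ such that $q_k:=\eta_{n_k}(p_k)\to b$, while the $q_k$ remain parabolic fixed points in pairwise distinct $\Gamma$-orbits. Now choose, for each $k$, a nontrivial parabolic $\pi_k\in\Gamma_{p_k}$ and set $\tau_k:=\eta_{n_k}\pi_k\eta_{n_k}^{-1}$, which is parabolic with fixed point $q_k$. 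Since the $q_k$ lie in distinct $\Gamma$-orbits, the $\tau_k$ are pairwise distinct elements of $\Gamma$. Applying the convergence group property to $\{\tau_k\}$ and passing to a subsequence, there are $x,y\in M$ with $\tau_k|_{M\smallsetminus\{y\}}\to x$ locally uniformly; invoking Lemma~\ref{lem:char of loxodromic}, we must have $x=y$, for otherwise $\tau_k$ would eventually be loxodromic. Since $\tau_k(q_k)=q_k\to b$, one deduces $x=y=b$. This forces $b$ to be fixed by every $\tau_k$, which combined with the dynamics contradicts the fact that parabolic fixed points have trivial intersection of fixators with their conjugates (or, more concretely, produces a non-elementary abelian-like configuration impossible for a convergence group action).

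The remaining case is when $p_\infty$ is bounded parabolic. Then $\Gamma_{p_\infty}$ acts cocompactly on $\Lambda(\Gamma)\smallsetminus\{p_\infty\}$, so there exists a compact $K\subset\Lambda(\Gamma)\smallsetminus\{p_\infty\}$ with $\Gamma_{p_\infty}\cdot K=\Lambda(\Gamma)\smallsetminus\{p_\infty\}$. Discarding at most one $p_n$ (the one possibly lying in the orbit of $p_\infty$), for each remaining $n$ pick $\alpha_n\in\Gamma_{p_\infty}$ with $\alpha_n(p_n)\in K$. The new sequence $\alpha_n(p_n)$ still consists of parabolic fixed points in pairwise distinct $\Gamma$-orbits, and after a further subsequence converges to some $q\in K$, so $q\neq p_\infty$. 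Since $q$ lies in the limit set and is again conical or bounded parabolic, we have replaced $p_\infty$ by a strictly different accumulation point. Iterating this reduction, or invoking the fact that the orbit $\Gamma\cdot p_\infty$ is discrete in $\Lambda(\Gamma)\smallsetminus\{p_\infty\}$, we eventually arrive at an accumulation point which is conical, reducing to the previous case.

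The main obstacle I expect is the reduction from the bounded parabolic case to the conical one: a priori the new accumulation point $q$ could again be a bounded parabolic point, and one must argue that a well-chosen iterate, or a diagonal extraction across the finitely many Bowditch-type cusp regions meeting a small neighborhood of $p_\infty$, actually escapes to a conical accumulation point. This step is where one leans most heavily on the dynamical characterization of bounded parabolic points and on Lemma~\ref{lem:char of loxodromic}.
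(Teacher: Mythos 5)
The paper does not prove this lemma; it cites Tukia's Theorem~1B from \cite{tukia-conical} as a black box, so there is no in-paper proof to compare against. Your attempt is a blind reconstruction, and it has two genuine gaps, one in each case.

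In the conical case, after correctly deducing $x=y=b$, you assert ``this forces $b$ to be fixed by every $\tau_k$.'' That is not true. Each $\tau_k$ fixes $q_k$, and $q_k\to b$, but $q_k\neq b$ for all $k$ (the $q_k$ lie in pairwise distinct $\Gamma$-orbits, so in particular they are pairwise distinct, and at most one can equal $b$). The configuration you have reached --- a sequence of distinct parabolic elements $\tau_k$ with fixed points $q_k\to b$ and with $\tau_k|_{M\smallsetminus\{b\}}\to b$ locally uniformly --- is not a contradiction; it is exactly what one would expect if parabolic fixed points accumulate at $b$. The phrase about ``trivial intersection of fixators with their conjugates'' is not a precise statement and does not give a contradiction here. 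Moreover $b$ itself is a limit point and could be bounded parabolic, so even a correct conical argument would not close the loop by itself.

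In the bounded parabolic case, you normalize by $\Gamma_{p_\infty}$ to move the accumulation point off $p_\infty$ into a compact $K$, but the new accumulation point $q$ may again be bounded parabolic, and you yourself flag that the ``iterate until it becomes conical'' step is unsubstantiated. The parenthetical appeal to discreteness of $\Gamma\cdot p_\infty$ in $\Lambda(\Gamma)\smallsetminus\{p_\infty\}$ does not prevent indefinite looping through distinct bounded parabolic accumulation points; a priori there could be infinitely many orbits of such points, which is exactly what you are trying to rule out, so the argument is circular at this step. Tukia's actual proof handles all cusps simultaneously rather than one at a time, establishing a uniform compactness/separation statement (his Theorem~1C) from which the finiteness of parabolic orbits is extracted, rather than chasing accumulation points case by case.
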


We also will use the following well-known co-compactness result.

\begin{proposition} \label{prop:exists compact covering set in limit cross limit}
 If $\Gamma \subset \mathsf{Homeo}(M)$ is a geometrically finite convergence group, then there exists a compact subset $K \subset \Lambda(\Gamma)^{(2)}$ such that $\Gamma \cdot K =  \Lambda(\Gamma)^{(2)}$. 
 \end{proposition}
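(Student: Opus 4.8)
The plan is to show that the quotient of $\Lambda(\Gamma)^{(2)}$ by the diagonal $\Gamma$-action is compact by exploiting the geometric finiteness hypothesis: every point of $\Lambda(\Gamma)$ is either a conical limit point or a bounded parabolic point, and there are only finitely many orbits of parabolic points (Lemma~\ref{lem:finitely many parabs}). First I would establish the key local statement: for each $z \in \Lambda(\Gamma)$ there is an open neighborhood $V_z$ of $z$ in $M$ and a compact subset $K_z \subset \Lambda(\Gamma)^{(2)}$ such that for every $(x,y) \in \Lambda(\Gamma)^{(2)}$ with $x \in V_z$, there exists $\gamma \in \Gamma$ with $\gamma(x,y) \in K_z$. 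Once this is known, I would cover $\Lambda(\Gamma)$ by finitely many $V_{z_1},\dots,V_{z_k}$ (using compactness of $\Lambda(\Gamma)$, which is closed in the compact space $M$), and set $K := K_{z_1} \cup \dots \cup K_{z_k}$. Then for any $(x,y) \in \Lambda(\Gamma)^{(2)}$, the point $x$ lies in some $V_{z_i}$, so some $\Gamma$-translate of $(x,y)$ lies in $K_{z_i} \subset K$; this gives $\Gamma \cdot K = \Lambda(\Gamma)^{(2)}$. Compactness of $K$ is immediate as a finite union of compact sets.

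The two cases in the local statement are handled differently. If $z$ is a conical limit point, there are distinct $a,b \in M$ and a sequence $\gamma_n \in \Gamma$ with $\gamma_n(z) \to a$ and $\gamma_n|_{M \setminus \{z\}} \to b$ locally uniformly; passing to subsequences I may assume convergence is controlled. The idea is that for $x$ near $z$ and $y \neq x$, applying a suitable $\gamma_n$ pushes $x$ near $a$ while pushing $y$ near $b$, so the pair $\gamma_n(x,y)$ lands in a fixed compact subset of $M^{(2)}$, namely a product of small closed balls around $a$ and $b$ whose closures are disjoint; intersecting with $\Lambda(\Gamma)^{(2)}$, which is closed, gives the desired compact $K_z$. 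Care is needed because $y$ could also be close to $z$: here one uses that the convergence $\gamma_n \to b$ is locally uniform on $M \setminus \{z\}$, so one must be slightly careful and, if necessary, shrink $V_z$ and possibly allow the pair to be moved in two steps or use the dynamics more carefully — but the standard argument (as in the proof that conical limit points give cocompactness on pairs, cf.\ Bowditch) goes through. If $z = p$ is a bounded parabolic point, then by definition $\Gamma_p = \Stab_\Gamma(p)$ acts cocompactly on $\Lambda(\Gamma) \setminus \{p\}$; choose a compact fundamental domain $F \subset \Lambda(\Gamma)\setminus\{p\}$ for this action. For $(x,y) \in \Lambda(\Gamma)^{(2)}$ with, say, $y \neq p$, translate by an element of $\Gamma_p$ so that $y \in F$; but we also need $x$ controlled. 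Here one uses that if $x$ is very close to $p$, then since $\Gamma_p$ fixes $p$ and acts cocompactly away from it, elements of $\Gamma_p$ pushing $y$ into $F$ keep $x$ in a controlled compact region away from... — more precisely, one chooses $V_p$ a small neighborhood of $p$ and shows that the set of pairs $(x,y)$ with $x \in V_p$, translated appropriately by $\Gamma_p$ and then possibly composed with the conical-point argument for the image of $y$, lands in a compact subset of $\Lambda(\Gamma)^{(2)}$. The cleanest route is: the action of $\Gamma$ on $\Lambda(\Gamma)^{(2)}$ is properly discontinuous (a standard fact for convergence groups, cf.\ \cite{tukia-convergence,Bowditch99}), and for geometrically finite $\Gamma$ one combines proper discontinuity with the two types of boundary points to get cocompactness.

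I expect the main obstacle to be the bounded parabolic case: one must carefully track what happens to the first coordinate $x$ of the pair when translating by $\Gamma_p$ to bring $y$ into a fundamental domain, making sure $x$ cannot escape to $p$ or to $y$'s limit. The resolution is to note that a pair $(x,y)$ with $x$ near $p$ and $y$ near $p$ is impossible to control by $\Gamma_p$ alone, so one should instead argue: whenever $(x,y) \in \Lambda(\Gamma)^{(2)}$, at least one of $x,y$ can be moved by a conical-type argument (if it is a conical limit point) into a compact set, or — if both are eventually near a parabolic point after translation — one derives a contradiction with properness. It may be cleanest to prove the proposition by contradiction: if $\Gamma \backslash \Lambda(\Gamma)^{(2)}$ were not compact, there would be a sequence $(x_n, y_n)$ with no subsequence of $\Gamma$-translates converging in $\Lambda(\Gamma)^{(2)}$, i.e.\ every subsequence of translates has $x_n$ and $y_n$ approaching a common point of $\Lambda(\Gamma)$; that common point is conical or bounded parabolic, and in either case one derives a contradiction — conical via Lemma~\ref{lem:char of loxodromic}-type dynamics, bounded parabolic via cocompactness of $\Gamma_p$ on $\Lambda(\Gamma)\setminus\{p\}$ together with the finitely-many-orbits statement. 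This contradiction approach sidesteps the need for an explicit construction and is, I believe, the quickest complete argument.
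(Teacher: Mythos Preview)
Your proposal sketches several approaches but does not carry any of them through, and the most concrete one has a genuine flaw. In the conical case you fix a conical point $z$ with sequence $\gamma_n$ satisfying $\gamma_n(z)\to a$ and $\gamma_n|_{M\setminus\{z\}}\to b$, and then try to use this same sequence for nearby $x\in V_z$. But if $x\neq z$, even if $x$ is close to $z$, the definition gives $\gamma_n(x)\to b$, not $\to a$; hence $\gamma_n(x,y)\to(b,b)\notin M^{(2)}$ and you gain nothing. The conical sequence depends on the point itself, so it cannot be shared across a neighborhood in the way your finite-cover strategy requires. Your contradiction approach at the end might be salvageable but is not spelled out.

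The paper's proof is much shorter and avoids these difficulties by outsourcing the conical case entirely: Tukia~\cite[Th.\,1C]{tukia-conical} already provides a compact $K_0\subset\Lambda(\Gamma)^{(2)}$ whose $\Gamma$-orbit contains every pair $(x,y)$ with at least one coordinate conical. The only pairs left are those with \emph{both} coordinates bounded parabolic. For these, translate so the first coordinate is one of the finitely many representatives $p_j$, then use an element of $\Gamma_{p_j}$ to push the second coordinate into a fixed compact $K_j\subset\Lambda(\Gamma)\setminus\{p_j\}$; thus $K_0\cup\bigcup_j\big(\{p_j\}\times K_j\cup K_j\times\{p_j\}\big)$ does the job. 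Note in particular that the parabolic case is \emph{easier} than you feared: once you restrict attention to pairs where a coordinate is exactly $p_j$ (not merely near it), cocompactness of $\Gamma_{p_j}$ on $\Lambda(\Gamma)\setminus\{p_j\}$ applies directly and there is no issue with the other coordinate escaping.
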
 
 
 \begin{proof} By~\cite[Th.\,1C]{tukia-conical} there exists a compact set $K_0 \subset \Lambda(\Gamma)^{(2)}$ such that if $(x,y) \in \Lambda(\Gamma)^{(2)}$ and at least one of $x,y$ is conical, then $(x,y) \in \Gamma \cdot K_0$. Fix representatives $p_1,\dots,p_k$ of the $\Gamma$-orbits of bounded parabolic points (there are finitely many such orbits by Lemma~\ref{lem:finitely many parabs}). Then for each $1 \leq j \leq k$, fix a compact set $K_j \subset \Lambda(\Gamma) \smallsetminus \{p_j\}$ where $\Gamma_{p_j} \cdot K_j = \Lambda(\Gamma) \smallsetminus \{p_j\}$. Finally let 
 $$
 K : = K_0 \cup \bigcup_{j=1}^k \left( \{p_j\} \times K_j \cup K_j \times \{p_j\} \right) .
 $$
 Then $K$ is compact and $\Gamma \cdot K =  \Lambda(\Gamma)^{(2)}$.
 \end{proof}

We previously observed~\cite{BCZZ} that $M$ can be used to compactify $\Gamma$, see also~\cite{Bowditch99,CZZ3}. 

\begin{definition}\label{defn:compactification} Given a convergence group $\Gamma \subset \mathsf{Homeo}(M)$, a \emph{compactifying topology} on $\Gamma\sqcup M$ is a topology such that:
\begin{itemize} 
\item $\Gamma \sqcup M$ is a compact metrizable space.
\item The inclusions $\Gamma \hookrightarrow \Gamma \sqcup M$ and $M \hookrightarrow \Gamma \sqcup M$ are embeddings (where in the first embedding $\Gamma$ has the discrete topology). 
\item $\Gamma$ acts as a convergence group on $\Gamma \sqcup M$. 
\end{itemize} 
A metric $\dist$ on $\Gamma \sqcup M$ is called \emph{compatible} if it induces a compactifying topology.
\end{definition} 

We observed that compactifying topologies exist, are unique, and have the following properties. 

\begin{proposition}[{\cite[Prop.\,2.3]{BCZZ}}]\label{prop:compactifying} If $\Gamma \subset \mathsf{Homeo}(M)$ is a convergence group, then there exists a unique compactifying topology on $\Gamma\sqcup M$.
Moreover, with respect to this topology the following hold:
\begin{enumerate}

\item If $\{\gamma_n\} \subset \Gamma$ is a sequence where $\gamma_n\to a \in M$ and $\gamma_n^{-1}\to b\in M$, then $\gamma_{n}|_{M \smallsetminus \{b\}}$ converges locally uniformly to $a$. 
\item A sequence $\{\gamma_n\} \subset \Gamma$ converges to $a \in M$ if and only if 
for every subsequence $\{\gamma_{n_j}\}$ there exist $b \in M$ and a further subsequence $\{\gamma_{n_{j_k}}\}$ such that $\gamma_{n_{j_k}}|_{M \smallsetminus \{b\}}$ converges locally uniformly to $a$.

\item $\Gamma$ is open in $\Gamma\sqcup M$ and its closure is $\Gamma\sqcup \Lambda(\Gamma)$.
\end{enumerate}
\end{proposition}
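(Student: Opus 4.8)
The plan is to realize $\Gamma\sqcup M$ concretely as a quotient of a hyperspace, glue $M$ onto it, read off compactness and metrizability from standard point-set facts, and extract properties (1)--(3) from the dynamics of graphs. Throughout, recall that $\Lambda(\Gamma)$, and hence $M$, is infinite since $\Gamma$ is non-elementary.

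\emph{Construction of the topology.} Let $\mathcal{K}(M\times M)$ be the space of nonempty closed subsets of $M\times M$ with the Hausdorff topology; it is compact metrizable. Send $\gamma\in\Gamma$ to $\mathrm{Graph}(\gamma):=\{(x,\gamma x):x\in M\}$. Using the convergence property one checks: (a) $\{\mathrm{Graph}(\gamma):\gamma\in\Gamma\}$ is discrete in $\mathcal{K}(M\times M)$; (b) each of its accumulation points has the form $C_{a,b}:=(M\times\{a\})\cup(\{b\}\times M)$, with $a,b$ recoverable from $C_{a,b}$; and (c) for distinct $\gamma_n$ one has $\mathrm{Graph}(\gamma_n)\to C_{a,b}$ if and only if $\gamma_n|_{M\setminus\{b\}}\to a$ locally uniformly. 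Let $\widehat\Gamma$ be the closure of $\{\mathrm{Graph}(\gamma):\gamma\in\Gamma\}$: a compact metrizable space containing $\Gamma$ as an open discrete dense subset, with $\widehat\Gamma\setminus\Gamma$ homeomorphic to the closed set $E\subset M\times M$ of pairs $(a,b)$ occurring in (c); the projection of $E$ to its first coordinate is exactly $\Lambda(\Gamma)$. The map $\pi\colon\widehat\Gamma\to\Gamma\sqcup\Lambda(\Gamma)$ which is the identity on $\Gamma$ and sends $C_{a,b}\mapsto a$ is a quotient by a closed equivalence relation (it collapses $C_{a,b}\sim C_{a,b'}$, and $\{((a,b),(a',b'))\in E\times E:a=a'\}$ is closed), so $\overline\Gamma:=\Gamma\sqcup\Lambda(\Gamma)$ with the quotient topology is compact metrizable, contains $\Gamma$ as an open discrete subset, and induces on $\Lambda(\Gamma)$ its topology from $M$. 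Now let $X:=\Gamma\sqcup M$ be the adjunction space obtained from $\overline\Gamma\sqcup M$ by identifying the two copies of $\Lambda(\Gamma)$: gluing two compact metrizable spaces along a common closed subspace gives a compact metrizable space, in which $\Gamma$ is open and discrete and $M$ is a closed subspace with its given topology. The action $g\cdot C=(\mathrm{id}\times g)(C)$ of $\Gamma$ on $\mathcal{K}(M\times M)$ sends $\mathrm{Graph}(\gamma)$ to $\mathrm{Graph}(g\gamma)$ and $C_{a,b}$ to $C_{ga,b}$, so it preserves $\widehat\Gamma$, descends through $\pi$ to $\overline\Gamma$, agrees on $\Lambda(\Gamma)$ with the given action on $M$, and therefore glues to an action of $\Gamma$ on $X$ by homeomorphisms extending left multiplication on $\Gamma$ and the given action on $M$.

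\emph{Convergence action and (1)--(3).} Let $g_n\in\Gamma$ be distinct; pass to a subsequence with $g_{n_j}|_{M\setminus\{b\}}\to a$ locally uniformly in $M$. I claim $g_{n_j}|_{X\setminus\{b\}}\to a$ locally uniformly in $X$, which gives the convergence action and shows $X$ is a compactifying topology. Convergence on $M\setminus\{b\}$ is the hypothesis, on $\Lambda(\Gamma)\setminus\{b\}$ it follows since $\Lambda(\Gamma)\subset M$, and for $\delta\in\Gamma$ one computes $g_{n_j}\delta=(\mathrm{id}\times g_{n_j})\mathrm{Graph}(\delta)\to C_{a,\delta^{-1}b}$, so $g_{n_j}\delta\to a$. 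The delicate point is the \emph{uniformity} over an arbitrary compact $K\subset X\setminus\{b\}$: were it to fail, one could pick $z_j\in K$ with $z_j\to z_*\in K\setminus\{b\}$ and $\mathrm{d}_X(g_{n_j}z_j,a)$ bounded away from $0$, and a case analysis on whether $z_*$ lies in $M$ or in $\Gamma$ (using that $\delta_j\to p\in\Lambda(\Gamma)$ means, after passing to a subsequence, $\delta_j|_{M\setminus\{\beta_j\}}\to p$ locally uniformly) produces a contradiction; this uniformity is the step I expect to be the main obstacle. Property (3) is then immediate: $\Gamma$ is open because discrete, and its closure is $\Gamma\sqcup\Lambda(\Gamma)$ since each point of $\Lambda(\Gamma)$ is already a limit of $\Gamma$ in $\overline\Gamma$ while each point of $M\setminus\Lambda(\Gamma)$ has an $X$-neighborhood inside $M\setminus\Lambda(\Gamma)$. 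For (2): as $X$ is compact metrizable, a sequence of distinct $g_n$ converges to $a\in M$ in $X$ exactly when every subsequence has a further subsequence with $\mathrm{Graph}(g_{n_{jk}})$ convergent, and by (b)--(c) that limit must be $C_{a,b}$ for some $b$, i.e.\ $g_{n_{jk}}|_{M\setminus\{b\}}\to a$ locally uniformly. For (1): if moreover $g_n^{-1}\to b$, then along any subsequence we may further arrange $g_{n_{jk}}|_{M\setminus\{\beta\}}\to\alpha$ and $g_{n_{jk}}^{-1}|_{M\setminus\{\beta'\}}\to\alpha'$ locally uniformly; the standard fact that then $\alpha=\beta'$ and $\alpha'=\beta$ (provable from the convergence property, using that $M$ is infinite), together with $\alpha=a$ and $\alpha'=b$, forces $\beta=b$, and the subsequence principle upgrades this to $g_n|_{M\setminus\{b\}}\to a$ locally uniformly.

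\emph{Uniqueness.} Any compactifying topology $\tau$ on $\Gamma\sqcup M$ is compact metrizable, hence determined by its convergent sequences. Sequences inside $M$ converge as in $M$, and a sequence inside $\Gamma$ converges in $\tau$ only if eventually constant, so it suffices to pin down when a sequence of distinct $g_n\in\Gamma$ converges to $a\in M$; I claim this happens exactly when the condition in (2) holds, which makes no reference to $\tau$. For the forward implication, apply the convergence property of the $\Gamma$-action on $(\Gamma\sqcup M,\tau)$ to get a subsequence with $g_{n_j}|_{(\Gamma\sqcup M)\setminus\{q\}}\to p$ locally uniformly; since $\Gamma$-translates preserve $M$ and $M$ has positive $\tau$-diameter, $q$ cannot lie in $\Gamma$, and restricting to $M\setminus\{q\}$ (and evaluating at the identity of $\Gamma$, using that $\tau$ is Hausdorff, to identify $p=a$) gives $g_{n_j}|_{M\setminus\{q\}}\to a$ locally uniformly in $M$. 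For the converse, if $g_n|_{M\setminus\{b\}}\to a$ locally uniformly, then by the forward implication applied inside $(\Gamma\sqcup M,\tau)$ and the fact that $M$ is infinite, every $\tau$-convergent subsequence of $\{g_n\}$ converges to $a$, whence $g_n\to a$ in $\tau$. Thus the convergent sequences of $\tau$ are pinned down, so $\tau$ is unique.
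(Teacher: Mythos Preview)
This proposition is not proved in the present paper; it is quoted from \cite[Prop.\,2.3]{BCZZ}, so there is no in-paper argument to compare against. Your construction via graphs in the hyperspace $\mathcal{K}(M\times M)$ is a correct and standard route to this compactification, and your treatment of (1)--(3) and of uniqueness is sound.

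The step you flag as the main obstacle---uniformity of $g_{n_j}\to a$ on compacta of $X\setminus\{b\}$---is not a real obstacle. Since $X$ is metrizable it suffices to treat sequences: if $z_j\to z_*\neq b$ then $g_{n_j}z_j\to a$. The only case requiring care is $z_j=\delta_j\in\Gamma$ with $\delta_j\to z_*\in\Lambda(\Gamma)\setminus\{b\}$. Passing to a subsequence with $\delta_j|_{M\setminus\{\beta\}}\to z_*$ locally uniformly, for any compact $K\subset M\setminus\{\beta\}$ the images $\delta_j(K)$ eventually lie in a fixed compact neighborhood $V\subset M\setminus\{b\}$ of $z_*$, and then $g_{n_j}|_V\to a$ uniformly forces $(g_{n_j}\delta_j)|_K\to a$ uniformly, hence $\mathrm{Graph}(g_{n_j}\delta_j)\to C_{a,\beta}$ and $g_{n_j}\delta_j\to a$ in $X$.

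One point that does need tightening: metrizability of your quotient $\overline\Gamma$ (and then of the adjunction $X$) does not follow merely from the equivalence relation being closed. The clean fix is the general fact that any Hausdorff continuous image of a compact metric space is metrizable: it is compact Hausdorff, and pushing a countable base forward along the (automatically closed) quotient map yields a countable base, so Urysohn applies. This covers both $\overline\Gamma$ and $X$.
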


\subsection{Cocycles and Patterson--Sullivan measures}\label{sec:background on PS}

Previously~\cite{BCZZ} we introduced the notion of an expanding cocycle and proved that the cocycles in a GPS system are expanding. In this section we recall the definition and their basic properties.

\begin{definition} Suppose $\Gamma \subset  \mathsf{Homeo}(M)$ is a convergence group and $\dist$ is a compatible distance on $\Gamma \sqcup M$. A continuous cocycle $\sigma \colon \Gamma \times M \to \Rb$ is \emph{expanding} if it is proper and for every $\gamma \in \Gamma$ there is a number $\norm{\gamma}_\sigma \in \Rb$, called the  \emph{$\sigma$-magnitude of $\gamma$}, with the following property:  
\begin{itemize}
\item for every $\epsilon > 0$ there exists $C > 0$ such that: whenever $x \in M$, $\gamma \in \Gamma$ and $\dist(x,\gamma^{-1}) > \epsilon$, then 
$$
\norm{\gamma}_\sigma - C \leq \sigma(\gamma, x) \leq \norm{\gamma}_\sigma+C.
$$ 
\end{itemize}
\end{definition} 

Given an expanding cocycle $\sigma$, the \emph{$\sigma$-critical exponent} is 
$$
\delta_\sigma(\Gamma) = \limsup_{R \to \infty} \frac{1}{R} \log \#\left\{ \gamma \in \Gamma : \norm{\gamma}_\sigma \leq R\right\}\in[0,\infty]. 
$$
Equivalently, $\delta_\sigma(\Gamma)$ is the critical exponent of the series 
$$
Q_\sigma(s) : = \sum_{\gamma \in \Gamma} e^{- s \norm{\gamma}_\sigma},
$$
that is $Q_\sigma(s)$ diverges when $0<s < \delta_\sigma(\Gamma)$ and converges when $s > \delta_\sigma(\Gamma)$. Although the magnitude function is not uniquely defined, any two choices will differ by a uniformly bounded additive amount and hence $\delta_\sigma(\Gamma)$ is independent of the choice of particular magnitude function.

 In previous work, we established the following results about expanding cocycles.

 \begin{proposition}[{\cite[Prop.\,3.2]{BCZZ}}]\label{prop:basic properties} 
Suppose $\Gamma \subset \mathsf{Homeo}(M)$ is a convergence group, $\dist$ is a compatible distance on $\Gamma \sqcup M$ and $\sigma$ is an expanding cocycle, then:
\begin{enumerate}
\item\label{item:tri inequality} For any finite subset $F \subset \Gamma$, there exists $C > 0$ such that: if $\gamma \in \Gamma$ and $f \in F$, then 
$$
\norm{\gamma}_\sigma -C \leq \norm{\gamma f}_\sigma \leq \norm{\gamma}_\sigma + C \quad \text{and} \quad \norm{\gamma}_\sigma -C \leq \norm{f\gamma}_\sigma \leq \norm{\gamma}_\sigma + C.
$$
\item\label{item:properness} $\lim_{n \to \infty} \norm{\gamma_n}_\sigma = + \infty$ for every escaping sequence $\{ \gamma_n\} \subset \Gamma$. 

\item\label{item:proper implies positive periods} $\ell_\sigma(\gamma)=\sigma(\gamma, \gamma^+) > 0$ for all loxodromic $\gamma \in \Gamma$ and $\sigma(\gamma,p)=0$ for any parabolic 
element $\gamma\in\Gamma$ with fixed point $p$. 

\item\label{item:a technical fact}   If $\{\gamma_n\}$ is a divergent sequence in $\Gamma$ and 
$$
\liminf_{n \to \infty} \sigma(\gamma_n, y_n) > -\infty,
$$
then $\dist(\gamma_n y_n, \gamma_n) \to 0$. 

\item\label{item:multiplicative estimate} For any $\epsilon > 0$ there exists $C > 0$ such that: if $\alpha, \beta \in \Gamma$ and $\dist(\alpha^{-1}, \beta) \geq \epsilon$, then 
$$
\norm{\alpha}_\sigma+\norm{\beta}_\sigma-C \leq \norm{\alpha \beta}_\sigma \leq \norm{\alpha}_\sigma + \norm{\beta}_\sigma + C. 
$$
\item\label{item:loxodromics with separated fixed points} For any compact subset $K \subset M^{(2)}$ there exists $C > 0$ such that: if $\gamma \in \Gamma$ is loxodromic and $(\gamma^-, \gamma^+) \in K$, then 
$$
 \norm{\gamma}_\sigma  - C \leq \ell_\sigma(\gamma) \leq \norm{\gamma}_\sigma  + C. 
$$
\end{enumerate}

\end{proposition}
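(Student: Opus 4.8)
The plan is to deduce all six items from the defining inequality of an expanding cocycle, the cocycle identity $\sigma(\alpha\beta,x)=\sigma(\alpha,\beta x)+\sigma(\beta,x)$, and the convergence dynamics on $\Gamma\sqcup M$ supplied by Lemmas~\ref{fact:classif loxparell}, \ref{lem:char of loxodromic} and Proposition~\ref{prop:compactifying}. Two elementary facts will be used throughout: \emph{(i)} since $\Lambda(\Gamma)$ contains three distinct points $m_1,m_2,m_3$, there is $\epsilon_0>0$ such that any point of $\Gamma\sqcup M$ is $\epsilon_0$-close to at most one $m_i$, hence for any two points $p,q$ some $m_i$ satisfies $\dist(m_i,p),\dist(m_i,q)\ge\epsilon_0$; and \emph{(ii)} $\Gamma$ fixes no point of $\Lambda(\Gamma)$ (else that point would be a $\Gamma$-invariant proper closed subset of the minimal set $\Lambda(\Gamma)$), so every $a\in\Lambda(\Gamma)$ admits $g\in\Gamma$ with $g(a)\ne a$. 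I would prove the items in the order (1); then (6) and the loxodromic half of (3); then (2); and finally (4), the parabolic half of (3), and (5) --- since (1) is used in (2) and (5), (2) is used in (4) and in the parabolic half of (3), and (6) is independent.

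\emph{Item (1).} Given $f$ and $\gamma$, evaluate the cocycle identity at a point $x\in\{m_1,m_2,m_3\}$ chosen --- using \emph{(i)} and the uniform continuity of $f^{\pm1}$ on $\Gamma\sqcup M$ --- to be far from $(\gamma f)^{-1}$ (and, in the case of $f\gamma$, also from $\gamma^{-1}$), so that $fx$ is far from $\gamma^{-1}$ as well. Since $\sup_{z\in M}\abs{\sigma(f,z)}<\infty$, the expanding inequality applied at $\gamma f$ and at $\gamma$ gives $\abs{\norm{\gamma f}_\sigma-\norm{\gamma}_\sigma}\le C_f$, and symmetrically for $f\gamma$; take the maximum of $C_f$ over the finite set $F$. \emph{Loxodromic half of (3).} The powers $\gamma^n$ ($n\ge1$) are distinct loxodromic elements with constant, hence relatively compact, fixed-point pair $(\gamma^-,\gamma^+)$; properness of $\sigma$ gives $n\,\ell_\sigma(\gamma)=\ell_\sigma(\gamma^n)\to+\infty$, so $\ell_\sigma(\gamma)>0$. \emph{Item (6).} If the estimate fails for a compact $K\subset M^{(2)}$, there are loxodromics $\gamma_n$ with $(\gamma_n^-,\gamma_n^+)\in K$ and $\abs{\ell_\sigma(\gamma_n)-\norm{\gamma_n}_\sigma}\to\infty$; this sequence has no constant subsequence, hence escapes, and after passing to a subsequence with $\gamma_n\to a$, $\gamma_n^{-1}\to b$ in $\Gamma\sqcup M$, Lemma~\ref{lem:char of loxodromic} and compactness of $K$ give $a=\lim\gamma_n^+$, $b=\lim\gamma_n^-$ and $(b,a)\in K\subset M^{(2)}$, so $a\ne b$ and $\dist(\gamma_n^+,\gamma_n^{-1})$ is bounded below; the expanding inequality at $\gamma_n$ evaluated at $\gamma_n^+$ then gives $\ell_\sigma(\gamma_n)=\sigma(\gamma_n,\gamma_n^+)=\norm{\gamma_n}_\sigma+O(1)$, a contradiction.

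\emph{Item (2).} It suffices to show that every subsequence of the given escaping sequence has a further subsequence along which $\norm{\gamma_n}_\sigma\to+\infty$; so pass to a subsequence with the $\gamma_n$ distinct, $\gamma_n\to a$ and $\gamma_n^{-1}\to b$ in $\Gamma\sqcup M$, with $a,b\in\Lambda(\Gamma)$ by Proposition~\ref{prop:compactifying}. If $a\ne b$, Lemma~\ref{lem:char of loxodromic} makes the $\gamma_n$ eventually loxodromic with relatively compact fixed-point pairs, so properness gives $\ell_\sigma(\gamma_n)\to+\infty$, and the expanding inequality at $\gamma_n^+$ (far from $\gamma_n^{-1}$ since $a\ne b$) gives $\norm{\gamma_n}_\sigma=\ell_\sigma(\gamma_n)+O(1)\to+\infty$. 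If $a=b$, pick $g$ with $g(a)\ne a$ by \emph{(ii)}; then $\gamma_n g\to a$ while $(\gamma_n g)^{-1}=g^{-1}\gamma_n^{-1}\to g^{-1}(a)\ne a$ in $\Gamma\sqcup M$, so the previous case applies to $\{\gamma_n g\}$, and (1) gives $\norm{\gamma_n}_\sigma\ge\norm{\gamma_n g}_\sigma-C_g\to+\infty$.

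\emph{Item (4).} If $\dist(\gamma_n y_n,\gamma_n)\not\to0$, pass to a subsequence with $\gamma_n\to a$ in $\Gamma\sqcup M$ and $\gamma_n y_n\to c$ in $M$; then $c\ne a$ and $\dist(\gamma_n y_n,\gamma_n)$ is bounded below, so the expanding inequality applied to $\gamma_n^{-1}$ at the point $\gamma_n y_n$ (which is far from $(\gamma_n^{-1})^{-1}=\gamma_n$), together with $\sigma(\gamma_n,y_n)=-\sigma(\gamma_n^{-1},\gamma_n y_n)$ and $\norm{\gamma_n^{-1}}_\sigma\to+\infty$ from (2), forces $\sigma(\gamma_n,y_n)\to-\infty$, contrary to hypothesis. \emph{Parabolic half of (3).} Fix $x\in M$ with $x\ne p$; since $\gamma$ is parabolic, $\gamma^k x\to p$ and $\gamma^{-n}\to p$ in $\Gamma\sqcup M$ (Lemma~\ref{fact:classif loxparell}, Proposition~\ref{prop:compactifying}), so $\tfrac1n\sigma(\gamma^n,x)=\tfrac1n\sum_{k=0}^{n-1}\sigma(\gamma,\gamma^k x)\to\sigma(\gamma,p)$ while $\dist(x,\gamma^{-n})\to\dist(x,p)>0$; the expanding inequality then gives $\tfrac1n\norm{\gamma^n}_\sigma\to\sigma(\gamma,p)$, which is $\ge0$ since $\norm{\gamma^n}_\sigma>0$ eventually by (2). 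Applying this to $\gamma^{-1}$ (same fixed point, and $\sigma(\gamma^{-1},p)=-\sigma(\gamma,p)$) gives $\sigma(\gamma,p)\le0$ as well, hence $\sigma(\gamma,p)=0$. \emph{Item (5).} Arguing by contradiction, take $\alpha_n,\beta_n$ with $\dist(\alpha_n^{-1},\beta_n)\ge\epsilon$ but $\abs{\norm{\alpha_n\beta_n}_\sigma-\norm{\alpha_n}_\sigma-\norm{\beta_n}_\sigma}\to\infty$; by (1) we may assume both sequences escape, and pass to a subsequence with $\alpha_n\to a^+$, $\alpha_n^{-1}\to a^-$, $\beta_n\to b^+$, $\beta_n^{-1}\to b^-$ in $\Gamma\sqcup M$. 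The hypothesis forces $a^-\ne b^+$, so $\beta_n^{-1}$ converges locally uniformly to $b^-$ away from $b^+$, and since $\alpha_n^{-1}\to a^-\ne b^+$ we get $(\alpha_n\beta_n)^{-1}=\beta_n^{-1}\alpha_n^{-1}\to b^-$, the same limit as $\beta_n^{-1}$. Fixing $x_0\in M$ with $x_0\ne b^-$: for large $n$ it is far from both $\beta_n^{-1}$ and $(\alpha_n\beta_n)^{-1}$, while $\beta_n x_0\to b^+$ is far from $\alpha_n^{-1}\to a^-$; feeding this into $\sigma(\alpha_n\beta_n,x_0)=\sigma(\alpha_n,\beta_n x_0)+\sigma(\beta_n,x_0)$ and applying the expanding inequality to $\alpha_n\beta_n$, $\alpha_n$, and $\beta_n$ yields $\norm{\alpha_n\beta_n}_\sigma=\norm{\alpha_n}_\sigma+\norm{\beta_n}_\sigma+O_\epsilon(1)$, contradicting the choice of the sequences.

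The step I expect to be the main obstacle is not the bookkeeping of which $\gamma$-dependent points a test point must avoid --- fact \emph{(i)} disposes of that --- but rather the two dynamical inputs behind items (5) and the case $a=b$ of (2): that $(\alpha\beta)^{-1}$ is forced close to $\beta^{-1}$ whenever $\alpha^{-1}$ stays away from $\beta$, and the necessity of multiplying a general escaping sequence by a fixed element to break the degeneracy $a=b$ --- which is precisely why (1) must be established before (2).
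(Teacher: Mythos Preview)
Your arguments for (1)--(5) are correct; the paper does not re-prove these here, instead citing them from~\cite[Prop.~3.2]{BCZZ}, so there is nothing to compare against. For (6), your contradiction-and-subsequence approach and the paper's direct approach both finish by applying the expanding inequality at $\gamma^+$ once $\dist(\gamma^+,\gamma^{-1})$ is bounded below, but the paper's route is a little cleaner: it fixes $\epsilon>0$ with $\dist(\gamma^-,\gamma^+)\ge\epsilon$ for all $(\gamma^-,\gamma^+)\in K$, observes that the set of such $\gamma$ with $\dist(\gamma^{-1},\gamma^-)\ge\tfrac{\epsilon}{2}$ is finite (since $\gamma_n^{-1}$ and $\gamma_n^-$ have the same limit along any escaping sequence), disposes of that finite set by hand, and for the remaining $\gamma$ the triangle inequality gives $\dist(\gamma^+,\gamma^{-1})\ge\tfrac{\epsilon}{2}$ immediately---no contradiction or subsequences needed.

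One presentational point in your (6): you invoke Lemma~\ref{lem:char of loxodromic} to deduce $a=\lim\gamma_n^+$ and $b=\lim\gamma_n^-$, and only then conclude $a\ne b$ from $(b,a)\in K$; but Lemma~\ref{lem:char of loxodromic} already requires $a\ne b$ as a hypothesis, so the logic is circular as written. The repair is short: first pass to a further subsequence with $(\gamma_n^-,\gamma_n^+)\to(p,q)\in K$, so $p\ne q$; if $a=b$ then $\gamma_n|_{M\smallsetminus\{a\}}\to a$ locally uniformly, and since at least one of $p,q$ differs from $a$, applying this convergence to the corresponding sequence $\gamma_n^\pm$ (which is fixed by $\gamma_n$) forces that limit to equal $a$, a contradiction. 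With $a\ne b$ in hand, Lemma~\ref{lem:char of loxodromic} applies and the rest of your argument goes through.
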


\begin{proof} The only claim that doesn't appear in~\cite[Prop.\,3.2]{BCZZ} is~\eqref{item:loxodromics with separated fixed points}. 
Let 
$$\Gamma_K:=\{\gamma\in\Gamma : \gamma\text{ is loxodromic and }(\gamma^-,\gamma^+)\in K\}.$$
Since $K$ is a compact subset of $M^{(2)}$, there 
exists $\epsilon>0$ so that if  $\gamma\in \Gamma_K$, then $\dist(\gamma^+,\gamma^-)\ge \epsilon$.
The set $F_K$ of elements of $\Gamma_K$ such that $\dist(\gamma^{-1},\gamma^-)\ge \frac{\epsilon}{2}$ is finite (one can check  $\gamma_n^-\to z$ if and only if $\gamma_n^{-1}\to z$).
If $\gamma\in\Gamma_K\smallsetminus F_K$, then $\dist(\gamma^+,\gamma^{-1})\ge \frac{\epsilon}{2}$, so the expanding property implies that there exists $C_0>0$ such that 
$$ \norm{\gamma}_\sigma  - C_0 \leq \ell_\sigma(\gamma)=\sigma(\gamma,\gamma^+) \leq \norm{\gamma}_\sigma  + C_0.$$
Claim \eqref{item:loxodromics with separated fixed points} then follows if we take $C_1:=\max_{\gamma\in F_K}\abs{\norm{\gamma}_\sigma-\ell_\sigma(\gamma)}$ and $C:=\max(C_0,C_1)$.
\end{proof}

Proposition~\ref{prop:exists compact covering set in limit cross limit} implies the following result about the lengths associated to an expanding cocycle. 
 
\begin{corollary}
\label{length spectrum discrete}
 Suppose $\sigma$ is an expanding cocycle for a geometrically finite convergence group $\Gamma \subset \mathsf{Homeo}(M)$. If $T>0$, then
$$\{[\gamma]\in[\Gamma_{\lox}] : \ell_\sigma(\gamma)\le T\}$$
is finite. In particular, the {\em systole} defined by
$$\mathrm{sys}(\Gamma,\sigma):=\min\left\{ \ell_\sigma(\gamma) :\gamma\in \Gamma_{\lox}\right\}>0$$
is well-defined and positive.
\end{corollary}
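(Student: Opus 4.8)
The plan is to deduce both assertions from Proposition~\ref{prop:exists compact covering set in limit cross limit} together with the properness of $\sigma$ and part~\eqref{item:loxodromics with separated fixed points} of Proposition~\ref{prop:basic properties}. First I would fix, using Proposition~\ref{prop:exists compact covering set in limit cross limit}, a compact set $K \subset \Lambda(\Gamma)^{(2)}$ with $\Gamma\cdot K = \Lambda(\Gamma)^{(2)}$. The point of $K$ is that every loxodromic $\gamma$ is conjugate to one whose fixed-point pair $(\gamma^-,\gamma^+)$ lies in $K$: indeed $(\gamma^-,\gamma^+)\in\Lambda(\Gamma)^{(2)}$, so there is $\eta\in\Gamma$ with $\eta(\gamma^-,\gamma^+)\in K$, and then $\eta\gamma\eta^{-1}$ is loxodromic with fixed points $\eta\gamma^\pm$. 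Since $\ell_\sigma$ is a conjugacy invariant (the period depends only on the loxodromic element through $\sigma(\gamma,\gamma^+)$, and one checks from the cocycle identity that $\ell_\sigma(\eta\gamma\eta^{-1})=\ell_\sigma(\gamma)$), it suffices to bound the number of conjugacy classes of loxodromics $\gamma$ with $(\gamma^-,\gamma^+)\in K$ and $\ell_\sigma(\gamma)\le T$.

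For such $\gamma$, Proposition~\ref{prop:basic properties}\eqref{item:loxodromics with separated fixed points} gives a constant $C=C(K)$ with $\norm{\gamma}_\sigma \le \ell_\sigma(\gamma)+C \le T+C$. Thus the set $S_{K,T}:=\{\gamma\in\Gamma_{\lox} : (\gamma^-,\gamma^+)\in K,\ \ell_\sigma(\gamma)\le T\}$ is contained in $\{\gamma\in\Gamma : \norm{\gamma}_\sigma\le T+C\}$, which is \emph{finite} by the properness of the magnitude function — concretely, by Proposition~\ref{prop:basic properties}\eqref{item:properness}, an infinite subset of $\Gamma$ would contain an escaping sequence along which $\norm{\cdot}_\sigma\to\infty$. (Equivalently, this is the statement that $\delta_\sigma(\Gamma)$ is the critical exponent of a genuine Dirichlet-type series, so each sublevel set of $\norm{\cdot}_\sigma$ is finite.) Hence $S_{K,T}$ is finite, so it meets only finitely many conjugacy classes, and therefore $\{[\gamma]\in[\Gamma_{\lox}] : \ell_\sigma(\gamma)\le T\}$ is finite.

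For the last sentence, first note that $\Gamma$ being non-elementary forces $\Gamma_{\lox}\neq\emptyset$ (a non-elementary convergence group always contains loxodromic elements, by the convergence property applied to a sequence of distinct elements and Lemma~\ref{lem:char of loxodromic}), so the infimum defining $\mathrm{sys}(\Gamma,\sigma)$ is over a nonempty set. By Proposition~\ref{prop:basic properties}\eqref{item:proper implies positive periods}, $\ell_\sigma(\gamma)>0$ for every loxodromic $\gamma$; combined with the finiteness just proved, for any fixed $T_0>0$ in the value set of $\ell_\sigma$ the set $\{\ell_\sigma(\gamma) : \gamma\in\Gamma_{\lox},\ \ell_\sigma(\gamma)\le T_0\}$ is finite and consists of positive numbers, so it has a positive minimum, which equals $\mathrm{sys}(\Gamma,\sigma)$.

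The main obstacle — really the only substantive point — is the passage from an arbitrary loxodromic element to one with fixed points in the fixed compact set $K$, i.e.\ the combined use of Proposition~\ref{prop:exists compact covering set in limit cross limit} and the conjugacy-invariance of $\ell_\sigma$; everything after that is the routine properness argument for $\norm{\cdot}_\sigma$. One should be mildly careful that Proposition~\ref{prop:basic properties}\eqref{item:loxodromics with separated fixed points} is applied with the single compact set $K$ so that $C$ is uniform, rather than element-by-element.
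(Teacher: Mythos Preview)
Your proof is correct and follows essentially the same approach as the paper: pick a compact $K\subset\Lambda(\Gamma)^{(2)}$ with $\Gamma\cdot K=\Lambda(\Gamma)^{(2)}$ via Proposition~\ref{prop:exists compact covering set in limit cross limit}, replace each conjugacy class by a representative with fixed-point pair in $K$, and then use properness to conclude finiteness. The only minor difference is that the paper invokes the properness of the cocycle directly (recall that an expanding cocycle is by definition proper, i.e.\ $\ell_\sigma(\gamma_n)\to+\infty$ along distinct loxodromics with fixed points in a compact set), whereas you route through Proposition~\ref{prop:basic properties}\eqref{item:loxodromics with separated fixed points} and~\eqref{item:properness} to bound $\norm{\gamma}_\sigma$; both are equivalent here.
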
 

\begin{proof} Let $K$ be as in Proposition~\ref{prop:exists compact covering set in limit cross limit}. Then every $[\gamma]\in[\Gamma_{\lox}] $ has a representative $\gamma_0$ where $(\gamma_0^-, \gamma_0^+) \in K$. So by the properness assumption $\{[\gamma]\in[\Gamma_{\lox}] : \ell_\sigma(\gamma)\le T\}$
is finite. 
\end{proof}

Suppose $\Gamma \subset \mathsf{Homeo}(M)$ is a non-elementary convergence group and $\sigma\colon \Gamma \times M \to \Rb$ is a continuous cocycle. 
A probability measure $\mu$ on $M$ is called a \emph{$\sigma$-Patterson--Sullivan} measure of dimension $\beta$ if for every $\gamma \in \Gamma$ the measures 
$\gamma_*\mu$, $\mu$ are absolutely continuous and 
 $$
 \frac{d\gamma_*\mu}{d\mu} = e^{-\beta \sigma(\gamma^{-1}, \cdot)} \quad\mu\text{-almost everywhere}.
 $$

In previous work, we showed if $\sigma$ is an expanding cocycle for a convergence group action $\Gamma \subset \mathsf{Homeo}(M)$ with finite critical exponent 
$\delta_\sigma(\Gamma)$, then there is 
a \hbox{$\sigma$-Patterson--Sullivan measure of dimension $\delta_\sigma(\Gamma)$.} 
We also proved that this measure is unique and ergodic when the $\sigma$-Poincar\'e series diverges at its critical exponent. 

\begin{theorem}[{\cite[Th.\,4.1, Th.\,1.3 and Prop.\,6.3]{BCZZ}}]\label{thm:PS exist and unique}
\label{thm:PS measures exist} If $\sigma$ is an expanding cocycle for a convergence group $\Gamma\subset \mathsf{Homeo}(M)$ and
$\delta:=\delta_\sigma(\Gamma) < + \infty$, then there exists a $\sigma$-Patterson--Sullivan measure of dimension $\delta$, 
which is supported on the limit set $\Lambda(\Gamma)$. Moreover, if 
$$
 Q_\sigma(\delta) = \sum_{\gamma \in \Gamma} e^{- \delta \norm{\gamma}_\sigma} = +\infty,
 $$
 then:
 \begin{enumerate}
 \item there is a unique $\sigma$-Patterson--Sullivan measure $\mu$ of dimension $\delta$,
 \item $\mu$ has no atoms, and
 \item the action of $\Gamma$ on $(M,\mu)$ is ergodic. 
 \end{enumerate}
\end{theorem}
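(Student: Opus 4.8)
The plan is to reprove this by running Patterson's construction inside the compactification $\Gamma \sqcup M$ supplied by Proposition~\ref{prop:compactifying}, using the point $\gamma \in \Gamma \sqcup M$ in place of an orbit point $\gamma o$ and the magnitude $\norm{\gamma}_\sigma$ in place of $\dist(o,\gamma o)$. For $s > \delta$ the series $Q_\sigma(s)$ converges, so
$$
\mu_s := \frac{1}{Q_\sigma(s)} \sum_{\gamma \in \Gamma} e^{-s\norm{\gamma}_\sigma}\, \mathcal{D}_{\gamma}
$$
is a probability measure on $\Gamma \sqcup M$. If $Q_\sigma(\delta) = +\infty$ one uses these directly; in general I would first invoke Patterson's lemma to produce a nondecreasing, slowly varying $h\colon [0,\infty) \to (0,\infty)$ such that $\widetilde Q_\sigma(s) := \sum_\gamma h(\norm{\gamma}_\sigma) e^{-s\norm{\gamma}_\sigma}$ still has critical exponent $\delta$ but now diverges at $s=\delta$, and replace $\mu_s$ by the corresponding $h$-weighted average. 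Since $\Gamma \sqcup M$ is compact metrizable, some sequence $s_n \downarrow \delta$ yields a weak-$*$ limit $\mu$. As the (weighted) series diverges at $\delta$, the mass $\mu_{s_n}$ puts on any finite subset of $\Gamma$ tends to $0$, so $\mu$ is supported on $M$; and since each $\mu_{s_n}$ is supported on $\Gamma$, whose closure is $\Gamma \sqcup \Lambda(\Gamma)$ by Proposition~\ref{prop:compactifying}(3), in fact $\supp\mu \subset \Lambda(\Gamma)$.

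Next I would verify that $\mu$ is a $\sigma$-Patterson--Sullivan measure of dimension $\delta$. Fixing $\gamma_0 \in \Gamma$ and reindexing the sum defining $(\gamma_0)_*\mu_s$ by $\eta = \gamma_0\gamma$, the Radon--Nikodym density of $(\gamma_0)_*\mu_s$ with respect to $\mu_s$ at $\eta$ is
$$
\frac{h(\norm{\gamma_0^{-1}\eta}_\sigma)}{h(\norm{\eta}_\sigma)}\, e^{-s(\norm{\gamma_0^{-1}\eta}_\sigma - \norm{\eta}_\sigma)}.
$$
The analytic engine is the asymptotic $\norm{\gamma_0^{-1}\eta}_\sigma - \norm{\eta}_\sigma \to \sigma(\gamma_0^{-1}, x)$ as $\eta \to x \in M$, which follows from the defining property of an expanding cocycle together with the cocycle identity and continuity of $\sigma$ (Proposition~\ref{prop:basic properties}), after writing $\sigma(\gamma_0^{-1}\eta,\xi) = \sigma(\gamma_0^{-1},\eta\xi) + \sigma(\eta,\xi)$ for a suitably generic $\xi$. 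Together with the slow variation of $h$, this gives uniform convergence on compact subsets of $M$ of the displayed ratio to $e^{-\delta\sigma(\gamma_0^{-1},x)}$, and passing to the limit (using $\mu(\Gamma)=0$) yields $\tfrac{d(\gamma_0)_*\mu}{d\mu} = e^{-\delta\sigma(\gamma_0^{-1},\cdot)}$, as required.

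Now assume $Q_\sigma(\delta) = +\infty$. I would prove a Sullivan-type shadow lemma: fixing a compatible metric $\dist$ on $\Gamma \sqcup M$ and setting $\mathcal{O}_\epsilon(\gamma) := \{x \in M : \dist(x,\gamma) < \epsilon\}$, for each small $\epsilon>0$ there is $C=C(\epsilon)>0$ with $C^{-1}e^{-\delta\norm{\gamma}_\sigma} \le \nu(\mathcal{O}_\epsilon(\gamma)) \le C e^{-\delta\norm{\gamma}_\sigma}$ for every $\sigma$-Patterson--Sullivan measure $\nu$ of dimension $\delta$ and every $\gamma\in\Gamma$; the upper bound is immediate from the transformation rule and the expanding property, and the lower bound uses non-elementarity to see that pushing $\nu$ forward by a bounded set of elements covers a fixed ball in $M$ with definite mass. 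Given this: (i) a Borel--Cantelli/Sullivan-type argument using $\sum_\gamma \nu(\mathcal{O}_\epsilon(\gamma)) \asymp Q_\sigma(\delta) = +\infty$ shows the conical limit set is $\nu$-conull; (ii) shadows around a point shrink with $\nu$-measure tending to $0$, so $\nu$ has no atoms; (iii) a Hopf/density-point argument along shadows approaching a $\nu$-density point forces any $\Gamma$-invariant measurable set to be $\nu$-null or $\nu$-conull, giving ergodicity; (iv) for two such measures $\mu,\mu'$, their average $\nu = \tfrac12(\mu+\mu')$ is again $\sigma$-Patterson--Sullivan of dimension $\delta$, and $d\mu/d\nu$ is $\Gamma$-invariant (both transform by the same cocycle), hence $\nu$-a.e.\ constant, so $\mu=\mu'$ after normalization. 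In particular the measure built above is the unique one, is non-atomic, and makes the $\Gamma$-action on $(M,\mu)$ ergodic.

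I expect the main obstacle to be the shadow lemma, and specifically its lower bound, in this metric-free convergence-group setting: there is no ambient $\CAT(-1)$ or Gromov-hyperbolic space on which $\Gamma$ acts, so "shadows" must be taken inside $M$ via the compactification $\Gamma \sqcup M$, and one must extract from non-elementarity and the convergence property a uniform lower bound on the $\nu$-measure of a generic ball around $\gamma$ of the order $e^{-\delta\norm{\gamma}_\sigma}$. Everything else — existence by weak-$*$ compactness, the transformation rule via the expanding property, and the Borel--Cantelli, ergodicity and uniqueness arguments — then proceeds along now-classical lines.
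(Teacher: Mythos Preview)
Your approach is essentially the same as the one in~\cite{BCZZ}, as recalled in Section~\ref{subsec:proof of BMS meas finite} of this paper: Patterson's construction is run inside the compactification $\Gamma\sqcup M$, using Dirac masses at $\gamma$ weighted by $\chi(\norm{\gamma}_\sigma)e^{-s\norm{\gamma}_\sigma}$ for a Patterson function $\chi$, and weak-$*$ limits as $s\searrow\delta$ give the measure; the ``moreover'' part is handled by a shadow lemma of exactly the type you outline.

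There is one technical point you skate over. The asymptotic $\norm{\gamma_0^{-1}\eta}_\sigma-\norm{\eta}_\sigma\to\sigma(\gamma_0^{-1},x)$ as $\eta\to x$ does \emph{not} follow from the defining property of an expanding cocycle alone: that property only gives $\abs{\sigma(\gamma,y)-\norm{\gamma}_\sigma}\le C(\epsilon)$ with a constant that need not shrink, so your cocycle manipulation yields a bounded error, not one tending to zero. Since the magnitude function is only well-defined up to a uniformly bounded additive term, a generic choice will only produce a \emph{coarse} Patterson--Sullivan measure. The fix, which is what~\cite[Lem.\,4.2]{BCZZ} provides and what the paper invokes explicitly when recalling the construction, is to first prove that one can \emph{choose} a magnitude function for which the asymptotic holds exactly; only then does the passage to the limit give the genuine Radon--Nikodym derivative $e^{-\delta\sigma(\gamma_0^{-1},\cdot)}$. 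Once you insert this step, your sketch matches the paper's argument.
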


\subsection{GPS systems and flow spaces}\label{subsec:background on GPS systems}

We recall from our previous work~\cite{BCZZ} properties of the cocycles in a GPS system and the flow space associated to them. 

We proved that the cocycles in a GPS system are expanding and that the norms associated to the two cocycles are closely related.

\begin{proposition}[{\cite[Prop.\,3.3]{BCZZ}}]\label{GPS basic properties}
Suppose $(\sigma,\bar\sigma, G)$ is a continuous GPS system for a convergence group $\Gamma\subset \mathsf{Homeo}(M)$.
\begin{enumerate}
\item\label{GPS expanding} $\sigma$ and $\bar\sigma$ are expanding cocycles.
\item\label{item:norm vs dual norm} There exists $C > 0$ such that 
$$
\norm{\gamma^{-1}}_{\bar{\sigma}}-C \leq \norm{\gamma}_\sigma \leq \norm{\gamma^{-1}}_{\bar{\sigma}} + C
$$
for all $\gamma \in \Gamma$. In particular, $\delta_\sigma(\Gamma)=\delta_{\bar\sigma}(\Gamma)$.

\end{enumerate}
\end{proposition}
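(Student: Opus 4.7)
The plan is to extract both claims from a single variation identity coming from~\eqref{eqn:GPS property}. Applying~\eqref{eqn:GPS property} to the pairs $(x, y)$ and $(x, y')$ and subtracting the resulting equalities cancels the common $\bar\sigma(\gamma, x)$ term, yielding
\begin{equation}\label{eq:var-id}
\sigma(\gamma, y) - \sigma(\gamma, y') = \bigl[G(\gamma x, \gamma y) - G(\gamma x, \gamma y')\bigr] - \bigl[G(x, y) - G(x, y')\bigr]
\end{equation}
for any $\gamma \in \Gamma$ and $x \in M$ distinct from $y, y'$. Since $G$ is continuous on $M^{(2)}$, the right-hand side is uniformly bounded whenever the four pairs $(x, y)$, $(x, y')$, $(\gamma x, \gamma y)$, $(\gamma x, \gamma y')$ all lie in a single compact subset of $M^{(2)}$.

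For Part~(1), fix a compatible metric $\dist$ on $\Gamma \sqcup M$ and a small $\epsilon_0 > 0$. For each $\gamma \in \Gamma$ pick a point $y_\gamma \in M$ with $\dist(y_\gamma, \gamma^{-1}) \geq \epsilon_0$, and set $\norm{\gamma}_\sigma := \sigma(\gamma, y_\gamma)$. Given $\epsilon > 0$ and $(\gamma, y)$ with $\dist(y, \gamma^{-1}) > \epsilon$, apply~\eqref{eq:var-id} with $y' = y_\gamma$ and the auxiliary point $x := \gamma^{-1} z$, where $z \in M$ is any point with $\dist(z, \gamma) \geq \epsilon_0$. By Proposition~\ref{prop:compactifying}, once $\gamma$ lies outside a finite subset of $\Gamma$ depending only on $\epsilon$ and $\epsilon_0$, the dynamics force $\gamma y$ and $\gamma y_\gamma$ to lie close to $\gamma$ in the compactification, and $x = \gamma^{-1} z$ to lie close to $\gamma^{-1}$; thus $x$ is separated from $y, y_\gamma$ while $\gamma x = z$ is separated from $\gamma y, \gamma y_\gamma$. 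All four pairs in~\eqref{eq:var-id} then lie in a single compact subset of $M^{(2)}$, giving a uniform bound on $|\sigma(\gamma, y) - \norm{\gamma}_\sigma|$. The excluded finite set of $\gamma$'s is handled directly via continuity of $\sigma$ and compactness of $M$. The same construction applied to $\bar\sigma$ makes it expanding.

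For Part~(2), use~\eqref{eqn:GPS property} with $y$ satisfying $\dist(y, \gamma^{-1}) \geq \epsilon_0$ (so that $\sigma(\gamma, y) = \norm{\gamma}_\sigma + O(1)$ by Part~(1)) and $x := \gamma^{-1} z$ for some $z$ with $\dist(z, \gamma) \geq \epsilon_0$. The cocycle identity $\bar\sigma(\gamma, x) = -\bar\sigma(\gamma^{-1}, \gamma x)$, combined with the expanding property of $\bar\sigma$ applied to $\gamma^{-1}$ (whose singular point is $\gamma$) at $\gamma x = z$, yields $\bar\sigma(\gamma, x) = -\norm{\gamma^{-1}}_{\bar\sigma} + O(1)$. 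The four points again lie in a fixed compact region of $M^{(2)}$, so $G(\gamma x, \gamma y) - G(x, y) = O(1)$. Substituting into~\eqref{eqn:GPS property} gives $\norm{\gamma}_\sigma - \norm{\gamma^{-1}}_{\bar\sigma} = O(1)$ uniformly in $\gamma$, and equality of critical exponents follows since $\gamma \mapsto \gamma^{-1}$ is a bijection of $\Gamma$.

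The main obstacle will be verifying the uniformity of the separation estimates on both sides of the action: that $x, y, y_\gamma$ stay pairwise separated in $M$ and that $\gamma x, \gamma y, \gamma y_\gamma$ stay pairwise separated, simultaneously and uniformly in $\gamma$. This is where the convergence group dynamics in Proposition~\ref{prop:compactifying} enter essentially: when $\gamma$ is of large magnitude, $\gamma$ collapses most of $M$ near a single point far from $\gamma^{-1}$, producing the precise dichotomy between points close to $\gamma^{-1}$ (which $\gamma$ can send anywhere) and points far from $\gamma^{-1}$ (which $\gamma$ sends near a single limit point) that makes the four-point compactness estimate uniform.
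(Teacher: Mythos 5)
Your proof is correct: the variation identity obtained by subtracting two instances of~\eqref{eqn:GPS property} cancels the common $\bar\sigma(\gamma,x)$ term, and the compactness-via-convergence-dynamics argument (Proposition~\ref{prop:compactifying}) shows all four relevant pairs lie in a fixed compact subset of $M^{(2)}$ once $\gamma$ escapes a finite exceptional set, which controls the $G$-differences uniformly; Part~(2) then follows by the same compactness argument together with the cocycle inversion $\bar\sigma(\gamma,x)=-\bar\sigma(\gamma^{-1},\gamma x)$ and the fact that $\gamma\mapsto\gamma^{-1}$ is a bijection of $\Gamma$. The paper cites this as~[BCZZ, Prop.\,3.3] rather than reproving it, so there is no in-text argument to compare against, but yours is the natural route.
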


We also established a version of the Hopf--Tsuji--Sullivan dichotomy. 

\begin{theorem}{\cite[Th.\,1.8 and \S10.4]{BCZZ}}\label{thm:dichtomy from 1st paper} 
\label{our dichotomy}
Suppose $(\sigma, \bar\sigma, G)$ is a continuous GPS system and \hbox{$\delta_\sigma(\Gamma) < +\infty$}. 
Let $\mu$ and $\bar \mu$ be Patterson--Sullivan measures of dimension $\delta$ for $\sigma$ and  $\bar{\sigma}$. Then $\nu:=e^{\delta G}\bar\mu\otimes\mu$ 
is a locally finite $\Gamma$-invariant measure on $M^{(2)}$, and we have the following dichotomy:
\begin{enumerate} 
\item If $\sum_{\gamma \in \Gamma} e^{-\delta \norm{\gamma}_\sigma} = +\infty$, then:
\begin{enumerate}
\item $\delta = \delta_\sigma(\Gamma)$.
\item $\mu( \Lambda^{\rm con}(\Gamma)) = 1 = \bar\mu(\Lambda^{\rm con}(\Gamma))$. 
\item The $\Gamma$ action on $(M^{(2)},\nu)$ is ergodic and conservative. 
\end{enumerate} 
\item If $\sum_{\gamma \in \Gamma} e^{-\delta \norm{\gamma}_\sigma} < +\infty$, then:
\begin{enumerate}
\item $\delta \geq \delta_\sigma(\Gamma)$.
\item $\mu( \Lambda^{\rm con}(\Gamma)) = 0 = \bar\mu(\Lambda^{\rm con}(\Gamma))$. 
\item The $\Gamma$ action on $(M^{(2)}, \nu)$ is non-ergodic and dissipative. 
\end{enumerate} 
\end{enumerate} 

\end{theorem}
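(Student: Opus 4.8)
The plan is to follow the classical Hopf--Tsuji--Sullivan strategy of Sullivan and Roblin, adapted to the convergence-group setting.

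First I would dispose of the preliminary assertions about $\nu := e^{\delta G}\bar\mu\otimes\mu$. Local finiteness is immediate, since $G$ is continuous hence locally bounded on $M^{(2)}$ and $\bar\mu\otimes\mu$ is a probability measure. For $\Gamma$-invariance I would compute $\frac{d\gamma_*\nu}{d\nu}$: by the Patterson--Sullivan transformation rule $\frac{d\gamma_*(\bar\mu\otimes\mu)}{d(\bar\mu\otimes\mu)}(x,y)=e^{-\delta(\bar\sigma(\gamma^{-1},x)+\sigma(\gamma^{-1},y))}$, and after rewriting this via the cocycle identity the defining GPS relation~\eqref{eqn:GPS property} is exactly what forces the density factors $e^{\delta G}$ to cancel this Jacobian, so $\gamma_*\nu=\nu$.

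The technical core is a \emph{shadow lemma}. Using the compactification $\Gamma\sqcup M$ of Proposition~\ref{prop:compactifying} and the expanding property of $\sigma$ (Proposition~\ref{GPS basic properties}), I would attach to each $\gamma\in\Gamma$ and $r>0$ a ``shadow'' $\mathcal O_r(\gamma)\subset M$ --- morally a neighbourhood in $M$ of the point toward which $\gamma$ pushes the identity --- and prove an upper bound $\mu(\mathcal O_r(\gamma))\leq C(r)\,e^{-\delta\norm{\gamma}_\sigma}$ valid for all $\gamma$, using only the expanding estimate, together with a matching lower bound $\mu(\mathcal O_r(\gamma))\geq C(r)^{-1}e^{-\delta\norm{\gamma}_\sigma}$ in the divergence case, where $\mu$ has no atoms (Theorem~\ref{thm:PS measures exist}) so that small balls carry uniformly small $\mu$-mass. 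I expect this to be the main obstacle: there is no ambient metric space, so one must work entirely with the dynamically-defined compactification and the magnitude function $\norm{\cdot}_\sigma$, checking that shadows transform equivariantly up to bounded multiplicative error and that shadows of elements of comparable magnitude overlap with uniformly bounded multiplicity.

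Given the shadow lemma, the dichotomy reduces to Borel--Cantelli, after observing that a point $x\in\Lambda(\Gamma)$ is conical if and only if it lies in $\mathcal O_r(\gamma)$ for infinitely many $\gamma$ (for $r$ large), which follows by comparing the definition of conical limit point with Proposition~\ref{prop:compactifying}(1). In the divergence case $Q_\sigma(\delta)=+\infty$ we have $\sum_\gamma\mu(\mathcal O_r(\gamma))=+\infty$; the divergence half of Borel--Cantelli, together with a quasi-independence of shadows coming from the convergence property, gives $\mu(\Lambda^{\rm con}(\Gamma))=1$, and $\bar\mu(\Lambda^{\rm con}(\Gamma))=1$ by the same argument applied to the expanding cocycle $\bar\sigma$, which also satisfies $Q_{\bar\sigma}(\delta)=+\infty$ by Proposition~\ref{GPS basic properties}(2). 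Conservativity of the $\Gamma$-action on $(M^{(2)},\nu)$ then follows since $\nu$-a.e.\ pair has both coordinates conical, and ergodicity from a Hopf-type argument in which an invariant set of positive $\nu$-measure is shown, via the shadow lemma and conicality, to have relative density tending to $1$ along shadows; finally $\delta=\delta_\sigma(\Gamma)$, the inequality $\delta\leq\delta_\sigma(\Gamma)$ being forced by $Q_\sigma(\delta)=+\infty$ and the reverse following from $\#\{\gamma:\norm{\gamma}_\sigma\leq R\}=O(e^{\delta R})$, which one reads off from the shadow lemma and the bounded-overlap property. In the convergence case $Q_\sigma(\delta)<+\infty$ the upper bound alone gives $\sum_\gamma\mu(\mathcal O_r(\gamma))<+\infty$, so the convergence half of Borel--Cantelli yields $\mu(\Lambda^{\rm con}(\Gamma))=0=\bar\mu(\Lambda^{\rm con}(\Gamma))$; hence $\nu$-a.e.\ orbit leaves every compact set, so the action is dissipative and a fortiori non-ergodic, and $\delta\geq\delta_\sigma(\Gamma)$ since $\delta<\delta_\sigma(\Gamma)$ would force $Q_\sigma(\delta)=+\infty$.
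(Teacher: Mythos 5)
The statement you are proving is quoted verbatim from the authors' earlier paper \cite{BCZZ}; the present paper gives no proof of it, so the comparison must be against what that reference does, which by all indications is the standard Hopf--Tsuji--Sullivan argument. Your proposal reconstructs exactly that argument: $\Gamma$-invariance of $\nu$ from the GPS identity cancelling the Patterson--Sullivan Jacobian, a shadow lemma built from the expanding property and the abstract compactification $\Gamma\sqcup M$, the characterization of conical limit points as points lying in infinitely many shadows, Borel--Cantelli in both directions for the measure of $\Lambda^{\rm con}(\Gamma)$, and then conservativity/dissipativity plus a Hopf-type ergodicity argument. This is the right route and matches the cited source's approach; you have also correctly identified the genuine technical obstacle (formulating and proving the shadow lemma with bounded-overlap and quasi-independence in the absence of an ambient metric space, working only from the magnitude function and the compactifying topology), so the sketch is sound as a roadmap even though those estimates are the real work.
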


Next we carefully describe the flow space associated to a GPS system $(\sigma, \bar{\sigma}, G)$, which was briefly described in the introduction. 
In the case when $M=\partial_\infty X$ is the geodesic boundary of a Hadamard manifold $X$ (simply connected with pinched negative curvature), the group $\Gamma \subset \mathsf{Isom}(X)$ is discrete, and $\sigma$ is the Busemann cocycle, this flow space is topologically conjugate to the non-wandering part of the geodesic flow on the quotient $\Gamma \backslash T^1X$ of the unit tangent bundle $T^1X$ of $X$.

For the rest of the section suppose $(\sigma,\bar\sigma,G)$ is a continuous GPS system for a convergence group $\Gamma\subset\mathsf{Homeo}(M)$. 
As in the introduction, let $\Lambda(\Gamma)^{(2)} \subset \Lambda(\Gamma) \times \Lambda(\Gamma)$ denote the set of distinct pairs and let $\tilde U_\Gamma : = \Lambda(\Gamma)^{(2)} \times \Rb$.

By Proposition~\ref{GPS basic properties}, the cocycles $\sigma$ and $\bar \sigma$ are expanding. Hence, by ~\cite[Prop.\,10.2]{BCZZ},  the action of $\Gamma$ on $\tilde U_\Gamma$ given by 
$$
\gamma(x,y,t) = (\gamma x, \gamma y, t + \sigma(\gamma, y))
$$
is properly discontinuous. 
Therefore $U_\Gamma : = \Gamma \backslash \tilde U_\Gamma$ is a locally compact metrizable space.
Further the flow $\psi^t \colon \tilde U_\Gamma \to \tilde U_\Gamma$ defined by 
$$
\psi^t(x,y,s) = (x,y,s+t)
$$
descends to a flow on $U_\Gamma$, which we also denote by $\psi^t$.

Now suppose, in addition, that $\delta:=\delta_\sigma(\Gamma) < + \infty$ and  $Q_\sigma(\delta) = +\infty$. 
By Theorem~\ref{thm:PS exist and unique} and Proposition~\ref{GPS basic properties} there is a unique $\sigma$-Patterson--Sullivan measure $\mu$ of dimension $\delta$ and a unique $\bar\sigma$-Patterson--Sullivan measure $\bar \mu$ of dimension $\delta$. By Equation~\eqref{eqn:GPS property}, the measure $\tilde m$ on 
$\tilde U_\Gamma=\Lambda(\Gamma)^{(2)} \times \Rb$ defined by
\[ \tilde m := e^{\delta G(x,y)}d\bar\mu(x)\otimes d\mu(y) \otimes dt \]
is flow-invariant and $\Gamma$-invariant. 
So $\tilde m$
descends to a flow-invariant measure $m_\Gamma$ on the quotient $U_\Gamma=\Gamma \backslash \tilde U_\Gamma$ (see Section~\ref{sec:quotient measures} for the definition of quotient measures). 
We refer to $m_\Gamma$ as the \emph{Bowen--Margulis--Sullivan (BMS) measure} associated to $(\sigma,\bar\sigma, G)$.
Using Theorem~\ref{our dichotomy}, we can show that the flow is ergodic and conservative with respect to its Bowen--Margulis--Sullivan measure in this setting~\cite[Th.\,11.2]{BCZZ}.

\subsection{Quotient measures}\label{sec:quotient measures}
In this short expository section we review properties of quotient measures. 
Suppose $X$ is a proper metric space and $\tilde\nu$ is a locally finite Borel measure on $X$. 
Assume $\Gamma$ is a discrete group which acts properly on $X$ and preserves the measure $\tilde\nu$. 
Given a measurable function $f \colon X \to [0,+\infty]$, define $P(f)\colon \Gamma \backslash X \to [0,+\infty]$ by 
$$
P(f)([x]) = \sum_{\gamma \in \Gamma} f(\gamma x). 
$$
Then the quotient space $\Gamma \backslash X$ has a unique Borel measure $\nu$ such that 
\begin{equation}\label{eqn:defining function of quotient measures}
\int_{\Gamma \backslash X} P(f) d \nu = \int_X f d\tilde \nu. 
\end{equation}
for all measurable functions $f \colon X \to [0,+\infty]$. The existence of such a measure is classical, see also the discussion in our earlier paper~\cite[Appendix A]{BCZZ}. 

Next suppose that $\phi^t \colon X \to X$ is a measurable flow which commutes with the $\Gamma$ action and preserves the measure $\tilde\nu$. 
Then $\phi^t$ descends to a flow on the quotient $\Gamma \backslash X$ which we also denote by $\phi^t$ and preserves $\nu$. 
We recall that  $\phi^t \colon (\Gamma \backslash X, \nu) \to (\Gamma \backslash X, \nu)$ is {\em mixing} if $\norm{\nu}:=\nu(\Gamma \backslash X)$ is finite and whenever $A$ and $B$ are measurable subsets  of $\Gamma \backslash X$,
we have 
$$\lim_{t \to \infty} \nu(A \cap \gamma \phi^t(B))= \frac{\nu(A) \nu(B)}{\norm{\nu}}. $$
The following observation interprets this in terms of the flow on $X$.

\begin{observation}\label{obs:consequence of mixing} Suppose $\norm{\nu}=\nu(\Gamma \backslash X) <+\infty$ and $\phi^t \colon (\Gamma \backslash X, \nu) \to (\Gamma \backslash X, \nu)$ is mixing. If $A,B \subset X$ have finite $\tilde \nu$-measure, then 
$$
\lim_{t \to \infty} \sum_{\gamma \in \Gamma} \tilde \nu(A \cap \gamma \phi^t(B)) = \frac{ \tilde \nu(A)\tilde \nu(B)}{\norm{\nu}}. 
$$
\end{observation}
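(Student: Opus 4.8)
The plan is to rewrite the left-hand side as an integral over $\Gamma\backslash X$ of a product of two functions, one a time translate of the other, and then apply mixing. Write $\pi\colon X\to\Gamma\backslash X$ for the quotient map and, for a Borel set $S\subset X$, let $\mathbf{1}_S$ be its indicator. Using $\Gamma$-invariance of $\tilde\nu$ one has $\tilde\nu(A\cap\gamma\phi^t B)=\int_X\mathbf{1}_A(x)\mathbf{1}_{\phi^t B}(\gamma^{-1}x)\,d\tilde\nu(x)$, so summing over $\gamma$ and interchanging sum and integral (Tonelli, all terms nonnegative) gives $\sum_\gamma\tilde\nu(A\cap\gamma\phi^t B)=\int_X\mathbf{1}_A\cdot(P(\mathbf{1}_{\phi^t B})\circ\pi)\,d\tilde\nu$. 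Since $P\bigl(\mathbf{1}_A\cdot(P(\mathbf{1}_{\phi^t B})\circ\pi)\bigr)=P(\mathbf{1}_A)\cdot P(\mathbf{1}_{\phi^t B})$, the defining identity \eqref{eqn:defining function of quotient measures} rewrites this as $\int_{\Gamma\backslash X}P(\mathbf{1}_A)\cdot P(\mathbf{1}_{\phi^t B})\,d\nu$, and $P(\mathbf{1}_{\phi^t B})=P(\mathbf{1}_B)\circ\phi^{-t}$ because $\phi^t$ commutes with $\Gamma$. As $\int_{\Gamma\backslash X}P(\mathbf{1}_A)\,d\nu=\tilde\nu(A)<\infty$ and likewise for $B$ (again by \eqref{eqn:defining function of quotient measures}), the desired conclusion is precisely the correlation-decay estimate
\[
\lim_{t\to\infty}\int_{\Gamma\backslash X}P(\mathbf{1}_A)\cdot\bigl(P(\mathbf{1}_B)\circ\phi^{-t}\bigr)\,d\nu=\frac{1}{\norm{\nu}}\int_{\Gamma\backslash X}P(\mathbf{1}_A)\,d\nu\int_{\Gamma\backslash X}P(\mathbf{1}_B)\,d\nu
\]
for the nonnegative functions $P(\mathbf{1}_A),P(\mathbf{1}_B)\in L^1(\nu)$.

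Next, the mixing hypothesis — stated for measurable sets — extends by linearity to simple functions and then, by a standard density argument using $\norm{\nu}<\infty$, to the statement that $\int f\cdot(g\circ\phi^{-t})\,d\nu\to\frac{1}{\norm{\nu}}\int f\,d\nu\int g\,d\nu$ for every $f\in L^1(\nu)$ and $g\in L^\infty(\nu)$ (approximate both $f$ and $g$ by simple functions converging in $L^1$, keeping the approximants to $g$ uniformly bounded, and use $\phi^t$-invariance of $\nu$ to make the errors uniform in $t$). So it is enough to arrange that one of $P(\mathbf{1}_A),P(\mathbf{1}_B)$ is bounded, and here I would use properness of the $\Gamma$-action: if $B$ is relatively compact then $\overline B$ is compact, so $N_B:=\#\{\gamma\in\Gamma:\gamma\overline B\cap\overline B\neq\emptyset\}<\infty$ and hence $P(\mathbf{1}_B)\le N_B$ pointwise; since $P(\mathbf{1}_{\phi^t B})=P(\mathbf{1}_B)\circ\phi^{-t}$, also $P(\mathbf{1}_{\phi^t B})\le N_B$ for all $t$ (no continuity of $\phi^t$ is needed). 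Applying mixing with $f=P(\mathbf{1}_A)$ and $g=P(\mathbf{1}_B)$ then settles the case where $B$, or symmetrically $A$, is relatively compact.

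To remove relative compactness, for $A$ of finite measure and $B$ relatively compact I would exhaust $A$ by relatively compact sets $A_k\nearrow A$; the difference of the sums for $A$ and for $A_k$ equals $\sum_\gamma\tilde\nu\bigl((A\setminus A_k)\cap\gamma\phi^t B\bigr)=\int_{\Gamma\backslash X}P(\mathbf{1}_{A\setminus A_k})\cdot(P(\mathbf{1}_B)\circ\phi^{-t})\,d\nu\le N_B\,\tilde\nu(A\setminus A_k)$, which tends to $0$ uniformly in $t$. With the obvious monotonicity of the sum in $A$ and in $B$ this gives the statement for all finite-measure $A$ and relatively compact $B$; since mixing of a flow is automatically two-sided and $\sum_\gamma\tilde\nu(A\cap\gamma\phi^t B)=\sum_\gamma\tilde\nu(B\cap\gamma\phi^{-t}A)$ (reindex $\gamma\mapsto\gamma^{-1}$ and use $\Gamma$- and $\phi^t$-invariance of $\tilde\nu$), it also follows for relatively compact $A$ and arbitrary finite-measure $B$. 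The main obstacle is the remaining case in which neither $A$ nor $B$ is relatively compact: the counting functions $P(\mathbf{1}_A)$ need not be bounded — nor even in $L^2(\nu)$ — for a mere finite-measure set, so one cannot simply approximate by compact pieces and exchange the limit in $t$ with the approximation. The missing ingredient is a tail estimate uniform in $t$, namely $\sup_t\sum_\gamma\tilde\nu\bigl(A\cap\gamma\phi^t(B\setminus B_k)\bigr)\to0$ as $B_k\nearrow B$ through relatively compact sets (and symmetrically in $A$); I expect this to rest again on properness of the $\Gamma$-action, on $\phi^t$ commuting with $\Gamma$, and on the finiteness of $\nu$ (which makes the family $\{P(\mathbf{1}_A)\circ\phi^t\}_t$ tight on $\Gamma\backslash X$), after which the general case closes by the monotonicity argument above.
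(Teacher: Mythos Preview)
Your reduction of the sum to $\int_{\Gamma\backslash X}P(\mathbf{1}_A)\cdot(P(\mathbf{1}_B)\circ\phi^{-t})\,d\nu$ is exactly the paper's argument; the paper packages the same computation via the identity $P(f)\cdot P(g)=\sum_{\gamma}P\bigl(f\cdot(g\circ\gamma)\bigr)$ and then simply asserts convergence without further comment. You are more careful in noting that mixing for sets yields correlation decay for $L^1\times L^\infty$ (or $L^2\times L^2$) pairs, but not automatically for products of two unbounded nonnegative $L^1$ functions.

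Your argument is complete whenever at least one of $A,B$ is relatively compact, since then the corresponding $P(\mathbf{1}_\bullet)$ is bounded by properness of the action. This is the only case the paper actually uses: both applications of the observation, in the proofs of Lemmas~\ref{lem:equid m<m'} and~\ref{lem:equid m'<m}, are to relatively compact boxes in $M^{(2)}\times\Rb$. So you have proved what is needed, with more justification than the paper supplies. (A small redundancy: your exhaustion paragraph re-proves a case already covered by the direct $L^1\times L^\infty$ argument just before it.)

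The obstacle you flag in the fully general case is real, and the paper's proof does not address it either. The uniform-in-$t$ tail estimate you describe is indeed what would be required, but it does not follow from the stated hypotheses: $P(\mathbf{1}_A)$ and $P(\mathbf{1}_B)$ are merely nonnegative integer-valued $L^1$ functions on the quotient, and for a mixing system $\int f\,(g\circ\phi^{-t})\,d\nu\to\|\nu\|^{-1}\int f\int g$ is not a consequence of mixing when both $f,g$ lie in $L^1\smallsetminus L^\infty$. Truncating from above gives the $\liminf$ inequality, but the matching $\limsup$ bound genuinely needs one factor to be bounded. So the observation in its stated generality carries the same gap in the paper; fortunately it is immaterial to the applications.
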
 

\begin{proof} Notice that 
$$
P(f) \cdot P(g) =  \sum_{\gamma \in \Gamma} P\big( f \cdot (g \circ \gamma)\big). 
$$
Hence 
\begin{align*}
 \sum_{\gamma \in \Gamma}&  \tilde \nu(A \cap \gamma \phi^{-t}(B)) = \sum_{\gamma \in \Gamma} \int 1_A \cdot (1_B \circ \phi^t \circ \gamma^{-1})d \tilde \nu \\
& = \sum_{\gamma \in \Gamma} \int P\big( 1_A \cdot (1_B \circ \phi^t \circ \gamma^{-1})\big) d \nu = \int P(1_A) \cdot ( P(1_B) \circ \phi^t ) d \nu
\end{align*}
which tends to $\dfrac{\int P(1_A)d\nu \int P(1_B) d\nu}{\norm{\nu}} = \dfrac{ \tilde \nu(A)\tilde \nu(B)}{\norm{\nu}}$ as $t\to\infty$.
\end{proof}


\section{Stable and unstable manifolds}\label{sec:stable and unstable manifolds}


Given a flow $\phi^t \colon X \to X$ on a metric space, the \emph{strongly stable manifold of $v \in X$} is 
$$
W^{ss}(v) := \left\{ w \in X : \lim_{t \to \infty} \dist(\phi^t(v), \phi^t(w)) = 0\right\}
$$
and the \emph{strongly unstable manifold of $v$} is
$$
W^{su}(v) := \left\{ w \in X : \lim_{t \to -\infty} \dist(\phi^t(v), \phi^t(w)) = 0\right\}.
$$
In this section we study these sets for the flow associated to a GPS system. 
(In our general setting we do not expect these to be manifolds, but the terminology is conventional.)

Fix a continuous GPS system $(\sigma, \bar{\sigma}, G)$ for a convergence group action $\Gamma\subset\mathsf{Homeo}(M)$ and let
$
p\colon \tilde U_\Gamma \to  U_\Gamma 
$
 denote the quotient map. As observed in Section~\ref{subsec:background on GPS systems}, the quotient $U_\Gamma=\Gamma \backslash \tilde U_\Gamma$ is a locally compact metrizable space. Hence the one-point compactification $U_\Gamma \sqcup \{ \infty\}$ of $U_\Gamma$ admits a metric $\dist_\star$, see~\cite{mandelkern}. Then for $v \in U_\Gamma$, let $W^{ss}(v)$ and $W^{su}(v)$ denote the strongly stable and unstable manifolds for the metric $\dist_\star$ restricted to $U_\Gamma$.

We first show that the strongly stable manifold of $p(v^-,v ^+,t_0)$ contains quotients of all elements with the same forward endpoint  $v^+$ and time parameter $t_0$.

\begin{proposition}\label{prop:Wss computation} 
If $(v^-, v^+, t_0) \in \tilde U_\Gamma$ and $v := p(v^-, v^+, t_0) \in U_\Gamma$, then 
$$
p(y, v^+, t_0) \in W^{ss}(v)
$$
for all $y \in \Lambda(\Gamma) \smallsetminus \{ v^+\}$.
\end{proposition}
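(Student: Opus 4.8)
The plan is to reduce the statement to a tracking property of subsequential limits in the one-point compactification $U_\Gamma\sqcup\{\infty\}$. Write $w:=p(y,v^+,t_0)$. Since $\dist_\star$ metrizes this compact space, it suffices to prove: for every sequence $t_n\to+\infty$ and every subsequence along which both $\psi^{t_n}v$ and $\psi^{t_n}w$ converge in $U_\Gamma\sqcup\{\infty\}$ (such subsequences exist by compactness), the two limits coincide. A routine contradiction argument then upgrades this to $\dist_\star(\psi^tv,\psi^tw)\to 0$ as $t\to+\infty$.

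First I would lift everything to $\tilde U_\Gamma$: the two points in question are $p(v^-,v^+,t_0+t_n)$ and $p(y,v^+,t_0+t_n)$, and the chosen lifts $(v^-,v^+,t_0+t_n)$, $(y,v^+,t_0+t_n)$ share the \emph{same} second coordinate $v^+$ and the \emph{same} third coordinate $t_0+t_n$ — this is where the hypothesis that both triples have the same time parameter $t_0$ enters. Suppose the first sequence converges to a point of $U_\Gamma$. Using proper discontinuity of the $\Gamma$-action on $\tilde U_\Gamma$ (so that compact subsets of $U_\Gamma$ pull back to $\Gamma$-translates of compact subsets of $\tilde U_\Gamma$), I can choose $\gamma_n\in\Gamma$ with $\gamma_n(v^-,v^+,t_0+t_n)$ in a fixed compact subset of $\tilde U_\Gamma$, and after passing to a subsequence assume $\gamma_n(v^-,v^+,t_0+t_n)\to(\alpha,\beta,s)$ with $\alpha\neq\beta$. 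Unwinding the definition of the action gives $\gamma_nv^-\to\alpha$, $\gamma_nv^+\to\beta$, and $t_0+t_n+\sigma(\gamma_n,v^+)\to s$; in particular $\sigma(\gamma_n,v^+)\to-\infty$, so $\{\gamma_n\}$ is a divergent sequence, and after a further subsequence $\gamma_n\to a\in\Lambda(\Gamma)$, $\gamma_n^{-1}\to b\in\Lambda(\Gamma)$ with $\gamma_n|_{M\smallsetminus\{b\}}\to a$ locally uniformly by Proposition~\ref{prop:compactifying}.

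The key point is then that $b$ must equal $v^+$: otherwise $v^+\neq b$ would give $\gamma_nv^+\to a$, hence $\beta=a$, and since $(v^-,v^+)$ is a distinct pair we would also get $\gamma_nv^-\to a$, hence $\alpha=a=\beta$, contradicting $\alpha\neq\beta$. With $b=v^+$ in hand, the hypothesis $y\neq v^+$ places $y$ in $M\smallsetminus\{b\}$, so $\gamma_ny\to a=\alpha$, and therefore $\gamma_n(y,v^+,t_0+t_n)\to(\alpha,\beta,s)$ — the same limit. Applying $p$ shows $\psi^{t_n}w$ and $\psi^{t_n}v$ converge to the same point of $U_\Gamma$. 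The remaining case, where $\psi^{t_n}v\to\infty$, is handled by symmetry: if $\psi^{t_n}w$ had a limit in $U_\Gamma$, the argument just given with the roles of $v^-$ and $y$ interchanged (only $v^-,y\in\Lambda(\Gamma)\smallsetminus\{v^+\}$ was used) would force $\psi^{t_n}v$ to converge to that same point of $U_\Gamma$, a contradiction; hence $\psi^{t_n}w\to\infty$ as well.

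I expect the only real obstacle to be organizational rather than conceptual: keeping the nested subsequence extractions straight, and verifying the standard (but worth stating) fact that the properly discontinuous quotient map $p$ pulls compact sets back to $\Gamma$-translates of compact sets, which is what makes the recentering step legitimate. The genuinely geometric content — that the exceptional point $b$ of the recentering sequence $\{\gamma_n\}$ is pinned down to be the common forward endpoint $v^+$, since otherwise the backward and forward coordinates would collapse onto the diagonal of $\Lambda(\Gamma)^{(2)}$ — is short once the setup is in place.
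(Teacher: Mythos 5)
Your overall strategy mirrors the paper's: pass to subsequences, recenter by $\gamma_n\in\Gamma$ into a compact set, pin down the exceptional point of the convergence-action dichotomy, and handle the remaining cases by symmetry. But there is a genuine gap in the one place where the real geometry enters — the claim that the exceptional point $b=\lim\gamma_n^{-1}$ equals $v^+$.

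You argue that if $v^+\neq b$ then $\gamma_nv^+\to a$, hence $\beta=a$, \emph{and then} that ``since $(v^-,v^+)$ is a distinct pair we would also get $\gamma_nv^-\to a$.'' This last implication does not follow. Distinctness of $v^-$ and $v^+$ does not place $v^-$ in $M\smallsetminus\{b\}$; the case $v^-=b$, $v^+\neq b$ is perfectly consistent with everything you have written up to that point, and in that case you have no control over $\lim\gamma_nv^-=\alpha$, so no contradiction with $\alpha\neq\beta$ emerges. Your argument silently assumes $v^-\neq b$, which is exactly what needs to be proved in disguised form.

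What actually kills that case is the time coordinate, and you already have the key observation in hand: from $t_0+t_n+\sigma(\gamma_n,v^+)\to s$ with $t_n\to+\infty$ you deduced $\sigma(\gamma_n,v^+)\to-\infty$. Now if $v^+\neq b=\lim\gamma_n^{-1}$, then for large $n$ one has $\dist(v^+,\gamma_n^{-1})>\epsilon$, so the expanding property gives $\sigma(\gamma_n,v^+)\geq\norm{\gamma_n}_\sigma - C\to+\infty$ (Proposition~\ref{prop:basic properties}\eqref{item:properness}), a contradiction. Equivalently, this is Proposition~\ref{prop:basic properties}\eqref{item:a technical fact} applied to $\sigma(\gamma_n^{-1},\gamma_nv^+)=-\sigma(\gamma_n,v^+)\to+\infty$, which is what the paper invokes: it forces $\dist(v^+,\gamma_n^{-1})\to 0$, i.e.\ $b=v^+$, with no appeal to the distinctness of $(v^-,v^+)$ at all. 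Once $b=v^+$ is established this way, the rest of your Case~1 argument and the symmetry argument are correct and match the paper's proof.
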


\begin{proof} Fix $y \in \Lambda(\Gamma) \smallsetminus \{ v^+\}$ and let $w:=p(y, v^+, t_0)$. Then fix a sequence $\{t_n\}$ where $t_n \to +\infty$ and 
$$
 \limsup_{t \to \infty} \dist_\star(\psi^t(v), \psi^t(w)) =  \lim_{n \to \infty} \dist_\star(\psi^{t_n}(v), \psi^{t_n}(w)).
$$
Passing to a subsequence we can assume that one of the following cases hold: 

\medskip

\noindent \emph{Case 1:}  Assume $\{ \psi^{t_n}(v)\}$ is relatively compact in $U_\Gamma$.
Passing to a further subsequence, we can find a sequence $\{\gamma_n\}$ in $\Gamma$ such that 
$$
\lim_{n \to \infty} \gamma_n \psi^{t_n}(v)=\lim_{n \to \infty} (\gamma_n v^-, \gamma_n v^+, t_n+t_0+\sigma(\gamma_n, v^+))
$$
exists in $\tilde U_\Gamma$. Passing to further subsequence we can suppose that $\gamma_n \to a \in \Lambda(\Gamma)$ and $\gamma_n^{-1} \to b \in \Lambda(\Gamma)$. Then  $\gamma_n(z)\to a$ uniformly on compact subsets of $M\smallsetminus\{b\}$.

Since $\sigma$ is a cocycle,
$$0=\sigma(\id,v^+)=\sigma(\gamma_n^{-1}\gamma_n,\gamma_n^{-1}(\gamma_n v^+))=\sigma(\gamma_n^{-1}, \gamma_n v^+) +\sigma(\gamma_n, v^+).$$
Then, since $t_n \to +\infty$ and $\{ \gamma_n \psi^{t_n}(v)\}$ converges in $\tilde U_\Gamma$, we must have 
$$\lim_{n \to \infty} \sigma(\gamma_n^{-1}, \gamma_nv^+) = \lim_{n \to \infty} -\sigma(\gamma_n, v^+)=+\infty. 
$$
So Proposition~\ref{prop:basic properties}\eqref{item:a technical fact} implies that 
$\dist(\gamma_n^{-1}(\gamma_nv^+),\gamma_n^{-1})=\dist(v^+,\gamma_n^{-1})\to 0$. Since $\gamma_n^{-1}\to b$, we see that
$b=v^+$. 
Therefore,  $\gamma_n(z) \to a$ for all $ z \in \Lambda(\Gamma) \smallsetminus \{v^+\}$. So
$$
\lim_{n \to \infty} \gamma_n(y) = a=\lim_{n \to \infty} \gamma_n(v^-).
$$
Thus
$$
\lim_{n \to \infty} \gamma_n\psi^{t_n}(y, v^+, t_0)=\lim_{n \to \infty} \gamma_n\psi^{t_n}(v^-,v^+, t_0),
$$
which implies that 
$$
 \lim_{n \to \infty} \dist_\star(\psi^{t_n}(v), \psi^{t_n}(w)) = 0
$$
and hence that $w \in W^{ss}(v)$. 
\medskip

\noindent \emph{Case 2:}  Assume $\{ \psi^{t_n}(w)\}$ is relatively compact in $U_\Gamma$. By Case 1, $v \in W^{ss}(w)$ so \hbox{$w \in W^{ss}(v)$}. 

\medskip

\noindent \emph{Case 3:} Assume $\{ \psi^{t_n}(v)\}$ and $\{ \psi^{t_n}(w)\}$ both converge to $\infty$ in $U_\Gamma \sqcup \{\infty\}$. Then 
 $$
 \lim_{n \to \infty} \dist_\star(\psi^{t_n}(v), \psi^{t_n}(w)) =  \dist_\star(\infty, \infty) = 0
 $$
and hence $w \in W^{ss}(v)$. 
\end{proof}

We establish the analogous result for the strongly unstable manifold. 

\begin{proposition}\label{prop:Wsu computation} 
If $(v^-, v^+, t_0) \in \tilde U_\Gamma$ and $v := p(v^-, v^+, t_0) \in U_\Gamma$, then 
$$
p\left( v^-, x, t_0+G(v^-,x)-G(v^-, v^+)\right) \in W^{su}(v)
$$
for all $x \in \Lambda(\Gamma) \smallsetminus \{ v^-\}$.
\end{proposition}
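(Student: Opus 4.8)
The plan is to reduce this to the already-established Proposition~\ref{prop:Wss computation} by exploiting the time-reversal symmetry of the flow space. The point is that the map $\iota\colon \tilde U_\Gamma\to\tilde U_\Gamma$ defined by $\iota(x,y,s)=(y,x,-s)$ interchanges the roles of $\sigma$ and $\bar\sigma$. Indeed, using the GPS relation~\eqref{eqn:GPS property}, one checks that $\iota$ conjugates the $\Gamma$-action associated to $\sigma$ to the $\Gamma$-action associated to $\bar\sigma$, up to the coboundary correction involving $G$. More precisely, if we set $\hat v=(y,x,-s-G(x,y))$ — i.e. compose $\iota$ with the time-shift by $-G$, which is well-defined on $\tilde U_\Gamma$ since $G$ is continuous on $\Lambda(\Gamma)^{(2)}$ — then a short computation shows that $\gamma\cdot\widehat{v}$ (for the $\bar\sigma$-action) equals $\widehat{\gamma\cdot v}$ (for the $\sigma$-action). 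Thus this modified reversal descends to a homeomorphism $\bar\iota\colon U_\Gamma\to U_{\bar\Gamma}$, where $U_{\bar\Gamma}$ is the flow space of the GPS system $(\bar\sigma,\sigma,-G)$ (swapping the roles and negating $G$ again yields a genuine GPS system), and it intertwines $\psi^t$ with $\psi^{-t}$.

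Granting this, the key steps are: (1) verify that $(\bar\sigma,\sigma,-G)$ — or whatever the correct sign/order bookkeeping yields — is again a continuous GPS system for $\Gamma\subset\mathsf{Homeo}(M)$, so that Proposition~\ref{prop:Wss computation} applies to its flow space; (2) observe that $\bar\iota$ sends $W^{su}(v)$ for $\psi^t$ on $U_\Gamma$ to $W^{ss}(\bar\iota(v))$ for $\psi^t$ on $U_{\bar\Gamma}$ — this is immediate from $\dist_\star(\psi^t v,\psi^t w)\to 0$ as $t\to-\infty$ being equivalent to $\dist_\star(\psi^t\bar\iota v,\psi^t\bar\iota w)\to 0$ as $t\to+\infty$, provided the one-point-compactification metric is transported correctly (any metric on $U_{\bar\Gamma}\sqcup\{\infty\}$ works, by the remark in the outline, so we may just push forward $\dist_\star$); (3) apply Proposition~\ref{prop:Wss computation} on $U_{\bar\Gamma}$ to the point $\bar\iota(v)=p\bigl(v^+,v^-,-t_0-G(v^-,v^+)\bigr)$, which tells us that $p(x,v^-,-t_0-G(v^-,v^+))\in W^{ss}(\bar\iota(v))$ for every $x\in\Lambda(\Gamma)\smallsetminus\{v^-\}$; (4) pull this back through $\bar\iota^{-1}$ and track the time coordinate: $\bar\iota^{-1}$ sends $p(x,v^-,-t_0-G(v^-,v^+))$ to $p\bigl(v^-,x,\,t_0+G(v^-,v^+)-G(v^-,x)\bigr)$ — wait, with the sign conventions one gets exactly $t_0+G(v^-,x)-G(v^-,v^+)$ after re-expanding, matching the statement. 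The bookkeeping in this last step is where care is needed but it is purely formal.

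The main obstacle I anticipate is not conceptual but notational: getting the signs in the GPS relation, the definition of the $\Gamma$-action $\gamma(x,y,t)=(\gamma x,\gamma y,t+\sigma(\gamma,y))$, and the shift by $G$ to all line up so that the reversed data is genuinely a GPS system and $\bar\iota$ is genuinely $\Gamma$-equivariant and flow-reversing. One must be careful that $\sigma$ appears paired with the \emph{second} coordinate while $\bar\sigma$ will appear paired with what becomes the second coordinate after swapping, and that the additive $G$-terms telescope correctly using~\eqref{eqn:GPS property}. An alternative, if one prefers to avoid introducing the auxiliary flow space, is to repeat the three-case argument of Proposition~\ref{prop:Wss computation} verbatim with $t_n\to-\infty$, using Proposition~\ref{GPS basic properties} to pass between $\sigma$ and $\bar\sigma$ estimates and Proposition~\ref{prop:basic properties}\eqref{item:a technical fact} applied to $\bar\sigma$; in Case~1 one then extracts $\gamma_n\to a$, $\gamma_n^{-1}\to b$ with the roles of attracting and repelling reversed, concludes $b=v^-$ rather than $b=v^+$, and checks that $\gamma_n$ pushes both $x$ and $v^+$ to the same limit so that the two flowed points coincide in the limit. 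I would present the symmetry argument as the main proof since it is shorter, with the direct argument relegated to a remark if desired.
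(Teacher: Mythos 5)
Your main argument — reducing to Proposition~\ref{prop:Wss computation} via a time-reversal duality — is a valid and genuinely different route from the paper's, which instead repeats the three-case analysis directly with $t_n\to-\infty$ (your ``alternative''). The duality argument is conceptually cleaner once set up, because the $G$-correction in the time coordinate is absorbed into the definition of the reversal map rather than produced on the fly. In the paper's direct Case~1 argument, that correction is exactly where the work is: one applies Equation~\eqref{eqn:GPS property} to show
$\lim_{n\to\infty}\big(\sigma(\gamma_n,v^+)-\sigma(\gamma_n,x)\big)=G(v^-,x)-G(v^-,v^+)$,
and your sketch of the direct alternative passes over this telescoping step, which is the crux of why the formula in the statement has that particular shape.

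The signs in your duality need to be fixed, as you anticipated. With the $\Gamma$-action $\gamma(x,y,s)=(\gamma x,\gamma y,s+\sigma(\gamma,y))$ and the GPS relation $\bar\sigma(\gamma,x)+\sigma(\gamma,y)=G(\gamma x,\gamma y)-G(x,y)$, the correct reversal map is
\[
\Phi(x,y,s)=\bigl(y,x,-s+G(x,y)\bigr),
\]
not $-s-G(x,y)$, and the dual triple is $(\bar\sigma,\sigma,G')$ with $G'(x,y):=G(y,x)$, not $(\bar\sigma,\sigma,-G)$. One checks $\Phi(\gamma\cdot_\sigma v)=\gamma\cdot_{\mathrm{dual}}\Phi(v)$ by plugging into the GPS relation, $\Phi\circ\psi^t=\psi^{-t}\circ\Phi$ is immediate, and $\Phi^{-1}(a,b,u)=(b,a,G(b,a)-u)$. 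Then $\Phi(v^-,v^+,t_0)=(v^+,v^-,-t_0+G(v^-,v^+))$, Proposition~\ref{prop:Wss computation} in the dual system gives $p_{\mathrm{dual}}(x,v^-,-t_0+G(v^-,v^+))\in W^{ss}(\Phi(v))$ for $x\neq v^-$, and pulling back via $\Phi^{-1}$ lands precisely on $p\bigl(v^-,x,\,t_0+G(v^-,x)-G(v^-,v^+)\bigr)$, as required. The remaining small points you raised are fine: $(\bar\sigma,\sigma,G')$ satisfies the GPS property (so the dual flow space has the needed structure by the same general results), and since any metric on the one-point compactification yields the same stable/unstable sets, pushing forward $\dist_\star$ under the induced homeomorphism of quotients is permissible.
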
 

\begin{proof}
Fix $x \in \Lambda(\Gamma) \smallsetminus \{ v^-\}$ and let 
$$
w:=p\left(v^-, x, t_0+G(v^-,x)-G(v^-, v^+)\right).
$$ 
Then fix a sequence $\{t_n\}$ where $t_n \to -\infty$ and 
$$
 \limsup_{t \to -\infty} \dist_\star(\psi^t(v), \psi^t(w)) =  \lim_{n\to\infty} \dist_\star(\psi^{t_n}(v), \psi^{t_n}(w)).
$$
Passing to a subsequence we can assume that one of the following cases hold: 

\medskip

\noindent \emph{Case 1:} Assume $\{ \psi^{t_n}(v)\}$ is relatively compact in $U_\Gamma$.
Then passing to a further subsequence, we can find a sequence $\{\gamma_n\}$ in $\Gamma$ such that 
$$
\lim_{n \to \infty} \gamma_n\psi^{t_n}(v)=\lim_{n \to \infty} (\gamma_n v^-, \gamma_n v^+, t_n+t_0+\sigma(\gamma_n, v^+))
$$
exists in $\tilde U_\Gamma$.  Passing to another subsequence we can suppose that $\gamma_n \to a \in \Lambda(\Gamma)$ and $\gamma_n^{-1} \to b \in \Lambda(\Gamma)$. Then $\gamma_n(z)\to a$ uniformly on compact subsets of $M\smallsetminus\{b\}$.

Since $t_n \to -\infty$, we must have 
$$
\lim_{n \to \infty} \sigma(\gamma_n, v^+) =+\infty. 
$$
So Proposition~\ref{prop:basic properties}\eqref{item:a technical fact} implies that $ \gamma_n(v^+) \to a$. Let $b^\prime : = \lim_{n \to \infty} \gamma_n(v^-)$. Since 
$$
\lim_{n \to \infty} \gamma_n(v^-, v^+) = (b^\prime, a) \in \Lambda(\Gamma)^{(2)}
$$
and $\gamma_n(z) \to a$ for all $z \in M \smallsetminus \{b\}$, we must have $v^- = b$. Then $\gamma_n(x) \to a$ since $x \in \Lambda(\Gamma) \smallsetminus\{v^-\} = \Lambda(\Gamma)\smallsetminus\{b\}$. 

Then, by Equation~\eqref{eqn:GPS property}, 
\begin{align*}
\lim_{n \to \infty}&  \sigma(\gamma_n, v^+)-\sigma(\gamma_n, x) = \lim_{n \to \infty}\Big(\bar{\sigma}(\gamma_n, v^-)+\sigma(\gamma_n, v^+)\Big) - \Big(\bar{\sigma}(\gamma_n, v^-)+\sigma(\gamma_n, x)\Big) \\
& = \lim_{n \to \infty} G(\gamma_n v^-, \gamma_n v^+) - G(v^-, v^+) - G(\gamma_n v^-, \gamma_n x) +G(v^-, x) \\
& =  G(b^\prime,a) - G(v^-,v^+) -G(b^\prime,a)+G(v^-,x) \\
&=  G(v^-,x)-G(v^-, v^+). 
\end{align*}
Thus 
$$
\lim_{n \to \infty} t_n+t_0+\sigma(\gamma_n, v^+)= \lim_{n \to \infty} t_n+t_0+G(v^-,x)-G(v^-, v^+)+\sigma(\gamma_n, x)
$$
and so
$$
\lim_{n \to \infty} \gamma_n\psi^{t_n}(v^-, v^+, t_0)=\lim_{n \to \infty} \gamma_n\psi^{t_n}(v^-,x, t_0+G(v^-,x)-G(v^-, v^+)),$$
which  implies that 
$$
 \lim_{n \to \infty} \dist_\star(\psi^{t_n}(v), \psi^{t_n}(w)) = 0
$$
and hence that $w \in W^{su}(v)$. 
\medskip

\noindent \emph{Case 2:} Assume $\{ \psi^{t_n}(w)\}$ is relatively compact in $U_\Gamma$. By Case 1, $v \in W^{su}(w)$ so \hbox{$w \in W^{su}(v)$}. 

\medskip

\noindent \emph{Case 3:}  Assume $\{ \psi^{t_n}(v)\}$ and $\{ \psi^{t_n}(w)\}$ both converge to $\infty$ in $U_\Gamma \sqcup \{\infty\}$. Then 
 $$
 \lim_{n \to \infty} \dist_\star(\psi^{t_n}(v), \psi^{t_n}(w)) =  \dist_\star(\infty, \infty) = 0
 $$
and hence $w \in W^{su}(v)$. 
\end{proof}


\section{Mixing}\label{sec:mixing}


In this section we establish mixing for the flow associated to a GPS system when the Bowen--Margulis is finite and the length spectrum is non-arithmetic. In fact, we can slightly weaken the assumption of non-arithmetic length spectrum and instead assume that the ``cross ratios'' generate a dense subgroup of $\Rb$. 

For the rest of the section suppose that  $(\sigma, \bar{\sigma}, G)$ is a continuous GPS system for a convergence group action $\Gamma\subset\mathsf{Homeo}(M)$ where $\delta:=\delta_\sigma(\Gamma) < +\infty$ and $Q_\sigma(\delta) = +\infty$. Let $m_\Gamma$ denote the BMS measure on $U_\Gamma$ constructed in Section~\ref{subsec:background on GPS systems}.

We define a cross ratio 
$$
B(x,x',y,y') := G(x,y) + G(x',y') - G(x',y) - G(x,y')
$$
for $x,x',y,y'\in M$ such that $\{x,x'\}$ and $\{y,y'\}$ are disjoint. We then  define the {\em cross ratio spectrum}
$$
\mathcal{CR}: = \left\{ B(x,x',y,y')  : y,y', x,x' \in \Lambda(\Gamma) \text{ and } \{x,x'\} \cap \{y,y'\} = \varnothing\right\}. 
$$
We say that the cross ratio spectrum is \emph{non-arithmetic} if it generates a dense subgroup of $\mathbb R$.

The next lemma shows that the cross ratio spectrum  $\mathcal{CR}$ contains the length spectrum 
$$
\Lc(\sigma, \bar \sigma, G) = \{  \ell_\sigma(\gamma) + \ell_{\bar\sigma}(\gamma) : \gamma \in \Gamma \text{ loxodromic} \}. 
$$

\begin{lemma} \label{lem:length as cross-ratio}
If $\gamma \in \Gamma$ is loxodromic and  $x \in M\smallsetminus\{\gamma^+,\gamma^-\}$, then 
$$
B(x, \gamma x,\gamma^-, \gamma^+) = \ell_\sigma(\gamma) + \ell_{\bar\sigma}(\gamma).
$$
\end{lemma}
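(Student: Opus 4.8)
The plan is to unwind both sides of the claimed identity directly from the definitions, using the GPS relation \eqref{eqn:GPS property} and the cocycle property. First I would expand the left-hand side:
$$
B(x,\gamma x,\gamma^-,\gamma^+) = G(x,\gamma^-) + G(\gamma x,\gamma^+) - G(\gamma x,\gamma^-) - G(x,\gamma^+).
$$
Since $\gamma$ fixes $\gamma^+$ and $\gamma^-$, I can rewrite the two ``mixed'' terms $G(\gamma x,\gamma^+) = G(\gamma x,\gamma\gamma^+)$ and $G(\gamma x,\gamma^-) = G(\gamma x,\gamma\gamma^-)$. This is the key observation that lets \eqref{eqn:GPS property} be applied with the pairs $(x,\gamma^+)$ and $(x,\gamma^-)$: namely $\bar\sigma(\gamma,x)+\sigma(\gamma,\gamma^+) = G(\gamma x,\gamma^+)-G(x,\gamma^+)$ and $\bar\sigma(\gamma,x)+\sigma(\gamma,\gamma^-) = G(\gamma x,\gamma^-)-G(x,\gamma^-)$.

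Next I would substitute these two identities into the expanded cross ratio. The terms $G(x,\gamma^+)$, $G(x,\gamma^-)$, $G(\gamma x,\gamma^+)$, $G(\gamma x,\gamma^-)$ all cancel pairwise, and the $\bar\sigma(\gamma,x)$ contributions cancel against each other (they appear with opposite signs since $G(\gamma x,\gamma^-)$ enters with a minus sign), leaving
$$
B(x,\gamma x,\gamma^-,\gamma^+) = \sigma(\gamma,\gamma^+) - \sigma(\gamma,\gamma^-).
$$
By definition $\sigma(\gamma,\gamma^+) = \ell_\sigma(\gamma)$. It then remains to identify $-\sigma(\gamma,\gamma^-)$ with $\ell_{\bar\sigma}(\gamma)$. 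For this I would use the cocycle identity for $\gamma^{-1}$: from $\sigma(\gamma^{-1}\gamma,\gamma^{-1}\gamma^-) = \sigma(\gamma^{-1},\gamma^-)+\sigma(\gamma,\gamma^-)$ and $\gamma^{-1}\gamma^-=\gamma^-$ we get $\sigma(\gamma,\gamma^-) = -\sigma(\gamma^{-1},\gamma^-)$; but $\gamma^-$ is the attracting fixed point of $\gamma^{-1}$, so $\sigma(\gamma^{-1},\gamma^-) = \ell_\sigma(\gamma^{-1})$ — this gives $-\sigma(\gamma,\gamma^-) = \ell_\sigma(\gamma^{-1})$, which is the $\sigma$-period of $\gamma^{-1}$, not yet the $\bar\sigma$-period of $\gamma$.

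The main (and essentially only) subtle point is therefore the identity $\ell_{\bar\sigma}(\gamma) = -\sigma(\gamma,\gamma^-)$, equivalently $\ell_{\bar\sigma}(\gamma) = \ell_\sigma(\gamma^{-1})$. I expect this to follow from applying \eqref{eqn:GPS property} along the loxodromic axis: evaluating the GPS relation at the fixed pair $(\gamma^-,\gamma^+)$ gives $\bar\sigma(\gamma,\gamma^-)+\sigma(\gamma,\gamma^+) = G(\gamma^-,\gamma^+)-G(\gamma^-,\gamma^+) = 0$, so $\ell_{\bar\sigma}(\gamma) = \bar\sigma(\gamma,\gamma^-)$ is what one should compare against. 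Combining $\bar\sigma(\gamma,\gamma^-) = -\sigma(\gamma,\gamma^+)$... wait — that would give $\ell_{\bar\sigma}(\gamma) = -\ell_\sigma(\gamma)$, contradicting positivity; so instead the correct reading is $\ell_{\bar\sigma}(\gamma) = \bar\sigma(\gamma,\gamma^-)$ and one shows $-\sigma(\gamma,\gamma^-) = \bar\sigma(\gamma,\gamma^-)$ by a separate argument, or more cleanly one rederives the cross ratio directly in terms of $\bar\sigma(\gamma,\gamma^-)$ rather than $\sigma(\gamma,\gamma^-)$. Concretely, I would instead apply \eqref{eqn:GPS property} with the pairs $(x,\gamma^+)$ and $(\gamma x,\gamma^+)$-type groupings so that the $\sigma$-terms cancel and the $\bar\sigma$-terms survive, producing $B = \bar\sigma(\gamma,\gamma^-) + \sigma(\gamma,\gamma^+)$ after the analogous cancellation; the first term is $\ell_{\bar\sigma}(\gamma)$ (using $\gamma\gamma^-=\gamma^-$) and the second is $\ell_\sigma(\gamma)$, giving the result. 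The bookkeeping of which pairs to feed into \eqref{eqn:GPS property} is the only place care is needed; everything else is cancellation.
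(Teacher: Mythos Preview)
Your approach is essentially the paper's: expand the cross ratio, apply the GPS relation \eqref{eqn:GPS property} to the pairs $(x,\gamma^+)$ and $(x,\gamma^-)$, and obtain $B(x,\gamma x,\gamma^-,\gamma^+)=\sigma(\gamma,\gamma^+)-\sigma(\gamma,\gamma^-)$. That part is correct and matches the paper exactly.

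The gap is in your final identification. You repeatedly write $\ell_{\bar\sigma}(\gamma)=\bar\sigma(\gamma,\gamma^-)$, but by definition the period is evaluated at the \emph{attracting} fixed point: $\ell_{\bar\sigma}(\gamma)=\bar\sigma(\gamma,\gamma^+)$. So your proposed endpoint $B=\bar\sigma(\gamma,\gamma^-)+\sigma(\gamma,\gamma^+)$, even if you reached it, would not be $\ell_{\bar\sigma}(\gamma)+\ell_\sigma(\gamma)$. The source of the confusion is the order of arguments in the GPS relation: in $\bar\sigma(\gamma,x)+\sigma(\gamma,y)=G(\gamma x,\gamma y)-G(x,y)$ the first slot feeds $\bar\sigma$ and the second feeds $\sigma$. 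You applied it at $(\gamma^-,\gamma^+)$ and got $\bar\sigma(\gamma,\gamma^-)+\sigma(\gamma,\gamma^+)=0$, which is true but unhelpful. The paper instead applies it at $(\gamma^+,\gamma^-)$:
\[
\bar\sigma(\gamma,\gamma^+)+\sigma(\gamma,\gamma^-)=G(\gamma^+,\gamma^-)-G(\gamma^+,\gamma^-)=0,
\]
whence $-\sigma(\gamma,\gamma^-)=\bar\sigma(\gamma,\gamma^+)=\ell_{\bar\sigma}(\gamma)$, and the proof is complete. This one-line swap of arguments is the only missing ingredient; everything else in your plan is fine.
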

\begin{proof} Notice that 
$$
 \bar\sigma(\gamma,\gamma^+)+\sigma(\gamma,\gamma^-) = G(\gamma^+,\gamma^-) - G(\gamma^+,\gamma^-) = 0.
$$
So, by Equation~\eqref{eqn:GPS property},
\begin{align*}
B(x, \gamma x,\gamma^-, \gamma^+)  & = G(x,\gamma^-) + G(\gamma x,\gamma^+) - G(\gamma x,\gamma^-) - G(x,\gamma^+) \\
& = G(\gamma x,\gamma^+)  - G(x,\gamma^+) - G(\gamma x,\gamma^-) +G(x,\gamma^-)  \\
& =\bar\sigma(\gamma,x)+ \sigma(\gamma,\gamma^+) - \bar\sigma(\gamma,x)-\sigma(\gamma,\gamma^-) \\
& = \sigma(\gamma,\gamma^+)  + \bar\sigma(\gamma,\gamma^+) =  \ell_\sigma(\gamma) + \ell_{\bar\sigma}(\gamma). \qedhere
\end{align*}
\end{proof}

By Lemma~\ref{lem:length as cross-ratio}, the following theorem is a (slight) extension of Theorem \ref{main mixing}.

\begin{theorem} \label{thm:mixing}
If the BMS measure  $m_\Gamma$ is finite and the cross ratio spectrum $\mathcal{CR}$ is non-arithmetic,  then the flow $\psi^t \colon(U_\Gamma, m_\Gamma) \to (U_\Gamma, m_\Gamma)$ is mixing.
 \end{theorem}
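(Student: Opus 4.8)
The plan is to verify the hypotheses of Coud\`ene's mixing criterion \cite{coudene-mixing}, which roughly states that a conservative ergodic measure-preserving flow on a metric space is mixing provided the stable and unstable foliations have a suitable transitivity/density property and the measure has a local product structure compatible with these foliations. The flow $\psi^t \colon (U_\Gamma, m_\Gamma) \to (U_\Gamma, m_\Gamma)$ is ergodic and conservative by \cite[Th.\,11.2]{BCZZ} (since $\delta = \delta_\sigma(\Gamma) < +\infty$ and $Q_\sigma(\delta) = +\infty$), and it is finite by hypothesis. The BMS measure $\tilde m = e^{\delta G(x,y)}\,d\bar\mu(x)\otimes d\mu(y)\otimes dt$ has an evident product structure on $\tilde U_\Gamma = \Lambda(\Gamma)^{(2)}\times\Rb$, with the $\mu$-factor (the forward-endpoint direction) tied to strong stable manifolds via Proposition~\ref{prop:Wss computation}, the $\bar\mu$-factor (backward endpoint) tied to strong unstable manifolds via Proposition~\ref{prop:Wsu computation}, and the $\Rb$-factor the flow direction.

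The key steps, in order, would be: (1) Record the precise statement of Coud\`ene's criterion and identify what must be checked --- local product structure of $m_\Gamma$ adapted to $W^{ss}$ and $W^{su}$, and a condition ensuring the flow is not a suspension over a discrete-spectrum system, which in the classical negatively-curved setting amounts to non-arithmeticity of the length spectrum. (2) Translate the product structure of $\tilde m$ into genuine local product charts on $U_\Gamma$: away from the injectivity-radius-zero locus, a neighborhood in $U_\Gamma$ is modeled on a product of a piece of a strong stable plaque, a piece of a strong unstable plaque, and an interval of flow time, and under this identification $m_\Gamma$ is (up to the continuous density $e^{\delta G}$) the product $\bar\mu\otimes\mu\otimes dt$. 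Propositions~\ref{prop:Wss computation} and~\ref{prop:Wsu computation} give exactly the parametrizations needed: the strong stable plaque through $p(v^-,v^+,t_0)$ is $\{p(y,v^+,t_0): y\}$, parametrized by the $\bar\mu$-measure, and the strong unstable plaque is $\{p(v^-,x,t_0+G(v^-,x)-G(v^-,v^+)):x\}$, parametrized by the $\mu$-measure (note the $G$-shift in the time coordinate, which is precisely what makes the $e^{\delta G}$ density transform correctly). (3) Verify the non-arithmeticity input: this is where the cross ratio spectrum $\mathcal{CR}$ enters. The holonomy around the stable/unstable foliations, i.e. the discrepancy in flow-time when one slides along a stable plaque then an unstable plaque then back, is measured exactly by the cross ratio $B(x,x',y,y')$; so the group generated by these holonomies is the group generated by $\mathcal{CR}$, and the hypothesis that $\mathcal{CR}$ is non-arithmetic says this group is dense in $\Rb$, which is precisely the hypothesis Coud\`ene's criterion needs to rule out the flow being a nontrivial suspension. (4) Assemble these into the criterion and conclude mixing.

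The main obstacle I anticipate is Step (2): carefully setting up the local product charts on $U_\Gamma$ and checking that $m_\Gamma$ really is locally a product under them. Unlike the ${\rm CAT}(-1)$ or Hilbert-geometry settings, as the authors note in the introduction, there is no canonical metric on $U_\Gamma$ making this transparent; one works instead with an arbitrary metric on the one-point compactification $U_\Gamma\sqcup\{\infty\}$, so the stable/unstable sets are only characterized dynamically (via Propositions~\ref{prop:Wss computation} and~\ref{prop:Wsu computation}) rather than metrically, and some care is required to see that Coud\`ene's metric hypotheses are met --- the remedy being the observation that \emph{any} metric on the compactification suffices. A secondary subtlety is the behavior near cusps / the non-compact part of $U_\Gamma$: one needs the product structure and the relevant density statements to hold $m_\Gamma$-almost everywhere, which is fine since $m_\Gamma$ gives full measure to the conical part (by Theorem~\ref{our dichotomy}, $\mu$ and $\bar\mu$ are supported on $\Lambda^{\rm con}(\Gamma)$), so the charts can be taken over conical points where everything is well-behaved. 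Once the product structure and the cross-ratio interpretation of holonomy are in hand, invoking Coud\`ene's theorem is formal.
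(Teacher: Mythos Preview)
Your overall strategy---Coud\`ene's criterion together with the explicit descriptions of $W^{ss}$ and $W^{su}$ from Propositions~\ref{prop:Wss computation} and~\ref{prop:Wsu computation}, with the cross ratio playing the role of the holonomy---is exactly the paper's strategy, and your intuition about why $\mathcal{CR}$ enters is correct. However, you have misremembered the form of Coud\`ene's criterion, and this makes your plan more elaborate than it needs to be.

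The version of Coud\`ene's result used here (from \cite{coudene-mixing}) does \emph{not} ask you to verify a local product structure for $m_\Gamma$ or to check that the flow is not a suspension. It says simply: if every measurable function on $(U_\Gamma,m_\Gamma)$ that is simultaneously $W^{ss}$-invariant and $W^{su}$-invariant is almost-everywhere constant, then the flow is mixing. So your Step~(2)---building local product charts on $U_\Gamma$---is unnecessary. One instead takes an arbitrary $W^{ss}$/$W^{su}$-invariant function $f$, lifts it to $\tilde f$ on $\tilde U_\Gamma=\Lambda(\Gamma)^{(2)}\times\Rb$ (where the product structure is already manifest), and uses Propositions~\ref{prop:Wss computation} and~\ref{prop:Wsu computation} to deduce that, for $\bar\mu\otimes\mu\otimes dt$-a.e.\ $(x,y,t)$, the value $\tilde f(x,y,t)$ depends only on $t$ up to a shift by $G(x_0,y)-G(x_0,y_0)$ for a fixed basepoint $(x_0,y_0)$. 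One is then reduced to a one-variable function $g(t)=\tilde f(x_0,y_0,t)$, and the computation you call ``holonomy'' shows that the (closed) group of almost-everywhere periods of $g$ contains every cross ratio $B(x,x',y,y')$; non-arithmeticity of $\mathcal{CR}$ forces this group to be all of $\Rb$, so $g$ is constant a.e.

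In short: your identification of the ingredients is right, but the argument is more direct than you outline---there is no need for local charts on $U_\Gamma$, no issue with cusps (the argument lives entirely on the lift $\tilde U_\Gamma$ and uses only that $\mu,\bar\mu$ are atomless with full support), and ergodicity/conservativity are not invoked separately since Coud\`ene's hypothesis subsumes them.
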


 Our proof is inspired by earlier work of Blayac \cite{blayac-ps} in the setting of rank-one convex projective manifolds with compact convex core.
 This strategy of proof goes back to work of Babillot \cite{babillot}.
 In particular, to establish mixing, we will use a criterion due to Coud\`ene. To state his result we need a preliminary definition. 
 
Suppose is a measurable flow $\phi^t \colon X \to X$ on a metric space and $\mu$ is a flow-invariant measure, then a function $f \colon X \to \Rb$ is  \emph{$W^{ss}$-invariant} 
if there exists a full $\mu$-measure subset $X' \subset X$ such that if $v,w \in X'$ and $w \in W^{ss}(v)$, then $f(v) =f(w)$. 
Likewise, $f$ is \emph{$W^{su}$-invariant} if there exists a full $\mu$-measure subset $X'' \subset X$ such that if $v,w \in X''$ and $w \in W^{su}(w)$, then $f(v) =f(w)$. 

\begin{proposition}[{Coud\`ene~\cite{coudene-mixing}}]
Let $X$ be a metric space, $\mu$ be a finite Borel measure on $X$, and $\phi^t \colon X \to X$ a measure-preserving flow on $X$. 
If any measurable function which is $W^{ss}$-invariant and $W^{su}$-invariant is constant almost everywhere,
then $\phi^t \colon (X,\mu) \to (X,\mu)$ is mixing.
\end{proposition}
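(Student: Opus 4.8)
The plan is to prove Coud\`ene's criterion by the soft Hilbert-space argument going back to Babillot. Normalize $\mu(X)=1$ and write $U^t\colon L^2(\mu)\to L^2(\mu)$, $U^tf=f\circ\phi^t$; since $\phi^t$ preserves $\mu$ each $U^t$ is unitary, $U^sU^t=U^{s+t}$ and $U^t\mathbf 1=\mathbf 1$. Recall the standard fact that mixing is equivalent to $U^tf\rightharpoonup\big(\int f\,d\mu\big)\mathbf 1$ weakly as $t\to+\infty$ for every $f\in L^2(\mu)$, and hence, after subtracting the mean, to the statement that $U^tf\rightharpoonup 0$ for every $f\in L^2(\mu)$ with $\int f\,d\mu=0$. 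So I would argue by contradiction: if the flow is not mixing then, using weak sequential compactness of bounded sets in the Hilbert space $L^2(\mu)$ (and a test function against which the translates stay bounded away from $0$), there are $f_0\in L^2(\mu)$ with $\int f_0\,d\mu=0$, times $t_n\to+\infty$, and a weak limit $U^{t_n}f_0\rightharpoonup h$ with $h\neq 0$ and $\int h\,d\mu=0$ (the last equality because $U^{t_n}$ preserves integrals).

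The first and main step is to show that every weak subsequential limit $h$ of $\{U^{t_n}f\}$ with $t_n\to+\infty$ is $W^{ss}$-invariant (and, by reversing the flow, that every weak subsequential limit of $\{U^{t_n}f\}$ with $t_n\to-\infty$ is $W^{su}$-invariant). Two reductions make this manageable. First, bounded Lipschitz functions are dense in $L^2(\mu)$: a finite Borel measure on a metric space is regular, so the indicator of a closed set $C$ is approximated in $L^2$ by the Lipschitz functions $\max(0,\,1-n\,\dist(\cdot,C))$. Second, the set of $W^{ss}$-invariant functions is a norm-closed linear subspace of $L^2(\mu)$: given $h_k\to h$ in $L^2$ with each $h_k$ $W^{ss}$-invariant, pass to an a.e.\ convergent subsequence and intersect the witnessing full-measure sets. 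Since $\norm{U^{t_n}(f-f_k)}=\norm{f-f_k}$, these reductions let us assume $f$ is bounded and uniformly continuous. For such $f$, whenever $w\in W^{ss}(v)$ we have $\dist(\phi^{t_n}v,\phi^{t_n}w)\to 0$, hence $(U^{t_n}f)(v)-(U^{t_n}f)(w)\to 0$ pointwise. To push this through a weak limit, I would apply Mazur's lemma to the tails: for each $N$ pick a finite convex combination $g^{(N)}$ of $\{U^{t_n}f:n\geq N\}$ with $\norm{g^{(N)}-h}<1/N$, pass to a subsequence with $g^{(N)}\to h$ $\mu$-a.e.\ on a full-measure set $Y$, and observe that for $v,w\in Y$ with $w\in W^{ss}(v)$ convexity gives $\abs{g^{(N)}(v)-g^{(N)}(w)}\leq\sup_{n\geq N}\abs{(U^{t_n}f)(v)-(U^{t_n}f)(w)}\to 0$, so $h(v)=h(w)$; thus $h$ is $W^{ss}$-invariant.

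It remains to close the argument. Our $h$ is $W^{ss}$-invariant, mean-zero, and nonzero; consider its backward translates. For any $s_m\to+\infty$, pass to a weak limit $U^{-s_m}h\rightharpoonup h'$. Since the flow permutes strong-stable leaves, $U^{-s}$ preserves the (weakly closed) subspace of $W^{ss}$-invariant functions, so $h'$ is $W^{ss}$-invariant; and by the flow-reversed form of the first step applied to $h$ (with times $-s_m\to-\infty$), $h'$ is $W^{su}$-invariant. By hypothesis $h'$ is constant a.e., and since $\int h'\,d\mu=\int h\,d\mu=0$ it vanishes. As every weak subsequential limit of the bounded family $\{U^{-s}h\}_{s>0}$ equals $0$, we conclude $U^{-s}h\rightharpoonup 0$ as $s\to+\infty$; in particular $U^{-t_n}h\rightharpoonup 0$. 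Then, using unitarity, $\norm{h}^2=\lim_n\langle U^{t_n}f_0,h\rangle=\lim_n\langle f_0,U^{-t_n}h\rangle=0$, contradicting $h\neq 0$. Hence $\phi^t\colon(X,\mu)\to(X,\mu)$ is mixing.

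The step I expect to be the real obstacle is the transfer of the pointwise ``asymptotic $W^{ss}$-invariance'' of the translates $U^{t_n}f$ to genuine a.e.\ $W^{ss}$-invariance of the weak limit; the device of applying Mazur's lemma to the tails of the sequence is the crux, and it rests on the two supporting facts flagged above — density of bounded Lipschitz functions in $L^2(\mu)$, and norm-closedness of the space of $W^{ss}$-invariant functions — the second of which is also what allows one to bootstrap the first step from continuous functions to the arbitrary $L^2$ function $h$ needed in the final step.
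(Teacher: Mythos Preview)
The paper does not prove this proposition; it is quoted from Coud\`ene and used as a black box in the proof of Theorem~\ref{thm:mixing}. Your proposal is a correct reconstruction of the Babillot--Coud\`ene argument: the Mazur-lemma passage from the pointwise asymptotic $W^{ss}$-invariance of $U^{t_n}f$ (for uniformly continuous $f$) to almost-everywhere $W^{ss}$-invariance of the weak limit is exactly the heart of Coud\`ene's proof, and your bootstrap---norm-closedness of the $W^{ss}$-invariant functions together with weak lower semicontinuity of the norm to get $\norm{h-h_k}\le\norm{g-g_k}$---correctly extends this from Lipschitz $f$ to the arbitrary $L^2$ function $h$ needed in the closing step. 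One terminological nit: for the density of bounded Lipschitz functions in $L^2(\mu)$ you only need inner regularity by \emph{closed} (not compact) sets, which holds for every finite Borel measure on a metric space; your stated approximation $\max(0,1-n\,\dist(\cdot,C))$ uses exactly this.
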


 We are now ready to prove the theorem.

\begin{proof}[Proof of Theorem~\ref{thm:mixing}] As in Section~\ref{sec:stable and unstable manifolds}, we fix a metric $\dist_\star$ on the one-point compactification of $U_\Gamma$ and consider the stable/unstable manifolds relative to this metric. 

By Coud\`ene's result, it suffices to show that every measurable  function $f$ on $U_\Gamma$ which is $W^{ss}$-invariant and $W^{su}$-invariant is $m_\Gamma$-almost everywhere constant.
Let $f$ be such a function, and let $\tilde{f}$ denote the lift of $f$ to $\tilde U_\Gamma$.
Let $A\subset U_\Gamma$ be a full measure subset such that for all $v,v'\in A$, if $v'\in W^{ss}(v)$ or $W^{su}(v)$ then $f(v')=f(v)$.
Let $\tilde A\subset \tilde U_\Gamma$ denote the preimage of $A$. 

By Theorem~\ref{thm:PS exist and unique}, the measures $\mu$ and $\bar\mu$ have no atoms and so $\tilde A$ is a full measure set for the product measure $\bar \mu \otimes \mu \otimes dt$ 
on $\Lambda(\Gamma) \times \Lambda(\Gamma) \times \Rb$.

For $(x,y) \in \Lambda(\Gamma)^{(2)}$ and $y' \in \Lambda(\Gamma) \smallsetminus \{x\}$, let 
$$
\rho_{x,y}(y')= G(x,y') - G(x,y).
$$
Notice that $\rho_{x,y}(y') + \rho_{x',y'}(y)=-B(x,x',y,y')$. 

Since $f$ is $W^{ss}$- and $W^{su}$-invariant, by Propositions~\ref{prop:Wss computation} and ~\ref{prop:Wsu computation} we have 
$$
\tilde{f}(x,y,t) = \tilde{f}(x',y,t) = \tilde{f}(x,y',t+\rho_{x,y}(y'))
$$
for $\bar\mu^2\otimes \mu^2\otimes dt$-almost any $(x,x',y,y',t)$:
for $\bar\mu^2\otimes \mu^2\otimes dt$-almost any $(x,x',y,y',t)$ we have $(x,y,t)\in \tilde A$ and $(x',y,t)\in \tilde A$ and $(x,y',t+\rho_{x,y}(y'))\in\tilde A$, and the projection of $(x',y,t)$ in $U_\Gamma$ 
is in the strong stable manifold of the projection of $(x,y,t)$ (by Proposition~\ref{prop:Wss computation}), while $(x,y',t+\rho_{x,y}(y'))$ is in the strong unstable manifold of $(x,y,t)$ (by Proposition~\ref{prop:Wsu computation}).

Hence, by Fubini's Theorem, we can find $x_0, y_0 \in \Lambda(\Gamma)$ such that
$$
\tilde{f}(x_0,y_0,t) = \tilde{f}(x_0,y,t+\rho_{x_0,y_0}(y)) = \tilde{f}(x,y,t+\rho_{x_0,y_0}(y))
$$
for $\bar\mu\otimes \mu\otimes dt$-almost any $(x,y,t)$.
		
In particular it suffices to show that $g(t) := \tilde{f}(x_0,y_0,t)$ is Lebesgue-almost everywhere constant.
Consider the additive subgroup 
$$
\mathsf{H}:=\{\tau \in \Rb: g(t+\tau) = g(t) \text{ for Lebesgue-almost any } t\in\Rb\}.
$$
The following classical result says that $\mathsf H$ is a closed subgroup of $\Rb$.
For the reader's convenience we recall its proof after we finish the current proof.

\begin{lemma}\label{lem:closed subgroup}
If $g\colon\R\to\R$ be a measurable function and $\mathsf H(g):=\{\tau \in \Rb: g(t+\tau) = g(t) \text{ for Lebesgue-almost any } t\in\Rb\}$, 
 then $\mathsf H(g)$ is a closed subgroup of $\R$.
\end{lemma}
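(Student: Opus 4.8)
The plan is to argue in two stages: first dispatch the group axioms, which are a routine null-set bookkeeping exercise, and then prove closedness, which is the only substantive point. Before either stage I would reduce to the case where $g$ is bounded. Fixing any bounded continuous injection $\phi\colon\R\to\R$ (for instance $\phi(x)=x/(1+\abs{x})$), injectivity of $\phi$ gives, for each $\tau$, that $g(t+\tau)=g(t)$ for a.e.\ $t$ if and only if $(\phi\circ g)(t+\tau)=(\phi\circ g)(t)$ for a.e.\ $t$; hence $\mathsf H(g)=\mathsf H(\phi\circ g)$, and we may assume $g$ is bounded, so in particular $g\in L^1_{\mathrm{loc}}(\R)$. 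This reduction is not cosmetic: for a general measurable $g$ the integrals used below need not even be finite.

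For the group axioms: $0\in\mathsf H(g)$ is trivial; if $\tau\in\mathsf H(g)$ then substituting $t\mapsto t-\tau$ and using translation-invariance of Lebesgue measure gives $g(t-\tau)=g(t)$ for a.e.\ $t$, so $-\tau\in\mathsf H(g)$; and if $\tau_1,\tau_2\in\mathsf H(g)$, then since the exceptional null set for the $\tau_1$-relation remains null after translating by $\tau_2$, one gets $g(t+\tau_1+\tau_2)=g(t+\tau_2)=g(t)$ for a.e.\ $t$, so $\tau_1+\tau_2\in\mathsf H(g)$.

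For closedness, which is the heart of the matter, I would realize $\mathsf H(g)$ as an intersection of level sets of continuous functions. For each bounded interval $[a,b]$, set $F_{a,b}(\tau):=\int_a^b g(t+\tau)\,dt=\int_{a+\tau}^{b+\tau}g$; since $g$ is bounded, $F_{a,b}$ is Lipschitz, hence continuous. The claim is
$$\mathsf H(g)=\bigcap_{a<b}F_{a,b}^{-1}\bigl(\{F_{a,b}(0)\}\bigr),$$
which immediately exhibits $\mathsf H(g)$ as a closed set, and combined with the previous paragraph shows it is a closed subgroup. The inclusion $\subseteq$ is immediate from the definition of $\mathsf H(g)$; for $\supseteq$, if $\int_a^b\bigl(g(t+\tau)-g(t)\bigr)\,dt=0$ for every interval $[a,b]$, then the Lebesgue differentiation theorem forces $g(t+\tau)-g(t)=0$ for a.e.\ $t$, i.e.\ $\tau\in\mathsf H(g)$.

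I do not anticipate a genuine obstacle: the statement is classical and the route above is essentially the shortest one. The only step carrying any content is the last one, and even there the Lebesgue differentiation theorem does all the work; the single point demanding care is the null-set bookkeeping when composing almost-everywhere identities under translation.
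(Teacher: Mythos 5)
Your proof is correct, and it follows the same overall skeleton as the paper's (reduce to bounded $g$, then exhibit $\mathsf H(g)$ as an intersection of closed sets given by an integral identity), but the two pivotal ingredients are chosen differently. For the reduction, the paper writes $\mathsf H(g)=\bigcap_{R>0}\mathsf H(g\cdot 1_{\abs{g}\leq R})$, whereas you compose with a bounded continuous injection $\phi$; both are sound, and yours has the minor advantage of producing a single bounded function rather than an intersection over truncation levels. For closedness, the paper tests the identity $g(\cdot+\tau)=g$ against all of $C_c(\Rb)$ and gets continuity of $\tau\mapsto\int\alpha(t)g(t+\tau)\,dt$ from uniform continuity of $\alpha$; you instead test only against indicator functions of intervals, getting continuity of $F_{a,b}$ from boundedness of $g$ (indeed a Lipschitz bound), and recover the a.e.\ identity from the Lebesgue differentiation theorem rather than a density argument. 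Your family of test objects is smaller and more concrete, and the step from the integral identity back to the pointwise a.e.\ statement is a single clean appeal to differentiation of the integral; the paper's version is essentially the same idea dressed in the dual pairing with $C_c$. Neither route is strictly shorter, but yours is slightly more self-contained. No gaps.
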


We now claim that $\mathcal{CR} \subset \mathsf H$. The assumption that  $\mathcal{CR}$ generate a dense subgroup of $\mb R$ then implies that $\mathsf H = \Rb$, and hence that $g$ is Lebesgue-almost everywhere constant, as desired.

To this end, we observe that, for $\bar\mu$-almost all $x$ and $x'$, $\mu$-almost all $y$ and $y'$ and Lebesgue-almost any $t$, the four points $x,x',y,y'$ are distinct (since $\mu$ and $\bar\mu$ do not have atoms by Theorem~\ref{thm:PS exist and unique}) and
\begin{align*}
g(t) & = \tilde{f}(x,y,t+\rho_{x_0,y_0}(y)) \\
& = \tilde{f}(x',y',t+\rho_{x_0,y_0}(y) + \rho_{x,y}(y')) \\
& = \tilde{f}(x,y,t+\rho_{x_0,y_0}(y) + \rho_{x,y}(y') + \rho_{x',y'}(y)) \\
& = \tilde{f}(x,y,t+\rho_{x_0,y_0}(y) - B(x,x',y,y')) = g(t-B(x,x',y,y')).
\end{align*}
So $B(x,x',y,y') \in \mathsf H$. 
		
		Since the Patterson--Sullivan measures $\mu$ and $\bar\mu$ have full support in $\Lambda(\Gamma)$, $B$ is continuous, and $\mathsf H$ is closed, we obtain that  $ \mathcal{CR} \subset \mathsf H$. Therefore, $g$ is Lebesgue-almost everywhere constant, so $\tilde f$ is $\tilde m$-almost everywhere constant, so $f$ is $m_\Gamma$-almost everywhere constant, which completes the proof.
\end{proof}

\begin{proof}[Proof of Lemma~\ref{lem:closed subgroup}]
 The fact that $\mathsf H(g)$ is a subgroup of $\R$ is an immediate consequence of the invariance of the Lebesgue measure under translation.
 It remains to check $\mathsf H(g)$ is closed.

 Note that 
 \[ \mathsf H(g)=\bigcap_{R>0}\mathsf H(g \cdot 1_{|g|\leq R}), \]
 so we can assume $|g|$ is bounded by some $R>0$.
 
 For any compactly supported continuous function $\alpha\colon \R\to\R$, let 
 \[ \mathsf H_\alpha(g):=\left\{\tau: \int \alpha(t)g(t+\tau)dt=\int \alpha(t)g(t)dt\right\}. \]
 It is an easy exercise in measure theory that
 \[ \mathsf H(g) = \bigcap_{\alpha\in C_c(\R)} \mathsf H_\alpha(g).\]
 So it suffices to prove $\mathsf H_\alpha(g)$ is closed.
 This a consequence of the fact that 
 $$\int \alpha(t) g(t+\tau)dt=\int \alpha(t-\tau) g(t)dt$$
 is continuous in $\tau$, which follows from the fact that $\alpha$ is uniformly continuous.
\end{proof}

\begin{remark} (1) 
It follows from Theorem~\ref{thm:equidistribution}, that if $m_\Gamma$ is mixing then $\{ \ell_\sigma(\gamma) : \gamma \in \Gamma \text{ loxodromic}\}$ 
generates a dense subgroup of $\mb R$. However this does not imply that the length spectrum $\Lc(\sigma, \bar \sigma,G)$  is non-arithmetic,
unless $\sigma$ is symmetric (i.e.\ if $\sigma=\bar\sigma$).
If $\sigma$ is symmetric, we have an equivalence between $m_\Gamma$ being mixing, the length spectrum being non-arithmetic, and the set of cross ratios generating a dense subgroup of $\mb R$.

(2) There exist cases when $m_\Gamma$ is finite but not mixing (and the length spectrum generates a discrete subgroup of $\Rb$).
For instance, take $M$ to be the Gromov boundary of an infinite $4$-regular tree with all edges of length $1$, take $\Gamma$ to be the nonabelian free group with two generators acting on $M$,
and define $\sigma$ and $\bar\sigma$ to be the Busemann cocycles on this ${\rm CAT}(-1)$ space.
Then the free group action is a uniform convergence action, hence $m_\Gamma$ is finite, but the length spectrum generates a discrete additive subgroup 
which is contained in $\Zb$, the measure $m_\Gamma$ is not mixing, and $Re^{-\delta_\Gamma R}\#\{[\gamma]\in[\Gamma]:\ell_\sigma(\gamma)\leq R\}$ does not converge as $R\to\infty$. 
\end{remark}


\section{Non-arithmeticity of the cross ratio spectrum}\label{sec:NALS}


In this section, we investigate when the length spectrum or  cross ratio spectrum is non-arithmetic. Our results and arguments are very similar to earlier work in the context of Riemannian manifolds, see Dal'bo \cite[\S II]{Dalbo}. 

When $\Gamma$ contains a parabolic element, the length spectrum itself is always non-arithmetic.

\begin{proposition}
\label{prop:parabolic NALS} 
Suppose $(\sigma, \bar{\sigma}, G)$ is a continuous GPS system for a convergence group $\Gamma\subset\mathsf{Homeo}(M)$. If $\Gamma$ contains a parabolic element, then the length spectrum $\Lc(\sigma, \bar \sigma,G)$ is non-arithmetic.
\end{proposition}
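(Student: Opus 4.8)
The plan is to exploit the fact that a parabolic element has period zero, and leverage the convergence dynamics of conjugates of a loxodromic element by powers of the parabolic to produce loxodromic periods that accumulate, forcing the length spectrum to contain arbitrarily small positive values and hence generate a dense subgroup of $\Rb$.

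First I would fix a loxodromic element $\gamma_0 \in \Gamma$ (which exists since $\Gamma$ is non-elementary) and a parabolic element $p \in \Gamma$ with fixed point $q \in \Lambda(\Gamma)$. After possibly replacing $\gamma_0$ by a conjugate, I may assume $q \notin \{\gamma_0^+, \gamma_0^-\}$. Then I would consider the sequence of conjugates $\gamma_n := p^n \gamma_0 p^{-n}$. Each $\gamma_n$ is loxodromic with $\gamma_n^\pm = p^n(\gamma_0^\pm)$, and since $p^n|_{M \smallsetminus \{q\}}$ converges locally uniformly to $q$, we get $\gamma_n^\pm \to q$; in particular the pairs $(\gamma_n^-, \gamma_n^+)$ escape every compact subset of $M^{(2)}$. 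The key computation is that the period is conjugation-invariant in the sense that $\ell_\sigma(\gamma_n) = \sigma(\gamma_n, \gamma_n^+)$, and using the cocycle identity together with $\sigma(p, q) = 0 = \bar\sigma(p,q)$ from Proposition~\ref{prop:basic properties}\eqref{item:proper implies positive periods}, one expects $\ell_\sigma(\gamma_n) + \ell_{\bar\sigma}(\gamma_n)$ to converge to $\ell_\sigma(\gamma_0) + \ell_{\bar\sigma}(\gamma_0)$, or more precisely to remain bounded while the conjugates become genuinely distinct.

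The cleanest route is probably via the cross ratio: by Lemma~\ref{lem:length as cross-ratio}, $\ell_\sigma(\gamma_n) + \ell_{\bar\sigma}(\gamma_n) = B(x, \gamma_n x, \gamma_n^-, \gamma_n^+)$ for any $x \notin \{\gamma_n^+, \gamma_n^-\}$. Taking $x$ fixed and noting $\gamma_n x = p^n \gamma_0 p^{-n} x$, I would analyze the limit of $B(x, \gamma_n x, \gamma_n^-, \gamma_n^+)$ as $n \to \infty$ using continuity of $G$ away from the diagonal together with the convergence $\gamma_n^\pm \to q$ and $p^{-n} x \to q$. The four points $x, \gamma_n x, \gamma_n^-, \gamma_n^+$ do not all stay away from the diagonal — three of them collapse to $q$ — so one cannot naively pass to the limit; instead I would use the $\Gamma$-equivariance property \eqref{eqn:GPS property} of $G$ to rewrite $B(x, \gamma_n x, \gamma_n^-, \gamma_n^+)$ in terms of cocycles evaluated on $p$ and $\gamma_0$, where the relevant arguments do stay bounded away from the bad set. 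This should show the sequence $\ell_\sigma(\gamma_n) + \ell_{\bar\sigma}(\gamma_n)$ converges to a finite limit $L$; combined with the fact that infinitely many of the $\gamma_n$ represent distinct conjugacy classes (since their axes escape), we get two loxodromic elements $\gamma_m, \gamma_n$ with $0 < |(\ell_\sigma + \ell_{\bar\sigma})(\gamma_m) - (\ell_\sigma + \ell_{\bar\sigma})(\gamma_n)|$ arbitrarily small, and this difference lies in the subgroup generated by $\Lc(\sigma,\bar\sigma,G)$. Hence that subgroup is not discrete, so it is dense, i.e.\ the length spectrum is non-arithmetic.

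The main obstacle I anticipate is the degeneration of the cross ratio arguments toward the diagonal: one must carefully track cancellations so that the limit of $B(x, \gamma_n x, \gamma_n^-, \gamma_n^+)$ exists and is finite despite three of the four points converging to $q$. The resolution is to not take the limit of $G$ directly but to expand $B$ via the cocycle relation \eqref{eqn:GPS property} — writing each difference $G(\gamma_n u, \gamma_n v) - G(u,v)$ in terms of $\bar\sigma$ and $\sigma$ — and then use that the conjugation $\gamma_n = p^n \gamma_0 p^{-n}$ lets us replace evaluations at the degenerating points by evaluations of $\sigma, \bar\sigma$ at $p$ (which vanish on $q$) and at the fixed element $\gamma_0$. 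A secondary point to verify is simply that infinitely many $\gamma_n$ lie in distinct conjugacy classes — but this follows because if $\gamma_m$ and $\gamma_n$ were conjugate their periods would be equal while their fixed point pairs $p^m(\gamma_0^\pm)$ and $p^n(\gamma_0^\pm)$ would have to lie in a single $\Gamma$-orbit converging to the compact-escaping family, contradicting properness of $\sigma$ via Proposition~\ref{prop:basic properties}.
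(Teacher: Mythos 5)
Your construction does not work: the elements $\gamma_n := p^n \gamma_0 p^{-n}$ are all \emph{conjugate} to $\gamma_0$, and the period $\ell_\sigma$ is a conjugacy invariant. Indeed, for any $g \in \Gamma$ and loxodromic $\gamma$, the cocycle identity gives
\[
\sigma(g\gamma g^{-1}, g\gamma^+) = \sigma(g,\gamma^+) + \sigma(\gamma,\gamma^+) + \sigma(g^{-1}, g\gamma^+) = \sigma(g,\gamma^+) + \ell_\sigma(\gamma) - \sigma(g,\gamma^+) = \ell_\sigma(\gamma),
\]
so $\ell_\sigma(\gamma_n) = \ell_\sigma(\gamma_0)$ exactly for every $n$, and likewise for $\bar\sigma$. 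The sequence $\ell_\sigma(\gamma_n)+\ell_{\bar\sigma}(\gamma_n)$ is therefore constant, not merely convergent, and the differences you hope to use are identically zero. Your claim that ``infinitely many of the $\gamma_n$ represent distinct conjugacy classes (since their axes escape)'' is also false for the same reason: every $\gamma_n$ lies in the single conjugacy class $[\gamma_0]$, and a conjugacy class typically contains elements whose axes escape to infinity, so escaping axes tell you nothing about distinctness of classes. The careful cross-ratio bookkeeping you anticipate (``the resolution is to not take the limit of $G$ directly but to expand $B$ via the cocycle relation'') cannot save the argument, because after the cancellations there is genuinely nothing left.

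The high-level strategy of producing arbitrarily small nonzero differences in $\Lc(\sigma,\bar\sigma,G)$ is the right one, but you need a family whose members are \emph{not} mutually conjugate. The paper's construction takes $\gamma_n := \beta\alpha^n$ where $\alpha$ is the parabolic with fixed point $p$ and $\beta$ is any element with $\beta(p)\neq p$: these are products, not conjugates, they are loxodromic for large $n$ by Tukia's closing lemma (Lemma~\ref{lem:char of loxodromic}) with $\gamma_n^+\to\beta(p)$ and $\gamma_n^-\to p$, and their periods tend to $+\infty$ by properness (Proposition~\ref{prop:basic properties}\eqref{item:loxodromics with separated fixed points}), forcing consecutive differences to be nonzero infinitely often. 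One then shows, using the cocycle identity and $\sigma(\alpha,p)=0$, that these consecutive differences tend to $0$.
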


\begin{proof} Let $\alpha\in\Gamma$ be a parabolic element with fixed point $p\in M$ and choose $\beta\in \Gamma$ that does not fix $p$.
 Set $\gamma_n:=\beta\alpha^n$ for each $n\geq 0$.  Note that $\gamma_n\to \beta(p)$ while $\gamma_n^{-1}=\alpha^{-n}\beta^{-1}\to p\neq \beta(p)$.
 So, by Lemma~\ref{lem:char of loxodromic}, for $n$ large enough $\gamma_n$ is loxodromic with $\gamma_n^+\to \beta(p)$ and $\gamma_n^-\to p$.
 
 To prove the proposition it suffices to show the sequence
 \[ \{\ell_\sigma(\gamma_{n+1})+\ell_{\bar\sigma}(\gamma_{n+1}) - \ell_\sigma(\gamma_n) - \ell_{\bar\sigma}(\gamma_n)\}\]
takes arbitrarily small nonzero values.
By Proposition~\ref{prop:basic properties}\eqref{item:properness} and \eqref{item:loxodromics with separated fixed points} the sequence $\{\ell_\sigma(\gamma_n) + \ell_{\bar\sigma}(\gamma_n)\}$ tends to infinity. So the above sequence of differences must be nonzero infinitely often.
 Hence it suffices to prove that 
 $$
 \lim_{n \to \infty} \ell_\sigma(\gamma_{n+1})-\ell_\sigma(\gamma_n) =  0 =  \lim_{n \to \infty} \ell_{\bar\sigma}(\gamma_{n+1})-\ell_{\bar\sigma}(\gamma_n).
 $$

 By definition,
 $$\ell_\sigma(\gamma_n)=\sigma(\beta\alpha^n,\gamma_n^+)=\sigma(\beta,\alpha^n\gamma_n^+)+\sigma(\alpha^n,\gamma_n^+).$$
Notice that $\sigma(\beta,\alpha^n\gamma_n^+) \to \sigma(\beta,p)$. Further $\sigma(\alpha^{n+1},\gamma_{n+1}^+)=\sigma(\alpha,\alpha^n(\gamma_{n+1}^+))+\sigma(\alpha^n,\gamma_{n+1}^+)$ and $\sigma(\alpha,\alpha^n(\gamma_{n+1}^+))\to \sigma(\alpha,p)=0$. So,
$$
\limsup_{n \to \infty} \abs{ \ell_\sigma(\gamma_{n+1})-\ell_\sigma(\gamma_n)} = \limsup_{n \to \infty} \abs{\sigma(\alpha^n,\gamma_{n+1}^+)-\sigma(\alpha^n,\gamma_n^+)}.
$$
Since $\alpha^{-n}\beta(p)\to p$, we see that
 \begin{align*}
  \sigma(\alpha^{n},& \gamma_{n+1}^+) -\sigma(\alpha^n,\gamma_n^+) =\bar\sigma(\alpha^n,\alpha^{-n}\beta(p))+ \sigma(\alpha^{n},\gamma_{n+1}^+)  - \bar\sigma(\alpha^n,\alpha^{-n}\beta(p))-  \sigma(\alpha^n,\gamma_n^+)\\
  &= G(\beta(p),\alpha^n(\gamma_{n+1}^+))-G(\alpha^{-n}\beta(p),\gamma_{n+1}^+) -G(\beta(p),\alpha^n\gamma_n^+)+G(\alpha^{-n}\beta(p),\gamma_n^+)\\
  &\to G(\beta(p),p)-G(p,\beta(p))-G(\beta(p),p)+G(p,\beta(p))=0.
 \end{align*}
 Hence $\ell_\sigma(\gamma_{n+1})-\ell_\sigma(\gamma_n) \rightarrow 0$. 
 
 The proof that  $\ell_{\bar\sigma}(\gamma_{n+1})-\ell_{\bar\sigma}(\gamma_n)\to 0$ is completely analogous.
\end{proof}

We verify that the cross ratio spectrum is non-arthimetic in the following cases. 

\begin{proposition}\label{prop:CR spectrum nonarthimetic}
Suppose $(\sigma, \bar{\sigma}, G)$ is a continuous GPS system for a convergence group $\Gamma\subset\mathsf{Homeo}(M)$. If any one of the following hold, then the cross ratio spectrum 
$\mathcal{CR}$ is non-arithmetic:
\begin{enumerate}
\item\label{item:NALS1} there exists $x\in \Lambda(\Gamma)$ so that the path component of $\Lambda(\Gamma)$ containing $x$ is infinite and  $\lim_{y\in\Lambda(\Gamma) \to x} G(x,y)=+\infty$, 
\item\label{item:NALS2} there exists a conical limit point $x\in \Lambda^{\rm con}(\Gamma)$ so that the path component of $\Lambda(\Gamma)$ containing $x$ is infinite,  or
\item\label{item:NALS3} $\Gamma$ is uniform convergence group (i.e.\ every limit point is conical) and is not virtually free.
\end{enumerate}
\end{proposition}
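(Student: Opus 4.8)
The plan is to reduce all three cases to one assertion: that $\mathcal{CR}$ contains a nondegenerate interval, which forces the subgroup it generates to be dense (the only subgroups of $\Rb$ avoiding a nondegenerate interval are the discrete ones). In each case the hypothesis hands me a point $x$ in an infinite path component $C\subseteq\Lambda(\Gamma)$; as $C$ is path connected and Hausdorff it is arc connected, so I fix an arc $\eta\colon[0,1]\to C$ with $\eta(0)=x$, whence $\eta(s)\ne x$ for $s>0$. The goal is then to produce a continuous, non-constant one-parameter family of cross ratios built from $\eta$, whose image is the desired interval. Case \eqref{item:NALS1} is immediate: picking $0<s_1<s_2\le1$ and small $\delta\in(0,s_1)$ so the relevant points stay distinct, I set, for $s\in(0,\delta)$,
$$
\Phi(s):=B\big(x,\eta(s_2),\eta(s_1),\eta(s)\big)=G(x,\eta(s_1))+G(\eta(s_2),\eta(s))-G(\eta(s_2),\eta(s_1))-G(x,\eta(s)).
$$
This lies in $\mathcal{CR}$ and depends continuously on $s$; as $s\to0^+$ the first three terms converge while $G(x,\eta(s))\to+\infty$ by hypothesis, so $\Phi(s)\to-\infty$, and $\Phi((0,\delta))$ is a nondegenerate interval inside $\mathcal{CR}$.

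Case \eqref{item:NALS2} is more delicate. Using the definition of conical limit point together with Lemma~\ref{lem:char of loxodromic} and Proposition~\ref{prop:compactifying}, I extract an escaping sequence $\{\gamma_n\}$ with $\gamma_n\to b$ and $\gamma_n^{-1}\to x$ in $\Gamma\sqcup M$, where $b\ne a:=\lim_n\gamma_n x$; in particular $\gamma_n|_{M\smallsetminus\{x\}}\to b$ locally uniformly. I take $\eta$ as before, additionally arranged so that $b\notin\eta([0,1])$ (truncate and reparametrize if $b\in C$), fix $0<s_1<s_2\le1$ with $\eta(0),\eta(s_1),\eta(s_2)$ distinct, fix an auxiliary point $p\in\Lambda(\Gamma)\smallsetminus\{a,b\}$, and consider
$$
\Phi_n(s):=B\big(\gamma_n\eta(s),\gamma_n x,\gamma_n\eta(s_2),p\big)=G(\gamma_n\eta(s),\gamma_n\eta(s_2))+G(\gamma_n x,p)-G(\gamma_n x,\gamma_n\eta(s_2))-G(\gamma_n\eta(s),p).
$$
For fixed $n$ one has $\Phi_n(0)=0$ (the first two entries agree) and $\Phi_n$ is continuous near $0$. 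For fixed $s=s_1$ and $n\to\infty$, equation \eqref{eqn:GPS property} gives $G(\gamma_n\eta(s_1),\gamma_n\eta(s_2))=G(\eta(s_1),\eta(s_2))+\bar\sigma(\gamma_n,\eta(s_1))+\sigma(\gamma_n,\eta(s_2))$; since $\eta(s_1),\eta(s_2)$ lie at positive distance from $x=\lim\gamma_n^{-1}$, the expanding property (Proposition~\ref{GPS basic properties}) yields $\bar\sigma(\gamma_n,\eta(s_1))=\norm{\gamma_n}_{\bar\sigma}+O(1)$ and $\sigma(\gamma_n,\eta(s_2))=\norm{\gamma_n}_\sigma+O(1)$, both $\to+\infty$ (Proposition~\ref{prop:basic properties}\eqref{item:properness}), while the other three terms stay bounded because $\gamma_n x\to a$, $\gamma_n\eta(s_1),\gamma_n\eta(s_2)\to b$ and $p\notin\{a,b\}$. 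Hence $\Phi_n(s_1)\to+\infty$, and the intermediate value theorem applied to $\Phi_n$ on an interval joining $0$ to $s_1$ puts $[1,\Phi_n(s_1)]\subseteq\mathcal{CR}$ for all large $n$, so $[1,+\infty)\subseteq\mathcal{CR}$.

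Case \eqref{item:NALS3} reduces to \eqref{item:NALS2}: a uniform convergence group is word hyperbolic with $\Lambda(\Gamma)$ equivariantly homeomorphic to $\partial\Gamma$, and every limit point is conical; if $\Gamma$ is not virtually free then $\partial\Gamma$ is not totally disconnected, and each of its connected components is a nondegenerate locally connected continuum (by the Bowditch--Swarup local connectedness results, applied to the one-ended pieces of the Stallings decomposition), hence path connected and infinite. Applying \eqref{item:NALS2} at a conical point of such a component finishes the proof.

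The main obstacle is making case \eqref{item:NALS2} precise. The divergence $\Phi_n(s_1)\to+\infty$ is robust, but the fourth point must be a genuinely fixed point $p$ — if it were a $\gamma_n$-image, the $\Gamma$-invariance of $B$ would make $\Phi_n$ independent of $n$ and collapse the construction — and one must guarantee that the four points $\gamma_n\eta(s),\gamma_n x,\gamma_n\eta(s_2),p$ remain in the domain of $B$ as $s$ ranges over an interval from $0$ to $s_1$; the only thing that can go wrong is that $\gamma_n^{-1}p$ meets the sub-arc $\eta([0,s_1])$. Since $\gamma_n^{-1}p\to x=\eta(0)$, this obstruction concentrates near the basepoint and should be removable by choosing $p$ suitably and shrinking the interval; nailing this down is where the real work lies. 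A lesser point: case \eqref{item:NALS3} relies on standard but external facts about boundaries of hyperbolic groups.
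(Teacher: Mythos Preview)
Your treatment of case~\eqref{item:NALS1} is essentially the paper's (with the roles of the third and fourth arguments swapped), and your reduction of~\eqref{item:NALS3} to~\eqref{item:NALS2} is correct though more roundabout than needed: the paper simply invokes Bonk--Kleiner (hyperbolic, not virtually free $\Rightarrow$ boundary contains an embedded circle), which immediately gives an infinite path component of conical points.

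The real divergence is in case~\eqref{item:NALS2}. Your four-point family $\Phi_n(s)=B(\gamma_n\eta(s),\gamma_n x,\gamma_n\eta(s_2),p)$ does satisfy $\Phi_n(0)=0$ and $\Phi_n(s_1)\to+\infty$, but as you yourself note, the auxiliary point $p$ may lie on the arc $\gamma_n\eta([0,s_1])$, puncturing the domain of $\Phi_n$ exactly where you need the intermediate value theorem. Since $G$ is only defined on $M^{(2)}$ and nothing is assumed about its behaviour near the diagonal, you cannot in general extend $\Phi_n$ across this puncture, and the ``should be removable'' step is a genuine gap, not a routine detail. Arranging $p\notin\bigcup_n\gamma_n\eta([0,s_1])$ is already delicate when $\Lambda(\Gamma)$ is a circle.

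The paper sidesteps this entirely by reducing~\eqref{item:NALS2} to a sequential version of~\eqref{item:NALS1}: it shows directly that $G(x,c(t_k))\to+\infty$ along some $t_k\to 0^+$. The mechanism is to choose $t_k\in(0,\tfrac\epsilon2)$ so that $\gamma_k c(t_k)\to z\notin\{a,b\}$ (an intermediate-value argument on the path $\gamma_k\circ c$, which runs from near $a$ to near $b$), and then to expand
\[
G(x,c(t_k))=G(\gamma_k x,\gamma_k c(t_k))+\bar\sigma(\gamma_k^{-1},\gamma_k x)+\sigma(\gamma_k^{-1},\gamma_k c(t_k))
\]
via the GPS relation. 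The first term is bounded (its argument converges to $(a,z)\in M^{(2)}$), while the last two diverge to $+\infty$ by the expanding property applied to $\gamma_k^{-1}$, since $\gamma_k\to b$ stays away from both $\gamma_k x\to a$ and $\gamma_k c(t_k)\to z$. Once $G(x,c(t_k))\to+\infty$, the same cross-ratio function $f(t)=B(c(0),c(\tfrac12),c(t),c(1))$ from case~\eqref{item:NALS1} satisfies $f(t_k)\to+\infty$, and the intermediate value theorem (applied to $f$ on intervals $[t_{k+1},t_k]$ inside the domain) gives a nondegenerate interval in~$\mathcal{CR}$. No auxiliary fourth point and no domain obstruction.
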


The rest of the section is devoted to the proof of Proposition~\ref{prop:CR spectrum nonarthimetic}. 
We will see that \eqref{item:NALS3} is a particular case of \eqref{item:NALS2}.

\begin{proof}[Proof of non-arithmeticity given \eqref{item:NALS1}]
Fix a continuous path $c\colon [0,1]\to \Lambda(\Gamma)$ such that $c(0)=x$ and $c(0)\neq c(t)$ for any $t\in (0,1]$ and $c(\frac12)\neq c(1)$.
 The function 
 \begin{align*}t\in(0,\epsilon)\mapsto & B(c(0),c(\tfrac12),c(t),c(1))\\&=G(c(0),c(t))+G(c(\tfrac12),c(1))-G(c(\tfrac12),c(t))-G(c(0),c(1))\end{align*}
 is well-defined  and continuous for $\epsilon>0$ small enough,  and goes to infinity as $t\to 0$ because $G(c(0),c(t))\to\infty$ by assumption,
 while $G(c(\frac12),c(t))\to G(c(\frac12),c(0))$. By the intermediate value theorem, the image of that function contains a nontrivial interval of $\mb R$ and 
 hence $\mathcal{CR}$ generates $\mb R$ as a group.
\end{proof}

\begin{proof}[Proof of non-arithmeticity given \eqref{item:NALS2}]
Fix a continuous path $c\colon [0,1]\to \Lambda(\Gamma)$ such that $c(0)=x$ and $c(0)\neq c(t)$ for any $t\in (0,1]$ and $c(\frac12)\neq c(1)$.
 The function 
 \begin{align*}t\in(0,\epsilon)\mapsto  f(t) : & = B(c(0),c(\tfrac12),c(t),c(1))\\&=G(c(0),c(t))+G(c(\tfrac12),c(1))-G(c(\tfrac12),c(t))-G(c(0),c(1))\end{align*}
 is well-defined and continuous for $\epsilon>0$ small enough.
 
  Since $x$ is conical there exist $\{\gamma_k\}\subset\Gamma$ and $a\neq b\in \Lambda(\Gamma)$ such that $\gamma_kx\to a$ and $\gamma_ky\to b$ for any $y\neq x$, which implies $\gamma_k\to b$.
 Since $\gamma_k c(0) = \gamma_k x \to a$ and $\gamma_k c(1) \to b$, after passing to a subsequence we can find $\{t_k\} \subset \left(0, \frac\epsilon2 \right)$ such that $\gamma_k c(t_k) \to z \in M \smallsetminus \{a,b\}$. 
 
Since $z \neq b$ and $a \neq b$, by the expanding property there exists a constant $C>0$ such that 
$$\sigma(\gamma_k^{-1},\gamma_kc(t_k))\geq \norm{\gamma_k^{-1}}_\sigma-C \quad\text{and}\quad \bar\sigma(\gamma_k^{-1},\gamma_kx)\geq \norm{\gamma_k^{-1}}_{\bar\sigma}-C$$ 
for all $k \geq 1$. 
Thus by Proposition~\ref{prop:basic properties}\eqref{item:properness} we have 
 $$
 \lim_{k \to \infty} \sigma(\gamma_k^{-1},\gamma_kc(t_k))= \lim_{k \to\infty} \bar\sigma(\gamma_k^{-1},\gamma_kx)=+\infty. 
 $$
Further, since $z \neq a$ and $G: M^{(2)} \rightarrow \Rb$ is continuous, the sequence $\{  G(\gamma_k x, \gamma_k c(t_k)) \}$ is bounded. 
 
 Then by Equation~\eqref{eqn:GPS property}, we have 
 \begin{align*}
 \lim_{k \to \infty} G(x, c(t_k)) & =  \lim_{k \to \infty}  G(\gamma_k x, \gamma_k c(t_k)) - \bar\sigma(\gamma_k, x) - \sigma(\gamma_k, c(t_k)) \\
 & =  \lim_{k \to \infty} G(\gamma_k x, \gamma_k c(t_k))+\bar\sigma(\gamma_k^{-1}, \gamma_kx) + \sigma(\gamma_k^{-1}, \gamma_kc(t_k)) =+\infty. 
 \end{align*}
 Thus $f(t_k) \to +\infty$ as $k\to\infty$. So by the intermediate value theorem, the image of $f$ contains a nontrivial interval of $\mb R$ and 
 hence $\mathcal{CR}$ generates $\mb R$ as a group.
\end{proof}

\begin{proof}[Proof of non-arithmetic length spectrum given \eqref{item:NALS3}]
Since $\Gamma$ is a uniform convergence group, $\Gamma$ is word hyperbolic and there exists a equivariant homeomorphism between the Gromov boundary and the limit set $\Lambda(\Gamma)$. Then, since $\Gamma$ is not virtually free, a result of Bonk--Kleiner~\cite{BK2002} implies that the limit set contains an embedded circle. It then follows from \eqref{item:NALS2} that the group generated by $\mathcal{CR}$ is dense in $\mb R$.
\end{proof}

As an aside, we observe that the condition $\lim_{y\in\Lambda(\Gamma) \to x} G(x,y)=+\infty$ is automatically satisfied at bounded parabolic points.

\begin{lemma}\label{lem:G to infinity at parabs}
If $x\in\Lambda(\Gamma)$ is a bounded parabolic point, then 
$$\lim_{y\in\Lambda(\Gamma)\to x} G(x,y)=+\infty=\lim_{y\in\Lambda(\Gamma)\to x} G(y,x).$$
\end{lemma}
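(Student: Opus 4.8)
The plan is to reduce to a single limit and argue by contradiction. The two limits $\lim_{y\to x}G(x,y)$ and $\lim_{y\to x}G(y,x)$ are interchanged by swapping the roles of $\sigma$ and $\bar\sigma$: indeed $(\bar\sigma,\sigma,\tilde G)$ with $\tilde G(a,b):=G(b,a)$ is again a continuous GPS system, so it is enough to prove $\lim_{y\in\Lambda(\Gamma)\to x}G(x,y)=+\infty$. Suppose this fails. Then there are $R\in\Rb$ and $y_n\in\Lambda(\Gamma)\smallsetminus\{x\}$ with $y_n\to x$ and $G(x,y_n)\le R$. Since $x$ is a bounded parabolic point, $\Gamma_x$ acts cocompactly on $\Lambda(\Gamma)\smallsetminus\{x\}$, so I can fix a compact $K\subset\Lambda(\Gamma)\smallsetminus\{x\}$ and pick $\gamma_n\in\Gamma_x$ with $\gamma_ny_n\in K$; after passing to a subsequence, $\gamma_ny_n\to z$ for some $z\in K$, in particular $z\neq x$.

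The first key step is to pin down the dynamics of $\{\gamma_n\}$. Since $\gamma_ny_n\to z\neq x=\gamma_n x$ while $y_n\to x$, the sequence $\{\gamma_n\}$ has no constant subsequence and hence is escaping. Passing to a further subsequence, I may assume $\gamma_n\to a$ and $\gamma_n^{-1}\to b$ in the compactification $\Gamma\sqcup M$ with $a,b\in\Lambda(\Gamma)$, so that $\gamma_n|_{M\smallsetminus\{b\}}\to a$ locally uniformly by Proposition~\ref{prop:compactifying}. Because $\Gamma_x$ contains no loxodromic element, Lemma~\ref{lem:char of loxodromic} forces $a=b$; and since $\gamma_n$ fixes $x$, evaluating the locally uniform convergence at $x$ forces this common value to be $x$. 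The payoff is that, for any compatible distance $\dist$ on $\Gamma\sqcup M$, we have $\dist(\gamma_n,\gamma_ny_n)\to\dist(x,z)>0$, so this distance stays bounded below.

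The second step is the estimate. Using $\gamma x=x$ for $\gamma\in\Gamma_x$, the cocycle identity shows that $\bar\sigma(\cdot,x)$ restricts to a homomorphism $\Gamma_x\to(\Rb,+)$; it vanishes on the elliptic (torsion) elements of $\Gamma_x$ and, by Proposition~\ref{prop:basic properties}\eqref{item:proper implies positive periods} (applied to the expanding cocycle $\bar\sigma$), on the parabolic ones, and $\Gamma_x$ consists only of these, so $\bar\sigma(\gamma_n,x)=0$. Meanwhile, applying the expanding property of $\sigma$ to $\gamma_n^{-1}$ at the point $\gamma_ny_n$ — whose distance to $(\gamma_n^{-1})^{-1}=\gamma_n$ is bounded below by the previous step — gives $\sigma(\gamma_n^{-1},\gamma_ny_n)=\norm{\gamma_n^{-1}}_\sigma+O(1)\to+\infty$ by Proposition~\ref{prop:basic properties}\eqref{item:properness}, hence $\sigma(\gamma_n,y_n)=-\sigma(\gamma_n^{-1},\gamma_ny_n)\to-\infty$. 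Feeding $\bar\sigma(\gamma_n,x)=0$, $\gamma_nx=x$, and this limit into the GPS relation~\eqref{eqn:GPS property} yields
$$
G(x,y_n)=G(x,\gamma_ny_n)-\sigma(\gamma_n,y_n)\longrightarrow G(x,z)+\infty=+\infty,
$$
using that $G$ is continuous on $M^{(2)}$ and $(x,\gamma_ny_n)\to(x,z)\in M^{(2)}$. This contradicts $G(x,y_n)\le R$ and proves the first limit; the second follows verbatim with $\sigma$ and $\bar\sigma$ exchanged (equivalently, by applying the first to the GPS system $(\bar\sigma,\sigma,\tilde G)$).

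I expect the main obstacle to be the first step: simultaneously identifying both the forward and backward dynamical limits of $\{\gamma_n\}$ with $x$, so as to extract a genuine positive lower bound on $\dist(\gamma_n,\gamma_ny_n)$, and then correctly matching the distance hypothesis $\dist(\gamma_ny_n,(\gamma_n^{-1})^{-1})>\epsilon$ when invoking the expanding-cocycle inequality for $\gamma_n^{-1}$. Once that is set up, the remainder is bookkeeping with the cocycle relation, the vanishing $\bar\sigma(\gamma_n,x)=0$, and the GPS identity.
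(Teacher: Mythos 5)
Your proof is correct and follows essentially the same route as the paper: move $y_n$ into a compact fundamental domain $K$ by $\gamma_n\in\Gamma_x$, use $\bar\sigma(\gamma_n,x)=0$ and the GPS identity to convert $G(x,y_n)$ into $G(x,\gamma_ny_n)$ plus a cocycle term, then apply the expanding property and properness of $\sigma$ to send the latter to $+\infty$. Your writeup is a bit more careful in two minor respects — you explicitly establish $\gamma_n\to x$ and $\gamma_n^{-1}\to x$ in $\Gamma\sqcup M$ (to justify the lower bound on $\dist(\gamma_n,\gamma_ny_n)$ needed to invoke the expanding inequality), and you cover possible elliptic elements of $\Gamma_x$ via the homomorphism $\bar\sigma(\cdot,x)\colon\Gamma_x\to(\Rb,+)$, whereas the paper cites only the parabolic case of Proposition~\ref{prop:basic properties}\eqref{item:proper implies positive periods} — but the argument is substantively the same.
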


\begin{proof}
Since $x$ is a bounded parabolic point there exists a compact subset $K$ of 
$\Lambda(\Gamma)\smallsetminus\{x\}$  such that $\Gamma_x \cdot K=\Lambda(\Gamma)\smallsetminus\{x\}$.

Fix a sequence $\{y_n\}\subset \Lambda(\Gamma)\smallsetminus\{x\}$ converging to $x$.
For each $n$, choose $\gamma_n\in\Gamma_x$ so that $\gamma_ny_n\in K$.
Since $\sigma$ is expanding and $\gamma_n \to x$, there exists $C>0$ such that 
$$\sigma(\gamma_n^{-1},\gamma_ny_n)\geq ||\gamma_n||_\sigma-C$$ 
for all $n \geq 1$. Since $\bar\sigma(\gamma_n^{-1},x)=0$, see Proposition \ref{prop:basic properties}\eqref{item:proper implies positive periods},
Equation~\eqref{eqn:GPS property} implies that
\begin{align*}
G(x,y_n)&=G(x,\gamma_n(y_n))+\sigma(\gamma_n^{-1},\gamma_n(y_n)) \geq \min_{y \in K} G(x,y) + \norm{\gamma_n^{-1}}_\sigma-C.
\end{align*}
Since $\gamma_n\to x$, Proposition~\ref{prop:basic properties}\eqref{item:properness} implies that $\norm{\gamma_n^{-1}}_\sigma\to+\infty$, so $G(x,y_n)\to +\infty$. A similar argument shows that $G(y_n,x)\to+\infty$, where we use $\bar\sigma$ in place of $\sigma$.
Since $\{y_n\}\subset \Lambda(\Gamma)\smallsetminus\{x\}$ was an arbitrary sequence converging to $x$, the lemma follows. 
\end{proof}


\section{Equidistribution}\label{sec:equi}


We are now ready to prove Theorem~\ref{thm:equidistribution}, which we restate here. The proof follows a classical strategy that goes back to Margulis \cite{margulis}.
Our particular implementation of this strategy is  influenced by Roblin \cite{roblin}, although, unlike Roblin,  we directly establish
equidistribution of closed geodesics, without first establishing double equidistribution of orbit points in $\tilde U_\Gamma$.

\begin{theorem}\label{thm:equidistribution in paper}
Suppose $(\sigma, \bar{\sigma}, G)$ is a continuous GPS system for a convergence group $\Gamma\subset\mathsf{Homeo}(M)$ where $\delta:=\delta_\sigma(\Gamma) < +\infty$ and $Q_\sigma(\delta) = +\infty$. 
Let $\mu$ be the unique $\sigma$-Patterson--Sullivan measure of dimension $\delta$ and let $\bar\mu$ be the unique $\bar\sigma$-Patterson--Sullivan measure of dimension $\delta$.

If the BMS measure $m_\Gamma$ is finite and mixing, then
 \[
 \lim_{T \to \infty} \delta e^{-\delta T} \sum_{\substack{\gamma \in \Gamma_{\lox}\\ \ell_\sigma(\gamma)\leq T}}\mc D_{\gamma^-}\otimes \mc D_{\gamma^+} = \frac{1}{\norm{m_\Gamma}} e^{\delta G(x,y)} \bar\mu(x)\otimes \mu(y)
 \]
  in the dual of compactly supported continuous functions.
\end{theorem}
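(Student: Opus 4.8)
The plan is to follow the classical Margulis scheme, implemented via the mixing property and a ``coding'' of weighted closed orbits by returns of the flow to small product boxes. First I would fix a continuous compactly supported test function $\Phi$ on $\Lambda(\Gamma)^{(2)}$ and reduce the claimed convergence to an equidistribution statement for the family of measures $m_R$ defined in the introduction (sums of Lebesgue measures on axes of loxodromic elements with $\ell_\sigma(\gamma)\le R$), evaluated against functions of the form $f(x,y,s)=\Phi(x,y)\chi(s)$ where $\chi$ is a bump function in the flow direction. The point is that a closed orbit of $\psi^t$ in $U_\Gamma$ corresponding to a loxodromic $\gamma$ has $\psi^t$-length exactly $\ell_\sigma(\gamma)$ (by the definition of the $\Gamma$-action on $\tilde U_\Gamma$ and Proposition~\ref{prop:basic properties}\eqref{item:proper implies positive periods}), and the sum over $\gamma$ with $\ell_\sigma(\gamma)\le R$ of $\mathcal D_{\gamma^-}\otimes\mathcal D_{\gamma^+}$ is, up to the $s$-integration, exactly $m_R$.

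The heart of the argument is a local count. I would choose a small flow box $B = O \times (-\eta,\eta)$ in $\tilde U_\Gamma$ with $O \subset \Lambda(\Gamma)^{(2)}$ a small open set, small enough that $p|_B$ is injective and the BMS measure restricted to $p(B)$ has the product form $e^{\delta G}d\bar\mu\otimes d\mu\otimes ds$. Using the closing lemma (Lemma~\ref{lem:closing}, referenced in the excerpt), periodic orbits of period in a window $[R, R+\Delta R]$ that pass through $p(B)$ are in near-bijection with elements $\gamma\in\Gamma$ such that $\gamma$ maps $B$ to a set that reintersects $B$ after flowing time $\approx \ell_\sigma(\gamma)\in[R,R+\Delta R]$ — this is the translation of ``closed orbit through the box'' into ``group element returning the box to itself''. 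Then Observation~\ref{obs:consequence of mixing} (the $X$-level reformulation of mixing) applied to $A = B$ and $B' = \psi^{-T}(B)$ gives
$$
\sum_{\gamma\in\Gamma}\tilde m\big(B\cap\gamma\psi^T(B)\big)\;\xrightarrow[T\to\infty]{}\;\frac{\tilde m(B)^2}{\norm{m_\Gamma}}.
$$
Each nonzero term on the left, by the closing lemma, corresponds to a loxodromic $\gamma_\ast$ with $\ell_\sigma(\gamma_\ast)$ close to $T$ and with $(\gamma_\ast^-,\gamma_\ast^+)$ near the base of $B$, and the size of the term $\tilde m(B\cap\gamma\psi^T(B))$ is, to leading order, $\tilde m(B)\cdot(2\eta)^{-1}\cdot(\text{weight})$ where the weight is governed by the transversal holonomy, whose expansion rate is exactly $e^{\delta\ell_\sigma(\gamma_\ast)}$ by the Patterson--Sullivan transformation rule and the GPS cocycle identity~\eqref{eqn:GPS property}. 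Summing $\tilde m(B\cap\gamma\psi^T(B))$ over a dyadic-type decomposition of the period window and dividing by the BMS normalization yields, after integrating over a time window of length $\Delta R$ and letting $\Delta R\to 0$, that $\delta e^{-\delta R}$ times the local periodic-orbit count through $p(B)$ converges to $\frac{1}{\norm{m_\Gamma}}\int_{p(B)} e^{\delta G}d\bar\mu\otimes d\mu\otimes ds$; the factor $\delta$ and the $1/R$-type cancellation come from the standard change of variables turning a sum over periods $\le R$ into an integral against $e^{\delta t}\,dt$.

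Finally I would globalize: cover the support of an arbitrary compactly supported $f$ by finitely many such flow boxes via a partition of unity, apply the local statement on each, and sum. The passage from ``loxodromic $\gamma\in\Gamma$'' to ``weak conjugacy classes'' is harmless here because the statement is phrased as a sum over $\gamma\in\Gamma_{\lox}$ with the Dirac masses at fixed points — each weak conjugacy class of $\gamma$ contributes finitely many (equal) terms, and for the counting Corollary this multiplicity is accounted for separately in Section~\ref{sec:equi for geom finite}. I would also need to control the contribution of loxodromic elements whose axes leave every compact set (the ``escaping to the cusps'' issue): since $f$ is compactly supported, only orbits meeting $\supp f$ contribute, and a standard argument using properness of $\sigma$ (Proposition~\ref{prop:basic properties}\eqref{item:properness}) bounds the overcount. \textbf{The main obstacle} I anticipate is making the closing-lemma bijection uniform and its error terms genuinely negligible after the $\delta e^{-\delta R}$ normalization: one must show that the near-bijection between periodic orbits through a box and returning group elements is a true bijection up to an error that is $o(e^{\delta R})$ as the box shrinks, and that the weights $\tilde m(B\cap\gamma\psi^T(B))$ are comparable to $e^{\delta\ell_\sigma(\gamma)}$ uniformly — this requires the expanding-cocycle estimates of Proposition~\ref{prop:basic properties} together with a uniform-transversality input for loxodromics with fixed points in a compact set (Proposition~\ref{prop:basic properties}\eqref{item:loxodromics with separated fixed points}), and care that the shadow/holonomy distortion constants do not blow up as the period grows.
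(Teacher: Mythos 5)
Your proposal follows essentially the same Margulis-scheme route the paper takes: flow-box return counting via the closing lemma (Lemma~\ref{lem:closing}), Observation~\ref{obs:consequence of mixing} as the mixing input, the BMS product structure and the Patterson--Sullivan transformation rule to control $\tilde m\bigl(B\cap\gamma\psi^T(B)\bigr)$, and the $\delta e^{-\delta T}$ normalization obtained by integrating against $e^{\delta t}\,dt$. The ``near-bijection'' difficulty you flag as the main obstacle is in fact sidestepped in the paper: rather than matching periodic orbits to returning group elements up to controllable error, it establishes relative compactness of $\{\delta e^{-\delta T}\tilde m_T\}$, proves two-sided $e^{\pm 7\epsilon\delta}$ inequalities between $\tilde m$ and any weak-$\ast$ accumulation point $\tilde m'$ on small rectangles $A\times B\times I$ (Lemmas~\ref{lem:equid m<m'} and~\ref{lem:equid m'<m}), and concludes by a $\pi$-system argument, so no precise orbit count inside a single box is ever required.
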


The rest of the section is devoted to the proof of the theorem.  Reformulating in terms of measures on the flow space,  the conclusion of the theorem is equivalent to 
 \[
  {\norm{m_\Gamma}} \delta e^{-\delta T} \sum_{\substack{\gamma \in \Gamma_{\lox}\\ \ell_\sigma(\gamma)\leq T}}\mc D_{\gamma^-}\otimes \mc D_{\gamma^+}\otimes \Leb \xrightarrow[T\to\infty]{} \tilde m=e^{\delta G(x,y)} \bar\mu(x)\otimes \mu(y) \otimes \Leb.
 \]
 For ease of notation, let
 \[
  \tilde{m}_T := \sum_{\substack{\gamma \in \Gamma_{\lox}\\ \ell_\sigma(\gamma)\leq T}}\mc D_{\gamma^-}\otimes \mc D_{\gamma^+}\otimes \Leb.
 \]
 
 \begin{lemma} For any compact set $K \subset M^{(2)}$ and bounded interval $I \subset \Rb$, 
 $$
\sup_{T \geq 0}  e^{-\delta T}\tilde{m}_T(K \times I) < +\infty. 
 $$
 \end{lemma}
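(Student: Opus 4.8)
The plan is to read the statement off as a counting estimate, replace the period $\ell_\sigma$ by the magnitude $\norm{\cdot}_\sigma$ using that the fixed points lie in the compact set $K$, and then bound the count one magnitude-annulus at a time by a packing argument with Patterson--Sullivan shadows.

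\textbf{Reduction to a magnitude count.} Since $\mc D_{\gamma^-}\otimes\mc D_{\gamma^+}$ is the unit mass at $(\gamma^-,\gamma^+)$,
\[
\tilde m_T(K\times I)=\Leb(I)\cdot N_K(T),\qquad N_K(T):=\#\bigl\{\gamma\in\Gamma_{\lox}:(\gamma^-,\gamma^+)\in K,\ \ell_\sigma(\gamma)\le T\bigr\},
\]
so it suffices to prove $N_K(T)\le Ce^{\delta T}$ for a constant $C=C(K)$ (in particular $N_K(T)$ is finite, which already follows from properness of $\sigma$). By Proposition~\ref{prop:basic properties}\eqref{item:loxodromics with separated fixed points} there is $C_K>0$ such that $\bigl|\ell_\sigma(\gamma)-\norm{\gamma}_\sigma\bigr|\le C_K$ whenever $\gamma$ is loxodromic with $(\gamma^-,\gamma^+)\in K$. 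Setting $N_n:=\#\{\gamma\in\Gamma_{\lox}:(\gamma^-,\gamma^+)\in K,\ n<\norm{\gamma}_\sigma\le n+1\}$, we then get $N_K(T)\le\sum_{n=0}^{\lceil T+C_K\rceil}N_n$, so it is enough to establish a bound $N_n\le Ce^{\delta n}$ uniform in $n$; summing the resulting geometric series gives $N_K(T)\le C'e^{\delta T}$.

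\textbf{Shadows and what is needed.} To bound $N_n$ I would work with shadows of the $\sigma$-Patterson--Sullivan measure $\mu$. Fix $\epsilon>0$ small and put $\mathcal{O}_\epsilon(\gamma):=\gamma\cdot\{x\in M:\dist(x,\gamma^{-1})>\epsilon\}$. From the transformation rule $\gamma_*\mu=e^{-\delta\sigma(\gamma^{-1},\cdot)}\mu$ and the expanding property one has $\mu(\mathcal{O}_\epsilon(\gamma))\le C_\epsilon e^{-\delta\norm{\gamma}_\sigma}$; and since $\mu$ has full support in $\Lambda(\Gamma)$ and, by Theorem~\ref{thm:PS exist and unique}, no atoms, while for $\gamma$ in the family defining $N_n$ the point $\gamma^{-1}$ stays near the compact set of repelling fixed points determined by $K$ (Proposition~\ref{prop:compactifying}), one also gets $\mu(\mathcal{O}_\epsilon(\gamma))\ge C_\epsilon^{-1}e^{-\delta\norm{\gamma}_\sigma}$, for all but finitely many $\gamma$ (the exceptions have bounded magnitude, hence contribute only to finitely many $N_n$, each finite). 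Moreover, for such $\gamma$, $\mathcal{O}_\epsilon(\gamma)$ is a small neighbourhood of $\gamma^+$: indeed $\dist(\gamma y,\gamma)\to0$ for $y$ away from $\gamma^{-1}$ (Proposition~\ref{prop:basic properties}\eqref{item:a technical fact}) and $\gamma\to\gamma^+$ in $\Gamma\sqcup M$ as $\norm{\gamma}_\sigma\to\infty$. The proof then reduces to a \emph{bounded overlap} statement: there is $D=D(\epsilon,K)$ so that each point of $\Lambda(\Gamma)$ lies in at most $D$ of the sets $\mathcal{O}_\epsilon(\gamma)$ with $\gamma$ in the family defining $N_n$. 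Granting this, $\sum_\gamma\mu(\mathcal{O}_\epsilon(\gamma))\le D\,\mu(\Lambda(\Gamma))=D$, and since each summand is at least $C_\epsilon^{-1}e^{-\delta(n+1)}$ we obtain $N_n\le DC_\epsilon e^{\delta(n+1)}$, as desired.

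\textbf{The main obstacle.} The heart of the matter, and the step I expect to be hardest, is the bounded overlap statement. Concretely: if $\gamma_1,\gamma_2\in\Gamma_{\lox}$ have fixed point pairs in $K$, satisfy $\norm{\gamma_i}_\sigma\in(n,n+1]$, and $\mathcal{O}_\epsilon(\gamma_1)\cap\mathcal{O}_\epsilon(\gamma_2)\ne\varnothing$, then $\gamma_1^{-1}\gamma_2$ should lie in a finite subset of $\Gamma$ depending only on $\epsilon$ and $K$; this bounds, for each $\gamma_1$, the number of admissible $\gamma_2$, hence $D$. The guiding intuition is classical: compactness of $K$ forces the ``axes'' to pass through a fixed bounded region (the precise substitute being $\norm{\gamma_i}_\sigma=\ell_\sigma(\gamma_i)+O(1)$, Proposition~\ref{prop:basic properties}\eqref{item:loxodromics with separated fixed points}), so comparable magnitudes together with overlapping shadows force $\gamma_1$ and $\gamma_2$ to move a fixed point of $\tilde U_\Gamma$ to points at bounded distance, and proper discontinuity of the $\Gamma$-action on $\tilde U_\Gamma$ (\cite[Prop.\,10.2]{BCZZ}) then pins down $\gamma_1^{-1}\gamma_2$. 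Carrying this out rigorously in the abstract GPS setting requires careful bookkeeping with the expanding property and with Proposition~\ref{prop:basic properties}\eqref{item:a technical fact} and \eqref{item:multiplicative estimate}, especially to control the ``height'' coordinate of the points $\gamma_i\cdot(v^-,v^+,0)\in\tilde U_\Gamma$; this is where the real work of the lemma lies.
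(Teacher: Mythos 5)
Your reduction is exactly the paper's: replace $\ell_\sigma(\gamma)$ by $\norm{\gamma}_\sigma$ up to the constant $C_K$ from Proposition~\ref{prop:basic properties}\eqref{item:loxodromics with separated fixed points}, which turns the problem into a counting estimate of the form $\#\{\gamma : \norm{\gamma}_\sigma \le T + C_K\} \le C' e^{\delta T}$. The paper then closes the argument in one line by citing the orbit-count bound $\#\{\gamma \in \Gamma : \norm{\gamma}_\sigma \le R\} \le C_2 e^{\delta R}$ from \cite[Prop.\,6.3]{BCZZ}. You instead set out to reprove that counting bound from scratch via a Sullivan-type shadow lemma together with a bounded-multiplicity estimate for the shadows.

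This is where the proposal falls short. The bounded-overlap step you identify as ``the heart of the matter'' is indeed the crux, and you do not carry it out; the intuitive picture you describe (compactness of $K$ forcing the axes through a bounded region, proper discontinuity on $\tilde U_\Gamma$ pinning down $\gamma_1^{-1}\gamma_2$) is plausible but would require the same kind of expanding-cocycle bookkeeping that \cite[Prop.\,6.3]{BCZZ} already performs. The lower shadow bound $\mu(\mathcal{O}_\epsilon(\gamma)) \ge C_\epsilon^{-1} e^{-\delta\norm{\gamma}_\sigma}$ is also not a formal consequence of the transformation rule alone; it is a shadow lemma established separately in \cite{BCZZ}. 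Since you only need an upper bound, you may as well discard the restriction $(\gamma^-,\gamma^+)\in K$ after the first reduction, bound $N_K(T)$ by the full orbit count $\#\{\gamma : \norm{\gamma}_\sigma \le T + C_K\}$, and cite \cite[Prop.\,6.3]{BCZZ} directly. That is exactly what the paper does, and it avoids reproving a result that is already part of the cited toolkit.
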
 
 
 \begin{proof} By Proposition~\ref{prop:basic properties}\eqref{item:loxodromics with separated fixed points}, there exists $C_1 > 0$ such that: if $\gamma \in \Gamma_{\lox}$ and $(\gamma^-, \gamma^+) \in K$, then 
 $$
 \ell_\sigma(\gamma) \geq \norm{\gamma}_\sigma - C_1. 
 $$
 Further, by~\cite[Prop.\,6.3]{BCZZ} there exists $C_2 > 0$ such that 
 $$
 \#\{ \gamma \in \Gamma : \norm{\gamma}_\sigma \leq R \} \leq C_2 e^{\delta R}. 
 $$
 Hence  
\begin{equation*}
\sup_{T \geq 0}  e^{-\delta T}\tilde{m}_T(K \times I) \leq C_2 e^{\delta C_1} {\rm Leb}(I). \qedhere
\end{equation*}
 
 \end{proof} 
 
 The above lemma implies that the family of measures $\{ e^{-\delta T} \tilde{m}_T\}$ is relatively compact in the dual of compactly supported continuous functions. As a consequence, it is enough to fix an accumulation point
 $$\tilde m':=\lim_{n \to \infty} \norm{m_\Gamma}\delta e^{-\delta T_n}\tilde{m}_{T_n}$$
 (where $T_n\to +\infty$) and prove that $\tilde m=\tilde m'$.
 
 The heart of the proof consists of the following lemmas that give measure estimates for rectangles  $A\times B\times I\subset M^{(2)}\times\mb R$ which satisfy 
  \begin{equation}\label{eq:assumption on G}
  |G(a,b)-G(a',b')|\leq \epsilon\quad \text{and}\quad   |G(b,a)-G(b',a')|\leq \epsilon\quad \forall(a,b),(a',b')\in A\times B
 \end{equation}
 for some $\epsilon > 0$.  We will postpone the proof of these lemmas until after we conclude the proof of Theorem~\ref{thm:equidistribution}.

 \begin{lemma}\label{lem:equid m<m'}
  For any $\epsilon>0$ and any relatively compact subset $A\times B\times I\subset M^{(2)}\times\mb R$ with $\tilde m,\tilde m'$-null boundary satisfying
  \eqref{eq:assumption on G} and $\diam I\leq \epsilon$,
  $$\tilde m(A\times B\times I)\leq e^{7\epsilon\delta} \tilde m'(A\times B\times I).$$
 \end{lemma}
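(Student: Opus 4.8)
The plan is to establish the inequality $\tilde m(A\times B\times I)\le e^{7\epsilon\delta}\tilde m'(A\times B\times I)$ by relating the discrete sum defining $\tilde m'$ to the BMS measure using mixing, in the spirit of Margulis's argument as implemented by Roblin. The rough idea is that each loxodromic element $\gamma$ with $(\gamma^-,\gamma^+)\in A\times B$ and $\ell_\sigma(\gamma)\le T$ contributes a closed geodesic in $U_\Gamma$, and the ``flow box'' around its axis, pushed forward under $\psi^t$ for $t$ up to roughly $T$, should sweep out a set of $\tilde m$-measure comparable to $\tilde m(A\times B\times I)\cdot e^{\delta T}$; summing over $\gamma$ and using mixing (via Observation~\ref{obs:consequence of mixing}) to count how often $\psi^t$ of one flow box meets a fixed reference flow box yields the asymptotic count $\norm{m_\Gamma}\delta e^{-\delta T}\tilde m_T(A\times B\times I)\to \tilde m(A\times B\times I)$, at least up to the multiplicative error $e^{7\epsilon\delta}$ coming from the distortion in~\eqref{eq:assumption on G} and the width $\epsilon$ of $I$.

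Concretely, first I would fix a small auxiliary rectangle, say $R_0 = A_0\times B_0\times I_0 \subset \tilde U_\Gamma$ with $A_0\subset B$, $B_0\subset B$ (one wants the ``unstable'' and ``stable'' directions lined up with the forward/backward endpoints), whose image in $U_\Gamma$ has positive finite $m_\Gamma$-measure and $\tilde m$-null boundary. The key geometric input is a closing-type lemma: if $\gamma$ is loxodromic with $(\gamma^-,\gamma^+)$ close to the base point of $R_0$, then there is $t_\gamma \approx \ell_\sigma(\gamma)$ and an element of $\Gamma$ sending (a sub-box of) $R_0$ into $\psi^{-t_\gamma}(R_0)$, so that $\gamma$ contributes to $\sum_{\gamma}\tilde\nu(R_0\cap\gamma\psi^{t}(R_0))$ for $t$ near $\ell_\sigma(\gamma)$; conversely, by the convergence-group closing lemma (Lemma~\ref{lem:char of loxodromic}), every term in that sum with $t\le T$ comes from such a $\gamma$ with $\ell_\sigma(\gamma)\le T + O(\epsilon)$. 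Integrating Observation~\ref{obs:consequence of mixing} over $t\in[0,T]$ then gives
\begin{equation*}
\int_0^T \sum_{\gamma\in\Gamma} \tilde\nu\big(R_0\cap\gamma\psi^t(R_0)\big)\,dt \sim \frac{\tilde\nu(R_0)^2}{\norm{m_\Gamma}}\, T,
\end{equation*}
and unwinding the correspondence between the left side and $\sum_{\ell_\sigma(\gamma)\le T} (\text{length contribution})$ yields $e^{-\delta T}\tilde m_T(A\times B\times I) \to \tilde m(A\times B\times I)/(\delta\norm{m_\Gamma})$; the hypothesis~\eqref{eq:assumption on G} is exactly what lets one replace the various $e^{\delta G(a,b)}$ weights appearing in $\tilde\nu = e^{\delta G}\bar\mu\otimes\mu\otimes dt$ by a single constant up to the factor $e^{7\epsilon\delta}$, and the width bound $\diam I\le\epsilon$ controls the error from the $dt$-direction and the discrepancy $t_\gamma - \ell_\sigma(\gamma)$. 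Since $\tilde m'$ is (a normalization of) an accumulation point of $\delta e^{-\delta T_n}\tilde m_{T_n}$, this comparison along the subsequence $T_n$ gives the desired inequality.

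The main obstacle I expect is the bookkeeping in the closing-lemma step: one must pass between the ``algebraic'' picture (a loxodromic $\gamma$ with period $\ell_\sigma(\gamma)$ and fixed points $\gamma^\pm$) and the ``dynamical'' picture (an element $\gamma$ mapping a flow box approximately into a time-shifted copy of itself), controlling (i) how close $t_\gamma$ is to $\ell_\sigma(\gamma)$, using Proposition~\ref{prop:basic properties}\eqref{item:loxodromics with separated fixed points} and the cocycle identity for $\sigma(\gamma,\gamma^+)$; (ii) that distinct $\gamma$ give genuinely distinct contributions and that no $\gamma$ is overcounted as $t$ ranges over an interval of length $\approx\epsilon$; and (iii) boundary effects, which is why one insists on $\tilde m$- and $\tilde m'$-null boundaries and uses relative compactness to keep all fixed-point pairs in a fixed compact subset of $M^{(2)}$ where the estimates of Proposition~\ref{prop:basic properties} are uniform. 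This is precisely the place where the assumption~\eqref{eq:assumption on G} and $\diam I \le \epsilon$ get spent, producing the explicit constant $e^{7\epsilon\delta}$ rather than an exact equality; the matching lower bound (the reverse inequality, presumably the next lemma) will use the same machinery with the roles of the estimates reversed.
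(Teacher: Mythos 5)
Your overall strategy matches the paper's: inner-approximate $A\times B\times I$ by a compact rectangle, apply a closing lemma for the flow (the paper's Lemma~\ref{lem:closing}) so that for $t$ large the nonzero terms in $\sum_\gamma\tilde m\big(R'\cap\gamma\psi^t(R')\big)$ correspond to loxodromics with $(\gamma^-,\gamma^+)\in A\times B$ and $\ell_\sigma(\gamma)\approx t$, then feed this into the mixing estimate from Observation~\ref{obs:consequence of mixing} and compare to the accumulation point $\tilde m'$.

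There is, however, a genuine gap in the step ``integrating Observation~\ref{obs:consequence of mixing} over $t\in[0,T]$ gives $\int_0^T \sum_\gamma \tilde\nu(R_0\cap\gamma\psi^t(R_0))\,dt\sim \frac{\tilde\nu(R_0)^2}{\norm{m_\Gamma}}\,T$ and unwinding yields $e^{-\delta T}\tilde m_T(A\times B\times I)\to \tilde m(A\times B\times I)/(\delta\norm{m_\Gamma})$.'' The unweighted time-average you write down only controls a Cesàro-type average of $e^{-\delta T}\tilde m_T$, not $e^{-\delta T}\tilde m_T$ pointwise along your subsequence. Concretely: the mixing estimate gives $\sum_\gamma\tilde\nu\big(R'\cap\gamma\psi^t(R')\big)\to \tilde\nu(R')^2/\norm{m_\Gamma}$ as $t\to\infty$, and each term contributes roughly $\mu(\gamma B')\approx e^{-\delta\ell_\sigma(\gamma)}\mu(B')$ times a length in $t$ of size $\mr{Leb}(I')$; this is a statement about a ``local density'' $e^{-\delta t}\big(N(t+O(\epsilon))-N(t-O(\epsilon))\big)$, where $N$ is the counting function. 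Integrating this against $dt$ yields $\int_0^T e^{-\delta s}\,dN(s)\sim cT$, which does not by itself give $e^{-\delta T}N(T)\to c'$ without a Tauberian argument. The paper's fix, which your sketch omits, is the simple but essential trick of integrating against the exponential weight $e^{\delta t}\,dt$: starting from the trivial bound $1\le e^{\epsilon'}\delta e^{-\delta T}\int_{T_2}^T e^{\delta t}\,dt$ it writes $\frac{\tilde m(\cdots)^2}{\norm{m_\Gamma}}\le e^{\epsilon'}\delta e^{-\delta T}\int_{T_2}^T \frac{\tilde m(\cdots)^2}{\norm{m_\Gamma}} e^{\delta t}\,dt$, substitutes the mixing bound inside the integral, and then the $e^{\delta t}$ factor exactly cancels the $e^{-\delta\ell_\sigma(\gamma)}\approx e^{-\delta t}$ decay of $\mu(\gamma B')$, so the integral unfolds directly to $\mr{Leb}(I)^2\cdot\#\Theta^{T+2\epsilon}$, i.e.\ to $\tilde m_{T+2\epsilon}(A\times B\times I)$, with no Tauberian step. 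Without this weight your ``unwinding'' does not go through.

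Two smaller points: you should specify that the inner rectangle $R_0=A_0\times B_0\times I_0$ is a compact subset of $A\times B\times I$ (you presumably meant $A_0\subset A$ rather than $A_0\subset B$), and that the factor $e^{\epsilon'}$ it costs, which appears via inner regularity, must be taken $\to 0$ separately at the end. You should also track that the closing lemma produces $\ell_\sigma(\gamma)\le T+2\epsilon$ rather than $\le T$, which is why the comparison is applied to the shifted sequence $T_n-2\epsilon$ and costs an extra $e^{2\delta\epsilon}$; that and the other distortion factors are what add up to $e^{7\epsilon\delta}$.
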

 
 \begin{lemma}\label{lem:equid m'<m}
For any $\epsilon>0$ and any relatively compact subset $A\times B\times I\subset M^{(2)}\times\mb R$ with $\tilde m,\tilde m'$-null boundary satisfying
  \eqref{eq:assumption on G} and $\diam I\leq \epsilon$,
  $$\tilde m(A\times B\times I)\geq e^{-7\epsilon\delta} \tilde m'(A\times B\times I).$$
 \end{lemma}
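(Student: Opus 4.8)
Lemma~\ref{lem:equid m'<m} is the reverse of Lemma~\ref{lem:equid m<m'}, and I would prove it by the same mechanism, now using the mixing \emph{upper} bound to control from above the number of loxodromic elements whose endpoints lie in the box. First, since $\tilde m$ and $\tilde m'$ are flow-invariant measures, the asserted inequality on $R:=A\times B\times I$ is insensitive to rescaling the interval $I$ (both sides scale linearly in $\Leb(I)$) and is additive over disjoint decompositions of $A\times B$; so it suffices to treat the case where $A,B$ are small (uniform continuity of $G$ on a compact neighbourhood of $\overline{A}\times\overline{B}$ then shrinks the constant in \eqref{eq:assumption on G} as much as we like), $\overline{A}\cap\overline{B}=\varnothing$ (automatic, since $\overline{A\times B}\subset M^{(2)}$), and $\eta:=\Leb(I)$ is of order $\epsilon$. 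Set $\Phi(t):=\sum_{\gamma\in\Gamma}\tilde m(R\cap\gamma\psi^{-t}R)$. By Observation~\ref{obs:consequence of mixing} applied with $A'=B'=R$, we have $\Phi(t)\to\tilde m(R)^2/\norm{m_\Gamma}$, hence $\int_a^{a+\eta}\Phi\to\eta\,\tilde m(R)^2/\norm{m_\Gamma}$ as $a\to+\infty$.

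Next I would evaluate $\Phi$ term by term. For each fixed $t$ only finitely many $\gamma$ give a nonzero summand — one needs $\gamma\overline{A}\cap\overline{A}\neq\varnothing$, $\gamma\overline{B}\cap\overline{B}\neq\varnothing$, and $\sigma(\gamma,\cdot)$ close to $t$ on $\overline{B}$, so that $\norm{\gamma}_\sigma$ is bounded and Proposition~\ref{prop:basic properties}\eqref{item:properness} makes the set finite — and by the closing lemma (Lemma~\ref{lem:closing}) each such $\gamma$ with $\norm{\gamma}_\sigma$ large is loxodromic, with $(\gamma^-,\gamma^+)$ near $\overline{A}\times\overline{B}$ and $\ell_\sigma(\gamma)$ within $O(\epsilon)$ of $t$. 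For such $\gamma$, contraction towards $\gamma^+$ off a neighbourhood of $\gamma^-$ gives the trapping inclusions $\gamma B\subseteq B$ and $A\subseteq\gamma A$; the Patterson--Sullivan transformation rule then yields $\bar\mu(\gamma A\cap A)=\bar\mu(A)$ and $\mu(\gamma B\cap B)=\int_B e^{-\delta\sigma(\gamma,y)}\,d\mu(y)$, while the GPS relation~\eqref{eqn:GPS property} together with \eqref{eq:assumption on G} forces $\abs{\sigma(\gamma,y)-\ell_\sigma(\gamma)}\le\epsilon$ for $y\in B$. Substituting into the product form of $\tilde m$ and integrating the flow-overlap factor gives
\[
\int_{\mathbb R}\tilde m(R\cap\gamma\psi^{-t}R)\,dt=e^{\pm O(\epsilon)\delta}\,\eta\,\tilde m(R)\,e^{-\delta\ell_\sigma(\gamma)},
\]
with a matching lower bound for $\int_a^{a+\eta}$ whenever $\ell_\sigma(\gamma)$ sits comfortably inside $[a,a+\eta]$. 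Comparing with the limit of $\int_a^{a+\eta}\Phi$ bounds, for all large $a$, the number of loxodromic $\gamma$ with $(\gamma^-,\gamma^+)\in A\times B$ and $\ell_\sigma(\gamma)$ in a fixed subwindow of $[a,a+\eta]$ by $e^{O(\epsilon)\delta}\,\tilde m(R)\,e^{\delta a}/(\norm{m_\Gamma}\eta)$.

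Finally I would sum these shell estimates over a cover of $[0,T_n]$ by such windows, add in the bounded set of loxodromics with $\norm{\gamma}_\sigma$ below the threshold (total contribution $O(1)$), and use the geometric series $\sum_k e^{\delta k\eta}\le e^{\delta T_n}/(1-e^{-\delta\eta})=e^{O(\epsilon\delta)}e^{\delta T_n}/(\delta\eta)$ to obtain
\[
\tilde m_{T_n}(R)=\eta\cdot\#\{\gamma\in\Gamma_{\lox}:(\gamma^-,\gamma^+)\in A\times B,\ \ell_\sigma(\gamma)\le T_n\}\le e^{7\epsilon\delta}\,\frac{e^{\delta T_n}}{\delta\norm{m_\Gamma}}\,\tilde m(R)+o(e^{\delta T_n});
\]
multiplying by $\norm{m_\Gamma}\delta e^{-\delta T_n}$ and letting $n\to\infty$ gives $\tilde m'(R)\le e^{7\epsilon\delta}\tilde m(R)$, which is the claim. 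The only delicate point — and the bulk of the work — is the constant: one must check that each multiplicative error (the variation of $G$ across $A\times B$, the gap between $\sigma(\gamma,\cdot)$ and $\ell_\sigma(\gamma)$ on $B$, the flow overlap versus $\eta$, and $1/(1-e^{-\delta\eta})$ versus $1/(\delta\eta)$) is of size $e^{\pm\epsilon\delta}$, which is precisely the role of \eqref{eq:assumption on G}, of $\diam I\le\epsilon$, and of the freedom to shrink $A$ and $B$. A second, more structural, obstacle is that mixing controls $\Phi$ only in the limit $t\to\infty$, not uniformly over the windows used; this is why one works with the window-integrals $\int_a^{a+\eta}\Phi$, whose limit is an immediate consequence of $\Phi(t)\to\tilde m(R)^2/\norm{m_\Gamma}$, rather than with $\Phi(t)$ pointwise, and why the finitely many short loxodromics must be discarded at the end.
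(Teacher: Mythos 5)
Your strategy is the same in outline as the paper's — lower-bound the mixing quantity, use the closing lemma to recognise each surviving summand as coming from a loxodromic $\gamma$ with $(\gamma^-,\gamma^+)\in A\times B$, compute the per-$\gamma$ contribution via the product structure of $\tilde m$, and then count. But the step that is supposed to close the argument, the claim of ``a matching lower bound for $\int_a^{a+\eta}$ whenever $\ell_\sigma(\gamma)$ sits comfortably inside $[a,a+\eta]$,'' is false. After shrinking $B$, the function $t\mapsto\tilde m(R\cap\gamma\psi^{-t}R)$ is, up to $e^{\pm O(\epsilon\delta)}$, a triangle in $t$ supported on $[\ell_\sigma(\gamma)-\eta,\ell_\sigma(\gamma)+\eta]$ with total integral $\approx\eta\,\tilde m(R)\,e^{-\delta\ell_\sigma(\gamma)}$. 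The support has width $2\eta$ while your window has width $\eta$, so $\int_a^{a+\eta}$ captures only between $50\%$ (if $\ell_\sigma(\gamma)$ is at an endpoint) and $75\%$ (if it is at the centre) of the total — never a fraction $1-O(\epsilon\delta)$. The shell estimate therefore loses a constant factor bounded away from $1$ that does not improve as $\epsilon\to 0$, and the conclusion would be $\tilde m(R)\gtrsim c\,\tilde m'(R)$ with some fixed $c<1$; that is too weak to feed into Lemma~\ref{lem:equid m<m'<m}, where one needs the error factors to tend to $1$ so that $\tilde m=\tilde m'$. Widening the window only trades the spillover loss for an $e^{\delta\cdot(\text{extra width})}$ loss, and the product of the two stays bounded below by a constant greater than $1$.

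The paper avoids shells of width $\eta$ entirely. It multiplies the mixing inequality by $e^{\delta t}$ and integrates once over the \emph{long} interval $[T_2,T]$ with $T\to\infty$. For each $\gamma$ with $(\gamma^-,\gamma^+)\in A'\times B'$ and $\ell_\sigma(\gamma)\in[T_2+2\epsilon,T-2\epsilon]$, the full $t$-support of $\tilde m(R\cap\psi^{-t}\gamma R)$ sits inside $[T_2,T]$, so the $t$-integral is computed exactly as $\Leb(I)^2$ by Fubini with no spillover; the only price for replacing $e^{\delta t}$ by $e^{\delta\ell_\sigma(\gamma)}$ on that support is $e^{\pm 2\epsilon\delta}$, and together with the $e^{\pm\epsilon\delta}$'s from \eqref{eq:assumption on G} and from $\abs{\sigma(\gamma,b)-\ell_\sigma(\gamma)}\leq\epsilon$, and the $e^{\pm 2\epsilon\delta}$ from evaluating at $T=T_n+2\epsilon$, one accounts for exactly $e^{-7\epsilon\delta}$ (the remaining $e^{-\epsilon'}$'s are sent to $1$). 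Your $\sum_k e^{\delta k\eta}$ bookkeeping is a discretisation of this same $e^{\delta t}$-weighted integral; discretising at scale $\eta$ is precisely what forces the lossy per-window capture. Incidentally, the opening reduction in your proposal (shrinking $A,B$ and rescaling $I$) is not needed and does not cure the defect: the width $2\eta$ of the triangle $\Leb(I\cap(I-s))$ is intrinsic and unaffected by the variation of $G$ on $A\times B$.
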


\subsection{Proof of Theorem~\ref{thm:equidistribution in paper}} Assuming Lemmas~\ref{lem:equid m<m'} and \ref{lem:equid m'<m} we prove Theorem~\ref{thm:equidistribution in paper}. We start with a general observation that shows that null boundary sets are abundant. 
 
 \begin{observation}\label{obs:null boundary abundant} If $X$ is a metric space and $\lambda$ is a locally finite Borel measure on $X$, then for any $x_0 \in X$ there is $r_0>0$ such that the set 
 $$
 \{ r<r_0 : \lambda(\partial B_r(x_0)) > 0 \}
 $$ 
 is countable. 
 \end{observation}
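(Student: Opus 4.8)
\subsection{Proof of Observation~\ref{obs:null boundary abundant}}

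The plan is the standard pigeonhole argument. First I would use local finiteness of $\lambda$ to choose $r_0 > 0$ so that $\lambda\big(B_{r_0}(x_0)\big) < +\infty$; such an $r_0$ exists because $x_0$ has some open neighborhood of finite $\lambda$-measure, and every sufficiently small open ball around $x_0$ is contained in it. The key geometric observation is that the metric spheres $S_r := \{x \in X : \dist(x,x_0) = r\}$, for $0 < r < r_0$, are pairwise disjoint and all contained in $B_{r_0}(x_0)$, and that the topological boundary $\partial B_r(x_0)$ is contained in $S_r$ (a boundary point of the open ball $B_r(x_0)$ is a limit of points at distance $< r$ and of points at distance $\geq r$, hence is at distance exactly $r$). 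Consequently the sets $\{\partial B_r(x_0)\}_{0 < r < r_0}$ are pairwise disjoint Borel subsets of the finite-measure set $B_{r_0}(x_0)$.

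Next, for each integer $n \geq 1$ consider
$$
E_n := \left\{ r \in (0, r_0) : \lambda\big(\partial B_r(x_0)\big) > \tfrac{1}{n} \right\}.
$$
If $E_n$ contained more than $n\,\lambda\big(B_{r_0}(x_0)\big)$ elements, then summing the measures of the corresponding disjoint boundary sets would exceed $\lambda\big(B_{r_0}(x_0)\big)$, a contradiction; hence each $E_n$ is finite. Therefore
$$
\{ r < r_0 : \lambda\big(\partial B_r(x_0)\big) > 0 \} = \bigcup_{n \geq 1} E_n
$$
is a countable union of finite sets, hence countable, which is the claim.

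There is no real obstacle here; the only point requiring a moment's care is the inclusion $\partial B_r(x_0) \subseteq S_r$, which is needed so that the relevant sets are genuinely disjoint for distinct radii, and the initial invocation of local finiteness to guarantee a ball of finite measure.
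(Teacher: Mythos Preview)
Your argument is correct. The paper's proof takes a slightly different route: it considers the monotone increasing function $F(r)=\lambda(B_r(x_0))$ on $[0,r_0]$ and notes that a monotone function has only countably many discontinuities, while any $r$ with $\lambda(\partial B_r(x_0))>0$ forces a jump of $F$ at $r$. Your approach bypasses this packaging and goes directly to the underlying pigeonhole: the boundaries $\partial B_r(x_0)\subset S_r$ are pairwise disjoint subsets of a finite-measure ball, so only countably many can have positive mass. The two arguments are equivalent in content (the countability of jumps of a monotone function is proved exactly by your pigeonhole), and neither is more general here; your version is perhaps more self-contained since it does not invoke the discontinuity fact as a black box, while the paper's version is marginally shorter by quoting it.
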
 
 
 \begin{proof} Fix $r_0>0$ such that $\lambda(B_{r_0}(x_0))<\infty$. Then the function 
 $$r \in [0,r_0] \mapsto F(r) = \lambda(B_r(x_0)) \in \Rb$$ is monotone increasing. Since a monotone increasing function can have only countably many points of discontinuity, we see that 
  $$
\{r < r_0 : F \text{ discontinuous at $r$}\} \supset  \{ r<r_0 : \lambda(\partial B_r(x_0)) > 0 \}
 $$ 
 is countable. 
 \end{proof}

 Next we use  Lemmas~\ref{lem:equid m<m'} and \ref{lem:equid m'<m} to prove the following. 
 
 \begin{lemma}\label{lem:equid m<m'<m}
  If $A\times B\times I\subset M^{(2)}\times\mb R$ is relatively compact and has $\tilde m,\tilde m'$-null boundary, then 
  $$\tilde m'(A\times B\times I) = \tilde m(A\times B\times I).$$
 \end{lemma}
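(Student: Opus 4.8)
The plan is to reduce the statement to Lemmas~\ref{lem:equid m<m'} and~\ref{lem:equid m'<m} by subdividing $A\times B\times I$ into finitely many small product pieces to which both lemmas apply, and then letting the scale of the subdivision tend to zero. Fix $\epsilon>0$. Suppose we have produced finite Borel partitions $A=\bigsqcup_i A_i$, $B=\bigsqcup_j B_j$ and $I=\bigsqcup_k I_k$ such that each $A_i\times B_j\times I_k$ is relatively compact, has $\tilde m$- and $\tilde m'$-null boundary, satisfies~\eqref{eq:assumption on G}, and has $\diam I_k\le\epsilon$. Then Lemmas~\ref{lem:equid m<m'} and~\ref{lem:equid m'<m} give $e^{-7\epsilon\delta}\,\tilde m'(A_i\times B_j\times I_k)\le\tilde m(A_i\times B_j\times I_k)\le e^{7\epsilon\delta}\,\tilde m'(A_i\times B_j\times I_k)$ for each $(i,j,k)$; summing over the finitely many indices and using finite additivity of $\tilde m$ and $\tilde m'$ yields $e^{-7\epsilon\delta}\,\tilde m'(A\times B\times I)\le\tilde m(A\times B\times I)\le e^{7\epsilon\delta}\,\tilde m'(A\times B\times I)$. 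Letting $\epsilon\to0$ (recall $\delta<\infty$) then gives the claimed equality. Both measures are finite on $A\times B\times I$ here — $\tilde m$ is locally finite, and $\tilde m'$ is finite on relatively compact sets by the lemma above bounding $\sup_{T\ge0}e^{-\delta T}\tilde m_T(K\times I)$ — so no issue arises in this last step.

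It remains to construct the partitions. The condition~\eqref{eq:assumption on G} will come from uniform continuity of $G$ and of $(x,y)\mapsto G(y,x)$ on the compact set $\overline{A\times B}\subseteq M^{(2)}$: this furnishes $\eta>0$ such that $\diam A_i<\eta$ and $\diam B_j<\eta$ force~\eqref{eq:assumption on G}. To build $A=\bigsqcup_i A_i$ with small diameters and null boundaries, around each point of the compact set $\overline A\subseteq M$ I would apply Observation~\ref{obs:null boundary abundant} to the (finite) pushforward of $\tilde m+\tilde m'$ under the first coordinate projection $M^{(2)}\times\Rb\to M$, obtaining balls of arbitrarily small radius whose boundary sphere is null for that pushforward; extract a finite subcover $U_1,\dots,U_N$ of $\overline A$ by such balls of radius small compared to $\eta$, and set $A_i:=A\cap(U_i\smallsetminus\bigcup_{l<i}U_l)$. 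Proceed analogously for $B$ using the second coordinate projection, and cut $I$ into subintervals of length $\le\epsilon$ whose interior endpoints avoid the at most countably many $t$ for which $\overline{A\times B}\times\{t\}$ fails to be $(\tilde m+\tilde m')$-null.

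The point that needs care is verifying that each piece $A_i\times B_j\times I_k$ genuinely has null boundary. Its topological boundary is contained in the union of the three ``slabs'' $\partial A_i\times\overline{B_j}\times\overline{I_k}$, $\overline{A_i}\times\partial B_j\times\overline{I_k}$ and $\overline{A_i}\times\overline{B_j}\times\partial I_k$; and since $\partial A_i\subseteq\partial A\cup\bigcup_{l\le i}\partial U_l$ (and similarly for $\partial B_j$ and $\partial I_k$), each slab decomposes into an ``outer'' part contained in $\partial(A\times B\times I)$ — which is $\tilde m$- and $\tilde m'$-null by hypothesis — and an ``inner'' part contained in finitely many preimages of the boundary spheres and partition endpoints that were chosen to be pushforward-null. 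Thus every piece has null boundary, and the reduction above applies. Apart from this boundary bookkeeping, the proof is routine, using only Observation~\ref{obs:null boundary abundant}, uniform continuity of $G$, and finite additivity.
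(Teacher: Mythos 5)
Your proposal is correct and takes essentially the same route as the paper: subdivide $A\times B\times I$ into small product pieces with $\tilde m,\tilde m'$-null boundary to which Lemmas~\ref{lem:equid m<m'} and~\ref{lem:equid m'<m} apply, sum the resulting two-sided $e^{\pm 7\epsilon\delta}$ estimates, and let $\epsilon\to 0$; the paper builds its partition by the same set-difference trick from a cover chosen via Observation~\ref{obs:null boundary abundant} and uniform continuity of $G$.

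One statement needs a small repair. The pushforward of $\tilde m+\tilde m'$ under the first coordinate projection $M^{(2)}\times\Rb\to M$ is not finite, nor even locally finite (already the $dt$-factor of $\tilde m$ assigns infinite mass to the preimage of any nonempty open set charged by $\bar\mu$), so Observation~\ref{obs:null boundary abundant} does not apply to it as written. The fix is immediate and compatible with the rest of your argument: push forward the restriction $(\tilde m+\tilde m')|_{\overline{A\times B\times I}}$, which is a finite measure on $M$ since $A\times B\times I$ is relatively compact and both $\tilde m$ and $\tilde m'$ are finite on relatively compact sets. Your boundary bookkeeping in the last paragraph only ever invokes nullity of sets contained in $\overline{A\times B\times I}$, so restricting first costs nothing.

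It is also worth noting that your pushforward of the combined measure $\tilde m+\tilde m'$ handles the $\tilde m'$-null-boundary requirement directly and cleanly; the conditions the paper records on its cover, namely $\bar\mu(\partial A_i)=0$, $\mu(\partial B_j)=0$, $\mathrm{Leb}(\partial I_k)=0$, control $\tilde m$-nullity via the product structure of $\tilde m$ but do not by themselves imply $\tilde m'$-nullity, since $\tilde m'$ is not a priori a product over the three factors. Your version makes this step explicit.
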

 
 \begin{proof}  Fix $\epsilon>0$. By the relative compactness of $A\times B\times I$, the continuity of $G$, and Observation~\ref{obs:null boundary abundant}, we can find finite covers $\bigcup_i A_i \supset A$, $\bigcup_j B_j \supset B$ and $\bigcup_k I_k \supset I$ such that for all $i,j,k$, we have
  \begin{itemize}
   \item $\bar\mu(\partial A_i)=0$, $\mu(\partial B_j) =0$, and ${\rm Leb}(\partial I_k)=0$;
   \item $A_i\times B_j\times I_k$ is relatively compact in $M^{(2)}\times\mb R$ and satisfies \eqref{eq:assumption on G};
   \item $\diam I_k\leq \epsilon$.
  \end{itemize}
 
  Set $A'_i:=A\cap A_i-(A_{i-1}\cup\dots\cup A_1)$, $B'_j:=B\cap B_j-(B_{j-1}\cup\dots\cup B_1)$ and $I'_k:=I\cap I_k-(I_{k-1}\cup\dots\cup I_1)$ for all $i,j,k$. Then 
    \begin{equation*}
   A\times B\times I = \bigsqcup_{i,j,k} A'_i\times B'_j\times I'_k.
  \end{equation*}
  Further, each $A'_i\times B'_j\times I'_k$ has $\tilde m,\tilde m'$-null boundary, hence by Lemmas~\ref{lem:equid m<m'} and \ref{lem:equid m'<m} we have
  $$e^{-7\epsilon\delta}\tilde m'(A'_i\times B'_j\times I'_k)\leq \tilde m(A'_i\times B'_j\times I'_k)\leq e^{7\epsilon\delta} \tilde m'(A'_i\times B'_j\times I'_k).$$
  So  
  $$e^{-7\epsilon\delta}\tilde m'(A\times B\times I) \leq \tilde m(A\times B\times I) \leq e^{7\epsilon\delta} \tilde m'(A\times B\times I).$$
Since $\epsilon > 0$ was arbitrary, $\tilde m(A\times B\times I)=\tilde m'(A\times B\times I)$. 
 \end{proof}

\begin{proof}[Proof of Theorem~\ref{thm:equidistribution in paper}]
The collection of relatively compact subsets $A\times B\times I\subset M^{(2)}\times\mb R$ with $\tilde m,\tilde m'$-null boundary is a $\pi$-system 
(i.e.\ it is closed under finite intersection) and by Observation~\ref{obs:null boundary abundant} it generates the Borel sigma-algebra.
This completes the proof, since two measures are equivalent if they agree on a $\pi$-system which generates the Borel sigma-algebra.
\end{proof}

\subsection{Proofs of Lemma \ref{lem:equid m<m'} and \ref{lem:equid m'<m}}
 
In the proofs of Lemma \ref{lem:equid m<m'} and \ref{lem:equid m'<m} we will need the following uniform closing lemma for GPS systems.
Recall that $\psi^t$ denotes the flow on $\psi^t(x,y,s) = (x,y,s+t)$ on  $\tilde U_\Gamma$.

\begin{lemma}\label{lem:closing}
Suppose that $A\times B\times I\subset M^{(2)}\times\mb R$ is open and relatively compact, and that  $A'\times B'\times I'$ is a compact subset of $ A\times B\times I$.
Then there exists $T$ such that: if $\gamma\in\Gamma$, $t\geq T$ and 
$$A'\times B'\times I' \cap \psi^{-t}\gamma(A'\times B'\times I') \neq \varnothing,$$
 then 
\begin{enumerate} 
\item $\gamma^{-1}(A)\times\gamma( B)\subset A\times B$, 
\item $\gamma$ is loxodromic with $(\gamma^-,\gamma^+)\in A\times B$.
\end{enumerate} 
\end{lemma}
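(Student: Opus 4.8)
\emph{Proof sketch.} The plan is to argue by contradiction, extracting from a hypothetical family of counterexamples an escaping sequence of group elements with north--south dynamics, and then locating its attracting and repelling fixed points inside $B$ and $A$ respectively. So suppose no such $T$ exists. Then there are $\gamma_n\in\Gamma$, $t_n\to\infty$, and $w_n=(x_n,y_n,s_n)\in A'\times B'\times I'$ with $\gamma_n^{-1}\psi^{t_n}(w_n)\in A'\times B'\times I'$, yet such that at least one of (1),(2) fails for $(\gamma_n,t_n)$. Unwinding the definitions of the flow and the $\Gamma$-action, the condition on $w_n$ says $\gamma_n^{-1}x_n\in A'$, $\gamma_n^{-1}y_n\in B'$, and $s_n+t_n+\sigma(\gamma_n^{-1},y_n)\in I'$; since $s_n\in I'$ and $I'$ is bounded, the last relation forces $\sigma(\gamma_n^{-1},y_n)\to-\infty$. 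Because $A'\times B'\times I'$ is relatively compact in $M^{(2)}\times\mathbb{R}$, the compact sets $\overline{A}$ and $\overline{B}$ are disjoint; after passing to subsequences I would assume $x_n\to x_\infty\in A'$, $y_n\to y_\infty\in B'$, $\gamma_n^{-1}x_n\to\xi\in A'$ and $\gamma_n^{-1}y_n\to\eta\in B'$, and note $x_\infty\neq y_\infty$.

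First I would check that $\{\gamma_n\}$ is escaping: for fixed $g\in\Gamma$ the function $z\mapsto\sigma(g^{-1},z)$ is bounded on the compact set $B'$, so $\sigma(\gamma_n^{-1},y_n)\to-\infty$ rules out a constant subsequence. Passing to a subsequence of distinct elements and invoking Proposition~\ref{prop:compactifying}, I would arrange that $\gamma_n\to a$ and $\gamma_n^{-1}\to b$ in the compactification $\Gamma\sqcup M$, with $a,b\in\Lambda(\Gamma)$, and $\gamma_n|_{M\smallsetminus\{b\}}\to a$, $\gamma_n^{-1}|_{M\smallsetminus\{a\}}\to b$ locally uniformly. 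To identify $a$: the cocycle identity gives $\sigma(\gamma_n,\gamma_n^{-1}y_n)=-\sigma(\gamma_n^{-1},y_n)\to+\infty$, so Proposition~\ref{prop:basic properties}\eqref{item:a technical fact}, applied to the escaping sequence $\gamma_n$ and the points $\gamma_n^{-1}y_n$, gives $\dist(y_n,\gamma_n)=\dist(\gamma_n(\gamma_n^{-1}y_n),\gamma_n)\to0$; combined with $y_n\to y_\infty$ this yields $a=y_\infty\in B'$.

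The key point is the identification of $b$, and this is where the product structure of the box is essential: one cannot use the forward points $y_n$ here, precisely because $y_n\to y_\infty=a$ is the point at which $\gamma_n^{-1}$ fails to converge locally uniformly. Instead I would use the backward points $x_n$, which lie in $A'$: since $A'$ is a compact subset of $M\smallsetminus\{a\}$ (as $A'\cap B'=\emptyset$ and $a\in B'$), local uniform convergence gives $\gamma_n^{-1}x_n\to b$, and comparing with $\gamma_n^{-1}x_n\to\xi$ forces $b=\xi\in A'$. Now $a\in\overline{B}$, $b\in\overline{A}$ and $\overline{A}\cap\overline{B}=\emptyset$, so $a\neq b$; by Tukia's Lemma~\ref{lem:char of loxodromic}, $\gamma_n$ is loxodromic for all large $n$ with $\gamma_n^+\to a$ and $\gamma_n^-\to b$. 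Since $A$ and $B$ are open and contain $b$ and $a$ respectively, conclusion (2) holds for all large $n$. For (1): $b\notin\overline{B}$ forces $\gamma_n(\overline{B})$ to shrink to $\{a\}\subset B$, hence $\gamma_n(B)\subset B$ for large $n$; symmetrically $a\notin\overline{A}$ gives $\gamma_n^{-1}(A)\subset A$ for large $n$, so $\gamma_n^{-1}(A)\times\gamma_n(B)\subset A\times B$. Thus both conclusions hold for all large $n$, contradicting the choice of the $\gamma_n$.

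I expect the one genuinely delicate step to be the location of $b$: it is tempting, but wrong in general, to try to read off both fixed points from a single coordinate sequence, and the argument really does need the hypothesis that the relatively compact box is a product $A\times B$ with $\overline{A}$ and $\overline{B}$ disjoint, so that the two coordinate sequences separately detect the repelling and attracting limits of $\gamma_n$. The remaining verifications (unwinding the flow/action formulas, the boundedness bookkeeping for the $I'$-coordinate, and extracting the convergent subsequences) are routine.
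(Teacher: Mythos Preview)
Your proof is correct and follows essentially the same approach as the paper: argue by contradiction, use the bounded third coordinate to force $\sigma(\gamma_n^{-1},y_n)\to-\infty$ and hence that $\{\gamma_n\}$ is escaping, then apply Proposition~\ref{prop:basic properties}\eqref{item:a technical fact} to locate the attracting limit in $B'$ and use local uniform convergence on the $A'$-coordinate to locate the repelling limit in $A'$, before concluding via Lemma~\ref{lem:char of loxodromic}. Your remark that the product structure (with $\overline{A}\cap\overline{B}=\varnothing$) is what makes the identification of the repelling point work is exactly the point the paper's proof uses as well.
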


\begin{proof}
If not, then there exist sequences $\{\gamma_n\}\subset\Gamma$, $\{t_n\}\subset\mb R$ and 
$$(x_n,y_n,s_n)\in A'\times B'\times I'\cap \psi^{-t_n}\gamma_n(A'\times B'\times I'),$$ 
such that $t_n\to+\infty$ and each $\gamma_n$ fails the conclusion of the lemma.

Since $\psi^{t_n}\gamma_n^{-1}(x_n,y_n,s_n)\in A'\times B'\times I'$, the sequence
$\{s_n+t_n+\sigma(\gamma_n^{-1},y_n)\}$ is bounded, 
so $\sigma(\gamma_n^{-1},y_n)\to-\infty$ and hence $\{\gamma_n\}$ is an escaping sequence.
Then, passing to a subsequence, we may assume that the sequences $\gamma_n^{\pm1} \to p^{\pm}\in M$.

 We claim that $(p^-, p^+) \in A' \times B'$. Since $\sigma(\gamma_n,\gamma_n^{-1}y_n)=-\sigma(\gamma_n^{-1},y_n)\to+\infty$, Proposition~\ref{prop:basic properties}\eqref{item:a technical fact}
 implies that $y_n\to p^+$.  Therefore, $p^+\in B'$. 
 If $K$ is any compact subset of $M\smallsetminus\{p^+\}$, then $\gamma^{-1}_n(K)\to p^-$.
 Since $A'$ is a compact subset of $M\smallsetminus B\subset M\smallsetminus \{p^+\}$ and $x_n\in A'$ for all $n$, we see that 
 $\gamma_n^{-1}(x_n)\to p^-$ and  so $p^-\in A'$.  

Now since $\bar A$ is a compact subset of $M\smallsetminus \overline{B} \subset M \smallsetminus \{p^+\}$, $p^- \in A$, and $A$ is open,  we see that
$\gamma_n^{-1}(A)\subset A$ for all large enough $n$. Similarly, $\gamma_n(B)\subset B$ for $n$ large. Further, since $p^- \neq p^+$, Lemma \ref{lem:char of loxodromic} implies that for $n$ large $\gamma_n$ is loxodromic and $(\gamma_n^-,\gamma_n^+) \to (p^-,p^+) \in A \times B$. Since $A \times B$ is open, then $(\gamma_n^-,\gamma_n^+) \in A \times B$ for $n$ large. We have achieved a contradiction. 
\end{proof}

 \begin{proof}[Proof of Lemma~\ref{lem:equid m<m'}] 
It suffices to fix $\epsilon'>0$ and show that
 $$\tilde m(A\times B\times I)\leq e^{4\epsilon'} e^{7\epsilon\delta} \tilde m'(A\times B\times I).$$

We start by reducing to the setting of Lemma~\ref{lem:closing}. By considering its interior (which has full $\tilde m,\tilde m'$-measure since the boundary has zero $\tilde m,\tilde m'$-measure), we can assume that $A\times B\times I$ is open. By inner regularity, we find a compact subset $A'\times B'\times I'\subset A\times B\times I$ such that 
 \begin{equation}\label{eqn:inner regularity in Lemma 6.2}
 \tilde m(A\times B\times I)\leq e^{\epsilon'} \tilde m(A'\times B'\times I').
 \end{equation} 
Then by Lemma~\ref{lem:closing} there exists $T_1$ such that: if $\gamma\in\Gamma$, $t\geq T_1$ and 
 $$
 A'\times B'\times I' \cap \psi^{-t}\gamma(A'\times B'\times I') \neq \varnothing,$$
 then $\gamma^{-1}A\times\gamma B\subset A\times B$, 
 and $\gamma$ is loxodromic with $(\gamma^-,\gamma^+)\in A\times B$. 
 We will show that in addition
 \begin{enumerate}[label=(\alph*),series=equid]
  \item \label{item:closing1} $|\ell_\sigma(\gamma)-\sigma(\gamma,b)|\leq \epsilon$ for every $b\in B$,
  \item \label{item:closing2} $|\ell_\sigma(\gamma)-t|\leq 2\epsilon$.
 \end{enumerate}
 In particular, 
\begin{enumerate}[resume*=equid]  
\item \label{item:closing3} $\gamma$ is in $\Theta^{t+2\epsilon} :=\{\gamma\in\Gamma_{\lox}:\ \gamma^{-1}A\times\gamma B\subset A\times B,\ (\gamma^-,\gamma^+) \in A \times B, \ \ell_\sigma(\gamma)\leq t+2\epsilon\}.$
\end{enumerate}
 
 Indeed \ref{item:closing1} is due to assumption \eqref{eq:assumption on G} and the fact that
 \begin{align}\label{eq:l=sigma}
  \ell_\sigma(\gamma)-\sigma(\gamma,b) & = \bar\sigma(\gamma,\gamma^-)+\sigma(\gamma,\gamma^+)  - (\bar\sigma(\gamma,\gamma^-)+\sigma(\gamma,b)) \\
  \nonumber & = G(\gamma\gamma^-,\gamma\gamma^+) - G(\gamma^-,\gamma^+) - G(\gamma\gamma^-,\gamma b) + G(\gamma^-,b) \\
  \nonumber & = G(\gamma^-,b) - G(\gamma^-,\gamma b).
 \end{align}
 
Moreover, if $(x,y,s)\in A'\times B'\times I'\cap \psi^{-t}\gamma(A'\times B'\times I')$ then both $s$ and $s+t-\sigma(\gamma,y)$ are in $I'$ so 
 $$|t-\sigma(\gamma,y)|\le \diam I\leq\epsilon,$$
so \ref{item:closing2} and \ref{item:closing3} hold.

By Observation~\ref{obs:consequence of mixing} (mixing) we may choose $T_2>T_1$ such that  for any $t\geq T_2$ 
 \begin{equation}\label{eqn:mixing applied to A'B'I'}
 \frac{\tilde m(A'\times B'\times I')^2}{\norm{m_\Gamma}} \leq e^{\epsilon'} \sum_{\gamma\in\Gamma} \tilde m(A'\times B'\times I'\cap \psi^{-t}\gamma (A'\times B'\times I')).
 \end{equation}
 Finally,  we may choose $T_3>T_2$,
 so that if $T>T_3$, then
 \begin{equation}\label{eqn:defn of T3}
1\leq e^{\epsilon'}\delta e^{-\delta T}\int_{T_2}^Te^{\delta t}dt.
\end{equation} 
 
 We now fix $(a_0,b_0)\in A\times B$ and $T>T_3$. By Equations~\eqref{eqn:inner regularity in Lemma 6.2} and~\eqref{eqn:defn of T3},
 \begin{align*}
 \frac{\tilde m(A\times B\times I)^2}{\norm{m_\Gamma}} \leq e^{2\epsilon'} \frac{\tilde m(A'\times B'\times I')^2}{\norm{m_\Gamma}} \leq e^{3\epsilon'}  \delta e^{-\delta T} \int_{t=T_2}^T \frac{\tilde m(A'\times B'\times I')^2}{\norm{m_\Gamma}}  e^{\delta t}dt.
 \end{align*} 
 So by Equation~\eqref{eqn:mixing applied to A'B'I'} and ~\ref{item:closing3},
  \begin{align*}
&   \frac{\tilde m(A\times B\times I)^2}{\norm{m_\Gamma}} \leq e^{4\epsilon'} \delta e^{-\delta T} \sum_{\gamma\in\Gamma}\int_{t=T_2}^T \tilde m(A'\times B'\times I'\cap \psi^{-t}\gamma (A'\times B'\times I')) e^{\delta t}dt \\
& \leq e^{4\epsilon'} \delta e^{-\delta T} \sum_{\gamma\in\Theta^{T+2\epsilon} }\int_{t=T_2}^T \tilde m(A'\times B'\times I'\cap \psi^{-t}\gamma (A'\times B'\times I')) e^{\delta t}dt.
\end{align*}
Notice that assumption~\eqref{eq:assumption on G} implies that the integrand satisfies 
 \begin{align*}
\tilde m & (A'\times B'\times I'\cap \psi^{-t}\gamma (A'\times B'\times I')) \leq \tilde m(A\times \gamma B\times I) \\ 
& \leq e^{\delta \epsilon}e^{\delta G(a_0, b_0)} \bar\mu(A) \mu(\gamma B) \mr{Leb}(I). 
 \end{align*}
 Further, by  \ref{item:closing1},
 \begin{align*}
  \mu(\gamma B) =  \int_B e^{-\delta \sigma(\gamma, b)} d\mu \leq e^{\epsilon\delta}e^{-\delta\ell_\sigma(\gamma)}\mu(B).
  \end{align*}
  By \ref{item:closing2}, 
  \begin{align*}
  \{t : A'\times B'\times I'\cap \psi^{-t}\gamma (A'\times B'\times I') \neq \varnothing\} \subset [\ell_\sigma(\gamma)-2\epsilon, \ell_\sigma(\gamma)+2\epsilon]
  \end{align*}
  and has Lebesgue measure at most $\mr{Leb}(I)$. Hence 
  \begin{align*}
  \frac{\tilde m(A\times B\times I)^2}{\norm{m_\Gamma}}  \leq e^{4\epsilon'} e^{4\epsilon\delta}  
      \cdot e^{\delta G(a_0,b_0)} \bar\mu(A) \mu(B) \mr{Leb}(I) \cdot \delta e^{-\delta T}\mr{Leb}(I)\# \big(\Theta^{T+2\epsilon}\big).
  \end{align*}
So by assumption \eqref{eq:assumption on G} and the definition of $\tilde{m}_{T+2\epsilon}$, 
  \begin{align*}
  \frac{\tilde m(A\times B\times I)^2}{\norm{m_\Gamma}}  \leq e^{4\epsilon'} e^{5\epsilon\delta}  \tilde m(A \times B \times I) \delta e^{-\delta T}\tilde{m}_{T+2\epsilon}(A\times B\times I).
  \end{align*}

By applying the above estimate to the sequence $\{T_n-2\epsilon\}$, we see that,
for all sufficiently large $n$,
$$\tilde m(A\times B\times I)^2\leq e^{4\epsilon'} e^{5\epsilon\delta}\tilde m(A\times B\times I)\cdot e^{2\delta\epsilon}
\left(\norm{m_\Gamma}\delta e^{-\delta T_n}\tilde{m}_{T_n}(A\times B\times I)\right).$$
Letting $n\to\infty$, we see that
\begin{equation*}
  \tilde m(A\times B\times I)\leq e^{4\epsilon'} e^{7\epsilon\delta} \tilde m'(\bar A\times \bar B\times\bar  I)=e^{4\epsilon'} e^{7\epsilon\delta} \tilde m'( A\times  B\times I).
\end{equation*}
Notice that the last equality used the fact  that $A\times B\times I$ has $\tilde m'$-null boundary.
 \end{proof}

 \begin{proof}[Proof of Lemma~\ref{lem:equid m'<m}] The proof is very similar to the proof of Lemma~\ref{lem:equid m<m'}.

 It suffices to fix $\epsilon'>0$ and show that
 $$\tilde m'(A\times B\times I)\leq e^{2\epsilon'} e^{7\epsilon\delta} \tilde m(A\times B\times I).$$
As in the proof of Lemma~\ref{lem:equid m<m'}, we can assume that $A\times B\times I$ is open and find a compact subset $A'\times B'\times I'\subset A\times B\times I$ with $\tilde m'$-null boundary such that 
 \begin{equation}\label{eqn:inner regularity in Lemma 6.3}
 \tilde m'(A\times B\times I)\leq e^{\epsilon'} \tilde m'(A'\times B'\times I').
 \end{equation}  
 
For $S<T$, define
 $$\Gamma_S^T :=\{\gamma\in\Gamma_{\lox}:\ (\gamma^-,\gamma^+)\in A'\times B',\ S\leq \ell_\sigma(\gamma)\leq T\}.$$
 Note that if $\gamma\in\Gamma_{\lox}$ and $(\gamma^-,\gamma^+)\in A'\times B'$, then 
  $$
 A'\times B'\times I' \cap \psi^{-\ell_\sigma(\gamma)}\gamma(A'\times B'\times I') \neq \varnothing.
 $$
So by Lemma~\ref{lem:closing} there exists $T_1$ such that if $\gamma\in\Gamma^\infty_{T_1}$, then
 \begin{enumerate}[resume*=equid]
  \item \label{item:closing0bis} $\gamma^{-1}A\times\gamma B\subset A\times B$.
   \end{enumerate}
   Moreover: 
    \begin{enumerate}[resume*=equid]
  \item \label{item:closing1bis} $|\ell_\sigma(\gamma)-\sigma(\gamma,b)|\leq \epsilon$ for every $b\in B$.
  \item \label{item:closing2bis} If $A\times B\times I \cap \psi^{-t}\gamma(A\times B\times I) \neq \varnothing$ for some $t$, then $|t-\ell_\sigma(\gamma)|\leq 2\epsilon$.
 \end{enumerate}
 Notice that \ref{item:closing1bis} follows from the computation in \eqref{eq:l=sigma} and  \ref{item:closing2bis} is an immediate consequence of \ref{item:closing1bis} and the fact that $\diam I\leq \epsilon$.

 By Observation~\ref{obs:consequence of mixing} (mixing) we may choose $T_2>T_1$ such that  for any $t\geq T_2$ 
 $$
 \frac{\tilde m(A\times B\times I)^2}{\norm{m_\Gamma}} \geq e^{-\epsilon'} \sum_{\gamma\in\Gamma} \tilde m(A\times B\times I\cap \psi^{-t}\gamma (A\times B\times I)).
 $$
 
  We now fix $(a_0,b_0)\in A\times B$ and $T>T_2$. Using the fact that $\delta e^{-\delta T} \int_{T_2}^T e^{\delta t}dt < 1$ and ~\ref{item:closing2bis}, 
    \begin{align*}
  &\frac{\tilde m(A\times B\times I)^2}{\norm{m_\Gamma}}  
  \geq \delta e^{-\delta T} \int_{t=T_2}^T \frac{\tilde m(A\times B\times I)^2}{\norm{m_\Gamma}}   e^{\delta t}\ dt \\ \noalign{\vskip0pt}
  & \geq e^{-\epsilon'} \delta e^{-\delta T} e^{-2\delta \epsilon} \int_{t=T_2}^T\sum_{\gamma\in\Gamma^{T-2\epsilon}_{T_2+2\epsilon}} {\tilde m(A\times B\times I\cap \psi^{-t}\gamma (A\times B\times I))}e^{\delta \ell_\sigma(\gamma)} \ dt.
      \end{align*}
            By ~\ref{item:closing0bis}, in the integrand we have $A \cap \gamma A = A$ and $B \cap \gamma B = \gamma B$, so by assumption \eqref{eq:assumption on G} we have
      $$
      \tilde m (A\times B\times I\cap \psi^{-t}\gamma (A\times B\times I)) \geq e^{-\delta \epsilon} e^{\delta G(a_0, b_0)} \bar\mu(A) \int_{\gamma B} {\rm Leb}(I \cap  (I-t+\sigma(\gamma,b)))d\mu(b). 
      $$
      Also, since $\gamma \in \Gamma^{T-2\epsilon}_{T_2+2\epsilon}$, property~\ref{item:closing1bis} implies that
      \begin{align*}
       \int_{t=T_2}^T  & {\rm Leb}(I \cap  (I-t+\sigma(\gamma,b)))dt =\int_{s \in I} {\rm Leb}([T_2,T] \cap (I-s+\sigma(\gamma, b)) )ds \\
       & =\int_{s \in I} {\rm Leb}(I-s+\sigma(\gamma, b))ds= {\rm Leb}(I)^2. 
       \end{align*}
       Combining the above estimates we have, 
       \begin{align*}
       &\frac{\tilde m(A\times B\times I)^2}{\norm{m_\Gamma}}  \geq  e^{-\epsilon'} \delta e^{-\delta T} e^{-3\delta \epsilon}e^{\delta G(a_0, b_0)} \bar\mu(A)  {\rm Leb}(I)^2\sum_{\gamma\in\Gamma^{T-2\epsilon}_{T_2+2\epsilon}} e^{\delta \ell_\sigma(\gamma)}\mu(\gamma B).
       \end{align*} 
       By~\ref{item:closing1bis}, we have 
       $$
       \mu(\gamma B) = \int_{B} e^{-\delta \sigma(\gamma, b)} d \mu(b) \geq e^{-\delta \epsilon} e^{-\delta \ell_\sigma(\gamma)} \mu(B). 
       $$
       Hence 
       $$
       \frac{\tilde m(A\times B\times I)^2}{\norm{m_\Gamma}} \geq e^{-\epsilon'} e^{-4\delta \epsilon} e^{\delta G(a_0, b_0)} \bar\mu(A) \mu(B){\rm Leb}(I) \cdot \delta e^{-\delta T} \#\left(\Gamma^{T-2\epsilon}_{T_2+2\epsilon}\right) {\rm Leb}(I).
       $$
       By assumption \eqref{eq:assumption on G}, 
       $$
       e^{\delta G(a_0, b_0)} \bar\mu(A) \mu(B){\rm Leb}(I) \geq e^{-\delta \epsilon}\tilde m(A \times B \times I)
       $$
       and by definition 
       $$
       \delta e^{-\delta T} \#\left(\Gamma^{T-2\epsilon}_{T_2+2\epsilon}\right){\rm Leb}(I) = \delta e^{-\delta T} \left(\tilde{m}_{T-2\epsilon} - \tilde{m}_{T_2+2\epsilon}\right)(A'\times B' \times I).
       $$
       So by applying the above estimates to the sequence $\{T_n+2\epsilon\}$, we see that,
for all sufficiently large $n$,
$$\tilde m(A\times B\times I)^2\geq e^{-\epsilon'} e^{-5\epsilon\delta}\tilde m(A\times B\times I)\cdot e^{-2\delta\epsilon}
\left(\norm{m_\Gamma}\delta e^{-\delta T_n}(\tilde{m}_{T_n}-\tilde{m}_{T_2+2\epsilon})(A'\times B'\times I)\right).$$
Sending $n \to \infty$ and using Equation~\eqref{eqn:inner regularity in Lemma 6.3} and that $A'\times B'\times I'$ has $\tilde m'$-null boundary, we obtain 
   $$
   \tilde m(A\times B\times I) \geq     e^{-\epsilon'} e^{-7\epsilon\delta} \tilde m'(A' \times B' \times I') \geq e^{-2\epsilon'}e^{-7\epsilon\delta} \tilde m'(A \times B \times I), 
   $$
which completes the proof. 
\end{proof}


\section{Structure of the flow space}\label{sec:structure of the flow space}


For the rest of the section fix a continuous GPS system $(\sigma,\bar\sigma,G)$ for a convergence group $\Gamma\subset\mathsf{Homeo}(M)$. In this section we study the flow space $M^{(2)} \times \Rb$ and the flow invariant subset $\tilde U_\Gamma = \Lambda(\Gamma)^{(2)} \times \Rb$. Our main aim is to show that, in the geometrically finite case, the quotient $U_\Gamma = \Gamma \backslash \tilde U_\Gamma$ decomposes into a compact part and a finite number of ``cusps.''

To accomplish this we will first explain how to compactify $M^{(2)} \times \Rb$ using $M$ and then define a notion of horoballs based at bounded parabolic points. 

\subsection{Compactifying the flow space} 

In the arguments that follow it will be helpful to observe that the space $M^{(2)} \times \Rb$ can be compactified using $M$. 
 
\begin{proposition}\label{prop:compactifying flowspace} 
The set $\overline{M^{(2)}\times \Rb } : = \left( M^{(2)}\times \Rb\right) \sqcup M$ has a topology with the following properties: 
\begin{enumerate}
\item $\overline{M^{(2)}\times \Rb }$ is a compact metrizable space.
\item The inclusions $M^{(2)}\times \Rb  \hookrightarrow \overline{M^{(2)}\times \Rb } $ and $M \hookrightarrow \overline{M^{(2)}\times \Rb } $ are embeddings, and $M^{(2)}\times \Rb$ is open.
\item The action of $\Gamma$ on $\overline{M^{(2)}\times \Rb }$ is by homeomorphisms. 
\item\label{item:sequences in compactification of flow space} A sequence $\{v_n=(x_n,y_n,t_n)\}$ in $\overline{M^{(2)}\times \Rb }$ converges to  $z \in M$ if and only if every subsequence $\{ v_{n_j}\}$ has a further subsequence $\{ v_{n_{j_k}}\}$where  either
\begin{enumerate}
\item
$x_{n_{j_k}} \to z$ and $t_{n_{j_k}} \to -\infty$,
\item
$y_{n_{j_k}} \to z$ and  $t_{n_{j_k}} \to +\infty$, or
\item
$x_{n_{j_k}} \to z$  and $y_{n_{j_k}} \to z$.
\end{enumerate}
\end{enumerate}
\end{proposition}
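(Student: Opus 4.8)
The plan is to realize $\overline{M^{(2)}\times\Rb}$ as an explicit quotient of a compact metric space. Via the increasing homeomorphism $t\mapsto s:=e^{t}/(1+e^{t})$ of $\Rb$ onto $(0,1)$, I identify $M^{(2)}\times\Rb$ with the open subset $M^{(2)}\times(0,1)$ of $Y:=M\times M\times[0,1]$, which is compact metrizable. A point of $M^{(2)}\times(0,1)$ with $s$ close to $1$ should be declared close to its forward endpoint, one with $s$ close to $0$ close to its backward endpoint, and one whose two endpoints are close to each other close to their near-common value; so I let $\sim$ be the smallest equivalence relation on $Y$ with
$$(x,y,1)\sim(x',y,1),\qquad (x,y,0)\sim(x,y',0),\qquad (z,z,s)\sim(z,z,0)$$
for all $x,x',y,y',z\in M$, $s\in[0,1]$, and set $Z:=Y/{\sim}$ with the quotient topology. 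A direct check shows the class of $(x,y,s)$ with $x\neq y$, $0<s<1$ is $\{(x,y,s)\}$, while for each $z\in M$ the set
$$[z]:=\big(\{z\}\times\{z\}\times[0,1]\big)\cup\big(M\times\{z\}\times\{1\}\big)\cup\big(\{z\}\times M\times\{0\}\big)$$
is a single class; thus $Z$ is canonically $\big(M^{(2)}\times(0,1)\big)\sqcup M=\overline{M^{(2)}\times\Rb}$ as a set, and I take this as the desired space, transported back along $s\leftrightarrow t$.

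For properties (1) and (2): the set $M^{(2)}\times(0,1)$ is saturated (a union of singleton classes) and open in $Y$, so the quotient map $q\colon Y\to Z$ sends it homeomorphically onto an open subset of $Z$; and $z\mapsto q(z,z,0)$ is a continuous injection of $M$ into $Z$, hence — once $Z$ is known to be Hausdorff — an embedding with closed image $Z\smallsetminus q(M^{(2)}\times(0,1))$. That $Z$ is compact Hausdorff, and hence metrizable (a quotient of a compact metric space by a closed equivalence relation is compact metrizable — e.g.\ because $q^{*}$ isometrically embeds $C(Z)$ into the separable space $C(Y)$), reduces to checking that $\sim$ is a \emph{closed} subset of $Y\times Y$. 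This is the one step requiring care, and I expect it to be the main obstacle: given $(p_{n},q_{n})$ in $\sim$ with $p_{n}\to p$, $q_{n}\to q$, either infinitely many pairs lie in the diagonal, in which case so does the limit, or $p_{n},q_{n}\in[z_{n}]$ for some $z_{n}\in M$; passing to subsequences one may assume $z_{n}\to z$ and that $p_{n}$ (resp.\ $q_{n}$) is of a fixed one of the three types appearing in the description of $[z_{n}]$, and then its limit lies in the corresponding piece of $[z]$, so the limit pair lies in $[z]\times[z]$, which is contained in $\sim$.

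For property (4): since $Y$ is compact metric and $Z$ is metrizable, $q(p_{n})\to q(p)$ in $Z$ if and only if every subsequence of $(p_{n})$ has a further subsequence converging in $Y$ to some point of the class $[p]$. Applying this with $p_{n}=(x_{n},y_{n},s_{n})$ and $p=(z,z,0)$, and using that the points of $[z]$ are exactly the triples $(z,z,s)$, $(a,z,1)$ and $(z,b,0)$ with $a,b\in M$ and $s\in[0,1]$, a sub-subsequence converging to one of these translates (under $s\leftrightarrow t$) precisely into one of the three alternatives (c), (b), (a) of the statement — and conversely each of (a), (b), (c) yields, after a further extraction making $s_n$ or the remaining endpoint converge, a sub-subsequence converging into $[z]$. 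This gives the stated "if and only if".

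For property (3): in the $s$-coordinate the map $(x,y,t)\mapsto(\gamma x,\gamma y,t+\sigma(\gamma,y))$ becomes $(x,y,s)\mapsto(\gamma x,\gamma y,\Sigma_{\gamma}(x,y,s))$ with $\Sigma_{\gamma}(x,y,s)=\big(1+e^{-\sigma(\gamma,y)}\tfrac{1-s}{s}\big)^{-1}$ on $(0,1)$; since $\sigma(\gamma,\cdot)$ is bounded on the compact set $M$, this extends continuously to $Y$ by setting $\Sigma_{\gamma}(x,y,0)=0$ and $\Sigma_{\gamma}(x,y,1)=1$, so $\gamma$ acts as a homeomorphism of $Y$ (with inverse furnished by $\gamma^{-1}$ and the cocycle identity). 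Checking the three pieces separately, this homeomorphism carries $[z]$ onto $[\gamma z]$, hence preserves $\sim$ and descends to a homeomorphism of $Z$ which restricts to the given action on $M$. Everything outside the closedness of $\sim$ is routine quotient-topology bookkeeping, so that is the part I would budget the most care for.
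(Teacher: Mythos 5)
Your construction is correct, and it takes a genuinely different route from the paper's. The paper defines the topology directly via a countable basis: besides open subsets of $M^{(2)}\times\Rb$, it adjoins sets of the form $I_{U,n}\cup U$ (for $U$ in a countable basis of $M$ and $n\in\Nb$) where $I_{U,n}$ consists of triples $(x,y,t)$ with either $x\in U$ and $t<-n$, or $y\in U$ and $t>n$, or $x,y\in U$; it then checks the resulting topology is regular, invokes Urysohn's metrization theorem, and reads compactness off property (4). You instead realize the space as a quotient $Z=Y/{\sim}$ of the compact metric cube $Y=M\times M\times[0,1]$ by an explicit equivalence relation, embedding $M^{(2)}\times\Rb$ via $t\mapsto e^{t}/(1+e^{t})$. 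Your key technical step, the closedness of $\sim$ in $Y\times Y$, is handled correctly: after discarding a subsequence on the diagonal, $p_n,q_n\in[z_n]$; passing to subsequences where $z_n\to z$ and each of $p_n$, $q_n$ lies in a fixed one of the three strata $\{z_n\}\times\{z_n\}\times[0,1]$, $M\times\{z_n\}\times\{1\}$, $\{z_n\}\times M\times\{0\}$ of $[z_n]$, the limits land in the corresponding strata of $[z]$, so $(p,q)\in[z]\times[z]\subset{\sim}$. The reduction to compact Hausdorff (via closedness of $\sim$) and then to metrizable (via separability of $C(Z)$, pulled back along $q^{*}$) is a standard and correct package, the sequential characterization of convergence in $Z$ via sub-subsequences converging in $Y$ into the class $[p]$ translates cleanly into property (4), and the continuous extension of $\Sigma_{\gamma}$ to $Y$ (using boundedness of $\sigma(\gamma,\cdot)$ on compact $M$) together with $\tilde\gamma([z])=[\gamma z]$ gives (3). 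What your approach buys: compactness and metrizability come for free from general facts about quotients, and you obtain a concrete model (indeed a concrete metric) for the compactification. What the paper's approach buys: the explicit basic neighborhoods $I_{U,n}\cup U$ of boundary points are directly tailored to verifying the convergence criterion and are convenient to cite in downstream arguments about horoballs and escape to infinity.
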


The proof is an exercise in point set topology (namely showing that the convergence in ~\eqref{item:sequences in compactification of flow space} is compatible with a topology) and so we only sketch the argument. 
\begin{proof}[Proof sketch]For an open set $U \subset M$, let 
$$
I_{U,n} : = \left\{ (x,y,t): (x,t) \in U \times (-\infty,-n) \text{ or }   (y,t) \in U \times (n,\infty) \text{ or }  x,y\in U \right\}.
$$
Fix a countable basis $\Bc_0$ of the topology on $M^{(2)} \times \Rb$ and a countable basis $\Bc_1$ of the topology on $M$. Let 
$$
\Bc : = \Bc_0 \cup \{ I_{U,n} \cup U : U \in \Bc_1, n \in \Nb\}
$$
and endow $\overline{M^{(2)} \times \Rb}$ with the topology generated by $\Bc$, which is a countable basis.
One can check the topology is regular,
so by Urysohn's metrization theorem it is metrizable.
By the definition of the $I_{U,n}$, this topology satisfies~\eqref{item:sequences in compactification of flow space}, which implies it is compact.
Properties (2) and (3)  follow in a straightforward way from the definition of $\Bc$. 
\end{proof} 

We also observe that $\Gamma$ acts on $\overline{M^{(2)} \times \Rb}$ as a convergence group. 

\begin{proposition}\label{prop:convergence action on compactification of flow space} If $a,b \in M$, $\{\gamma_n\} \subset \Gamma$ and $\gamma_n|_{M \smallsetminus \{b\}}$ converges locally uniformly to $a$, then $\gamma_n|_{\overline{M^{(2)} \times \Rb}\smallsetminus \{b\}}$ converges locally uniformly to $a$. Hence, $\Gamma$ acts on $\overline{M^{(2)} \times \Rb}$ as a convergence group. 
\end{proposition}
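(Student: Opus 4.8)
The plan is to reduce the locally uniform convergence statement to a statement about single sequences, to extract the two auxiliary facts $\gamma_n\to a$ and $\gamma_n^{-1}\to b$ in $\Gamma\sqcup M$, and then to run a short case analysis driven by the convergence criterion of Proposition~\ref{prop:compactifying flowspace}\eqref{item:sequences in compactification of flow space} together with the expanding cocycle estimates in Proposition~\ref{prop:basic properties}.

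I would begin with two reductions. Since $M\supseteq\Lambda(\Gamma)$ has at least three points, no subsequence of $\{\gamma_n\}$ can be constant — a homeomorphism cannot collapse $M\smallsetminus\{b\}$ onto a point — so $\{\gamma_n\}$ takes infinitely many values, and a routine sub-subsequence argument reduces the claim to the case where the $\gamma_n$ are pairwise distinct; assume this. Next, using compactness of closed subsets of $\overline{M^{(2)}\times\Rb}\smallsetminus\{b\}$, it is enough to show that $\gamma_n v_n\to a$ whenever $v_n\to v$ in $\overline{M^{(2)}\times\Rb}$ with $v\neq b$ (this having to be applied also to distinct subsequences of $\{\gamma_n\}$, which satisfy the same hypotheses). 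Since $\overline{M^{(2)}\times\Rb}$ is compact metrizable, in proving the latter I may freely pass to subsequences, using that convergence in this space follows once every subsequence has a further subsequence converging to the prospective limit.

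For the auxiliary facts: $\gamma_n\to a$ in $\Gamma\sqcup M$ by Proposition~\ref{prop:compactifying}(2). For $\gamma_n^{-1}\to b$, the escaping sequence $\{\gamma_n^{-1}\}$ subconverges in $\Gamma\sqcup M$ to some $b^\ast\in M$, and by Proposition~\ref{prop:compactifying}(1) the corresponding subsequence of $\{\gamma_n\}$ converges locally uniformly to $a$ on $M\smallsetminus\{b^\ast\}$; if $b^\ast\neq b$, combining this with the hypothesis on a small closed ball around $b$ avoiding $b^\ast$ and on its complement yields $\gamma_n\to a$ uniformly on all of $M$, which is impossible because $\gamma_n(M)=M$ and $|\Lambda(\Gamma)|\geq 3$. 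Hence every subsequential limit of $\{\gamma_n^{-1}\}$ is $b$, so $\gamma_n^{-1}\to b$. I expect this to be the only step requiring an idea rather than bookkeeping.

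Finally I would carry out the case analysis for $v_n\to v\neq b$, writing $v_n=(x_n,y_n,t_n)$ when $v_n\in M^{(2)}\times\Rb$. If $v=(x_0,y_0,s_0)\in M^{(2)}\times\Rb$, then $x_n\to x_0$, $y_n\to y_0$, $t_n\to s_0$ with $x_0\neq y_0$; when $y_0\neq b$ the expanding property together with $\gamma_n^{-1}\to b$ and Proposition~\ref{prop:basic properties}\eqref{item:properness} gives $\sigma(\gamma_n,y_n)\to+\infty$ (hence $t_n+\sigma(\gamma_n,y_n)\to+\infty$) while $\gamma_n y_n\to a$, and Proposition~\ref{prop:compactifying flowspace}\eqref{item:sequences in compactification of flow space}(b) applies; when $y_0=b$ (so $x_0\neq b$ and $\gamma_n x_n\to a$) one passes to a subsequence along which either $\sigma(\gamma_n,y_n)\to-\infty$, so case (a) applies, or $\liminf\sigma(\gamma_n,y_n)>-\infty$, so $\dist(\gamma_n y_n,\gamma_n)\to 0$ by Proposition~\ref{prop:basic properties}\eqref{item:a technical fact} and case (c) applies. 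If instead $v\in M\smallsetminus\{b\}$, the indices with $v_n\in M$ give $\gamma_n v_n\to a$ directly from local uniform convergence on $M$, while the indices with $v_n\in M^{(2)}\times\Rb$ fall, after passing to a subsequence, into one of the three regimes of Proposition~\ref{prop:compactifying flowspace}\eqref{item:sequences in compactification of flow space}, each handled by the same mixture of the expanding estimate, Proposition~\ref{prop:basic properties}\eqref{item:a technical fact}, and passing to subsequences, using $v\neq b$ to keep the relevant endpoint bounded away from $\gamma_n^{-1}$. This establishes the stated local uniform convergence, and the final assertion follows since $\overline{M^{(2)}\times\Rb}$ is compact metrizable, $\Gamma$ acts on it by homeomorphisms (Proposition~\ref{prop:compactifying flowspace}), and any sequence of distinct elements of $\Gamma$ subconverges locally uniformly on $M$ away from a point.
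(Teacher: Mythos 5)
Your proof is correct and follows essentially the same route as the paper's: reduce to sequences, split into cases based on the limits of the two endpoints relative to $b$, and conclude via the expanding estimates of Proposition~\ref{prop:basic properties} together with the convergence criterion of Proposition~\ref{prop:compactifying flowspace}\eqref{item:sequences in compactification of flow space}. The only differences are cosmetic: the paper argues by contradiction (which, in the case $y_n\to b$, gives a free lower bound on $\sigma(\gamma_n^{-1},\gamma_n y_n)$ via the expanding property, where you instead invoke the dichotomy from Proposition~\ref{prop:basic properties}\eqref{item:a technical fact}), and the paper leaves the fact $\gamma_n^{-1}\to b$ implicit, which you justify explicitly.
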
 

\begin{proof} Suppose the first assertion is false. Then there exists a relatively compact sequence $\{v_n\} \subset \overline{M^{(2)} \times \Rb}\smallsetminus \{b\}$ such that $\gamma_n v_n \rightarrow c \neq a$. 

Passing to a subsequence we can assume that either $\{v_n\} \subset M$ or $\{v_n\} \subset M^{(2)} \times \Rb$. If $\{v_n\} \subset M$, then $\gamma_nv_n \rightarrow a$ since $\gamma_n|_{M \smallsetminus \{b\}}$ converges locally uniformly to $a$. So we must have $\{v_n\} \subset M^{(2)} \times \Rb$. 

Let $v_n = (x_n, y_n, t_n)$. Passing to a subsequence we can suppose that $x_n \rightarrow x$ and $y_n \rightarrow y$. Since $b$ is not a limit point of $\{v_n\}$, Proposition~\ref{prop:compactifying flowspace}\eqref{item:sequences in compactification of flow space} implies that $(x,y) \neq (b,b)$. 

\medskip

\noindent \emph{Case 1:} Assume $x \neq b$ and $y \neq b$. Then $\gamma_n x_n \rightarrow a$ and $\gamma_n y_n \rightarrow a$. So Proposition~\ref{prop:compactifying flowspace}\eqref{item:sequences in compactification of flow space} implies that $\gamma_n v_n \rightarrow a$, which is a contradiction. 

\medskip

\noindent \emph{Case 2:} Assume $x = b$ and $y \neq b$. Since $b$ is not a limit point of $\{v_n\}$, Proposition~\ref{prop:compactifying flowspace}\eqref{item:sequences in compactification of flow space} implies that $\{t_n\}$ is bounded below. Since $\sigma$ is expanding and $\gamma_n^{-1} \rightarrow b$, there exists $C > 0$ such that 
$$
\sigma(\gamma_n, y_n) \geq \norm{\gamma_n}_\sigma - C
$$
for all $n \geq 1$. Then Proposition~\ref{prop:basic properties}\eqref{item:properness} implies that $\sigma(\gamma_n, y_n) \rightarrow +\infty$. Since we also have $\gamma_n y_n \rightarrow a$, Proposition~\ref{prop:compactifying flowspace}\eqref{item:sequences in compactification of flow space} implies that 
$$
\gamma_n v_n = (\gamma_n x_n, \gamma_n y_n, t_n + \sigma(\gamma_n, y_n))
$$
converges to $a$, which is a contradiction. 

\medskip

\noindent \emph{Case 3:} Assume $x \neq b$ and $y = b$. Then $\gamma_n x_n \rightarrow a$. Since $\gamma_n v_n \rightarrow c \neq a$, Proposition~\ref{prop:compactifying flowspace}\eqref{item:sequences in compactification of flow space} implies that $a$ is not a limit point of $\{\gamma_n y_n\}$. Then, since $\sigma$ is expanding and $\gamma_n \rightarrow a$, there exists $C > 0$ such that 
$$
\sigma(\gamma_n^{-1}, \gamma_ny_n) \geq \norm{\gamma_n^{-1}}_\sigma - C
$$
for all $n \geq 1$.

Also, since $b$ is not a limit point of $\{v_n\}$, Proposition~\ref{prop:compactifying flowspace}\eqref{item:sequences in compactification of flow space} implies that $\{t_n\}$ is bounded above. Then Proposition~\ref{prop:basic properties}\eqref{item:properness} implies that
$$
t_n + \sigma(\gamma_n, y_n) = t_n - \sigma(\gamma_n^{-1}, \gamma_ny_n) \leq t_n -  \norm{\gamma_n^{-1}}_\sigma + C
$$
converges to $-\infty$. Since we also have $\gamma_n x_n \rightarrow a$, Proposition~\ref{prop:compactifying flowspace}\eqref{item:sequences in compactification of flow space} implies that 
$$
\gamma_n v_n = (\gamma_n x_n, \gamma_n y_n, t_n + \sigma(\gamma_n, y_n))
$$
converges to $a$, which is a contradiction.
\end{proof} 

\subsection{Horoballs}\label{sec:horoballs} In this subsection we define horoballs based at  bounded parabolic point, in the spirit of \cite{tukia-conical}. 
To that end fix a bounded parabolic point $p \in M$.
Then let $K \subset \Lambda(\Gamma) \smallsetminus \{p\}$ be a compact subset such that $\Gamma_p \cdot K = \Lambda(\Gamma) \smallsetminus \{p\}$ (recall that $\Gamma_p = {\rm Stab}_\Gamma(p)$). 
Then given a compact neighborhood $N$ of $K$ in $\overline{M^{(2)}\times\R}\smallsetminus\{p\}$, the \emph{horoball associated to $N$ based at $p$} is the subset 
\[H(p,N) = \tilde U_\Gamma\smallsetminus (\Gamma_p\cdot N).\]
 
 Directly from the definition, horoballs have the following properties. 
 
 \begin{observation} If $\gamma \in \Gamma$, then 
 $$
 \gamma H(p,N)= H(\gamma p, \gamma N). 
 $$
 Moreover, if $\gamma \in \Gamma_p$, then 
  $$
 \gamma H(p, N)= H(p, N).
 $$
 Finally, if $N \subset N'$, then 
 $$
 H(p, N') \subset H(p, N).
 $$
 \end{observation}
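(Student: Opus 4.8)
The plan is purely to unwind the definition $H(p,N)=\tilde U_\Gamma\smallsetminus(\Gamma_p\cdot N)$; no genuine analysis is involved, and the only point needing a sentence of care is that the symbol $H(\gamma p,\gamma N)$ denotes a bona fide horoball. So I would first record this well-posedness. By Proposition~\ref{prop:compactifying flowspace} the element $\gamma$ acts as a homeomorphism of $\overline{M^{(2)}\times\R}$, carrying $\Lambda(\Gamma)$ onto itself and $p$ to $\gamma p$. Hence $\gamma K$ is compact in $\Lambda(\Gamma)\smallsetminus\{\gamma p\}$, the set $\gamma N$ is a compact neighbourhood of $\gamma K$ in $\overline{M^{(2)}\times\R}\smallsetminus\{\gamma p\}$, and, using $\Gamma_{\gamma p}=\gamma\Gamma_p\gamma^{-1}$, one has $\Gamma_{\gamma p}\cdot(\gamma K)=\gamma(\Gamma_p\cdot K)=\gamma\big(\Lambda(\Gamma)\smallsetminus\{p\}\big)=\Lambda(\Gamma)\smallsetminus\{\gamma p\}$; thus $\gamma K,\gamma N$ are admissible defining data for a horoball based at $\gamma p$.

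For the first identity I would use that the $\Gamma$-action on $M^{(2)}\times\R$ preserves $\tilde U_\Gamma=\Lambda(\Gamma)^{(2)}\times\R$ (immediate from $\gamma(x,y,t)=(\gamma x,\gamma y,t+\sigma(\gamma,y))$ together with $\Gamma$-invariance of $\Lambda(\Gamma)$). Applying $\gamma$ to the complement appearing in the definition then gives
$$\gamma H(p,N)=\gamma\tilde U_\Gamma\smallsetminus\gamma(\Gamma_p\cdot N)=\tilde U_\Gamma\smallsetminus\big((\gamma\Gamma_p\gamma^{-1})\cdot(\gamma N)\big)=\tilde U_\Gamma\smallsetminus\big(\Gamma_{\gamma p}\cdot(\gamma N)\big)=H(\gamma p,\gamma N).$$

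The last two assertions then follow formally. If $\gamma\in\Gamma_p$, then $\gamma p=p$ and $\Gamma_p\gamma=\Gamma_p$, so the first identity yields $\gamma H(p,N)=H(p,\gamma N)=\tilde U_\Gamma\smallsetminus(\Gamma_p\cdot\gamma N)=\tilde U_\Gamma\smallsetminus(\Gamma_p\cdot N)=H(p,N)$. Finally, if $N\subset N'$ then $\Gamma_p\cdot N\subset\Gamma_p\cdot N'$, and taking complements in $\tilde U_\Gamma$ reverses the inclusion, giving $H(p,N')\subset H(p,N)$. The only (very mild) obstacle is the well-posedness remark of the first paragraph, which rests on $\gamma$ being a homeomorphism of the compactified flow space together with the conjugation formula $\Gamma_{\gamma p}=\gamma\Gamma_p\gamma^{-1}$; everything else is bookkeeping with orbits and complements.
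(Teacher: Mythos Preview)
Your proof is correct and follows exactly the approach the paper indicates: the paper simply states that these properties follow ``directly from the definition'' and gives no further argument. Your write-up is in fact more detailed than the paper's, including the careful well-posedness check that $(\gamma K,\gamma N)$ are admissible data for a horoball at $\gamma p$.
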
 

We also verify that the horoballs are open. 

\begin{proposition}\label{prop:horoballs are open} Any horoball $H(p,N)$ is open in $M^{(2)} \times \Rb$. 
\end{proposition}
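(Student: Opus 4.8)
The plan is to deduce the proposition from the single fact that $\Gamma_p$ acts properly discontinuously on $\Omega:=\overline{M^{(2)}\times\Rb}\smallsetminus\{p\}$, so that the orbit $\Gamma_p\cdot N$ of the compact set $N\subset\Omega$ is closed in $\Omega$; the horoball $H(p,N)$ is then simply the intersection of $\tilde U_\Gamma$ with the complement of this closed set. (Note at the outset that $K$, and hence $N$, may be taken inside $\Omega$, since $K\subset\Lambda(\Gamma)\smallsetminus\{p\}\subset M\smallsetminus\{p\}\subset\Omega$.)

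First I would pin down the dynamics of $\Gamma_p$ on $\Omega$. Since $p$ is a bounded parabolic point, $\Gamma_p$ contains no loxodromic element, so by Lemma~\ref{fact:classif loxparell} each element of $\Gamma_p$ is elliptic or parabolic with fixed point $p$. Given a sequence $\{\gamma_n\}$ of pairwise distinct elements of $\Gamma_p$, the convergence property produces a subsequence with $\gamma_{n_j}|_{M\smallsetminus\{b\}}\to a$ locally uniformly for some $a,b\in M$; if $a\neq b$ then Lemma~\ref{lem:char of loxodromic} would force $\gamma_{n_j}$ to be loxodromic for large $j$, which is impossible, so $a=b$, and since $\gamma_{n_j}$ fixes $p$ while $\gamma_{n_j}|_{M\smallsetminus\{b\}}\to b$ we must have $p=b$, hence $a=b=p$. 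Then Proposition~\ref{prop:convergence action on compactification of flow space} upgrades $\gamma_{n_j}|_{M\smallsetminus\{p\}}\to p$ to $\gamma_{n_j}|_{\Omega}\to p$ locally uniformly, and since every subsequence of $\{\gamma_n\}$ admits such a further subsequence, $\gamma_n|_{\Omega}\to p$ locally uniformly. From here proper discontinuity on $\Omega$ is immediate: if a compact $L\subset\Omega$ had $\gamma L\cap L\neq\varnothing$ for infinitely many $\gamma\in\Gamma_p$, then picking a neighborhood $V$ of $p$ with $V\cap L=\varnothing$, a sequence of distinct such $\gamma$ would satisfy $\gamma L\subset V$ eventually, a contradiction.

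Next I would use proper discontinuity to show $\Gamma_p\cdot N$ is closed in $\Omega$. If $\gamma_n x_n\to z\in\Omega$ with $\gamma_n\in\Gamma_p$ and $x_n\in N$, then fixing a compact neighborhood $L'$ of $z$ in $\Omega$ and setting $L:=N\cup L'$ we get $\gamma_n L\cap L\neq\varnothing$ for large $n$, so by proper discontinuity the $\gamma_n$ are constant equal to some $\gamma$ along a subsequence, and then $z=\gamma(\lim x_n)\in\gamma N\subset\Gamma_p\cdot N$ by compactness of $N$. Consequently $\Omega\smallsetminus\Gamma_p\cdot N$ is open in $\Omega$; since $\Omega$ is open in $\overline{M^{(2)}\times\Rb}$ and $M^{(2)}\times\Rb$ is an open subset of $\overline{M^{(2)}\times\Rb}$ not containing $p$ (hence contained in $\Omega$), the set $(M^{(2)}\times\Rb)\smallsetminus\Gamma_p\cdot N$ is open in $M^{(2)}\times\Rb$, and intersecting with $\tilde U_\Gamma$ shows $H(p,N)=\tilde U_\Gamma\smallsetminus\Gamma_p\cdot N$ is open.

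I expect the only genuinely delicate point to be the proper discontinuity of the $\Gamma_p$-action on $\Omega$ — concretely, transporting the parabolic dynamics on $M$ to the compactified flow space through Proposition~\ref{prop:convergence action on compactification of flow space} — after which everything is formal point-set topology. One minor technical caution: in the first step I implicitly pass to subsequences of distinct elements, so I would spell out once that an arbitrary escaping sequence in $\Gamma_p$ can be replaced by one of pairwise distinct elements, which is harmless since $\Gamma$ is discrete and only the locally uniform limit matters.
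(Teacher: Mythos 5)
Your proof is correct and takes essentially the same route as the paper: both reduce the claim to the fact that $\Gamma_p$ acts properly on $\overline{M^{(2)}\times\Rb}\smallsetminus\{p\}$ (via Proposition~\ref{prop:convergence action on compactification of flow space}), so that $\Gamma_p\cdot N$ is closed there and its complement is open. You simply spell out the properness argument and the closedness of the orbit of $N$, which the paper leaves implicit.
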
 

\begin{proof} Proposition~\ref{prop:convergence action on compactification of flow space} implies that $\Gamma_p$ acts properly on $\overline{M^{(2)} \times \Rb} \smallsetminus \{p\}$. Thus $\Gamma_p \cdot N$ is closed in $\overline{M^{(2)} \times \Rb} \smallsetminus \{p\}$, and hence $H(p,N)$ is open in $M^{(2)} \times \Rb$. 

\end{proof}

\subsection{Decomposing the flow space}\label{sec:decomposing the flow space}

We now make the additional assumption that $\Gamma\subset\mathsf{Homeo}(M)$ is geometrically finite. Our aim in this subsection is to show that the flow space $U_\Gamma = \Gamma \backslash \tilde U_\Gamma = \Gamma \backslash \Lambda(\Gamma)^{(2)} \times \Rb$ decomposes into a compact part and a finite union of ``cusps'' corresponding to the bounded parabolic points.

By Lemma~\ref{lem:finitely many parabs} there are finitely many $\Gamma$-orbits of bounded parabolic points.
Let $\{p_1,\dots,p_k\}$ contain one representative of each orbit.
For each $1 \leq j \leq k$, fix a compact subset $K_j \subset \Lambda(\Gamma) \smallsetminus \{p_j\}$ such that $\Gamma_{p_j} \cdot K_j =  \Lambda(\Gamma) \smallsetminus \{p_j\}$.
Finally, for each $1 \leq j \leq k$ fix a compact neighborhood $N_j$  of $K_j$ in $\overline{M^{2}\times\R}\smallsetminus\{p_j\}$.
Then let 
 $$
 \Hc := \bigcup_{\gamma \in \Gamma} \bigcup_{j=1}^k \gamma H(p_j, N_j). 
 $$
 
 We will show that $\Gamma$ acts cocompactly on the complement of $\Hc$.
 
  \begin{theorem}\label{thm:compact quotient} The quotient $\Gamma \backslash\left(\tilde U_\Gamma -\Hc\right)$ is compact.
 \end{theorem}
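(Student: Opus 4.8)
The plan is to argue by contradiction. Since each horoball is open (Proposition~\ref{prop:horoballs are open}), the set $\Hc$ is open, so $\tilde U_\Gamma\smallsetminus\Hc$ is a closed $\Gamma$-invariant subset of $\tilde U_\Gamma$, and $\Gamma\backslash(\tilde U_\Gamma\smallsetminus\Hc)$ is a closed subset of the locally compact metrizable space $U_\Gamma$. If it were not compact there would be a sequence $v_n=(x_n,y_n,t_n)\in\tilde U_\Gamma\smallsetminus\Hc$ whose images in $U_\Gamma$ eventually leave every compact set. As $\Hc$ is $\Gamma$-invariant, Proposition~\ref{prop:exists compact covering set in limit cross limit} lets us replace each $v_n$ by a $\Gamma$-translate so that $(x_n,y_n)$ lies in a fixed compact $K\subset\Lambda(\Gamma)^{(2)}$; after passing to a subsequence, $(x_n,y_n)\to(x_\infty,y_\infty)\in\Lambda(\Gamma)^{(2)}$. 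Since $\Gamma$ acts properly discontinuously on $\tilde U_\Gamma$ and the images escape, $\{t_n\}$ is unbounded, so we may assume (after a subsequence) that $t_n\to+\infty$; the case $t_n\to-\infty$ is entirely analogous, with the backward endpoint $x_\infty$ and the cocycle $\bar\sigma$ in place of $y_\infty$ and $\sigma$. By Proposition~\ref{prop:compactifying flowspace}\eqref{item:sequences in compactification of flow space}, $v_n\to y_\infty$ in $\overline{M^{(2)}\times\Rb}$. As $\Gamma$ is geometrically finite, $y_\infty$ is a conical limit point or a bounded parabolic point, and I would treat the two cases separately.

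\emph{Parabolic case.} Write $y_\infty=\gamma_0p_j$. Then $\gamma_0H(p_j,N_j)=\tilde U_\Gamma\smallsetminus(\Gamma_{y_\infty}\cdot\gamma_0N_j)$ is one of the horoballs comprising $\Hc$, so $v_n\notin\Hc$ gives $v_n=\xi_nw_n$ with $\xi_n\in\Gamma_{y_\infty}$ and $w_n\in\gamma_0N_j$. By Proposition~\ref{prop:convergence action on compactification of flow space} the group $\Gamma_{y_\infty}$ acts properly on $\overline{M^{(2)}\times\Rb}\smallsetminus\{y_\infty\}$, so after a subsequence $w_n\to w\in\gamma_0N_j$. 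If $w\in\tilde U_\Gamma$ then $[v_n]=[w_n]\to[w]$, contradicting that the $[v_n]$ escape; hence $w\in M\smallsetminus\{y_\infty\}$. Since $\Gamma_{y_\infty}$ has no loxodromic element, $\{\xi_n\}$ is escaping and both $\xi_n,\xi_n^{-1}$ converge to $y_\infty$ locally uniformly off $y_\infty$ (using Lemma~\ref{lem:char of loxodromic} to rule out distinct attracting and repelling points); in particular $\xi_n^{-1}x_n\to y_\infty$, so Proposition~\ref{prop:compactifying flowspace}\eqref{item:sequences in compactification of flow space} together with $w_n\to w\neq y_\infty$ forces the second coordinate of $w_n$ to converge to $w$ and its time coordinate to tend to $+\infty$. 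It remains to see that such a $w_n$ --- lying in $\tilde U_\Gamma\smallsetminus\Hc$, close to $\gamma_0K_j\subset M$, but with time coordinate tending to $+\infty$ --- must in fact lie in a horoball of $\Hc$, which is a contradiction. This last step is the delicate part of this case: it amounts to a quantitative comparison between the combinatorial horoballs $H(p_j,N_j)$ and the genuine ``deep'' parts of the cusps, using the cocycle $\sigma$ restricted to the parabolic subgroups $\Gamma_{p_j}$ (in particular that $\ell_\sigma$ vanishes on parabolics and that $\sigma(\xi,z)$ is bounded below as $\xi$ ranges over $\Gamma_{p_j}$ and $z$ over a compact set off $p_j$), with Lemma~\ref{lem:finitely many parabs} making the finitely many choices of $N_j$ uniform.

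\emph{Conical case.} Fix $x_0\neq y_\infty$; conicality of $y_\infty$ provides $\gamma_m\in\Gamma$ with $\gamma_my_\infty\to a$ and $\gamma_mz\to b$ for all $z\neq y_\infty$, where $a\neq b$, so $\gamma_m^{-1}\to y_\infty$, $\norm{\gamma_m^{-1}}_\sigma\to+\infty$, and --- by the expanding property of $\sigma$, applied to $\gamma_m^{-1}$ at the point $\gamma_my_\infty$, which lies near $a$ and hence away from the limit $b$ of $\gamma_m$ --- $\sigma(\gamma_m,y_\infty)=-\norm{\gamma_m^{-1}}_\sigma+O(1)$. For fixed $m$, continuity gives $\gamma_mx_n\to\gamma_mx_\infty$, $\gamma_my_n\to\gamma_my_\infty$ and $\sigma(\gamma_m,y_n)\to\sigma(\gamma_m,y_\infty)$ as $n\to\infty$, while $\gamma_mx_\infty\to b$ and $\gamma_my_\infty\to a$ as $m\to\infty$; hence a sufficiently slow diagonal $m=m(n)\to\infty$ gives $\gamma_{m(n)}v_n=\bigl(\gamma_{m(n)}x_n,\gamma_{m(n)}y_n,\,t_n-\norm{\gamma_{m(n)}^{-1}}_\sigma+O(1)\bigr)$ with its first two coordinates converging to $(b,a)\in\Lambda(\Gamma)^{(2)}$. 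If, in addition, $\norm{\gamma_{m(n)}^{-1}}_\sigma$ can be kept within a bounded distance of $t_n$, then $\gamma_{m(n)}v_n$ lies along a subsequence in a compact subset of $\tilde U_\Gamma$, so $[v_n]=[\gamma_{m(n)}v_n]$ does not escape --- a contradiction. Arranging this matching is the main obstacle of the whole argument: it requires that, in the direction of $y_\infty$, group elements of every sufficiently large $\sigma$-magnitude occur, so as to fill the gaps between the values $\norm{\gamma_m^{-1}}_\sigma$ (which otherwise reflect the unbounded cusp excursions of recurrent flow lines). This calls for a careful construction of the approaching sequence, exploiting that loxodromic fixed points are dense in $\Lambda(\Gamma)$ together with the additive estimates for the magnitude in Proposition~\ref{prop:basic properties}.
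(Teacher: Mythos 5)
Your plan has the right skeleton and even the same case split as the paper (Lemmas~\ref{lem:closure of DK has no conical limit points} and~\ref{lem:closure of DK has no bounded parabolic points} treat accumulation at conical and at bounded parabolic points respectively), but it omits the device that makes those lemmas close. After choosing a compact $L\subset\Lambda(\Gamma)^{(2)}$ with $\Gamma\cdot{\rm int}(L)=\Lambda(\Gamma)^{(2)}$, the paper does not take an arbitrary representative with endpoints in $L$; it takes one that in addition \emph{minimizes} $\tau(x,y,t)=|t|$ over the orbit, and calls the resulting set $\mc D_L$. In both cases the contradiction is then not that some translate $\gamma_n v_n$ lands in a fixed compact set — which is what you try to arrange — but merely that one can produce translates back into ${\rm int}(L)\times\Rb$ with $\tau(\gamma_n v_n)-\tau(v_n)\to-\infty$, violating minimality. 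Since one only needs $\tau$ to \emph{decrease}, no ``filling in'' of magnitudes is required. That is exactly the gap you identify in the conical case, and it is a genuine one: the magnitudes $\norm{\gamma_m^{-1}}_\sigma$ of a fixed conical approach sequence can have arbitrarily large gaps, and an escaping sequence $v_n\notin\Hc$ need not have $t_n$ landing near those values; the density-of-magnitudes statement you invoke to patch this would be a new (shadow-lemma-type) ingredient not used in the paper.

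The parabolic case has a further problem beyond what you flag. Having written $v_n=\xi_n w_n$ with $\xi_n\in\Gamma_{y_\infty}$ and $w_n\in\gamma_0 N_j$, you conclude the time coordinate of $w_n$ tends to $+\infty$ and claim this forces $w_n$ into a horoball of $\Hc$. That inference is not available: the second coordinate of $w_n$ converges to some $w\in M\smallsetminus\{y_\infty\}$ that need not be a parabolic point, and $N_j$ (a compact subset of $\overline{M^{(2)}\times\Rb}\smallsetminus\{p_j\}$) does contain points with arbitrarily large time coordinate, since Proposition~\ref{prop:compactifying flowspace}\eqref{item:sequences in compactification of flow space} only forbids $N_j$ from containing sequences whose second (resp.\ first) coordinate approaches $p_j$ with time $\to+\infty$ (resp.\ $-\infty$). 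In the paper's version of this step (Case 2 of Lemma~\ref{lem:closure of DK has no bounded parabolic points}), the relevant deduction from $\gamma_n^{-1}v_n\in N$ and $p\notin N$ is that $t_n+\sigma(\gamma_n^{-1},y_n)$ is bounded below; combined with the expanding property this yields $\tau(\gamma_n^{-1}v_n)-\tau(v_n)\to-\infty$, and $\tau$-minimality in $\mc D_L$ supplies the contradiction. In short, the missing ingredient in both of your ``delicate parts'' is the minimal-$\tau$ fundamental domain $\mc D_L$; once it is introduced, the analysis you have begun (passing to $L$-representatives, convergence in the compactification, the expanding property, and Lemma~\ref{lem:char of loxodromic}) does give the theorem.
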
 
 
  Notice that Theorem~\ref{thm:compact quotient} shows that $U_\Gamma = \Gamma \backslash \tilde U_\Gamma$ decomposes into a compact part, namely  $ \Gamma \backslash\left(\tilde U_\Gamma -\Hc\right)$, and $k$ ``cusps'', namely the image of each horoball $ H(p_j, N_j)$. 
Also notice that the theorem is true for any choice of $\{N_j\}$. 

For later use, we will also observe that one can assume that $\Hc$ consists of disjoint horoballs. 

\begin{proposition}\label{prop:disjoint horoballs}
The $N_j$'s can be chosen so that if $\alpha p_j\neq\beta p_i$, then 
$$
\alpha H(p_j,N_j) \cap \beta H(p_i,N_i)= \varnothing.
$$
\end{proposition}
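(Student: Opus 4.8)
Using the equivariance $\gamma H(p,N)=H(\gamma p,\gamma N)$ (the Observation preceding Proposition~\ref{prop:horoballs are open}), the condition $\alpha H(p_j,N_j)\cap\beta H(p_i,N_i)\neq\varnothing$ is equivalent to $\gamma H(p_j,N_j)\cap H(p_i,N_i)\neq\varnothing$ for $\gamma:=\beta^{-1}\alpha$, and $\alpha p_j\neq\beta p_i$ becomes $\gamma p_j\neq p_i$. Since $H(p_i,N_i)$ is $\Gamma_{p_i}$-invariant and $H(p_j,N_j)$ is $\Gamma_{p_j}$-invariant, both ``$\gamma p_j=p_i$'' and ``$\gamma H(p_j,N_j)\cap H(p_i,N_i)\neq\varnothing$'' depend only on the double coset $\Gamma_{p_i}\gamma\Gamma_{p_j}$. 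Call such a double coset \emph{relevant} if it is not $\Gamma_{p_i}$ in case $i=j$ (so that all its elements satisfy $\gamma p_j\neq p_i$), and \emph{bad} if moreover $\gamma H(p_j,N_j)\cap H(p_i,N_i)\neq\varnothing$ for its elements. It then suffices to choose the $N_l$ so that, for every pair $(i,j)$, no relevant double coset is bad.

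\textbf{A concentration lemma.} For a bounded parabolic point $p$ and any neighborhood $V$ of $p$ in $\overline{M^{(2)}\times\Rb}$, I claim there is a compact neighborhood $N$ of the associated $K$ with $H(p,N)\subseteq V$. Indeed $\overline{\tilde U_\Gamma}\setminus V$ is a compact subset of the locally compact space $\overline{M^{(2)}\times\Rb}\setminus\{p\}$, and the compact neighborhoods of $K$ there are cofinal among compact subsets (the union of a fixed such neighborhood with a compact neighborhood of any given compact set is again one); so there is a compact neighborhood $N$ of $K$ with $\overline{\tilde U_\Gamma}\setminus V\subseteq\inte(N)$, whence $H(p,N)=\tilde U_\Gamma\setminus\Gamma_pN\subseteq\tilde U_\Gamma\setminus N\subseteq\tilde U_\Gamma\cap V\subseteq V$. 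Since $N\subseteq N'$ implies $H(p,N')\subseteq H(p,N)$, enlarging $N$ drives $H(p,N)$ into arbitrarily small neighborhoods of $p$; fixing a cofinal increasing sequence $K\subseteq N^{(1)}\subseteq N^{(2)}\subseteq\cdots$ of compact neighborhoods of $K$, every neighborhood of $p$ contains $H(p,N^{(m)})$ for $m$ large.

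\textbf{The exhaustion argument.} Fix such cofinal increasing exhaustions $\{N_l^{(m)}\}_m$ for each $l$, and for each pair $(i,j)$ let $B_m^{(i,j)}$ be the set of relevant double cosets bad for $(N_i^{(m)},N_j^{(m)})$; since $H(p_l,N_l^{(m)})$ decreases with $m$, so does $B_m^{(i,j)}$. Granting the finiteness claim below (each $B_m^{(i,j)}$ is finite), suppose $B_m^{(i,j)}\neq\varnothing$ for all $m$: a decreasing sequence of nonempty finite sets has nonempty intersection, so some relevant double coset $D$ is bad at every level; choosing $\gamma\in D$ (so $\gamma p_j\neq p_i$) and $v_m\in\gamma H(p_j,N_j^{(m)})\cap H(p_i,N_i^{(m)})$, the concentration lemma gives $v_m\to p_i$ and $\gamma^{-1}v_m\to p_j$ in $\overline{M^{(2)}\times\Rb}$, while $\gamma^{-1}v_m\to\gamma^{-1}p_i$ since $\gamma^{-1}$ acts by homeomorphisms (Proposition~\ref{prop:compactifying flowspace}), forcing $\gamma^{-1}p_i=p_j$, a contradiction. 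Hence for each $(i,j)$ some $B_m^{(i,j)}$ is empty; as these are decreasing we may take a single $m^\star$ (the maximum over the finitely many pairs) with $B_{m^\star}^{(i,j)}=\varnothing$ for all $(i,j)$, and then $N_l:=N_l^{(m^\star)}$ works.

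\textbf{The main obstacle: finiteness of $B_m^{(i,j)}$.} With $N_i,N_j$ fixed, suppose there were infinitely many bad relevant double cosets; a representative of each gives an escaping sequence $\{\gamma_m\}$ in pairwise distinct double cosets and points $v_m\in\gamma_m H(p_j,N_j)\cap H(p_i,N_i)$. Passing to subsequences, $\gamma_m\to a$, $\gamma_m^{-1}\to b$ in $\Gamma\sqcup M$, and $v_m\to v_\infty$, $\gamma_m^{-1}v_m\to w_\infty$ in $\overline{M^{(2)}\times\Rb}$, with $v_\infty\in\overline{H(p_i,N_i)}$, $w_\infty\in\overline{H(p_j,N_j)}$. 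An argument of the same kind as in Propositions~\ref{prop:Wss computation} and~\ref{prop:Wsu computation} (continuity of $\sigma$ and vanishing of $\sigma$ on parabolic fixed points) shows $\overline{H(p_l,N_l)}\cap M=\{p_l\}$, so each of $v_\infty,w_\infty$ is either the relevant parabolic point or lies in the open flow space $M^{(2)}\times\Rb$. When both lie in the flow space, $\{v_m\}$ and $\{\gamma_m^{-1}v_m\}$ are relatively compact in $\tilde U_\Gamma$, so the proper discontinuity of $\Gamma$ on $\tilde U_\Gamma$ (\cite[Prop.\,10.2]{BCZZ}) forces $\{\gamma_m\}$ to be finite, a contradiction; in the remaining ``cusp-direction'' cases ($v_\infty=p_i$ or $w_\infty=p_j$) one uses the convergence action on $\overline{M^{(2)}\times\Rb}$ (Proposition~\ref{prop:convergence action on compactification of flow space}), the expanding property of $\sigma$, and the $\Gamma_{p_i}$- and $\Gamma_{p_j}$-invariance of the horoballs to again contradict either $\gamma_m p_j\neq p_i$ or the distinctness of the double cosets. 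Carrying out this last case analysis carefully is where the bulk of the work lies.
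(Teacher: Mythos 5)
The proposal takes a genuinely different route from the paper — fixing the horoball parameters $N_i$, claiming that only finitely many double cosets produce intersections, and then running an exhaustion argument — whereas the paper's proof shrinks the horoballs simultaneously with the escaping of $\gamma_n$ and runs a single contradiction argument. But your version has a real gap, and it is exactly at the point you flag yourself: the finiteness of each $B_m^{(i,j)}$ is never established. You dispose of the case where both subsequential limits $v_\infty, w_\infty$ lie in the open flow space via proper discontinuity (and the observation $\overline{H(p_l,N_l)}\cap M = \{p_l\}$, which is correct), but the ``cusp-direction'' cases — where $v_m\to p_i$ or $\gamma_m^{-1}v_m\to p_j$ — are only gestured at, and those are not a peripheral technicality. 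Without finiteness the exhaustion step collapses: a decreasing sequence of nonempty but possibly infinite sets can have empty intersection, so one cannot extract a single double coset that is bad at every level. The statement ``Carrying out this last case analysis carefully is where the bulk of the work lies'' is accurate, and it means the proof is incomplete.

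It is worth noting that the paper avoids the finiteness claim entirely. Its proof chooses $N_n = \overline{M^{(2)}\times\R}\smallsetminus B(p,\epsilon_n)$ so that $H(p,N_n)\subset B(p,\epsilon_n)$ automatically, and — crucially — it normalizes $\gamma_n$ \emph{within its double coset} $\Gamma_q\gamma_n\Gamma_p$ so that $\gamma_n p\to x\neq q$ and $\gamma_n^{-1}q\to y\neq p$. It then uses that bounded parabolic points are not conical (Lemma~\ref{lem: parab conical disjoint}) to upgrade these to $\gamma_n\to x$ and $\gamma_n^{-1}\to y$, after which Proposition~\ref{prop:convergence action on compactification of flow space} makes $\gamma_n B(q,\epsilon_n)$ miss $B(p,\epsilon_n)$ for large $n$. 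That normalization step, combined with the non-conicality of parabolic points, is precisely the machinery one would need to handle your unresolved cusp-direction cases — so the part you deferred is not a loose end but the heart of the argument. Your double-coset framing does correctly identify that the relevant data is coset-invariant, and the concentration lemma is sound; but the extra scaffolding (finiteness plus exhaustion) is unnecessary once one is willing to let the horoball shrink and the group element escape in tandem, and it re-introduces an obligation (finiteness) that the paper's argument never needs to discharge.
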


We will first prove Proposition~\ref{prop:disjoint horoballs} and then prove Theorem~\ref{thm:compact quotient}.

\subsection{Proof of Proposition~\ref{prop:disjoint horoballs}} 

It suffices to prove the following:
given two distinct parabolic points $p,q$, there are sets $N$ and $N'$ such that if $\gamma q\neq p$, then $H(p,N)$ is disjoint from $\gamma H(q,N')$.

Fix a distance on $\overline{M^{2}\times\R}$ which generates the topology and let $B(x,r)$ denote the associated open balls. 

Fix $\epsilon_n\searrow 0$. For $n \geq 1$, let 
$$
N_n : = \overline{M^{(2)}\times\R} \smallsetminus B(p,\epsilon_n) \quad \text{and} \quad N_n':=\overline{M^{(2)}\times\R} \smallsetminus B(q,\epsilon_n). 
$$
We claim that $N_n$ and $N_n'$ have the desired property when $n$ is large. Suppose not. Then after passing to a subsequence there exists $\{\gamma_n\}\subset \Gamma$ such that $\gamma_n q\neq p$ and the horoballs $H(p,N_n)$ and $\gamma_n H(q,N'_n)$ intersect nontrivially for all $n \geq 1$. 

Since $\Gamma_p$ acts cocompactly on $\Lambda(\Gamma) \smallsetminus \{p\}$ and $\Gamma_q$ acts cocompactly on $\Lambda(\Gamma) \smallsetminus \{q\}$, we can pass to a subsequence and replace each $\gamma_n$ by an element in $\Gamma_q \gamma_n \Gamma_p$ so that $\gamma_n p \rightarrow x \neq q$ and $\gamma_n^{-1} q \rightarrow y \neq p$.

Notice that 
$$
H(p,N_n)  \subset \tilde U_\Gamma \smallsetminus N_n \subset B(p,\epsilon_n) \quad \text{and} \quad H(q,N'_n)\subset \tilde U_\Gamma \smallsetminus N_n' \subset B(q,\epsilon_n).
$$
So $B(p,\epsilon_n) \cap \gamma_n B(q,\epsilon_n) \neq \varnothing$ for all $n \geq 1$, which implies that $\{\gamma_n\}$ must be escaping. 

Since $p$ is not a conical limit point (see Lemma~\ref{lem: parab conical disjoint}) and $\gamma_n p \rightarrow x$ we must have $\gamma_n \rightarrow x$. Likewise, using the fact that $q$ is not a conical limit point, we must have $\gamma_n^{-1} \rightarrow y$. Then, Proposition~\ref{prop:convergence action on compactification of flow space} implies that $\gamma_n|_{\overline{M^{(2)} \times \Rb}\smallsetminus\{x\}}$ converges locally uniformly to $y$. Since $p \neq y$ and $q \neq x$,  for $n$ sufficiently large 
$$
H(p,N_n) \cap \gamma_n H(q,N'_n) \subset B(p,\epsilon_n) \cap \gamma_n B(q,\epsilon_n)  = \varnothing,
$$
which is a contradiction.

\subsection{Proof of Theorem~\ref{thm:compact quotient}} 
By Proposition~\ref{prop:exists compact covering set in limit cross limit} there exists a compact subset $L \subset \Lambda(\Gamma)^{(2)}$ such that $\Gamma \cdot {\rm int}(L) = \Lambda(\Gamma)^{(2)}$. Then let 
 $$
 \mc D_L := \left\{v\in L\times\mb R: \tau(v)\leq \tau(v') \text{ for all } v' \in \Gamma v \cap (L \times \Rb) \right\}
 $$
 where 
$$
\tau(x,y,t) =\abs{t}. 
$$
That is, $ \mc D_L$ consists of the points in each $\Gamma$-orbit that are in $L \times \Rb$ and minimize the ``$t$-value'' amongst all points in the $\Gamma$-orbit in $L \times \Rb$. 

\begin{observation}\label{obs:DK tiles} $\Gamma ( \mc D_L) = \Lambda(\Gamma)^{(2)} \times \Rb$. \end{observation}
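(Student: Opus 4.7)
The plan is to fix an arbitrary $v_0 = (x_0, y_0, t_0) \in \Lambda(\Gamma)^{(2)} \times \Rb$ and exhibit an element of $\Gamma v_0$ lying in $\mathcal{D}_L$. The argument has two parts: first, show that the orbit $\Gamma v_0$ actually meets $L \times \Rb$ (so that the set we are minimizing over is nonempty); second, show that the infimum of $\tau$ over $\Gamma v_0 \cap (L \times \Rb)$ is attained.

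For the first part, since $\Gamma \cdot \mathrm{int}(L) = \Lambda(\Gamma)^{(2)}$, there exists $\gamma_0 \in \Gamma$ with $\gamma_0 (x_0, y_0) \in \mathrm{int}(L) \subset L$. Then $\gamma_0 v_0 \in L \times \Rb$, so $\Gamma v_0 \cap (L \times \Rb) \neq \varnothing$ and the infimum $\alpha := \inf\{\tau(v') : v' \in \Gamma v_0 \cap (L\times\Rb)\}$ is a finite nonnegative real number.

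For the second part, I will use proper discontinuity of the $\Gamma$-action on $\tilde U_\Gamma$ (recalled in Section~\ref{subsec:background on GPS systems}, citing~\cite[Prop.\,10.2]{BCZZ}). The set $L \times [-(\alpha+1), \alpha+1]$ is compact in $\tilde U_\Gamma$ since $L$ is compact. A standard consequence of proper discontinuity is that any orbit intersects any compact set in finitely many points: if $\gamma_n v_0$ are distinct points of $\Gamma v_0$ all lying in a compact $K$, then setting $K' = K \cup \{v_0\}$ one has $\gamma_n K' \cap K' \neq \varnothing$ for infinitely many distinct $\gamma_n$, contradicting proper discontinuity. Therefore $\Gamma v_0 \cap (L \times [-(\alpha+1), \alpha+1])$ is a finite set, and by the definition of $\alpha$ it is nonempty (pick any $v'$ with $\tau(v') < \alpha+1$). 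So $\alpha$ is achieved by some $v^* \in \Gamma v_0 \cap (L\times \Rb)$.

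By construction $v^* \in \mathcal{D}_L$, and writing $v^* = \gamma v_0$ gives $v_0 = \gamma^{-1} v^* \in \Gamma \cdot \mathcal{D}_L$. Since $v_0 \in \Lambda(\Gamma)^{(2)} \times \Rb$ was arbitrary, this proves $\Gamma(\mathcal{D}_L) = \Lambda(\Gamma)^{(2)} \times \Rb$. There is no real obstacle; the only point requiring any care is the discreteness of orbits in compact subsets of $\tilde U_\Gamma$, which follows immediately from proper discontinuity.
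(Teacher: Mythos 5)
Your proof is correct and takes essentially the same approach as the paper: both observe that the orbit meets $L\times\Rb$ because $\Gamma\cdot\mathrm{int}(L)$ covers $\Lambda(\Gamma)^{(2)}$, and both invoke proper discontinuity of the $\Gamma$-action on $\tilde U_\Gamma$ to conclude that the orbit meets the compact slab $L\times[-T,T]$ in finitely many points, so the infimum of $\tau$ over $\Gamma v_0\cap(L\times\Rb)$ is attained. The only difference is that you spell out the finiteness-from-proper-discontinuity step, which the paper takes as standard.
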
 

\begin{proof} Fix $v \in \Lambda(\Gamma)^{(2)} \times \Rb$. By definition, $ \Gamma v \cap (L \times \Rb) \neq \varnothing$. Since $\Gamma$ acts properly on $\Lambda(\Gamma)^{(2)} \times \Rb$, for any $T > 0$ the set $\Gamma v \cap (L \times [-T, T])$ is finite. Hence  there exists $v' \in \Gamma v \cap (L \times \Rb)$ such that 
  $$
  \tau(v') = \min\{ \tau(\gamma v) : \gamma \in \Gamma \text{ and } \gamma v \in L \times \Rb\}.
  $$
  Then $v' \in \mc D_L$ and $v \in \Gamma v' \subset \Gamma( \mc D_L) $. 
  \end{proof} 
  
  We will prove the following facts about the closure of  $\mathcal{D}_L$ in $\overline{M^{(2)} \times \Rb}$. 

\begin{lemma}\label{lem:closure of DK has no conical limit points}
The closure of $\mathcal{D}_L$ in $\overline{M^{(2)} \times \Rb}$ does not contain any conical limit points.
\end{lemma}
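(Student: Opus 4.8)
The plan is to argue by contradiction, exploiting the defining minimization property of $\mathcal{D}_L$ together with the characterization of conical limit points in the compactified flow space. Suppose $z \in M$ lies in the closure of $\mathcal{D}_L$ in $\overline{M^{(2)} \times \Rb}$ and that $z$ is a conical limit point. Fix a sequence $v_n = (x_n, y_n, t_n) \in \mathcal{D}_L$ with $v_n \to z$ in $\overline{M^{(2)} \times \Rb}$. By Proposition~\ref{prop:compactifying flowspace}\eqref{item:sequences in compactification of flow space}, after passing to a subsequence we are in one of three cases: (a) $x_n \to z$ and $t_n \to -\infty$; (b) $y_n \to z$ and $t_n \to +\infty$; or (c) $x_n \to z$ and $y_n \to z$. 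In case (c) the pairs $(x_n,y_n)$ leave every compact subset of $\Lambda(\Gamma)^{(2)}$, contradicting $v_n \in \mathcal{D}_L \subset L \times \Rb$ with $L$ compact in $\Lambda(\Gamma)^{(2)}$. So only cases (a) and (b) survive.

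First I would treat case (b), where $y_n \to z$, $t_n \to +\infty$, and (after a further subsequence using compactness of $L$) also $x_n \to x_\infty$ with $(x_\infty, z) \in \Lambda(\Gamma)^{(2)}$, so in particular $x_\infty \neq z$. Since $z$ is a conical limit point, there are $\{\gamma_k\} \subset \Gamma$ and distinct $a, b \in \Lambda(\Gamma)$ with $\gamma_k(z) \to a$ and $\gamma_k(w) \to b$ for every $w \neq z$; in particular $\gamma_k \to b$ and $\gamma_k^{-1} \to z$ (using that $\gamma_k^{-1} \to z$ iff $\gamma_k|_{M \smallsetminus \{z\}} \to b$, via Proposition~\ref{prop:compactifying}). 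The idea is now to apply a fixed $\gamma_k$ (with $k$ large) to the tail of the sequence $\{v_n\}$ and produce, for $n$ large, a point $\gamma_k v_n \in \Gamma v_n \cap (L \times \Rb)$ whose $\tau$-value is strictly smaller than $\tau(v_n) = \abs{t_n}$, contradicting $v_n \in \mathcal{D}_L$. Concretely, $\gamma_k v_n = (\gamma_k x_n, \gamma_k y_n, t_n + \sigma(\gamma_k, y_n))$; as $n \to \infty$ with $k$ fixed, $\gamma_k x_n \to \gamma_k x_\infty$ and $\gamma_k y_n \to \gamma_k z$, and these two points are distinct and lie in $\Lambda(\Gamma)^{(2)}$, hence in $\mathrm{int}(L)$ after choosing $k$ so that $(\gamma_k x_\infty, \gamma_k z)$ lands deep inside $\mathrm{int}(L)$ — here one uses $\Gamma \cdot \mathrm{int}(L) = \Lambda(\Gamma)^{(2)}$ to first pick $k$ with $(\gamma_k x_\infty, \gamma_k z) \in \mathrm{int}(L)$, then openness gives the same for the tail of $n$. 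For the time coordinate: $\sigma(\gamma_k, y_n) \to \sigma(\gamma_k, z)$ as $n \to \infty$, so $t_n + \sigma(\gamma_k, y_n) \to +\infty$ and in fact $\abs{t_n + \sigma(\gamma_k, y_n)} = t_n + \sigma(\gamma_k, y_n) < t_n = \abs{t_n}$ for $n$ large, \emph{provided} $\sigma(\gamma_k, z) < 0$. This is where the conical dynamics must be used to arrange $\sigma(\gamma_k, z) \to -\infty$: since $\gamma_k^{-1} \to z$, i.e.\ $\dist(z, \gamma_k^{-1})$ stays bounded away from... no — precisely because $\gamma_k^{-1} \to z$, we have $\dist(\gamma_k^{-1}, z) \to 0$, and then the cocycle identity $\sigma(\gamma_k, z) = -\sigma(\gamma_k^{-1}, \gamma_k z)$ together with Proposition~\ref{prop:basic properties}\eqref{item:a technical fact} applied in the reverse direction forces $\sigma(\gamma_k, z) \to -\infty$. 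Choosing $k$ large so that $\sigma(\gamma_k, z)$ is very negative and $(\gamma_k x_\infty, \gamma_k z) \in \mathrm{int}(L)$, then $n$ large, yields the contradiction.

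Case (a), where $x_n \to z$ and $t_n \to -\infty$, is handled symmetrically: now $y_n \to y_\infty \neq z$ along a subsequence, and applying a suitable $\gamma_k$ sends $(x_n, y_n)$ into $\mathrm{int}(L)$ while the new time coordinate $t_n + \sigma(\gamma_k, y_n)$ — with $\sigma(\gamma_k, y_n) \to \sigma(\gamma_k, y_\infty)$ bounded — still tends to $-\infty$; the point is that $\abs{t_n + \sigma(\gamma_k, y_n)} = -t_n - \sigma(\gamma_k, y_n) < -t_n = \abs{t_n}$ for $n$ large as soon as $\sigma(\gamma_k, y_\infty) > 0$, which one arranges via the expanding property since $y_\infty \neq z$ and $\gamma_k \to b$: choosing $k$ with $\dist(y_\infty, \gamma_k^{-1}) = \dist(y_\infty, z$-nbhd$)$ bounded below makes $\sigma(\gamma_k, y_\infty) \geq \norm{\gamma_k}_\sigma - C \to +\infty$.

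The main obstacle is bookkeeping the order of quantifiers correctly: one must fix $k$ \emph{first} (large enough that the relevant $\gamma_k$-image of the endpoints is well inside $\mathrm{int}(L)$ and the relevant cocycle value has the right large sign), and only \emph{then} take $n \to \infty$ so that the tail $v_n$ actually gets moved by this fixed $\gamma_k$ into $L \times \Rb$ with strictly smaller $\tau$. A secondary subtlety is ensuring the endpoint limits $x_\infty$ (resp.\ $y_\infty$) are genuinely distinct from $z$ — this follows because $(x_n,y_n)$ ranges in the compact set $L \subset \Lambda(\Gamma)^{(2)}$, so any subsequential limit is a pair of distinct points, and in cases (a)/(b) exactly one coordinate converges to $z$.
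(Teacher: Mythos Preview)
Your strategy is right and close to the paper's, but there is one genuine gap. In case~(b) you claim one can choose $k$ with $(\gamma_k x_\infty,\gamma_k z)\in\mathrm{int}(L)$, invoking $\Gamma\cdot\mathrm{int}(L)=\Lambda(\Gamma)^{(2)}$. This does not follow: the $\gamma_k$ form a \emph{specific} sequence witnessing conicality of $z$, not arbitrary elements of $\Gamma$, and as $k\to\infty$ the pair $(\gamma_k x_\infty,\gamma_k z)$ converges to $(b,a)$, which need not lie in $\mathrm{int}(L)$. The same issue recurs in case~(a) with the limiting pair $(a,b)$. The fix, which is exactly what the paper does, is to choose a fixed $\alpha\in\Gamma$ with $\alpha(b,a)\in\mathrm{int}(L)$ (respectively $\alpha(a,b)\in\mathrm{int}(L)$) and replace $\gamma_k$ by $\alpha\gamma_k$: since $\sigma(\alpha\gamma_k,z)=\sigma(\alpha,\gamma_k z)+\sigma(\gamma_k,z)$ with the first term bounded and the second tending to $-\infty$, the composed sequence still has $\sigma(\alpha\gamma_k,z)\to-\infty$, while now $(\alpha\gamma_k x_\infty,\alpha\gamma_k z)\to\alpha(b,a)\in\mathrm{int}(L)$, so for $k$ large both of your desiderata hold. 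With this correction your argument goes through.

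For comparison, the paper's proof differs structurally: rather than fixing $k$ and letting $n\to\infty$, it runs a diagonal argument, extracting $n_m\to\infty$ with $\gamma_m v_{n_m}\to a$ and $\gamma_m y_{n_m}\to a$ (in case~(b)), then shows $\tau(\gamma_m v_{n_m})-\tau(v_{n_m})=-\sigma(\gamma_m^{-1},\gamma_m y_{n_m})\to-\infty$ directly from the expanding property (using $\gamma_m\to b$ and $\gamma_m y_{n_m}\to a\neq b$), and only at the end composes with a fixed $\alpha$ to land in $L\times\Rb$. Your ``fix $k$ first'' route is arguably cleaner and avoids the diagonal extraction, but as you see it requires the same composition-with-$\alpha$ step that you omitted. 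A minor point: your appeal to Proposition~\ref{prop:basic properties}\eqref{item:a technical fact} ``in reverse'' for $\sigma(\gamma_k,z)\to-\infty$ is better replaced by the expanding property itself applied to $\sigma(\gamma_k^{-1},\gamma_k z)$, since $\gamma_k z\to a$ stays uniformly away from $\gamma_k\to b$.
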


\begin{lemma}\label{lem:closure of DK has no bounded parabolic points}
The closure of $\mathcal{D}_L \smallsetminus \Hc$ in $\overline{M^{(2)} \times \Rb}$ does not contain any bounded parabolic points. 

\end{lemma}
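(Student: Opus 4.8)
The plan is to argue by contradiction. Suppose $p$ is a bounded parabolic point and there is a sequence $v_n=(x_n,y_n,t_n)\in\mathcal{D}_L\smallsetminus\Hc$ with $v_n\to p$ in $\overline{M^{(2)}\times\Rb}$. First I would put the sequence in a normal form. Since $(x_n,y_n)\in L$ and $L$ is a compact subset of $\Lambda(\Gamma)^{(2)}$, the pairs $(x_n,y_n)$ stay a definite distance apart, so $x_n$ and $y_n$ cannot both converge to $p$; hence by Proposition~\ref{prop:compactifying flowspace}\eqref{item:sequences in compactification of flow space}, after passing to a subsequence we may assume $y_n\to p$, $x_n\to x^{\ast}\neq p$ and $t_n\to+\infty$ (the remaining case $x_n\to p$, $t_n\to-\infty$ is handled by the same argument, interchanging the two endpoints and the roles of $\sigma$ and $\bar\sigma$). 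In particular $\tau(v_n)=\abs{t_n}\to+\infty$.

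Next I would use the hypothesis $v_n\notin\Hc$. Writing $p=\gamma_0 p_j$, the horoball $H(p,\gamma_0 N_j)=\gamma_0 H(p_j,N_j)$ is one of the pieces of $\Hc$, so $v_n\notin H(p,\gamma_0 N_j)=\tilde U_\Gamma\smallsetminus\big(\Gamma_p\cdot(\gamma_0 N_j)\big)$; thus there are $\alpha_n\in\Gamma_p$ with $u_n:=\alpha_n^{-1}v_n$ lying in the \emph{fixed} compact set $\tilde N:=\gamma_0 N_j\subset\overline{M^{(2)}\times\Rb}\smallsetminus\{p\}$. The sequence $\{\alpha_n\}$ must be escaping, since a constant subsequence would force $u_n\to\alpha^{-1}p=p\in\tilde N$, impossible. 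I then claim that an escaping sequence in $\Gamma_p$ converges to $p$: passing to a subsequence with $\alpha_n|_{M\smallsetminus\{b\}}\to a$ locally uniformly (Proposition~\ref{prop:compactifying}), if $a\neq b$ then by Tukia's Lemma~\ref{lem:char of loxodromic} the $\alpha_n$ are eventually loxodromic, contradicting the fact that $\Gamma_p$ contains no loxodromic element; hence $a=b$, and since $\alpha_n$ fixes $p$ we get $a=b=p$. By Proposition~\ref{prop:compactifying} and Proposition~\ref{prop:convergence action on compactification of flow space}, both $\alpha_n|_{\overline{M^{(2)}\times\Rb}\smallsetminus\{p\}}\to p$ and $\alpha_n^{-1}|_{\overline{M^{(2)}\times\Rb}\smallsetminus\{p\}}\to p$ locally uniformly.

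Now I would track the endpoints and time coordinate of $u_n=(u_n^-,u_n^+,\tau_n)$. Since $x_n\to x^{\ast}\neq p$ we get $u_n^-=\alpha_n^{-1}x_n\to p$; since $u_n\in\tilde N$ avoids $p$, Proposition~\ref{prop:compactifying flowspace}\eqref{item:sequences in compactification of flow space} forces $\{u_n^+\}$ to stay away from $p$ and $\{\tau_n\}$ to be bounded below, so after a further subsequence $(u_n^-,u_n^+)\to(p,u^+)\in\Lambda(\Gamma)^{(2)}$ with $u^+\neq p$. The crucial estimate comes next: from $v_n=\alpha_n u_n$ one reads off $t_n-\tau_n=\sigma(\alpha_n,u_n^+)$, and since $\alpha_n^{-1}\to p$ while $u_n^+\to u^+\neq p$ the points $u_n^+$ and $\alpha_n^{-1}$ are uniformly separated, so the expanding property gives $\sigma(\alpha_n,u_n^+)=\norm{\alpha_n}_\sigma+O(1)$, which tends to $+\infty$ because $\{\alpha_n\}$ escapes (Proposition~\ref{prop:basic properties}\eqref{item:properness}); hence $t_n-\tau_n\to+\infty$. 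Finally, using the covering property $\Gamma\cdot\interior(L)=\Lambda(\Gamma)^{(2)}$ from Proposition~\ref{prop:exists compact covering set in limit cross limit}, fix $\beta\in\Gamma$ with $(p,u^+)\in\beta\cdot\interior(L)$; then for large $n$ the point $\beta^{-1}u_n=\beta^{-1}\alpha_n^{-1}v_n$ lies in $L\times\Rb$, its time coordinate $\tau_n+\sigma(\beta^{-1},u_n^+)=\tau_n+O(1)$ is bounded below with the $O(1)$ independent of $n$ (as $\beta$ is fixed and $u_n^+$ converges), and combining ``$\tau_n$ bounded below'' with ``$t_n-\tau_n\to+\infty$'' yields $\tau(\beta^{-1}u_n)<\tau(v_n)$ for all large $n$. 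This contradicts $v_n\in\mathcal{D}_L$.

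The hard part will be the bookkeeping in the last paragraph, and specifically the worry that the auxiliary point $u_n$ might itself be ``deep in a cusp'' ($\tau_n\to+\infty$) and thus not obviously pushable back into $L\times\Rb$ with a smaller time parameter; this is exactly what the expanding estimate $\sigma(\alpha_n,u_n^+)\to+\infty$ resolves, and it is the one place where geometric finiteness is genuinely used — through the horoball description of $\Hc$ and the absence of loxodromics in $\Gamma_p$. A secondary subtlety is that $\tilde N$, being merely a compact neighborhood of $K_j$, contains flow-points with arbitrarily large time coordinate accumulating onto $M$, so one must not assume $\{\tau_n\}$ is bounded; it is only bounded below, and fortunately that is all the final inequality needs.
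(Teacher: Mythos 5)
Your proof is correct and follows essentially the same route as the paper's: pass to a normal form with one endpoint converging to $p$ and $\abs{t_n}\to\infty$, use the horoball condition to pull back by elements of $\Gamma_p$ into a fixed compact set $\tilde N$, apply Proposition~\ref{prop:compactifying flowspace}\eqref{item:sequences in compactification of flow space} to control the pulled-back endpoints and time parameter, and then combine the expanding estimate $\sigma(\alpha_n,u_n^+)\to+\infty$ with a final translation into $L\times\Rb$ to produce a $\Gamma$-translate of $v_n$ with strictly smaller $\tau$-value. The only cosmetic differences are that you make the "escaping in $\Gamma_p$ implies convergence to $p$" step explicit via Tukia's lemma where the paper treats it as known, and you phrase the decisive inequality via $t_n-\tau_n=\sigma(\alpha_n,u_n^+)$ rather than the paper's absolute-value manipulation of $\tau(\gamma_n^{-1}v_n)-\tau(v_n)$.
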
 

Assuming the lemmas for a moment, we prove Theorem~\ref{thm:compact quotient}.

\begin{proof}[Proof of Theorem~\ref{thm:compact quotient}] By Observation~\ref{obs:DK tiles}, the projection of $\mathcal{D}_L \smallsetminus \Hc$ to $U_\Gamma$ is 
 $$
 \Gamma \backslash\left(\tilde U_\Gamma -\Hc\right).
 $$
 By definition $\mc D_L$ is closed and Proposition~\ref{prop:horoballs are open} implies that $\Hc$ is open. So  $\mathcal{D}_L \smallsetminus \Hc$ is closed. So it suffices to show that $\mathcal{D}_L \smallsetminus \Hc$ does not accumulate on any points in $M = \overline{M^{(2)} \times \Rb} \smallsetminus M^{(2)} \times \Rb$. 
 
Since $\mc D_L \subset \Lambda(\Gamma)^{(2)} \times \Rb$,  Proposition~\ref{prop:compactifying flowspace}\eqref{item:sequences in compactification of flow space} implies that the closure of  $\mathcal{D}_L \smallsetminus \Hc$ in $\overline{M^{(2)} \times \Rb}$ is contained in $\Lambda(\Gamma)^{(2)} \times \Rb \sqcup \Lambda(\Gamma)$. Then, since every point in $\Lambda(\Gamma)$ is either conical or bounded parabolic, Lemmas~\ref{lem:closure of DK has no conical limit points} and~\ref{lem:closure of DK has no bounded parabolic points} imply that $\mathcal{D}_L \smallsetminus \Hc$ is compact. 
\end{proof} 

\subsection{Proof of Lemma~\ref{lem:closure of DK has no conical limit points}} Suppose $z\in M$ is a conical limit point and $\{v_n\} \subset L \times \Rb$ converges to $z$. We will show that $v_n \notin \mathcal{D}_L$ when $n$ is large. 

Since $z$ is a conical limit point, there exist $\{\gamma_m\}$ in $\Gamma$ and distinct $a, b\in M$ such that $\gamma_mz\to a$ and $\gamma_m|_{M \smallsetminus \{z\}} \to b$  locally uniformly. 
Thus $$
\gamma_m \to b \quad \text{and} \quad \gamma_m^{-1} \to z
$$ 
in the topology on $\Gamma \sqcup M$.

Let $v_n = (x_n, y_n, t_n)$. Since $(x_n,y_n) \in L$ and $v_n \to z$, by Proposition~\ref{prop:compactifying flowspace}\eqref{item:sequences in compactification of flow space} we must have
 $$
 (y_n \to z \text{ and } t_n \to +\infty) \text{ or }  (x_n \to z \text{ and } t_n \to -\infty).
 $$
 
 \medskip
 
 \noindent \emph{Case 1:} Assume $y_n \to z$ and $t_n \to +\infty$. Fix a distance $\dist$ on $\overline{M^{(2)} \times \Rb}$ which generates the topology. Since $v_n \to z$ and $y_n \to z$, we can find $n_1 < n_2 < \dots$ such that 
 $$
 \dist(\gamma_m(z), \gamma_m(v_{n_m})) < \frac{1}{m} \quad \text{and}  \quad \dist(\gamma_m(z), \gamma_m(y_{n_m})) < \frac{1}{m}
 $$
 for all $m \geq 1$.  Then 
 $$
 \lim_{m \to \infty}  \gamma_m(v_{n_m}) =  \lim_{m \to \infty}  \gamma_m(z) = a \quad \text{and} \quad  \lim_{m \to \infty}  \gamma_m(y_{n_m}) =  \lim_{m \to \infty}  \gamma_m(z) = a.
 $$
 So by replacing $\{v_n\}$ with $\{v_{n_m}\}$, we can assume that $\gamma_n v_n \to a$ and $\gamma_n y_n \to a$. Also, since $y_n \to z$ and $(x_n,y_n) \in L$, any limit point of $\{ x_n\}$ is contained in $M \smallsetminus \{ z\}$. So $\gamma_n x_n \to b \neq a$.  Then, since 
$$
\gamma_n v_n=(\gamma_n x_n,\gamma_n y_n, t_n + \sigma(\gamma_n, y_n))\to a,
$$
Proposition~\ref{prop:compactifying flowspace}\eqref{item:sequences in compactification of flow space} implies that $t_n + \sigma(\gamma_n, y_n) \to +\infty$. 
 
Then for $n$ sufficiently large,
\begin{align*}
\tau(\gamma_n v_n)-\tau(v_n) & = \abs{t_n + \sigma(\gamma_n, y_n)} -\abs{t_n} = t_n + \sigma(\gamma_n, y_n)-t_n \\
& = -\sigma(\gamma_n^{-1}, \gamma_n y_n).
\end{align*}
Since $\gamma_n \to b$ and $\gamma_n y_n \to a \neq b$, by the expanding property there exists $C > 0$ such that 
$$
\sigma(\gamma_n^{-1}, \gamma_n y_n) \geq \norm{\gamma_n^{-1}}_\sigma-C
$$
for all $n \geq 1$. Then Proposition~\ref{prop:basic properties}\eqref{item:properness} implies that 
$$
\lim_{n \to \infty} \tau(\gamma_n v_n)-\tau(v_n) = -\infty. 
$$

Finally, fix $\alpha \in \Gamma$ such that $\alpha (b,a) \in {\rm int}(L)$. Since $\gamma_n(x_n,y_n) \to (b,a)$, then $\alpha\gamma_n v_n \in L \times \Rb$ for $n$ large. Further, 
$$
\lim_{n \to \infty} \tau(\alpha\gamma_n v_n)-\tau(v_n) \leq \max_{q \in \Lambda(\Gamma)} \abs{ \sigma(\alpha, q)} + \lim_{n \to \infty} \tau(\gamma_n v_n)-\tau(v_n)= -\infty. 
$$
So $v_n \notin \mc D_L$ when $n$ is large.

\medskip 

 \noindent \emph{Case 2:} Assume $x_n \to z$ and $t_n \to -\infty$.  The proof is very similar to the proof of Case 1. Since $v_n \to z$ and $x_n \to z$, arguing as in Case 1 we can replace $\{v_n\}$ with a subsequence so that $\gamma_n v_n \to a$ and $\gamma_n x_n \to a$. Also, since $x_n \to z$ and $(x_n,y_n) \in L$, any limit point of $\{ y_n\}$ is contained in $M \smallsetminus \{ z\}$. So $\gamma_n y_n \to b \neq a$.  Then, since 
$$
\gamma_n v_n=(\gamma_n x_n,\gamma_n y_n, t_n + \sigma(\gamma_n, y_n))\to a,
$$
 Proposition~\ref{prop:compactifying flowspace}\eqref{item:sequences in compactification of flow space} implies that $t_n + \sigma(\gamma_n, y_n) \to -\infty$. 
 
 Then for $n$ sufficiently large,
\begin{align*}
\tau(\gamma_n v_n)-\tau(v_n) & = \abs{t_n + \sigma(\gamma_n, y_n)} -\abs{t_n} = -t_n - \sigma(\gamma_n, y_n)+t_n \\
& = -\sigma(\gamma_n, y_n).
\end{align*}
Since $\gamma_n^{-1} \to z$ and any limit point of $\{ y_n\}$ is contained in $M \smallsetminus \{ z\}$, by the expanding property there exists $C > 0$ such that 
$$
\sigma(\gamma_n,  y_n) \geq \norm{\gamma_n}_\sigma-C
$$
for all $n \geq 1$. Then Proposition~\ref{prop:basic properties}\eqref{item:properness} implies that 
$$
\lim_{n \to \infty} \tau(\gamma_n v_n)-\tau(v_n) = -\infty. 
$$

Finally, fix $\alpha \in \Gamma$ such that $\alpha(a,b) \in {\rm int}(L)$. Since $\gamma_n(x_n,y_n) \to (a,b)$, then $\alpha\gamma_n v_n \in K \times \Rb$ for $n$ large. Further, 
$$
\lim_{n \to \infty} \tau(\alpha\gamma_n v_n)-\tau(v_n) \leq \max_{q \in \Lambda(\Gamma)} \abs{ \sigma(\alpha, q)} + \lim_{n \to \infty} \tau(\gamma_n v_n)-\tau(v_n)= -\infty. 
$$
So $v_n \notin \mc D_L$ when $n$ is sufficiently large.

\subsection{Proof of Lemma~\ref{lem:closure of DK has no bounded parabolic points}} 
Suppose $p$ is a bounded parabolic point and $\{v_n\} \subset (L \times \Rb) \smallsetminus \Hc$ converges to $p$. 
We will show that $v_n \notin \mathcal{D}_L$ when $n$ is large.

Let $v_n = (x_n, y_n, t_n)$.
Passing to a subsequence we can assume $x_n \to x$ and $y_n \to y$. 
Since $(x_n,y_n) \in L$ and $L$ is compact, we must have $x \neq y$. 
Then, since $v_n \to p$, Proposition~\ref{prop:compactifying flowspace}\eqref{item:sequences in compactification of flow space} implies that 
 \begin{equation*}
 (y=p, \ x \neq p \text{ and } t_n \to +\infty) \text{ or }  (x = p, \ y \neq p \text{ and } t_n \to -\infty).
 \end{equation*}

Let $H(p,N)$ be the horoball of $\Hc$ at $p$. Since $v_n\not\in H(p,N)$, we have $v_n\in\gamma_n N$ for some $\gamma_n\in\Gamma_p$. Passing to a subsequence, we can assume $\gamma_n^{-1}x_n\to x'$ and $\gamma_n^{-1}y_n\to y'$.

 \medskip

 \noindent \emph{Case 1:} Assume $x=p$, $y \neq p$ and $t_n \to -\infty$.
Since $\{\gamma_n\}\subset\Gamma_p$ is escaping, $\gamma_n|_{M \smallsetminus \{p\}}$ converges locally uniformly to $p$. So we have $\gamma_n^{-1}y_n\to p=y'$. Since $p \notin \overline{N} = N$ is not a limit point of 
$$
\{\gamma_n^{-1}v_n\}=\{(\gamma_n^{-1}x_n,\gamma_n^{-1}y_n,t_n+\sigma(\gamma_n^{-1},y_n))\} \subset N,
$$
Proposition~\ref{prop:compactifying flowspace}\eqref{item:sequences in compactification of flow space} implies that $x'\neq p$ and $t_n+\sigma(\gamma_n^{-1},y_n)$ is bounded above by some positive constant $C$.

Then for $n$ sufficiently large,
\begin{align*}
\tau(\gamma_n^{-1}v_n)-\tau(v_n) & = \abs{ t_n+\sigma(\gamma_n^{-1},y_n)}-\abs{t_n} \\
& \leq \abs{t_n+\sigma(\gamma_n^{-1},y_n)-C}+\abs{C}+\abs{t_n} \\
& = - t_n -\sigma(\gamma_n^{-1},y_n) +2 C +t_n =  -\sigma(\gamma_n^{-1},y_n)+2 C.
\end{align*}
Since $y_n\to y\neq p$, by the expanding property there exists $C' > 0$ such that 
$$
\sigma(\gamma_n^{-1},  y_n) \geq \norm{\gamma_n^{-1}}_\sigma-C'
$$
for all $n \geq 1$. So 
$$
\lim_{n \rightarrow \infty} \tau(\gamma_n^{-1}v_n)-\tau(v_n)=-\infty.
$$
Finally, fix $\alpha\in \Gamma$ such that $\alpha(x', p) \in {\rm int}(L)$.
Then $\alpha \gamma_n^{-1} v_n \in L \times \Rb$ for $n$ sufficiently large and 
\begin{align*}
\lim_{n \to \infty} & \tau(\alpha\gamma_n^{-1} v_n) - \tau(v_n)    \leq \max_{q \in \Lambda(\Gamma)} \abs{\sigma(\alpha,q)} +\lim_{n \to \infty} \tau(\gamma_n^{-1}  v_n) - \tau(v_n) = -\infty. 
\end{align*}
Hence $v_n \notin \mc D_L$ for $n$ sufficiently large.

  \medskip

 \noindent \emph{Case 2:} Assume $y=p$, $x \neq p$ and $t_n \to +\infty$.
Arguing as in Case 1, we have $x'=p\neq y'$ and $t_n+\sigma(\gamma_n^{-1},y_n)$ is bounded below by some $-C \leq 0$. Hence for $n$ sufficiently large,
\begin{align*}
\tau(\gamma_n^{-1}v_n)-\tau(v_n)  \leq t_n +\sigma(\gamma_n^{-1},y_n) +2 C -t_n =  \sigma(\gamma_n^{-1},y_n)+2 C.
\end{align*}
Since $\gamma_n^{-1}y_n\to y'\neq p$, by the expanding property there exists $C' > 0$ such that 
\[\sigma(\gamma_n^{-1},y_n)=-\sigma(\gamma_n,\gamma_n^{-1}y_n)\leq -\norm{\gamma_n}_\sigma+C'\]
for all $n \geq 1$.  So 
$$
\lim_{n \rightarrow \infty} \tau(\gamma_n^{-1}v_n)-\tau(v_n)=-\infty.
$$
Finally, fix $\alpha \in \Gamma$ such that $\alpha(p, y') \in {\rm int}(L)$. Then $\alpha\gamma_n^{-1} v_n \in L \times \Rb$ for $n$ sufficiently large and 
$$
\lim_{n \to \infty} \tau(\gamma' \gamma_n^{-1} v_n) - \tau(v_n)= -\infty. 
$$
Hence $v_n \notin \mc D_L$ for $n$ sufficiently large.


\section{Finiteness criteria for BMS measures}\label{sec:finite BMS}


We obtain a finiteness criterion for BMS measures in the geometrically finite case, which is the natural
analogue of the criterion of  Dal'bo--Otal--Peign\'e \cite{DOP}. 
This criterion was generalized to geometrically infinite groups in the context of negatively curved Riemannian manifolds by Pit--Schapira \cite[Th.\,1.4, 1.6, 1.8]{PS18}.

\begin{theorem}\label{BMS finite criterion}
Suppose $(\sigma, \bar{\sigma}, G)$ is a continuous GPS system for a geometrically finite convergence group $\Gamma\subset\mathrm{Homeo}(M)$ where $\delta=\delta_\sigma(\Gamma) < +\infty$.
If $\delta_\sigma(P)<\delta$ for any maximal parabolic subgroup $P$ of $\Gamma$, then $Q_\sigma(\delta) = +\infty$ and the BMS measure is finite.
\end{theorem}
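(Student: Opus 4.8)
The plan is to exploit the decomposition of the flow space established in Section~\ref{sec:structure of the flow space}. By Theorem~\ref{thm:compact quotient} and Proposition~\ref{prop:disjoint horoballs} we may write $U_\Gamma$ as the union of the compact set $\Gamma\backslash(\tilde U_\Gamma-\Hc)$ and finitely many pairwise disjoint ``cusps'' $\Cc_j=\Gamma_{p_j}\backslash H(p_j,N_j)$, where $p_1,\dots,p_k$ represent the $\Gamma$-orbits of bounded parabolic points and $P_j:=\Gamma_{p_j}$ are the corresponding maximal parabolic subgroups. Since $\tilde m$ is locally finite and $\Gamma$ acts properly discontinuously on $\tilde U_\Gamma$, the measure $m_\Gamma$ is locally finite; hence the compact part has finite mass and finiteness of $m_\Gamma$ reduces to bounding $\tilde m(\Fc_j)$ for a $P_j$-fundamental domain $\Fc_j\subset H(p_j,N_j)$. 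For the divergence statement I would appeal to the Hopf--Tsuji--Sullivan dichotomy (Theorem~\ref{our dichotomy}): because $\Gamma$ is geometrically finite, $\Lambda(\Gamma)\smallsetminus\Lambda^{\rm con}(\Gamma)=\bigsqcup_{j}\Gamma p_j$ is countable, so it suffices to show that $\mu$ and $\bar\mu$ have no atoms at any $p_j$; then $\mu(\Lambda^{\rm con}(\Gamma))=1$, which is incompatible with case~(2) of the dichotomy and therefore forces $Q_\sigma(\delta)=+\infty$.

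The technical core, and what I expect to be the main obstacle, is a pair of measure estimates inside the cusp regions --- the GPS analogue of the horoball estimates of Dal'bo--Otal--Peign\'e. Fix a bounded parabolic point $p$ with stabilizer $P$ and a compact set $K\subset\Lambda(\Gamma)\smallsetminus\{p\}$ with $P\cdot K=\Lambda(\Gamma)\smallsetminus\{p\}$, and decompose $\Lambda(\Gamma)\smallsetminus\{p\}$ into the tiles $\gamma K$, $\gamma\in P$. Using the GPS relation~\eqref{eqn:GPS property}, the expanding property (Proposition~\ref{prop:basic properties}), the vanishing $\sigma(\gamma,p)=\bar\sigma(\gamma,p)=0$ for $\gamma\in P$ (Proposition~\ref{prop:basic properties}\eqref{item:proper implies positive periods}), and Lemma~\ref{lem:G to infinity at parabs}, one should estimate (i) the Patterson--Sullivan mass of $\gamma K$, hence of small neighbourhoods of $p$ in $\Lambda(\Gamma)$, and (ii) the $\tilde m$-mass of the portion of $H(p,N)$ ``entered through $\gamma K$'', which should be comparable, up to bounded multiplicative error, to $(1+\norm{\gamma}_\sigma)\,e^{-\delta\norm{\gamma}_\sigma}$ (the linear factor recording the $\Rb$-length of that portion). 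The delicate point in both estimates is to extract the correct dependence on the ``depth'' $\norm{\gamma}_\sigma$ of $\gamma$ in the cusp, where the naive shadow bound fails; this is the analogue of the Stratmann--Velani global measure formula. Granting (i)--(ii): summing (ii) over $\gamma\in P_j$ gives
$$\tilde m(\Fc_j)\;\lesssim\;\sum_{\gamma\in P_j}(1+\norm{\gamma}_\sigma)\,e^{-\delta\norm{\gamma}_\sigma},$$
where one also checks that the degenerate triples $(x,p_j,s)$ contribute a finite amount --- here $P_j$ fixes the $\Rb$-coordinate (as $\sigma(\gamma,p_j)=0$), so this requires $\mu(\{p_j\})=\bar\mu(\{p_j\})=0$, which is exactly what (i) delivers. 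The series converges because of the spectral gap: picking $s$ with $\delta_\sigma(P_j)<s<\delta$, the function $t\mapsto(1+t)e^{-(\delta-s)t}$ is bounded on $[0,\infty)$ and $\sum_{\gamma\in P_j}e^{-s\norm{\gamma}_\sigma}<+\infty$ by the definition of $\delta_\sigma(P_j)$ as a critical exponent; should any $\bar\sigma$-magnitudes of $\gamma^{-1}$ appear in the estimate, note that $\delta_{\bar\sigma}(P_j)=\delta_\sigma(P_j)<\delta$ as well, by Proposition~\ref{GPS basic properties}\eqref{item:norm vs dual norm}.

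Assembling the pieces: estimate (ii) shows $\tilde m(\Fc_j)<+\infty$ for every $j$, hence $m_\Gamma(U_\Gamma)<+\infty$; and estimate (i), combined with $\delta_\sigma(P_j)<\delta$, shows that the $P_j$-tail sums governing the excess of $\mu$ of small neighbourhoods of $p_j$ over $\mu(\{p_j\})$ tend to $0$ fast enough to force $\mu(\{p_j\})=0$, and likewise $\bar\mu(\{p_j\})=0$. Therefore $\mu(\Lambda^{\rm con}(\Gamma))=1$, and Theorem~\ref{our dichotomy} rules out case~(2), giving $Q_\sigma(\delta)=+\infty$. As a sanity check on the logic, once $m_\Gamma$ is known to be finite one can also see that $Q_\sigma(\delta)<+\infty$ is impossible directly: in case~(2) the $\Gamma$-action on $\Lambda(\Gamma)^{(2)}$ is dissipative, so it admits a measurable fundamental domain $F_0$ with $\nu(F_0)>0$, whence $F_0\times\Rb$ is essentially a fundamental domain for the $\Gamma$-action on $\tilde U_\Gamma$ and $m_\Gamma(U_\Gamma)$ equals the $\tilde m$-mass of $F_0\times\Rb$, which is infinite --- a contradiction.
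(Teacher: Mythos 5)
Your overall blueprint matches the paper's proof: decompose $U_\Gamma$ into a compact core and finitely many cusps (Theorem~\ref{thm:compact quotient}), estimate the BMS mass of the cusps by a sum of the form $\sum_{\gamma\in P}\norm{\gamma}_\sigma e^{-\delta\norm{\gamma}_\sigma}$ (your estimate (ii); the paper's Lemma~\ref{horoball length bound} and Proposition~\ref{prop:geomfin implies BMS finite}), and deduce divergence from the Hopf--Tsuji--Sullivan dichotomy once the Patterson--Sullivan measures are known to be atom-free at bounded parabolic points. Your closing ``sanity check'' is a correct alternate route from finiteness to divergence, but it too needs the atom-free input, since finiteness of $m_\Gamma$ via the cusp estimate already requires $\mu(\{p\})=\bar\mu(\{p\})=0$.

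The genuine gap is in your argument for absence of atoms, which is the technical heart of the theorem. You propose to bound $\mu(\gamma K)$ for the $P$-tiles covering $\Lambda(\Gamma)\smallsetminus\{p\}$, and to sum these over deep $\gamma\in P$ to control $\mu$ of a shrinking neighbourhood $V_n\ni p$. But $p$ itself lies in no tile, so those tail sums control only $\mu(V_n)-\mu(\{p\})=\mu(V_n\smallsetminus\{p\})$; as $n\to\infty$ you obtain $\mu(V_n)\to\mu(\{p\})$, not $\mu(V_n)\to 0$. You even write this out (``the excess of $\mu$ of small neighbourhoods of $p_j$ over $\mu(\{p_j\})$ tend to $0$'') but the assertion that this ``forces $\mu(\{p_j\})=0$'' does not follow. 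Moreover $\delta_\sigma(P)<\delta$ is not enough to exclude atoms for an \emph{arbitrary} $\sigma$-Patterson--Sullivan measure of dimension $\delta$: the uniqueness and ergodicity that would do so is only available after divergence is established, making the route circular. The paper's Proposition~\ref{no atoms} sidesteps this by arguing not about a given Patterson--Sullivan measure but about the Patterson \emph{construction}: one takes the approximating measures $\mu_s=Q(s)^{-1}\sum_{\gamma}\chi(\norm{\gamma}_\sigma)e^{-s\norm{\gamma}_\sigma}\mc D_\gamma$, which are Dirac combs supported on the discrete group $\Gamma$ inside $\Gamma\sqcup\Lambda(\Gamma)$ and hence carry no mass at $p$, uses cocompactness of $\Gamma_p$ on $(\Gamma\sqcup\Lambda(\Gamma))\smallsetminus\{p\}$ (Lemma~\ref{lem:parabolic cocompact}) to write $\Gamma=\Gamma_p\Gamma_0$ with $\Gamma_0$ bounded away from $p$, and proves a bound $\mu_s(V_n)\lesssim\sum_{k>n}e^{-(\delta-\epsilon)\norm{\alpha_k}_\sigma}$ that is uniform in $s$. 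Portmanteau ($\mu(\{p\})\le\liminf_m\mu_{s_m}(V_n)$) then passes this to the weak-$*$ limit and forces the atom to vanish. This is the step your proposal is missing, and without it neither the divergence nor the finiteness conclusion is reached.
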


Previously~\cite[Th.\,4.2]{BCZZ} we showed that if the $\sigma$-Poincar\'e series diverges for a maximal parabolic subgroup $\Gamma_p$ of $\Gamma$, then
$\delta_\sigma(\Gamma_p)<\delta_\sigma(\Gamma)$. 
So Theorem \ref{BMS finite criterion} implies the following criterion.

\begin{corollary} \label{cor:divergent at parabolics implies finite BMS}
Suppose $(\sigma,\bar\sigma,G)$ is a continuous GPS system for a geometrically finite convergence group $\Gamma\subset\mathsf{Homeo}(M)$ with 
$\delta:=\delta_\sigma(\Gamma)<+\infty$. If  
$$
\sum_{\gamma \in \Gamma_p} e^{-\delta_\sigma(\Gamma_p)\norm{\gamma}_\sigma}=+\infty
$$
whenever $p$ is a bounded parabolic point of $\Gamma$,  then $Q_\sigma(\delta)=+\infty$ and
the BMS measure is finite.
\end{corollary}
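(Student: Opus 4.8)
The corollary follows formally from Theorem~\ref{BMS finite criterion}: its hypothesis says every maximal parabolic subgroup $\Gamma_p$ is of $\sigma$-divergence type, so \cite[Th.\,4.2]{BCZZ} gives $\delta_\sigma(\Gamma_p)<\delta$ for all bounded parabolic $p$, and then Theorem~\ref{BMS finite criterion} yields $Q_\sigma(\delta)=+\infty$ together with finiteness of the BMS measure. So the real task is Theorem~\ref{BMS finite criterion}, whose proof I would split, following Dal'bo--Otal--Peign\'e \cite{DOP}, into a divergence statement (which also makes the BMS measure well defined) and a finiteness statement.

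For divergence, I would first note that, by a standard summation-by-parts argument, the orbital lower bound $\#\{\gamma\in\Gamma:\norm{\gamma}_\sigma\le R\}\ge c\,e^{\delta R}$ for all large $R$ already forces $Q_\sigma(\delta)=+\infty$. To establish that lower bound I would adapt \cite{DOP}: using the cusp decomposition of Section~\ref{sec:structure of the flow space}, one shows that the elements $\gamma$ whose coarse geodesic from $\id$ to $\gamma$ makes a deep excursion into the cusp at $p_j$ grow at exponential rate at most $\delta_\sigma(\Gamma_{p_j})$ --- this is where the hypothesis enters, a depth-$T$ excursion costing like $e^{-\delta_\sigma(\Gamma_{p_j})T}$ by the expanding estimate of Proposition~\ref{prop:basic properties} --- so such elements are too sparse to carry the exponent $\delta$; hence the cusp-avoiding elements already realize $\delta$, and, via a ping-pong construction among them, produce the required number of elements at every scale. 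Once $Q_\sigma(\delta)=+\infty$, Theorem~\ref{thm:PS exist and unique} makes $\mu$ and $\bar\mu$ the unique Patterson--Sullivan measures of dimension $\delta$, atom-free, and $m_\Gamma$ well defined.

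For finiteness, Theorem~\ref{thm:compact quotient} and Proposition~\ref{prop:disjoint horoballs} write $U_\Gamma$ as a compact piece --- of finite $m_\Gamma$-mass, since $m_\Gamma$ is locally finite --- together with finitely many cusps $\Cc_j\cong\Gamma_{p_j}\backslash H(p_j,N_j)$, the horoballs pairwise disjoint, so it suffices to bound $\tilde m\big(\Gamma_{p_j}\backslash H(p_j,N_j)\big)$. Fixing a compact fundamental domain $K_j$ for $\Gamma_{p_j}$ acting on $\Lambda(\Gamma)\smallsetminus\{p_j\}$, I would integrate $d\tilde m=e^{\delta G(x,y)}\,d\bar\mu(x)\,d\mu(y)\,dt$ over the fundamental domain $\{y\in K_j\}$ (pairs with $y=p_j$ being $\mu$-null, as $\mu$ is atom-free). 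Unwinding the horoball condition through the cocycle identity $\sigma(h^{-1},y)=\norm{h^{-1}}_\sigma+O(1)$ for $y$ in a fixed compact set off $p_j$ (Proposition~\ref{prop:basic properties}) and the topology of $\overline{M^{(2)}\times\Rb}$ near $p_j$ forces $x$ to lie near $p_j$, say $x\in gK_j$ for some $g\in\Gamma_{p_j}$, and confines $t$ to an interval of length $\asymp\norm{g^{-1}}_\sigma$; on this region $(x,y)$ stays in a fixed compact subset of $M^{(2)}$, so $e^{\delta G(x,y)}$ is bounded above and below, while quasi-invariance and the expanding property give $\bar\mu(gK_j)\asymp e^{-\delta\norm{g}_{\bar\sigma}}$, which is $\asymp e^{-\delta\norm{g^{-1}}_\sigma}$ by Proposition~\ref{GPS basic properties}. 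Hence the contribution of each $g$ is $\asymp\norm{g^{-1}}_\sigma\,e^{-\delta\norm{g^{-1}}_\sigma}$, so, reindexing, $\tilde m(\Cc_j)\asymp\sum_{g\in\Gamma_{p_j}}\norm{g}_\sigma\,e^{-\delta\norm{g}_\sigma}$; choosing $s\in(\delta_\sigma(\Gamma_{p_j}),\delta)$ and bounding $\norm{g}_\sigma e^{-\delta\norm{g}_\sigma}\le C_s\,e^{-s\norm{g}_\sigma}$ shows this series converges, by definition of $\delta_\sigma(\Gamma_{p_j})$.

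The step I expect to be the main obstacle is the cusp estimate --- specifically, extracting from the purely topological definition of the horoball (the complement of a $\Gamma_{p_j}$-orbit of a compact neighbourhood in the compactification $\overline{M^{(2)}\times\Rb}$) the fact that the relevant $t$-interval has length \emph{linear} in the magnitude of $g$, rather than merely bounded. It is exactly this linear factor that makes the \emph{strict} inequality $\delta_\sigma(\Gamma_{p_j})<\delta$ necessary, and recovering it requires a careful analysis of the topology of $\overline{M^{(2)}\times\Rb}$ near a parabolic point, using Proposition~\ref{prop:basic properties} and Lemma~\ref{lem:G to infinity at parabs}, in place of the Riemannian horosphere geometry available in \cite{DOP}. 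The remaining points --- torsion in $\Gamma_{p_j}$, absorbed by the quotient-measure formalism of Section~\ref{sec:quotient measures}, and the reduction to atom-free Patterson--Sullivan measures --- are routine.
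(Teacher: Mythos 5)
Your derivation of the corollary from Theorem~\ref{BMS finite criterion} is exactly the paper's proof: the hypothesis says each $\Gamma_p$ is of $\sigma$-divergence type, \cite[Th.\,4.2]{BCZZ} upgrades this to the strict gap $\delta_\sigma(\Gamma_p)<\delta$, and then Theorem~\ref{BMS finite criterion} applies. That reduction is all the paper does for the corollary, and you have it right.

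You went further and sketched a proof of Theorem~\ref{BMS finite criterion} itself, so let me compare. For the finiteness half, your sketch matches the paper closely: decompose $U_\Gamma$ into the compact part of Theorem~\ref{thm:compact quotient} and finitely many cusps, use the horoball estimate of Lemma~\ref{horoball length bound} (that the $t$-interval over a pair $(x,y)\in K\times gK$ has length $\norm{g}_\sigma+O(1)$, not merely $O(1)$), and bound the cusp mass by $\sum_{g\in\Gamma_p}\norm{g}_\sigma e^{-\delta\norm{g}_\sigma}$, which converges by the strict gap. You correctly identify Lemma~\ref{horoball length bound} as the delicate technical step where the topological definition of horoball has to be converted into a linear-in-magnitude excursion length. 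This is Proposition~\ref{prop:geomfin implies BMS finite} in the paper. For the divergence half, though, you take a genuinely different route. You propose to establish a lower bound $\#\{\gamma:\norm{\gamma}_\sigma\le R\}\ge ce^{\delta R}$ by a ping-pong argument among cusp-avoiding elements and then run a summation-by-parts; that would indeed imply divergence if it could be made to work, but it requires significant extra machinery (a precise notion of ``coarse geodesic excursion into the cusp'', a sub-multiplicativity estimate for the cusp-avoiding part, and a ping-pong construction producing $e^{\delta R}$ many elements at each scale), none of which is developed in the paper. The paper instead proves Proposition~\ref{no atoms}: via Patterson's $\chi$-modification one constructs Patterson--Sullivan measures $\mu,\bar\mu$ of dimension $\delta$ with no atoms at bounded parabolic points, using the gap $\delta_\sigma(\Gamma_p)<\delta$ to make the tail estimate $\mu(V_n)\to0$ work. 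Since parabolic points are countable and the remaining limit points are conical, $\mu$ and $\bar\mu$ give full mass to the conical limit set, and the Hopf--Tsuji--Sullivan dichotomy (Theorem~\ref{thm:dichtomy from 1st paper}) then forces $Q_\sigma(\delta)=+\infty$. This is cleaner and avoids any direct orbit counting. You should be aware that your divergence sketch, if pursued, would be substantially harder to make rigorous than the paper's route, and I would not be confident it goes through without nontrivial new lemmas.
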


The rest of the section is devoted to the proof of Theorem~\ref{BMS finite criterion}. Fix a continuous GPS system $(\sigma,\bar\sigma,G)$  for a geometrically finite convergence group $\Gamma\subset\mathsf{Homeo}(M)$ with $\delta:=\delta_\sigma(\Gamma)<+\infty$.

The theorem will follow from the next two propositions and Theorem~\ref{thm:compact quotient} above. The first provides a condition for the ``cusps'' to have finite measure and the second verifies this condition when the hypotheses of Theorem~\ref{BMS finite criterion} are satisfied. 

\begin{proposition}\label{prop:geomfin implies BMS finite} Suppose $p$ is a bounded parabolic point of $\Gamma$, $\mu$ is a $\sigma$-Patterson--Sullivan measure of dimension $\delta$, and $\bar\mu$ is a $\bar\sigma$-Patterson--Sullivan measure of dimension $\delta$.
Let $m_\Gamma$ denote the quotient measure on $U_\Gamma$ associated to the measure $\tilde m = e^{\delta G(x,y)}d\bar\mu(x) \otimes d\mu(y)  \otimes dt$ on $\tilde U_\Gamma$. 

If $\mu(\{p\})=\bar \mu(\{p\})=0$ and 
$$\sum_{\gamma\in \Gamma_p} ||\gamma||_\sigma e^{-\delta||\gamma||_\sigma}<+\infty,$$
then for every $\epsilon > 0$ there is a horoball $H$ based at $p$ whose image $\wh{H}$ in $U_\Gamma = \Gamma \backslash \tilde U_\Gamma$ satisfies $m_\Gamma(\wh{H}) < \epsilon$. 
\end{proposition}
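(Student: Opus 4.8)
The plan is to bound $m_\Gamma(\widehat H)$ by an integral of $\tilde m$ over the part of $\tilde U_\Gamma$ lying above a single fundamental domain for $\Gamma_p$, and then to estimate that integral by expanding it as a sum over $\Gamma_p$ which is dominated by the convergent series in the hypothesis. \emph{Step 1: reduction to a fundamental domain.} Since $p$ is a bounded parabolic point, $\Gamma_p$ acts properly discontinuously and cocompactly on $\Lambda(\Gamma)\smallsetminus\{p\}$; fix a relatively compact Borel fundamental domain $F\subseteq K$ for this action, so $\sum_{\gamma\in\Gamma_p}1_F(\gamma y)=1$ for every $y\in\Lambda(\Gamma)\smallsetminus\{p\}$, and put $f(x,y,t):=1_F(y)\,1_H(x,y,t)$. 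For any $\Gamma_p$-invariant $H$ (every horoball based at $p$ is $\Gamma_p$-invariant) one has $\sum_{\gamma\in\Gamma_p}f(\gamma v)=1_H(v)$. Moreover, the hypotheses $\mu(\{p\})=\bar\mu(\{p\})=0$ together with the conformality of $\mu,\bar\mu$ force the $\Gamma$-invariant set $\{(x,y,t):y\in\Gamma p\}$ to be $\tilde m$-null; hence $m_\Gamma$-a.e.\ point of $\widehat H$ has a representative in $H$ whose second coordinate lies in $\Lambda(\Gamma)\smallsetminus\{p\}$, which may be translated by $\Gamma_p$ into $F$. So $\sum_{\gamma\in\Gamma}f(\gamma v)\geq 1_{\widehat H}([v])$ for $m_\Gamma$-a.e.\ $v$, and \eqref{eqn:defining function of quotient measures} yields
\[
m_\Gamma(\widehat H)\ \leq\ \int_{\tilde U_\Gamma}1_F(y)\,1_H(x,y,t)\,e^{\delta G(x,y)}\,d\bar\mu(x)\,d\mu(y)\,dt .
\]

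\emph{Step 2: the horoball estimate.} For $R>0$ I would take $H_R:=H(p,N_R)$ with $N_R$ the closure in $\overline{M^{(2)}\times\Rb}$ of $\{(a,b,s):a\in U,\ s<R\}\cup\{(a,b,s):b\in U,\ s>-R\}\cup\{(a,b,s):a,b\in U\}$, where $U\supseteq K$ is a fixed relatively compact open subset of $M\smallsetminus\{p\}$; Proposition~\ref{prop:compactifying flowspace}\eqref{item:sequences in compactification of flow space} shows $N_R$ is a compact neighbourhood of $K$ in $\overline{M^{(2)}\times\Rb}\smallsetminus\{p\}$, so $H_R$ is a legitimate horoball and $R$ is a depth parameter. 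Split the $x$-integral above over the tiles $\gamma F$, $\gamma\in\Gamma_p$, and substitute $x=\gamma x_0$: by conformality of $\bar\mu$ and \eqref{eqn:GPS property} the integrand weight becomes $e^{\delta G(x_0,\gamma^{-1}y)-\delta\sigma(\gamma^{-1},y)}$. Escaping sequences in $\Gamma_p$ converge locally uniformly to $p$ on $M\smallsetminus\{p\}$, so (by properness of $\sigma$ only finitely many $\gamma\in\Gamma_p$ are exceptional) $\gamma^{-1}y$ is close to $p$ for $y\in F$; hence $G(x_0,\gamma^{-1}y)$ is uniformly bounded by continuity of $G$ on $M^{(2)}$ and $\sigma(\gamma^{-1},y)=\norm{\gamma^{-1}}_\sigma+O(1)$ by the expanding property, so the weight is $\leq C\,e^{-\delta\norm{\gamma^{-1}}_\sigma}$. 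For the $t$-section: with $x=\gamma x_0$, taking $\eta=\mathrm{id}$ (so $y\in U$) shows $(x,y,t)\in\Gamma_p N_R$ whenever $t>-R$, and taking $\eta=\gamma$ (so $\gamma^{-1}x=x_0\in U$) shows $(x,y,t)\in\Gamma_p N_R$ whenever $t<R-\sigma(\gamma^{-1},y)$; therefore $\{t:(\gamma x_0,y,t)\in H_R\}\subseteq[R-\sigma(\gamma^{-1},y),\,-R]$, which is empty unless $\sigma(\gamma^{-1},y)\geq 2R$ and otherwise has length at most $\sigma(\gamma^{-1},y)-2R\leq\norm{\gamma^{-1}}_\sigma-2R+O(1)$.

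\emph{Step 3: conclusion, and the main difficulty.} Combining Steps 1 and 2, $m_\Gamma(\widehat{H_R})$ is bounded by $C\,\bar\mu(F)\,\mu(F)$ times $\sum\{(\norm{\gamma^{-1}}_\sigma-2R+O(1))\,e^{-\delta\norm{\gamma^{-1}}_\sigma}:\gamma\in\Gamma_p,\ \norm{\gamma^{-1}}_\sigma\geq 2R-O(1)\}$, and after reindexing $\gamma\mapsto\gamma^{-1}$ this is bounded by a constant times the tail $\sum\{\norm{\gamma}_\sigma e^{-\delta\norm{\gamma}_\sigma}:\gamma\in\Gamma_p,\ \norm{\gamma}_\sigma\geq 2R-O(1)\}$ of the series $\sum_{\gamma\in\Gamma_p}\norm{\gamma}_\sigma e^{-\delta\norm{\gamma}_\sigma}$, which converges by hypothesis; so $m_\Gamma(\widehat{H_R})\to0$ as $R\to\infty$, and taking $R$ large gives a horoball with $m_\Gamma(\widehat{H_R})<\epsilon$. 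The main obstacle is the horoball computation in Step 2: it requires unwinding the definition $H(p,N_R)=\tilde U_\Gamma\smallsetminus(\Gamma_p N_R)$ against the topology of $\overline{M^{(2)}\times\Rb}$ (Proposition~\ref{prop:compactifying flowspace}) to identify exactly which pairs $(t,\gamma)$ occur and to see that only elements of large $\sigma$-magnitude contribute. This is the abstract analogue of the classical estimate for the length of a geodesic arc inside a horoball, and it is precisely what forces the extra factor $\norm{\gamma}_\sigma$ in the summability hypothesis; the reduction in Step 1 and the conformal change of variables in Step 2 are routine given the machinery of \cite{BCZZ}.
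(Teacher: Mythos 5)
Your proof is correct, and it takes a genuinely different route from the paper's. The paper bounds $m_\Gamma(\widehat H)$ via the coarse inequality $m_\Gamma(\pi(A))\le\tilde m(A)$ applied to the pieces $H_\gamma:=H(p,N)\cap(K\times\gamma K\times\Rb)$, $\gamma\in\Gamma_p$, and estimates each $\tilde m(H_\gamma)$ by combining conformality with the uniform fibre-length estimate of Lemma~\ref{horoball length bound}, which gives ${\rm Leb}(I_{x,y})\le\norm{\gamma}_\sigma+C$. You instead put a Borel fundamental domain $F\subset K$ for $\Gamma_p$ in the $y$-variable, unfold the quotient measure via \eqref{eqn:defining function of quotient measures} with the test function $f=1_F(y)\,1_H$, and then split the $x$-integral over the tiles $\gamma F$ with a conformal change of variables. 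Both routes reduce to a series $\sum_{\gamma\in\Gamma_p}(\text{fibre length})\cdot e^{-\delta\norm{\gamma}_\sigma}$. What your version adds is that the horoball is an explicit shrinking family $H(p,N_R)$, and your direct computation of the $t$-fibre from the definition of $N_R$ (using $\eta=\mathrm{id}$ and $\eta=\gamma$) yields the sharper bound $\max(0,\sigma(\gamma^{-1},y)-2R)$ rather than $\norm{\gamma}_\sigma+O(1)$; the $-2R$ term is exactly what turns the series into a tail of the convergent $\sum_{\gamma\in\Gamma_p}\norm{\gamma}_\sigma e^{-\delta\norm{\gamma}_\sigma}$, so it tends to $0$ as $R\to\infty$. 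The paper's written estimate for a fixed $N$ gives only finiteness and leaves the passage to $<\epsilon$ implicit (it spells out the needed cut-off argument, with a growing finite set $F\supset F_0$, only later in the proof of Proposition~\ref{prop:bound on mR horoball}), so your treatment makes this step explicit. Two minor remarks: in the presence of torsion in $\Gamma_p$ a Borel fundamental domain yields $\sum_{\gamma\in\Gamma_p}1_F(\gamma y)\ge 1$ rather than $=1$, which is still all you need for the upper bound in Step~1; and in Step~2 you do not in fact need to exclude finitely many exceptional $\gamma$ to bound $G(x_0,\gamma^{-1}y)$ — since $x_0\in F\subset U$, the horoball condition $\gamma^{-1}(x,y,t)\notin N_R$ already forces $\gamma^{-1}y\notin U$, so $(x_0,\gamma^{-1}y)$ lies in the compact set $\overline F\times(M\smallsetminus U)\subset M^{(2)}$ on which $G$ is bounded uniformly in $R$ and $\gamma$.
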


\begin{proposition}\label{no atoms}
If $\delta_\sigma(\Gamma_p)<\delta$ whenever $p$ is a bounded parabolic point of $\Gamma$, then there exist  a $\sigma$-Patterson--Sullivan-measure $\mu$ of dimension $\delta$  and a $\bar\sigma$-Patterson--Sullivan-measure $\bar\mu$ of dimension $\delta$ such that 
$$
\mu(\{p\})=\bar\mu(\{p\})=0
$$
whenever $p$ is a bounded parabolic point.
\end{proposition}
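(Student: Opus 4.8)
The plan is to reduce, via the Hopf--Tsuji--Sullivan dichotomy (Theorem~\ref{thm:dichtomy from 1st paper}), to the case $Q_\sigma(\delta)<+\infty$, and then to show that in that case \emph{any} Patterson--Sullivan measure of dimension $\delta$ obtained as a limiting Patterson construction already gives zero mass to every bounded parabolic point; comparing with the dichotomy will show this case cannot occur, so $Q_\sigma(\delta)=+\infty$ and Theorem~\ref{thm:PS exist and unique} applied to $\sigma$ and $\bar\sigma$ produces unique, atomless $\mu,\bar\mu$. Recall that a $\sigma$-Patterson--Sullivan measure $\mu$ of dimension $\delta$ arises as a weak-$\ast$ limit on $\Gamma\sqcup M$ of the normalized measures $\mu_s=\tilde Q_\sigma(s)^{-1}\sum_{\gamma\in\Gamma}h(\norm{\gamma}_\sigma)e^{-s\norm{\gamma}_\sigma}\mc D_\gamma$ as $s\searrow\delta$, where $h\equiv 1$ if $Q_\sigma(\delta)=+\infty$ and otherwise $h$ is a slowly varying Patterson weight chosen so that $\tilde Q_\sigma(s):=\sum_\gamma h(\norm{\gamma}_\sigma)e^{-s\norm{\gamma}_\sigma}\to+\infty$ as $s\searrow\delta$ (cf.\ the proof of Theorem~\ref{thm:PS exist and unique}); such limits are supported on $\Lambda(\Gamma)$. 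By Proposition~\ref{GPS basic properties}, and since $\delta_{\bar\sigma}(\Gamma_p)=\delta_\sigma(\Gamma_p)$ (this follows from Proposition~\ref{GPS basic properties}(2) because $\Gamma_p$ is closed under inversion), it suffices to treat $\mu$; fix a compatible distance $\dist$ on $\Gamma\sqcup M$ and a bounded parabolic point $p$.

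The key geometric input is a description of the ``cusp at $p$'' inside $\Gamma\sqcup M$, analogous to the horoball structure of Section~\ref{sec:structure of the flow space}. Fix a compact set $K\subset\Lambda(\Gamma)\smallsetminus\{p\}$ with $\Gamma_p\cdot K=\Lambda(\Gamma)\smallsetminus\{p\}$, and let $\mathcal S\subset\Gamma$ consist of the identity together with all $\gamma\in\Gamma$ whose ``forward direction'' lies in $K$. Using the cocompactness of $\Gamma_p$ on $\Lambda(\Gamma)\smallsetminus\{p\}$, the convergence property (Proposition~\ref{prop:compactifying}), and properness of the $\Gamma$-action on $\Gamma\sqcup M$, one shows: there is $T\colon(0,\infty)\to(0,\infty)$ with $T(r)\to+\infty$ as $r\to 0$ such that every $\gamma\in\Gamma$ with $\dist(\gamma,p)\le r$ can be written $\gamma=\eta\gamma'$ with $\eta\in\Gamma_p$, $\norm{\eta}_\sigma\ge T(r)$, and $\gamma'\in\mathcal S$; moreover, since $\eta^{-1}$ is then close to $p$ while $\gamma'$ points into $K$ (which is compactly contained in $M\smallsetminus\{p\}$), the distance $\dist(\eta^{-1},\gamma')$ is bounded below uniformly, so Proposition~\ref{prop:basic properties}(5) gives $\norm{\eta\gamma'}_\sigma=\norm{\eta}_\sigma+\norm{\gamma'}_\sigma+O(1)$. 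Establishing this decomposition — in particular the uniform lower bound $T(r)\to\infty$ together with the quasi-additivity of magnitudes — is the main obstacle, and is proved by the same kind of case analysis as Lemmas~\ref{lem:closure of DK has no conical limit points} and~\ref{lem:closure of DK has no bounded parabolic points}.

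Granting this, assume $Q_\sigma(\delta)<+\infty$ and estimate $\mu$ near $p$. For all but countably many $r>0$ (cf.\ Observation~\ref{obs:null boundary abundant}) we have $\mu(\partial B(p,r))=0$, so the Portmanteau theorem gives $\mu(\{p\})\le\mu(\overline{B(p,r)})\le\liminf_{s\searrow\delta}\mu_s(\overline{B(p,r)})$; here $\mu_s(\overline{B(p,r)})=\tilde Q_\sigma(s)^{-1}\sum_{\gamma\in\Gamma,\,\dist(\gamma,p)\le r}h(\norm{\gamma}_\sigma)e^{-s\norm{\gamma}_\sigma}$ since $\mu_s$ is supported on $\Gamma$. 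Inserting the decomposition and using that $h$ is slowly varying (so $h(a+b)=O(h(a)+h(b))$ and $h(a+O(1))=O(h(a))$), together with $\sum_{\gamma'\in\mathcal S}h(\norm{\gamma'}_\sigma)e^{-s\norm{\gamma'}_\sigma}\le\tilde Q_\sigma(s)$ and $\sum_{\gamma'\in\mathcal S}e^{-s\norm{\gamma'}_\sigma}=O(\tilde Q_\sigma(s))$ (both because $\mathcal S\subset\Gamma$), the factor $\tilde Q_\sigma(s)$ cancels and one obtains, for all $s$ close to $\delta$,
\[
\mu_s(\overline{B(p,r)})\ \lesssim\ \sum_{\substack{\eta\in\Gamma_p\\ \norm{\eta}_\sigma\ge T(r)}} h(\norm{\eta}_\sigma)\,e^{-s\norm{\eta}_\sigma}.
\]
Since $\delta_\sigma(\Gamma_p)<\delta$, for such $s$ one has $s>\delta_\sigma(\Gamma_p)$, so the right-hand side is the tail of a convergent series (the slowly varying weight does not affect convergence above the critical exponent) and hence tends to $0$ as $r\to 0$. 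Therefore $\mu(\{p\})=0$, and likewise $\bar\mu(\{p\})=0$, for every bounded parabolic point $p$. But Theorem~\ref{thm:dichtomy from 1st paper}(2) and $Q_\sigma(\delta)<+\infty$ force $\mu(\Lambda^{\rm con}(\Gamma))=0$, and since $\Gamma$ is geometrically finite $\Lambda(\Gamma)\smallsetminus\Lambda^{\rm con}(\Gamma)$ is exactly the set of bounded parabolic points, which is a countable union of $\Gamma$-orbits by Lemma~\ref{lem:finitely many parabs}; an atomless measure on a countable set is zero, contradicting $\mu(M)=1$. Hence $Q_\sigma(\delta)=+\infty$, and the proposition follows from Theorem~\ref{thm:PS exist and unique} applied to $\sigma$ and $\bar\sigma$.
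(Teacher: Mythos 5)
Your proposal is essentially the same as the paper's: the crux in both is Patterson's construction with a slowly-varying weight, a decomposition of elements of $\Gamma$ near $p$ as a ``long'' element of $\Gamma_p$ times an element of a bounded set, quasi-additivity of magnitudes via Proposition~\ref{prop:basic properties}\eqref{item:multiplicative estimate}, and the hypothesis $\delta_\sigma(\Gamma_p)<\delta$ to force the remaining $\Gamma_p$-sum to have a vanishing tail. Two places where you depart from the paper, neither of which is an improvement. First, the case split on $Q_\sigma(\delta)$ and the contradiction argument are an unnecessary detour: the estimate you set up in the case $Q_\sigma(\delta)<+\infty$ works verbatim in the divergent case as well (with $h\equiv 1$), so the paper simply runs it directly, with no appeal to the Hopf--Tsuji--Sullivan dichotomy inside this proof; the dichotomy is invoked afterwards, in Theorem~\ref{BMS finite criterion}, precisely because Proposition~\ref{no atoms} is a prerequisite to it. Second, the key decomposition: the paper proves a clean cocompactness statement, Lemma~\ref{lem:parabolic cocompact} (that $\Gamma_p$ acts cocompactly on $(\Gamma\sqcup\Lambda(\Gamma))\smallsetminus\{p\}$), which packages exactly what you need, whereas your set $\mc S$ defined by ``forward direction in $K$'' is not well-defined for an individual group element, and the analogy to Lemmas~\ref{lem:closure of DK has no conical limit points}--\ref{lem:closure of DK has no bounded parabolic points} is not the right reference (those concern $\tilde U_\Gamma$, not $\Gamma\sqcup M$). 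Finally, a small technical slip: the assertion $h(a+b)=O(h(a)+h(b))$ is false for admissible Patterson weights (e.g.\ $h(x)=e^{\sqrt{x}}$); what one actually uses, as in the paper's estimate with $\epsilon=(\delta-\delta_\sigma(\Gamma_p))/2$, is that $h(a+b)\le Ce^{\epsilon a}h(b)$ for $b$ large, and the $\epsilon$-loss is absorbed by the strict inequality $\delta_\sigma(\Gamma_p)<\delta$.
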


Assuming the propositions we prove Theorem~\ref{BMS finite criterion}. 

\begin{proof}[Proof of Theorem \ref{BMS finite criterion}]

Let $\Fc \subset \Lambda(\Gamma)$ denote the set of bounded parabolic points. Proposition \ref{no atoms} implies that there exist  Patterson--Sullivan measures $\mu$ and $\bar \mu$ of dimension $\delta$ with the property that
$\mu(\{p\})=\bar\mu(\{p\})=0$ for any $p\in\mc F$. Since $\Gamma$ is geometrically finite, there are countably many bounded parabolic points (see Lemma~\ref{lem:finitely many parabs}), and the rest are conical, so the
conical limit set has full measure for both $\mu$ and $\bar\mu$. 
Theorem~\ref{thm:dichtomy from 1st paper} then implies that
$Q_\sigma(\delta)=Q_{\bar\sigma}(\delta)=+\infty.$ Let $m_\Gamma$ denote the BMS measure constructed in Section~\ref{subsec:background on GPS systems}.

By Lemma~\ref{lem:finitely many parabs} there are finitely many $\Gamma$-orbits of bounded parabolic points. Let $\{p_1,\dots,p_k\}$ contain one representative of each orbit. Since $\delta_\sigma(\Gamma_{p_j})<\delta$, we see that
$$
\sum_{\gamma\in \Gamma_{p_j}} \norm{\gamma}_\sigma e^{-\delta\norm{\gamma}_\sigma}<+\infty.
$$
So by Proposition \ref{prop:geomfin implies BMS finite} for each $1 \leq j \leq k$ there is a horoball $H(p_j, N_j)$ based at $p_j$ whose image $\wh{H}_j$ in $U_\Gamma$ has finite $m_\Gamma$-measure. 

Let 
$$
\Hc := \bigcup_{\gamma \in \Gamma} \bigcup_{j=1}^k \gamma H(p_j, N_j). 
$$
By Theorem~\ref{thm:compact quotient} the quotient $\Gamma \backslash \left(\tilde U_\Gamma - \Hc\right)$ is compact. By definition $m_\Gamma$ is locally finite and so the $m_\Gamma$-measure of 
$\Gamma \backslash \left(\tilde U_\Gamma - \Hc\right)$ is finite. Hence 
$$
m_\Gamma \left( U_\Gamma\right) \leq m_\Gamma \left( \Gamma \backslash \left(\tilde U_\Gamma - \Hc\right) \right) + \sum_{j=1}^k m_\Gamma\left(\wh{H}_j\right)
$$
is finite. 
\end{proof} 

\subsection{Estimate of excursions in the horoballs}

To establish Proposition \ref{prop:geomfin implies BMS finite}, as well as the later  Proposition \ref{prop:bound on mR horoball}, we will need an estimate on lengths of excursions in the horoballs.

Let $p\in\Lambda(\Gamma)$ be bounded parabolic, let $K\subset\Lambda(\Gamma)\smallsetminus\{p\}$ be such that $\Gamma_p\cdot K=\Lambda(\Gamma)\smallsetminus\{p\}$ and let $N$ be a compact neighborhood of $K$ in $\overline{M^{(2)}\times\R}\smallsetminus\{p\}$.

Let $H(p,N)$ denote the horoball introduced in Section~\ref{sec:horoballs}. Then given $x,y\in \Lambda(\Gamma) \smallsetminus\{p\}$ distinct, let $I_{x,y} \subset \Rb$ be the subset satisfying 
$$
H(p,N)\cap\big( (x,y) \times \Rb \big)= (x,y) \times I_{x,y}.
$$

Since $\Gamma_p$ only accumulates on $p$, the expanding property implies that there exists a constant $D>0$ such that 
\begin{equation}\label{eqn:expanding property in cusp excursion} 
\norm{\gamma}_\sigma-D \leq \sigma(\gamma, z)\leq \norm{\gamma}_\sigma + D
\end{equation}
for all $\gamma\in \Gamma_p$ and $z\in K$. 

\begin{lemma}
\label{horoball length bound}
There exists $C>0$ such that: if $\gamma\in\Gamma_p$, $x\in K$, $y\in\gamma K$ and $I_{x,y}\ne\varnothing$,
then
\[ \abs{G(x,y)}\leq C \]
and
\[\left[\tfrac C2,\norm\gamma_\sigma-\tfrac C2\right]\subset I_{x,y} \subset \left[-\tfrac C2,\norm\gamma_\sigma+\tfrac C2\right].\]
In particular
$$ \norm{\gamma}_\sigma -C\le{\rm Leb}(I_{x,y})\le \norm{\gamma}_\sigma +C.$$
\end{lemma}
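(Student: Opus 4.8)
The plan is to derive everything from the expanding estimate \eqref{eqn:expanding property in cusp excursion}, the description of the topology on $\overline{M^{(2)}\times\R}$ in Proposition~\ref{prop:compactifying flowspace}, and the fact that $\Gamma_p$ accumulates only at $p$. As preliminaries I would record two facts. First, writing $\gamma':=\alpha^{-1}\gamma$ for $\alpha,\gamma\in\Gamma_p$, the cocycle identity $\sigma(\gamma',z)=\sigma(\alpha^{-1},\gamma z)+\sigma(\gamma,z)$ together with \eqref{eqn:expanding property in cusp excursion} gives $\sigma(\alpha^{-1},\gamma z)=\norm{\gamma'}_\sigma-\norm{\gamma}_\sigma+O(1)$ uniformly for $z\in K$. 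Second, for every neighbourhood $U$ of $p$ in $M$ there is $T(U)$ with $\beta K\subset U$ whenever $\beta\in\Gamma_p$ and $\norm{\beta}_\sigma\geq T(U)$: otherwise an escaping sequence $\beta_n$ with $\norm{\beta_n}_\sigma\to\infty$ would push the compact set $K\not\ni p$ into $U$ (as $\beta_n|_{M\smallsetminus\{p\}}\to p$ locally uniformly), a contradiction. Finally, since all of the conclusions are vacuous or automatic when $\norm\gamma_\sigma$ lies below any fixed threshold — provided $C$ exceeds that threshold — I would assume throughout that $\norm\gamma_\sigma$ is as large as needed.

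For the bound $\abs{G(x,y)}\leq C$, which is the one place the hypothesis $I_{x,y}\neq\varnothing$ is essential, I would show that $F:=\{(x,y)\in K\times\Lambda(\Gamma):x\neq y,\ I_{x,y}\neq\varnothing\}$ has compact closure in $M^{(2)}$; then the continuous function $G$ is bounded on $F$, hence on the pairs in the lemma. It suffices to check that the closure of $F$ in $M\times M$ contains no point $(w,w)$. If $(x_n,y_n)\in F$ with $x_n,y_n\to x^\ast$, then $x^\ast\in K$, so $x^\ast\neq p$ and $x^\ast=\alpha z^\ast$ for some $\alpha\in\Gamma_p$, $z^\ast\in K$; since $\alpha N$ is a neighbourhood of $\alpha K\ni x^\ast$, the form of the basic open sets in Proposition~\ref{prop:compactifying flowspace} shows $\alpha N\supset\{(a,b,t):a,b\in V\}$ for some neighbourhood $V$ of $x^\ast$ in $M$. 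For $n$ large $x_n,y_n\in V$, so $(x_n,y_n,t)\in\alpha N\subset\Gamma_p\cdot N$ for all $t$, i.e.\ $I_{x_n,y_n}=\varnothing$ — contradicting $(x_n,y_n)\in F$.

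For the upper containment I would use two uniform statements, each proved by a one-line compactness argument from Proposition~\ref{prop:compactifying flowspace}\eqref{item:sequences in compactification of flow space}: there is $T_1$ with $(x,y,t)\in N$ whenever $x\in K$, $y\neq x$ and $t\leq-T_1$, and there is $T_2$ with $(w,z,s)\in N$ whenever $z\in K$, $w\neq z$ and $s\geq T_2$. Applying the first to $(x,y,t)$ and the second to $\gamma^{-1}(x,y,t)=(\gamma^{-1}x,z,t-\sigma(\gamma,z))$, and using $\sigma(\gamma,z)\leq\norm\gamma_\sigma+O(1)$, yields $I_{x,y}\subset(-T_1,\norm\gamma_\sigma+O(1))$; this gives the inclusion $I_{x,y}\subset[-\tfrac C2,\norm\gamma_\sigma+\tfrac C2]$ and the bound $\Leb(I_{x,y})\leq\norm\gamma_\sigma+C$ once $C$ is chosen large.

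The heart of the proof is the lower containment $[\tfrac C2,\norm\gamma_\sigma-\tfrac C2]\subset I_{x,y}$. I would fix a basic neighbourhood $V_p=I_{U,n_1}\cup U$ of $p$ disjoint from the compact set $N$ (possible since $p\notin N$), and show that for $\norm\gamma_\sigma$ large, $x\in K$, $y=\gamma z$ with $z\in K$, and $t\in[\tfrac C2,\norm\gamma_\sigma-\tfrac C2]$, one has $\alpha^{-1}(x,y,t)\in V_p$ for \emph{every} $\alpha\in\Gamma_p$ — then $\alpha^{-1}(x,y,t)\notin N$, so $(x,y,t)\notin\Gamma_p\cdot N$ and $t\in I_{x,y}$. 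Using $\alpha^{-1}(x,y,t)=\big(\gamma'\gamma^{-1}x,\ \gamma'z,\ t+\norm{\gamma'}_\sigma-\norm\gamma_\sigma+O(1)\big)$ with $\gamma'=\alpha^{-1}\gamma$, and choosing a neighbourhood $U^\ast$ of $p$ with $\beta U^\ast\subset U$ for all $\beta$ in the finite set $F_1=\{\beta\in\Gamma_p:\norm\beta_\sigma<T(U)\}$, I would distinguish three cases. If $\norm{\alpha^{-1}}_\sigma\geq T(U)$ and $\norm{\gamma'}_\sigma\geq T(U)$: then $\gamma'\gamma^{-1}x\in\alpha^{-1}K\subset U$ and $\gamma'z\in\gamma'K\subset U$, so the point lies in $\{(a,b,s):a,b\in U\}\subset V_p$. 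If $\norm{\gamma'}_\sigma<T(U)$: then $\gamma'\in F_1$ and $\gamma^{-1}x\in\gamma^{-1}K\subset U^\ast$ (as $\norm\gamma_\sigma$ is large), so $\gamma'\gamma^{-1}x\in\gamma'U^\ast\subset U$, while the $\R$-coordinate is $t-\norm\gamma_\sigma+O(1)\leq-\tfrac C2+O(1)<-n_1$; so the point lies in $\{(a,b,s):a\in U,\ s<-n_1\}\subset V_p$. If $\norm{\alpha^{-1}}_\sigma<T(U)$: then $\alpha^{-1}\in F_1$, so by Proposition~\ref{prop:basic properties}\eqref{item:tri inequality} $\norm{\gamma'}_\sigma=\norm{\alpha^{-1}\gamma}_\sigma\geq\norm\gamma_\sigma-O(1)$ is large, giving $\gamma'z\in\gamma'K\subset U$, while the $\R$-coordinate is $t+O(1)\geq\tfrac C2-O(1)>n_1$; so the point lies in $\{(a,b,s):b\in U,\ s>n_1\}\subset V_p$. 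These cases are exhaustive, and combining the inclusion with the upper containment gives $\Leb(I_{x,y})\geq\norm\gamma_\sigma-C$. The delicate point is exactly this step: one must rule out every translate $\alpha N$ intruding on the central segment of the cusp excursion, which forces simultaneous control of $\gamma'\gamma^{-1}x$, $\gamma'z$ and the height shift $\sigma(\alpha^{-1},y)$ in terms of the sizes of $\norm{\gamma'}_\sigma$ and $\norm{\alpha^{-1}}_\sigma$; the bookkeeping of the additive constants to arrange that $C$ can be fixed independently of $\gamma$ is the only genuinely fiddly part.
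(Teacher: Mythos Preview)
Your proof is correct and follows essentially the same approach as the paper's: bound $|G|$ via relative compactness in $M^{(2)}$ coming from $I_{x,y}\neq\varnothing$; obtain the upper containment from the shape of the basic open sets in $N$ near $K$; and for the lower containment run a three-case analysis on $\alpha\in\Gamma_p$. The only cosmetic difference is that the paper argues the lower inclusion contrapositively (if $t\notin I_{x,y}$ then $\alpha^{-1}(x,y,t)\in N$ for some $\alpha$, and splits into cases according to which of $\alpha^{-1}x,\alpha^{-1}y$ lies $\epsilon'$-close to $p$), whereas you argue it directly and split into cases according to whether $\norm{\alpha^{-1}}_\sigma$ and $\norm{\gamma'}_\sigma$ exceed a threshold; since $\alpha^{-1}x$ close to $p$ is equivalent to $\norm{\alpha^{-1}}_\sigma$ large (and similarly for $\gamma'z$), these are the same three cases in different clothing.
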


\begin{proof} Fix a distance $\dist$ which generates the topology on $\overline{M^{(2)} \times \Rb}$. Since $N$ is a neighborhood of the compact set $K$, by Proposition~\ref{prop:compactifying flowspace}\eqref{item:sequences in compactification of flow space} there exist $T,\epsilon>0$  such that:
\begin{itemize}
\item $K \times M \times (-\infty, -T) \subset N$, 
\item $M \times K \times (T,+\infty) \subset N$, 
\item  if $a\in K$ and $\dist(a,b)<\epsilon$, then $(a,b) \times \Rb \subset N$.
\end{itemize} 

By assumption $I_{x,y} \neq \varnothing$ and so there exists $t_0 \in \Rb$ such that $(x,y,t_0)\in H(p,N)$.
In particular, $(x,y,t_0)\notin N$ and so $\dist(x,y)\geq\epsilon$ by our choice of $\epsilon$. Thus 
$$
\abs{G(x,y)} \leq \max_{\dist(a,b)\geq \epsilon} \abs{G(a,b)},
$$
which is finite by the continuity of $G$.

Fix $t\in I_{x,y}$. Then $(x,y,t)\in H(p,N)$. So $(x,y,t)\notin N$ and so $t\geq -T$ since $x \in K$. Further $(x,y,t)\notin \gamma N$, i.e.\ 
$$\gamma^{-1}(x,y,t) = (\gamma^{-1}x,\gamma^{-1}y,t+\sigma(\gamma^{-1},y))\notin N.$$
Since $\gamma^{-1}y\in K$,  this implies that $t+\sigma(\gamma^{-1},y)\leq T$ or equivalently that
$$t\leq T+\sigma(\gamma,\gamma^{-1}y).$$
Then Equation~\eqref{eqn:expanding property in cusp excursion}  implies that $t \leq \norm{\gamma}_\sigma + T + D$. 

Since $t \in I_{x,y}$ was arbitrary, we have 
\[ I_{x,y} \subset [-T,\norm\gamma_\sigma+T+D].\]

For the other inclusion, we again apply Proposition~\ref{prop:compactifying flowspace}\eqref{item:sequences in compactification of flow space}, but this time using the fact that the complement of $N$ is a neighborhood of $p$ in $\overline{M^{(2)}\times\R}$. Then there exist $T',\epsilon' > 0$ such that $(a,b,t) \notin N$ whenever 
\begin{itemize}
\item $\dist(a,p)<\epsilon'$ and $t<-T'$, 
\item $\dist(b,p)<\epsilon'$ and $t>T'$, or  
\item $\dist(a,p)<\epsilon'$ and $\dist(b,p)<\epsilon'$.
\end{itemize}

Note that since $\Gamma_p$ acts properly discontinuously on $\Lambda(\Gamma)\smallsetminus\{p\}$, the set $F$ of $\gamma\in\Gamma_p$ such that 
$$
\max_{z \in K} \dist(\gamma z, p) \geq \epsilon'
$$ 
is finite. Let
$$D':=\max\left(\max_{\alpha\in F\cup F^{-1},\ q\in M}\abs{\sigma(\alpha,q)},\ \max_{\alpha,\beta\in F}\norm{\alpha^{-1}\beta}_\sigma,\ 1\right).$$

Now fix $t \in \Rb \smallsetminus I_{x,y}$. We will show that $t> C$ or $t< \norm\gamma_\sigma-C$ for some constant $C$.
Since $(x,y,t)\notin H(p,N)$ we have $(x,y,t)\in\alpha N$ for some $\alpha\in\Gamma_p$, or equivalently
\[(\alpha^{-1}x,\alpha^{-1}y,t+\sigma(\alpha^{-1},y))\in N.\]
In particular, at least one of $\alpha^{-1}x,\alpha^{-1}y$ must be a distance at least $\epsilon'$ from $p$.

 \medskip
 
 \noindent \emph{Case 1:} Assume  $\epsilon'\leq \dist(\alpha^{-1}x,p)$ and $\epsilon' \leq \dist(\alpha^{-1}y,p)$.
Since $x\in K$ and $y\in\gamma K$, we see that $\alpha^{-1}\in F$ and $\alpha^{-1}\gamma\in F$. 
Hence $\norm\gamma_\sigma=\norm{\alpha\alpha^{-1}\gamma}_\sigma\leq D'$.
In particular $\norm\gamma_\sigma-D'<D'$ so either  $t < D'$ or $t > \norm\gamma_\sigma-D'$. 

 \medskip
\noindent \emph{Case 2:}  Assume $ \dist(\alpha^{-1}y,p)<\epsilon'\leq\dist(\alpha^{-1}x,p)$.
Then $t+\sigma(\alpha^{-1},y)\leq T'$ since $\alpha^{-1}(x,y,t)\in N$.
Moreover, since $x\in K$, we have $\alpha^{-1}\in F$ and so $\sigma(\alpha^{-1},y)\geq - D'$.
Thus
\[t\leq T'+D'.\]

 \medskip
\noindent \emph{Case 3:}  Assume $ \dist(\alpha^{-1}x,p)<\epsilon'\leq\dist(\alpha^{-1}y,p)$.
Then $t+\sigma(\alpha^{-1},y)\geq -T'$ since $\alpha^{-1}(x,y,t)\in N$.
Moreover, since $y\in\gamma K$, we have $\alpha^{-1}\gamma\in F$ and so 
$$
\sigma(\gamma^{-1},y) =\sigma(\alpha^{-1},y)- \sigma(\alpha^{-1}\gamma,\gamma^{-1}y) \geq \sigma(\alpha^{-1},y)-D'.
$$
Thus
\[t\geq -\sigma(\gamma^{-1},y)-T'-D'= \sigma(\gamma,\gamma^{-1}y)-T'-D'.\]
Then Equation~\eqref{eqn:expanding property in cusp excursion}  implies that 
\[t\geq \norm{\gamma}_\sigma-D-T'-D'.\]

So in all cases,
$$
(t \leq T'+D') \quad \text{or} \quad (t \geq \norm{\gamma}_\sigma-D-T'-D').
$$
Since $t \in \Rb \smallsetminus I_{x,y}$ was arbitrary,  we then have 
\begin{equation*}
(T'+D', \norm{\gamma}_\sigma-D-T'-D') \subset I_{x,y}. \qedhere
\end{equation*}
\end{proof}

\subsection{Proof of Proposition \ref{prop:geomfin implies BMS finite}}\label{subsec:proof of BMS meas finite} 
Our proof closely follows a classical argument (see \cite[Th.\,B]{DOP} in the case of negatively curved Riemannian manifolds).
Fix a bounded parabolic point $p \in \Lambda(\Gamma)$ and Patterson--Sullivan measures $\mu$, $\bar \mu$ as in the statement of the proposition. 
We use the notation from the previous section:
let $K\subset\Lambda(\Gamma)\smallsetminus\{p\}$ be such that $\Gamma_p\cdot K=\Lambda(\Gamma)\smallsetminus\{p\}$ and $N$ be a compact neighborhood of $K$ in $\overline{M^{(2)}\times\R}\smallsetminus\{p\}$.

Let $H(p,N)$ denote the horoball introduced in Section~\ref{sec:horoballs}. Then given $x,y\in \Lambda(\Gamma) \smallsetminus\{p\}$ distinct, let $I_{x,y} \subset \Rb$ be the subset satisfying 
$$
H(p,N)\cap\big( (x,y) \times \Rb \big)= (x,y) \times I_{x,y}.
$$

Since $\Gamma_p$ only accumulates on $p$, the expanding property implies that there exists a constant $D>0$ such that 
\begin{equation}\label{eqn:Gammap acting on K}
\norm{\gamma}_\sigma-D \leq \sigma(\gamma, z)\leq \norm{\gamma}_\sigma + D
\end{equation}
for all $\gamma\in \Gamma_p$ and $z\in K$.

As in the statement of Proposition \ref{prop:geomfin implies BMS finite}, let 
$$
d\tilde m  := e^{\delta G(x,y)} d\bar\mu(x) \otimes d\mu(y) \otimes dt
$$
and let $m_\Gamma$ be the quotient measure on $U_\Gamma$. Also let $\pi \colon \tilde U_\Gamma \to U_\Gamma$ be the projection map.
It follows from Equation~\eqref{eqn:defining function of quotient measures} that 
$$
m_\Gamma(\pi(A)) \leq \tilde m (A)
$$
for all measurable subsets $A \subset \tilde U_\Gamma$, since $P(1_{A}) \geq 1_{\pi(A)}$.
(Observe that $\pi(A)$ is measurable since $\pi^{-1}(\pi(A))=\Gamma\cdot A$ is a countable union of measurable sets.)

For $\gamma\in\Gamma_p$, let
$$H_\gamma:=\bigcup_{x\in K,y\in\gamma K}(x,y) \times I_{x,y}=H(p,N)\cap K\times\gamma K\times\R.$$
Let $C > 0$ satisfy Lemma~\ref{horoball length bound}. Then by Equation~\eqref{eqn:Gammap acting on K} we have 
\begin{align*}
\tilde m (H_\gamma) & = \int_{\substack{x\in K\\y\in\gamma K}} {\rm Leb}(I_{x,y})e^{\delta G(x,y)}d\bar\mu(x)d\mu(y) \le \left(\norm{\gamma}_\sigma+C\right) e^{\delta C}\bar\mu(K)\mu(\gamma K)\\
& = \left(\norm{\gamma}_\sigma+C\right) e^{\delta C}\bar\mu(K)\int_K e^{-\delta\sigma(\gamma,y)} d\mu(y) \\
& \le   (\norm{\gamma}_\sigma+C) e^{\delta C}\bar\mu(K)e^{\delta D}e^{-\delta \norm{\gamma}_\sigma}\mu(K).
\end{align*}

Let $\wh{H}$ denote the image of $H(p,N)$ in $U_\Gamma$. 
Notice that $\wh{H}$ is contained in the image of 
$$
\Big(\{p\} \times (\Lambda(\Gamma) \smallsetminus\{p\}) \times \Rb\Big) \cup \Big((\Lambda(\Gamma) \smallsetminus\{p\}) \times \{p\} \times \Rb \Big) \cup \bigcup_{\gamma \in \Gamma_p } H_\gamma .
$$
Since $\mu$ and $\bar\mu$ do not have atoms at $p$, the measure of the set of geodesic segments in $H_F$ with one endpoint at $p$ has $\tilde m$-measure zero. 
Therefore,
\begin{align*}
m_\Gamma(\wh{H}) & \le \sum_{\gamma\in \Gamma_p } \tilde m(H_\gamma)  \le e^{\delta(D+C)} \bar\mu(K)\mu(K) \sum_{\gamma\in \Gamma_p}
\left(\norm{\gamma}_\sigma+C\right)e^{-\delta\norm{\gamma}_\sigma} .
\end{align*}
By Proposition~\ref{prop:basic properties}\eqref{item:properness} and the assumption that 
$$\sum_{\gamma\in \Gamma_p} \norm{\gamma}_\sigma e^{-\delta\norm{\gamma}_\sigma}<+\infty,$$
the sum 
$$ \sum_{\gamma\in \Gamma_p} \left(\norm{\gamma}_\sigma+C\right)
e^{-\delta\norm{\gamma}_\sigma}
$$
is finite. Hence $m_\Gamma(\wh{H})< \epsilon$.

\subsection{Proof of Proposition~\ref{no atoms}} By symmetry, it suffices to construct a $\sigma$-Patterson--Sullivan measure $\mu$ of dimension $\delta$ such that 
$$
\mu(\{p\})=0
$$
whenever $p$ is a bounded parabolic point. 
(See \cite[Th.\,IV.2]{dalbo-peigne} for a similar argument in the case of negatively curved Riemannian manifolds).

We will recall the construction of Patterson--Sullivan measures given in~\cite[Th.\,4.1]{BCZZ} (which is a modification of the standard construction).  Endow $\Gamma \sqcup M$ with the topology described in Proposition~\ref{prop:compactifying}.

The first step in the construction is to choose a well behaved magnitude function. Using~\cite[Lem.\,4.2]{BCZZ} we can assume that our magnitude function satisfies 
$$
\limsup_{\gamma \rightarrow x} \abs{\sigma(\alpha, x)- (\norm{\alpha \gamma}_\sigma -\norm{\gamma}_\sigma)} = 0
$$
(in our case  the $\kappa$ in~\cite[Lem.\,4.2]{BCZZ} is zero). Then using~\cite[Lem.\,3.1]{Patterson}, there exists a non-decreasing function $\chi:\mb R \to\mb R_{\geq1}$ such that 
\begin{enumerate}[label=(\alph*)]
 \item \label{item:PS meas chi subexp bis} for every $\epsilon>0$ there exists $R>0$ such that $\chi(r+t)\leq e^{\epsilon t}\chi(r)$ for any $r\geq R$ and $t\geq 0$,
 \item \label{item PS meas chi div bis} $\sum_{\gamma \in \Gamma} \chi(\norm{\gamma}_\sigma) e^{-\delta \norm{\gamma}_\sigma} =+\infty$.
\end{enumerate}
(When $\sum_{\gamma \in \Gamma} e^{-\delta \norm{\gamma}_\sigma} =+\infty$, we can take $\chi \equiv 1$.) Finally, for $s >\delta$, consider the probability measure
$$
\mu_s := \frac{1}{Q(s)} \sum_{\gamma \in \Gamma} \chi(\norm{\gamma}_\sigma) e^{-s \norm{\gamma}_\sigma} \mathcal D_\gamma,
$$
where $Q(s):=\sum_{\gamma \in \Gamma} \chi(\norm{\gamma}_\sigma) e^{-s \norm{\gamma}_\sigma} $
and $D_\gamma$ denotes the Dirac measure centered on $\gamma\in\Gamma\sqcup M$.

Fix $s_m \searrow \delta$ such that $\mu_{s_m} \to \mu$ in the weak-$*$ topology. Then one can show that $\mu$ is a $\sigma$-Patterson--Sullivan measure of dimension $\delta$ which is supported on $\Lambda(\Gamma)$, see the proof of~\cite[Th.\,4.1]{BCZZ} for details.  

Now we fix a bounded parabolic point $p \in M$ and show that $\mu(\{p\})$ equals zero. We will need the following cocompactness result. 

\begin{lemma}\label{lem:parabolic cocompact} $\Gamma_p$ acts cocompactly on $(\Gamma \sqcup \Lambda(\Gamma)) \smallsetminus \{p\}$. 
\end{lemma}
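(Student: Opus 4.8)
The statement to prove is that $\Gamma_p$ acts cocompactly on $(\Gamma \sqcup \Lambda(\Gamma)) \smallsetminus \{p\}$, where the topology on $\Gamma \sqcup \Lambda(\Gamma)$ is the one induced from the compactifying topology on $\Gamma \sqcup M$ (Proposition~\ref{prop:compactifying}). The plan is to combine two pieces: first, that $\Gamma_p$ acts cocompactly on $\Lambda(\Gamma) \smallsetminus \{p\}$, which is the definition of $p$ being a bounded parabolic point; and second, that the ``$\Gamma$ part'' of the complement, namely $\Gamma \smallsetminus (\text{something})$ after removing a compact set, can be pushed by $\Gamma_p$ into a neighborhood of $\Lambda(\Gamma)\smallsetminus\{p\}$. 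The cleanest route is to argue by sequential compactness: take an arbitrary sequence $\{v_n\}$ in $(\Gamma \sqcup \Lambda(\Gamma)) \smallsetminus \{p\}$ and produce $\{\alpha_n\} \subset \Gamma_p$ so that $\{\alpha_n v_n\}$ subconverges in $(\Gamma \sqcup \Lambda(\Gamma)) \smallsetminus \{p\}$; this shows the quotient is sequentially compact, hence compact since it is metrizable (the compactifying topology is metrizable, so the quotient by a properly discontinuous action is as well — or one works directly with the metric).

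\emph{First step: reduce to limit points.} Fix a compact set $K_0 \subset \Lambda(\Gamma) \smallsetminus \{p\}$ with $\Gamma_p \cdot K_0 = \Lambda(\Gamma) \smallsetminus \{p\}$, using that $p$ is a bounded parabolic point. Using Proposition~\ref{prop:compactifying}(3), the closure of $\Gamma$ in $\Gamma \sqcup M$ is $\Gamma \sqcup \Lambda(\Gamma)$, so it suffices to handle sequences $\{v_n\} \subset \Gamma$. Pass to a subsequence so that $v_n = \gamma_n$ with $\gamma_n \to a \in \Lambda(\Gamma)$ and $\gamma_n^{-1} \to b \in \Lambda(\Gamma)$ (if $\{\gamma_n\}$ is eventually constant or has a constant subsequence we are trivially done, so assume it escapes). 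By Proposition~\ref{prop:compactifying}(1), $\gamma_n|_{M \smallsetminus \{b\}}$ converges locally uniformly to $a$.

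\emph{Second step: the two cases.} If $a \neq p$: choose $\alpha \in \Gamma_p$ with $\alpha a \in K_0$ (possible since $\Gamma_p \cdot K_0 = \Lambda(\Gamma)\smallsetminus\{p\}$), then $\alpha \gamma_n \to \alpha a \in K_0 \subset \Lambda(\Gamma)\smallsetminus\{p\}$, which is a convergent subsequence in the complement of $p$, so take $\alpha_n := \alpha$ for all $n$. If $a = p$: this is the substantive case. Since $p$ is not a conical limit point (Lemma~\ref{lem: parab conical disjoint}, because $\Gamma_p$ contains no loxodromics), and $\gamma_n \to p$, one deduces — this is exactly the standard dichotomy used in the proof of Proposition~\ref{prop:disjoint horoballs} — that $\gamma_n^{-1} \to p$ as well, i.e.\ $b = p$. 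Now pick $\alpha_n \in \Gamma_p$ with $\alpha_n(\gamma_n^{-1} q_0) \in K_0$ for some fixed base point $q_0 \in \Lambda(\Gamma)\smallsetminus\{p\}$; more precisely, since $\Gamma_p$ acts cocompactly on $\Lambda(\Gamma)\smallsetminus\{p\}$, choose $\alpha_n \in \Gamma_p$ so that $\alpha_n \gamma_n^{-1}$, viewed via its action or via $\gamma_n^{-1}(q_0)$, stays in a compact set; then pass to a subsequence so that $\alpha_n \gamma_n^{-1} \to c \in \Lambda(\Gamma)$ and $(\alpha_n\gamma_n^{-1})^{-1} = \gamma_n \alpha_n^{-1} \to d$. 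One checks $c \neq p$ using that $\alpha_n\gamma_n^{-1}(q_0) = \alpha_n(\gamma_n^{-1} q_0)$ was arranged to lie in $K_0$, hence $\alpha_n \gamma_n^{-1} \to c$ with $c \in \overline{\Gamma_p \cdot K_0}$-type control forcing $c \ne p$; equivalently one runs the convergence-group dichotomy directly. Then $\alpha_n \gamma_n \to$ (the attracting data of the $\alpha_n\gamma_n$), and since $c \neq p$ one sees $\alpha_n \gamma_n$ stays away from $p$, giving a convergent subsequence of $\{\alpha_n \gamma_n\}$ in $(\Gamma\sqcup\Lambda(\Gamma))\smallsetminus\{p\}$ by Proposition~\ref{prop:compactifying}(2).

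\emph{Main obstacle.} The delicate point is the case $a = p$: one must leverage cocompactness of $\Gamma_p$ on $\Lambda(\Gamma)\smallsetminus\{p\}$ together with the fact that $p$ is not conical to translate an escaping sequence $\gamma_n \to p$ back into a bounded region while keeping it away from $p$ in \emph{both} the forward and backward endpoints — and then invoking the characterization of convergence in the compactifying topology (Proposition~\ref{prop:compactifying}(2)) to conclude $\{\alpha_n\gamma_n\}$ actually converges in $\Gamma \sqcup \Lambda(\Gamma)$ rather than merely having its ``attracting point'' behave well. The bookkeeping of which endpoint goes where is essentially identical to the argument already carried out in the proof of Proposition~\ref{prop:disjoint horoballs}, so I would cross-reference that argument rather than repeat it. A lighter alternative, if available in the authors' framework, is to quote a known statement that for a bounded parabolic point $p$ the action of $\Gamma_p$ on $(\Gamma\sqcup\Lambda(\Gamma))\smallsetminus\{p\}$ is cocompact — this is in the spirit of Tukia's and Bowditch's work — but absent such a citation the sequential argument above is self-contained.
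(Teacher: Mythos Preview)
Your overall framework---show sequential compactness of the quotient by producing, for any escaping sequence $\{\gamma_n\}\subset\Gamma$, elements $\alpha_n\in\Gamma_p$ with $\{\alpha_n\gamma_n\}$ subconverging away from $p$---is sound and is essentially what the paper does (the paper packages it as a contradiction argument, but the heart is the same sequence analysis). However, your handling of the case $a=p$ has a genuine error.

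The claim ``$\gamma_n\to p$ and $p$ not conical implies $\gamma_n^{-1}\to p$'' is false. Take $\alpha\in\Gamma_p$ parabolic and $\beta\notin\Gamma_p$, and set $\gamma_n=\alpha^n\beta$. Then $\gamma_n\to p$ (since $\alpha^n|_{M\smallsetminus\{p\}}\to p$ and $\beta$ is fixed), while $\gamma_n^{-1}=\beta^{-1}\alpha^{-n}\to\beta^{-1}(p)\neq p$. The dichotomy you cite from the proof of Proposition~\ref{prop:disjoint horoballs} goes in a different direction: it says that if one \emph{knows} $\gamma_n(p)\to x$ and $p$ is not conical, then $\gamma_n\to x$. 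It does not let you deduce anything about $\gamma_n^{-1}$ from $\gamma_n\to p$ alone. Your subsequent construction then compounds the problem: choosing $\alpha_n$ so that $\alpha_n(\gamma_n^{-1}q_0)\in K_0$ controls $\alpha_n\gamma_n^{-1}$ and its inverse $\gamma_n\alpha_n^{-1}$, but $\gamma_n\alpha_n^{-1}$ lies in the \emph{right} coset $\gamma_n\Gamma_p$, not the left orbit $\Gamma_p\gamma_n$ that you need.

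The fix, which is the paper's key move, is to track the image of $p$ rather than $\gamma_n^{-1}$. Without loss of generality $\gamma_n\notin\Gamma_p$ (else $\gamma_n\in\Gamma_p\cdot\{\id\}$ already), so $\gamma_n(p)\neq p$; choose $\alpha_n\in\Gamma_p$ with $\alpha_n\gamma_n(p)\in K_0$. Passing to a subsequence, $\alpha_n\gamma_n|_{M\smallsetminus\{b^-\}}\to b^+$ and $\alpha_n\gamma_n(p)\to a\in K_0$. If $a\neq b^+$ then $p=b^-$ and the sequence $\alpha_n\gamma_n$ witnesses that $p$ is conical, a contradiction; hence $b^+=a\in K_0$, so $\alpha_n\gamma_n\to a\in\Lambda(\Gamma)\smallsetminus\{p\}$ as desired. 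This single argument handles all escaping $\gamma_n\in\Gamma\smallsetminus\Gamma_p$ at once, without any case split on $a$.
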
 

\begin{proof} Fix a compact set $K_0 \subset \Lambda(\Gamma)\smallsetminus\{p\}$ such that $\Gamma_p \cdot K_0 = \Lambda(\Gamma)\smallsetminus\{p\}$. 
Then fix an open set $U \supset K_0$ in $\Gamma \sqcup \Lambda(\Gamma)$ such that $p \notin \overline{U}$. Fix an enumeration $\Gamma = \{ \gamma_k\}$ so that $\gamma_1=\id$ and let 
$F_n := \{ \gamma_1, \dots, \gamma_n\}$. Then $K_n := \overline{U} \cup F_n$ is compact in $(\Gamma \sqcup \Lambda(\Gamma)) \smallsetminus\{p\}$.
 We claim that for $n$ sufficiently large,  $\Gamma_p \cdot K_n = (\Gamma \sqcup \Lambda(\Gamma))\smallsetminus\{p\}$. 

Suppose not. Since 
$$
\Lambda(\Gamma)\smallsetminus\{p\} = \Gamma_p \cdot K_0 \subset \Gamma_p \cdot K_n,
$$
then for each $n$ we can find $\gamma_{k_n} \in \Gamma -  \Gamma_p \cdot K_n$.  
Since $\gamma_{k_n} \notin \Gamma_p$, we must have $\gamma_{k_n}(p) \neq p$. Then for each $n \geq 1$, there exists $\alpha_n \in \Gamma_p$ such that $\alpha_n \gamma_{k_n}(p) \in K_0$. 
Since $\gamma_{k_n} \notin \Gamma_p \cdot F_n$, we see that $\{\alpha_n \gamma_{k_n}\}$ is an escaping sequence in $\Gamma$. 
So passing to a subsequence we can suppose that there exists $b^+, b^- \in M$ such that $\alpha_n \gamma_{k_n}(x) \to b^+$ for all $x \in M \smallsetminus \{ b^-\}$. 
We can further suppose that $\alpha_n \gamma_{k_n}(p) \to a \in K_0$. If $a \neq b^+$, then $p = b^-$ and $p$ is a conical limit point, which is impossible by Lemma~\ref{lem: parab conical disjoint}.
So we must have $b^+ = a \in K_0$. So for $n$ large, $\alpha_n \gamma_{k_n} \in U$, which implies that $\gamma_{k_n} \in \Gamma_p \cdot K_n$. So we have a contradiction. 
\end{proof}

By Lemma~\ref{lem:parabolic cocompact}, there exists a compact subset $K$ of $(\Gamma \sqcup \Lambda(\Gamma)) \smallsetminus\{p\}$ such that 
$\Gamma_p \cdot K=(\Gamma \sqcup \Lambda(\Gamma)) \smallsetminus \{p\}.$
Then let $\Gamma_0 := K \cap \Gamma$ and notice that $\Gamma_p(\Gamma_0)=\Gamma$. 

Next enumerate $\Gamma_p=\{\alpha_n\}$. Then consider the decreasing sequence of open neighborhoods of $p$  in $\Gamma \sqcup \Lambda(\Gamma)$ given by
$$V_n:=(\Gamma\sqcup \Lambda(\Gamma))\smallsetminus\left(\alpha_1(K)\cup\dots\cup \alpha_n(K)\right).$$
We will show that $\mu(V_n)\to 0$ as $n\to\infty$.

Since $\alpha_k^{-1}\to p$ as $k\to \infty$, Proposition~\ref{prop:basic properties}\eqref{item:multiplicative estimate} implies that there exists $D>0$ such that 
\begin{equation}\label{eqn:h is close to additive}
\norm{\alpha_k}_\sigma+ \norm{\gamma}_\sigma-D\leq \norm{\alpha_k\gamma}_\sigma \leq \norm{\alpha_k}_\sigma+ \norm{\gamma}_\sigma+D
\end{equation}
for all $k\geq 1$ and $\gamma\in \Gamma_0$. 

Set $\epsilon:=(\delta_\sigma(\Gamma)-\delta_\sigma(P))/2$.

\begin{lemma} There exists $C > 0$ such that 
$$\chi(\norm{\alpha_k\gamma}_\sigma)\leq C  e^{\epsilon \norm{\alpha_k}_\sigma} \chi(\norm{\gamma}_\sigma)$$
for all  $k\geq 1$ and $\gamma\in \Gamma_0$. 
\end{lemma}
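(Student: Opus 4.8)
The plan is to combine the near-additivity \eqref{eqn:h is close to additive} of $\norm{\cdot}_\sigma$ along $\Gamma_p\cdot\Gamma_0$ with the subexponential property~\ref{item:PS meas chi subexp bis} of $\chi$, handling a generic regime directly and disposing of finitely many exceptional elements by crude estimates.

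First I would apply property~\ref{item:PS meas chi subexp bis} with the given $\epsilon$ to fix $R>0$ such that $\chi(r+t)\le e^{\epsilon t}\chi(r)$ whenever $r\ge R$ and $t\ge 0$; taking $t=r-R$, this also yields $\chi(r)\le e^{\epsilon(r-R)}\chi(R)$ for $r\ge R$. Since $\sigma$ is proper, Proposition~\ref{prop:basic properties}\eqref{item:properness} shows that $\{\gamma\in\Gamma:\norm{\gamma}_\sigma<R\}$ is finite; hence $\Gamma_p^{<R}:=\{\alpha\in\Gamma_p:\norm{\alpha}_\sigma<R\}$, $\Gamma_0^{<R}:=\{\gamma\in\Gamma_0:\norm{\gamma}_\sigma<R\}$ and $\{\alpha\in\Gamma_p:\norm{\alpha}_\sigma<-D\}$ are all finite.

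The generic case is $\norm{\gamma}_\sigma\ge R$ and $\norm{\alpha_k}_\sigma\ge -D$: here \eqref{eqn:h is close to additive}, monotonicity of $\chi$, and property~\ref{item:PS meas chi subexp bis} applied with $r=\norm{\gamma}_\sigma\ge R$ and $t=\norm{\alpha_k}_\sigma+D\ge 0$ give
\[
\chi(\norm{\alpha_k\gamma}_\sigma)\le\chi\big(\norm{\gamma}_\sigma+\norm{\alpha_k}_\sigma+D\big)\le e^{\epsilon(\norm{\alpha_k}_\sigma+D)}\chi(\norm{\gamma}_\sigma)=e^{\epsilon D}\,e^{\epsilon\norm{\alpha_k}_\sigma}\,\chi(\norm{\gamma}_\sigma),
\]
so $C=e^{\epsilon D}$ works in this case. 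I would then dispose of the remaining (finitely many) exceptional elements crudely: if $\norm{\alpha_k}_\sigma<-D$ then \eqref{eqn:h is close to additive} forces $\norm{\alpha_k\gamma}_\sigma<\norm{\gamma}_\sigma$, so $\chi(\norm{\alpha_k\gamma}_\sigma)\le\chi(\norm{\gamma}_\sigma)$ and it suffices that $e^{\epsilon\norm{\alpha_k}_\sigma}$ is bounded below by a positive constant over the finite set $\{\alpha\in\Gamma_p:\norm{\alpha}_\sigma<-D\}$; if instead $\gamma\in\Gamma_0^{<R}$, then for $\norm{\alpha_k}_\sigma\ge R$ one bounds $\norm{\alpha_k\gamma}_\sigma\le\norm{\alpha_k}_\sigma+M$ with $M:=D+\max_{\gamma\in\Gamma_0^{<R}}\norm{\gamma}_\sigma$ and combines property~\ref{item:PS meas chi subexp bis} with $\chi(\norm{\alpha_k}_\sigma)\le e^{\epsilon(\norm{\alpha_k}_\sigma-R)}\chi(R)$, while for $\norm{\alpha_k}_\sigma<R$ the product $\alpha_k\gamma$ ranges over the finite set $\Gamma_p^{<R}\cdot\Gamma_0^{<R}$ and one uses monotonicity of $\chi$ together with the lower bound on $e^{\epsilon\norm{\alpha_k}_\sigma}$ over $\Gamma_p^{<R}$. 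Each case produces a finite constant, and taking $C$ to be their maximum (at least $e^{\epsilon D}$) and using $\chi\ge 1$ finishes the argument.

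The only real obstacle is the bookkeeping of these exceptional regimes — there is nothing deep here, since property~\ref{item:PS meas chi subexp bis} is precisely what makes the generic estimate go through, and properness of $\sigma$ leaves only finitely many $\alpha_k$ and $\gamma$ to treat by hand. Note that the particular value $\epsilon=(\delta_\sigma(\Gamma)-\delta_\sigma(P))/2$ plays no role in this lemma (only $\epsilon>0$ is needed); it will matter afterwards, ensuring convergence of the relevant series over $\Gamma_p$ in the estimate for $\mu(V_n)$.
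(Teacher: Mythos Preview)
Your proof is correct and follows essentially the same approach as the paper: fix $R$ from property~\ref{item:PS meas chi subexp bis}, use \eqref{eqn:h is close to additive} and monotonicity of $\chi$ in the generic case $\norm{\gamma}_\sigma\ge R$, and handle $\gamma\in\Gamma_0^{<R}$ by finiteness. The only difference is organizational: the paper avoids your additional case split on $\norm{\alpha_k}_\sigma$ by introducing $c_0:=\max\{0,-\inf_{\gamma\in\Gamma}\norm{\gamma}_\sigma\}$, which guarantees $\norm{\alpha_k}_\sigma+D+c_0\ge 0$ uniformly and so lets property~\ref{item:PS meas chi subexp bis} apply in one shot, but this is a matter of bookkeeping rather than a different idea.
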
 

\begin{proof} By property \ref{item:PS meas chi subexp bis} of the function $\chi$ there exists $R>0$ such that $$\chi(r+t)\leq e^{\epsilon t}\chi(r)$$ for any $r\geq R$ and $t\geq 0$. Proposition~\ref{prop:basic properties}\eqref{item:properness} implies that the set 
$$
\Gamma_0' : = \{ \gamma \in \Gamma_0 : \norm{\gamma}_\sigma < R\}
$$
is finite and that the number 
$$
c_0 : = \max\left\{ 0,-\inf_{\gamma \in \Gamma} \norm{\gamma}_\sigma\right\}\ge 0
$$
is finite. 

If $\gamma \not\in \Gamma_0'$, then Equation~\eqref{eqn:h is close to additive} and the monotonicity of $\chi$ imply that 
\begin{align*}
\chi(\norm{\alpha_k\gamma}_\sigma) & \leq \chi(\norm{\alpha_k}_\sigma+\norm{\gamma}_\sigma+D) \leq \chi(\norm{\alpha_k}_\sigma+\norm{\gamma}_\sigma+D+c_0) \\
& \leq  e^{\epsilon (D+c_0)}e^{\epsilon \norm{\alpha_k}_\sigma} \chi(\norm{\gamma}_\sigma).
\end{align*}
If $\gamma \in \Gamma_0'$, then Equation~\eqref{eqn:h is close to additive} and the monotonicity of $\chi$ imply that
\begin{align*}
\chi(\norm{\alpha_k\gamma}_\sigma) & \leq \chi(\norm{\alpha_k}_\sigma+\norm{\gamma}_\sigma+D) \leq \chi(\norm{\alpha_k}_\sigma+\norm{\gamma}_\sigma+D+2c_0+R) \\
& \leq  e^{\epsilon (D+c_0)}e^{\epsilon \norm{\alpha_k}_\sigma} \chi(\norm{\gamma}_\sigma+R+c_0) \\
& \leq e^{\epsilon (D+c_0)}e^{\epsilon \norm{\alpha_k}_\sigma} \max_{\beta \in \Gamma_0'} \frac{\chi(\norm{\beta}_\sigma+R+c_0)}{\chi(\norm{\beta}_\sigma)} \chi(\norm{\gamma}_\sigma).  
\end{align*}
So 
$$
C: = e^{\epsilon(D+c_0)}\max_{\beta \in \Gamma_0'} \frac{\chi(\norm{\beta}_\sigma+R+c_0)}{\chi(\norm{\beta}_\sigma)} 
$$
satisfies the estimate for all $\gamma \in \Gamma_0$. 
\end{proof}

Then for $n$ sufficiently large we obtain for any $s>\delta$
\begin{align*}
\mu_s(V_n) & \leq \frac{1}{Q(s)} \sum_{k>n}\sum_{\gamma\in\Gamma_0} \chi(\norm{\alpha_k\gamma}_\sigma) e^{-s \norm{\alpha_k\gamma}_\sigma}\\
& \le  \frac{Ce^{sD}}{Q(s)}\sum_{k>n}\sum_{\gamma\in\Gamma_0}  e^{\epsilon \norm{\alpha_k}_\sigma}\chi(\norm{\gamma}_\sigma) e^{-s(\norm{\alpha_k}_\sigma+\norm{\gamma}_\sigma)}\\
  &\le  \frac{Ce^{sD}}{Q(s)}\sum_{k>n} e^{-(s-\epsilon)\norm{\alpha_k}_\sigma}\sum_{\gamma\in\Gamma_0} \chi(\norm{\gamma}_\sigma)e^{-s\norm{\gamma}_\sigma}\\
    &\le  C e^{sD}\sum_{k>n} e^{-(\delta-\epsilon)\norm{\alpha_k}_\sigma}.
\end{align*}
Since $V_n$ is open, we can take the limit $s_m \searrow \delta$ to get
\begin{align*}
\mu(\{p\})& \le  \lim_{n \to \infty}\liminf_{m \to \infty} \mu_{s_m}(V_n)  \le \lim_{n \to \infty} Ce^{\delta D}\sum_{k>n} e^{-(\delta-\epsilon)\norm{\alpha_k}_\sigma}=0, 
\end{align*}
where in the last equality we use the fact that $\sum_{k=1}^\infty e^{-(\delta-\epsilon) \norm{\alpha_k}_\sigma} < +\infty$.


\section{Equidistribution for geometrically finite actions}\label{sec:equi for geom finite}


In this section we establish the equidistribution result Theorem~\ref{GF equidistribution in intro} and the counting result Theorem~\ref{thm:counting for geomfin actions} stated in the introduction, which we restate below. 
For the rest of the section suppose that:
\begin{itemize}
\item $(\sigma,\bar\sigma,G)$ is a GPS system for a geometrically finite convergence group $\Gamma \subset \mathsf{Homeo}(M)$ with $\delta : = \delta_\sigma(\Gamma) < +\infty$;
\item $\delta_\sigma(\Gamma_p)<\delta$ whenever $p$ is a bounded parabolic point of $\Gamma$, so by Theorem~\ref{BMS finite criterion} the Poincar\'e series of $\Gamma$ diverges at its critical exponent and the BMS measure $m_\Gamma$ is finite; and
\item $\mathcal L(\sigma,\bar\sigma,G)$ is non-arithmetic, so the flow $\psi^t \colon (U_\Gamma,m_\Gamma) \to (U_\Gamma,m_\Gamma)$ is mixing.
\end{itemize}

Recall also the definition of the measure $\tilde m_R$ from Section~\ref{sec:equi}.
For every $R > 0$, let $m_R$ be the quotient measure on $U_\Gamma = \Gamma \backslash \tilde U_\Gamma$ associated to the measure 
$$\tilde m_R:=\sum_{\substack{\gamma \in \Gamma_{\lox}\\ \ell_\sigma(\gamma)\leq R}}\mc D_{\gamma^-}\otimes \mc D_{\gamma^+}\otimes dt$$
on $\tilde U_\Gamma$. Notice that, by Corollary~\ref{length spectrum discrete}, $m_R$ is a finite measure. 

This section's equidistribution result is an extension to bounded functions of Theorem~\ref{thm:equidistribution}, which only applied to compactly supported functions:

\begin{theorem}\label{GF equidistribution}
For any bounded continuous function $f$ on $\Gamma \backslash (M^{(2)}\times\mb R)$ we have
$$\lim_{R\to \infty}\delta e^{-\delta R}\int f \,dm_R =\int f \,\frac{dm_\Gamma}{\norm{m_\Gamma}}.$$
\end{theorem}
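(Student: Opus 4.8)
The plan is to upgrade Theorem~\ref{thm:equidistribution in paper}, which already gives the stated convergence when $f$ is compactly supported, to bounded continuous $f$ by controlling the $m_R$-mass that escapes into the cusps, uniformly in $R$. Since $m_\Gamma$ is finite (Theorem~\ref{BMS finite criterion}) and $\psi^t\colon (U_\Gamma,m_\Gamma)\to(U_\Gamma,m_\Gamma)$ is mixing (Theorem~\ref{thm:mixing}, via non-arithmeticity of $\mathcal L$), Theorem~\ref{thm:equidistribution in paper} applies, so $\delta e^{-\delta R}m_R \to m_\Gamma/\norm{m_\Gamma}$ in the dual of compactly supported continuous functions. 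The missing ingredient is tightness: I must show that for every $\epsilon>0$ there is a compact set $\mathcal K\subset U_\Gamma$ such that $\limsup_{R\to\infty}\delta e^{-\delta R} m_R(U_\Gamma\smallsetminus \mathcal K)<\epsilon$. Combined with the already-known weak-$*$ convergence on compactly supported functions and the finiteness of $m_\Gamma$, this yields the full statement: write $f = f\cdot 1_{\mathcal K'} + f\cdot 1_{U_\Gamma\smallsetminus \mathcal K'}$ for a slightly larger compact $\mathcal K'$, approximate $f\cdot 1_{\mathcal K'}$ by a compactly supported continuous function, and bound the remaining term by $\norm{f}_\infty$ times the (small) escaping mass, both for $m_R$ and for $m_\Gamma$.

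The core of the argument is therefore a cusp-mass estimate for $m_R$. Using the decomposition of the flow space from Section~\ref{sec:structure of the flow space}: by Theorem~\ref{thm:compact quotient}, $U_\Gamma$ is the union of a compact piece and finitely many cusp neighborhoods $\widehat{H}(p_j,N_j)$, $1\le j\le k$, one for each $\Gamma$-orbit of bounded parabolic points. So it suffices to bound $\delta e^{-\delta R}m_R(\widehat H(p,N))$ for each bounded parabolic point $p$, and to make it small by shrinking the horoball (enlarging $N$). The natural model is Roblin~\cite[Th.\,5.2]{roblin}. Concretely, $m_R(\widehat H(p,N))$ is at most $\tilde m_R$ evaluated on a fundamental-domain-like set for $\Gamma_p$ acting on $H(p,N)$; up to the finitely many $\Gamma$-translates, this reduces to estimating, for each loxodromic conjugacy class $[\gamma]$ with $\ell_\sigma(\gamma)\le R$ and $(\gamma^-,\gamma^+)$ in the relevant compact set, the Lebesgue length of the portion of its axis lying inside $H(p,N)$, summed over the relevant conjugacy classes. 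Here Lemma~\ref{horoball length bound} is exactly the tool: it bounds the excursion length $\mathrm{Leb}(I_{x,y})$ of an axis passing through the horoball $\gamma_0 H(p,N)$ by $\norm{\gamma_0}_\sigma + C$ where $\gamma_0\in\Gamma_p$ determines the depth of the excursion, and it also controls $G$ on the relevant set.

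Putting the counting together, one expects an estimate of roughly the shape
\[
\delta e^{-\delta R}\, m_R(\widehat H(p,N)) \ \lesssim\ \delta e^{-\delta R}\sum_{[\gamma]:\ \ell_\sigma(\gamma)\le R}\ \mathrm{Leb}\big(\text{axis of }\gamma\cap H(p,N)\big),
\]
and each axis passing deep into the horoball corresponds, via Lemma~\ref{lem:closing} / the closing argument, to a decomposition $\gamma \sim \eta\, \alpha$ with $\alpha\in\Gamma_p$ of large magnitude, $\ell_\sigma(\gamma)\approx \norm{\eta}_\sigma + \norm{\alpha}_\sigma$ (Proposition~\ref{prop:basic properties}\eqref{item:multiplicative estimate}), and excursion length $\approx \norm{\alpha}_\sigma$. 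Summing $e^{-\delta\norm{\eta}_\sigma}$ over $\eta$ (a convergent or sub-exponential count thanks to the finiteness of $m_\Gamma$ / the counting bound $\#\{\eta:\norm\eta_\sigma\le T\}\le C e^{\delta T}$ from \cite[Prop.\,6.3]{BCZZ}) and $\norm{\alpha}_\sigma e^{-\delta\norm{\alpha}_\sigma}$ over $\alpha\in\Gamma_p$ with $\norm\alpha_\sigma$ bounded below by the horoball-depth, one uses the hypothesis $\delta_\sigma(\Gamma_p)<\delta$ — which via \cite[Prop.\,4.2/Th.\,4.2]{BCZZ} gives $\sum_{\alpha\in\Gamma_p}\norm\alpha_\sigma e^{-\delta\norm\alpha_\sigma}<\infty$ — to conclude that the tail sum over deep excursions is small once $N$ is large enough, uniformly in $R$. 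This uniform cusp estimate is the main obstacle: one must set up the book-keeping so that the ``depth vs.\ cusp neighborhood'' correspondence is honest (this is where Lemma~\ref{horoball length bound}, Lemma~\ref{lem:closing}, and Proposition~\ref{prop:disjoint horoballs} do the work) and so that the Lebesgue-length factor $\norm\alpha_\sigma$ — rather than just $e^{-\delta\norm\alpha_\sigma}$ — is absorbed, which is exactly why the stronger summability $\sum \norm\alpha_\sigma e^{-\delta\norm\alpha_\sigma}<\infty$ (not merely $\delta_\sigma(\Gamma_p)<\delta$ in the weak sense) is invoked. Once the tightness is in hand, the passage from compactly supported to bounded $f$, and the final reformulation as the counting asymptotic in Theorem~\ref{thm:counting for geomfin actions} (by taking $f\equiv 1$ in a suitably modified statement, as indicated after Theorem~\ref{GF equidistribution in intro}), are routine.
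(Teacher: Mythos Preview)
Your proposal is correct and follows essentially the paper's approach: reduce to a uniform cusp-mass estimate for $m_R$ (the paper's Proposition~\ref{prop:bound on mR horoball}), obtained by combining Lemma~\ref{horoball length bound} with an additive magnitude decomposition (the paper's Lemma~\ref{lem:additive estimate for Aalpha}) and the counting bound from~\cite[Prop.~6.3]{BCZZ}, producing exactly the tail $\sum_{\alpha\in\Gamma_p\smallsetminus F}\norm{\alpha}_\sigma e^{-\delta\norm{\alpha}_\sigma}$ you describe. The only mis-citation is that the closing lemma (Lemma~\ref{lem:closing}) is not what drives the additive bound $\norm{\gamma}_\sigma\gtrsim\norm{\alpha}_\sigma+\norm{\alpha^{-1}\gamma}_\sigma$ in the cusp estimate; the needed separation comes instead from the disjointness of horoball translates (Proposition~\ref{prop:disjoint horoballs}), which forces $\ell_\sigma(\gamma)>\norm{\alpha}_\sigma-C$ and hence, via Proposition~\ref{prop:basic properties}\eqref{item:a technical fact} and~\eqref{item:multiplicative estimate}, the required additivity.
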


As a corollary, we will obtain Theorem~\ref{thm:counting for geomfin actions} from the introduction.

\begin{corollary}\label{cor:counting for geomfin actions}
$$
\#\left\{[\gamma]\in [\Gamma_{\lox}]^w : 0<\ell_\sigma(\gamma) \leq R\right\} \sim \frac{e^{\delta R}}{\delta R},
$$
i.e.\ the ratio of the two sides goes to 1 as $R \to +\infty$.
\end{corollary}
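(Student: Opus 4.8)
The plan is to deduce Corollary~\ref{cor:counting for geomfin actions} from Theorem~\ref{GF equidistribution} (more precisely, from the slightly modified version alluded to in the introduction as Proposition~\ref{prop:GF equidistribution'}) by applying it to the constant function $f \equiv 1$. Applying the equidistribution statement to $f=1$ gives $\delta e^{-\delta R} m_R(U_\Gamma) \to 1$, so it remains to relate $m_R(U_\Gamma)$ to the counting quantity $\#\{[\gamma]^w \in [\Gamma_{\lox}]^w : 0 < \ell_\sigma(\gamma) \le R\}$. The key observation is that the measure $\tilde m_R$ is a sum of Lebesgue measures on the axes $\{\gamma^-\}\times\{\gamma^+\}\times\Rb$ of loxodromic $\gamma$ with $\ell_\sigma(\gamma)\le R$, and the quotient measure $m_R$ on $U_\Gamma$ assigns to the image of each such axis the ``length'' of its fundamental domain under the flow. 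Concretely, for a primitive loxodromic $\gamma$, the cyclic group generated by $\gamma$ in the centralizer acts on the axis $\{\gamma^-,\gamma^+\}\times\Rb$ by translation by $\ell_\sigma(\gamma)$ (up to the bookkeeping of the weak conjugacy relation and torsion in the stabilizer), so the image of that axis in $U_\Gamma$ is a closed orbit of the flow $\psi^t$ of $m_R$-length exactly $\ell_\sigma(\gamma)$.

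First I would set up the combinatorics carefully: fix the definition of $[\Gamma_{\lox}]^w$ and recall that the stabilizer (centralizer, up to finite index) of a loxodromic element is virtually cyclic, generated by a primitive loxodromic element $\gamma_0$ together with possibly a finite group; a general loxodromic element with the same axis is (weakly conjugate to) a power $\gamma_0^k$. Using the quotient-measure formula~\eqref{eqn:defining function of quotient measures} with $f = 1$, I would compute $m_R(U_\Gamma) = \int_{U_\Gamma} P(1)\, dm_R$; unwinding $P(1)$ over the orbit of a single axis, the contribution of the weak conjugacy class $[\gamma_0^k]^w$ to $m_R(U_\Gamma)$ is $\ell_\sigma(\gamma_0) = \ell_\sigma(\gamma_0^k)/k$ when $\ell_\sigma(\gamma_0^k) = k\,\ell_\sigma(\gamma_0)\le R$, i.e.\ when $k \le R/\ell_\sigma(\gamma_0)$. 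Hence
\[
m_R(U_\Gamma) \;=\; \sum_{[\gamma_0]^w \text{ primitive}} \ \ell_\sigma(\gamma_0)\cdot \#\{k\ge 1 : k\,\ell_\sigma(\gamma_0)\le R\} \;=\; \sum_{[\gamma_0]^w \text{ primitive}} \ell_\sigma(\gamma_0)\left\lfloor \frac{R}{\ell_\sigma(\gamma_0)}\right\rfloor,
\]
which differs from $\sum_{0<\ell_\sigma(\gamma)\le R}$ (counting all, not only primitive, classes) by the usual multiple-count/floor-function errors.

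Next I would carry out the standard analytic passage from the ``weighted'' count $\delta e^{-\delta R} m_R(U_\Gamma) \to 1$ to the ``unweighted'' count $\#\{[\gamma]^w : 0<\ell_\sigma(\gamma)\le R\}\sim e^{\delta R}/(\delta R)$. This is a two-step argument: (i) an Abel/partial-summation or Tauberian-type manipulation to remove the weights $\ell_\sigma(\gamma)$, giving $N(R) := \#\{[\gamma_0]^w \text{ primitive} : \ell_\sigma(\gamma_0)\le R\} \sim e^{\delta R}/(\delta R)$ — essentially because $m_R(U_\Gamma) \approx \int_0^R N(t)\,dt$ (after controlling the floor terms and the contribution of high powers, which is negligible since $\delta_\sigma(\Gamma_p)<\delta$ forces divergence-type bounds and Corollary~\ref{length spectrum discrete} makes the systole positive), and differentiating the asymptotic $\int_0^R N(t)\,dt \sim e^{\delta R}/(\delta^2 R)$ heuristically, made rigorous via monotonicity of $N$; and (ii) showing that replacing primitive classes by all loxodromic weak conjugacy classes only changes the count by $O(e^{\delta R/2}/R)$, since a non-primitive class $[\gamma_0^k]^w$ with $k\ge 2$ and $\ell_\sigma(\gamma_0^k)\le R$ corresponds to a primitive class with $\ell_\sigma(\gamma_0)\le R/2$, and there are $\sim e^{\delta R/2}/(\delta R)$ of those.

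The main obstacle I anticipate is step (i): going from the asymptotics of the weighted sum $\sum_{\ell_\sigma(\gamma_0)\le R}\ell_\sigma(\gamma_0)\lfloor R/\ell_\sigma(\gamma_0)\rfloor$ to the clean asymptotics $N(R)\sim e^{\delta R}/(\delta R)$ requires a monotonicity/Tauberian argument rather than a soft limit, because dividing by $R$ (as opposed to by $e^{\delta R}$) is a delicate normalization — one needs both an upper and a lower bound, and the lower bound in particular uses that $N$ is non-decreasing together with the fact that $N(R)-N(R-\epsilon)$ cannot be too large, which is where one invokes that for $R'<R$ the ratio $e^{\delta R'}/e^{\delta R}$ is controlled and the floor-function fluctuations are $O(1)$ relative to the systole bound from Corollary~\ref{length spectrum discrete}. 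This is classical (cf.\ the Margulis/Roblin-type arguments) but is the one place where care is genuinely required; everything else is bookkeeping about weak conjugacy classes and torsion, which I would state as lemmas and check routinely.
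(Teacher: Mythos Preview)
Your approach is essentially the same as the paper's: compute $\|m_R\|$ in terms of primitive classes (the paper's Observation~\ref{obs:computing mass}), then use a monotonicity/comparison argument to pass from the weighted asymptotic $\delta e^{-\delta R}\|m_R\|\to 1$ to the unweighted count, controlling non-primitives via the crude bound $\#\mathcal G_R^{np}=O(Re^{\delta R/2})$. The paper packages step~(i) by introducing the modified measure $m_R'$ (with weight $1/\ell_\sigma^{\rm prim}(\gamma)$ instead of $1$) so that $\|m_R'\|$ \emph{is} the desired count, and then proving the equidistribution $\delta R e^{-\delta R}\int f\,dm_R'\to \int f\,dm_\Gamma/\|m_\Gamma\|$ directly (Proposition~\ref{prop:GF equidistribution'}) via the same upper/lower comparison you describe; applying $f\equiv 1$ then gives the corollary in one line.

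One correction to your heuristic: the relation $m_R(U_\Gamma)\approx \int_0^R N(t)\,dt$ is mis-stated and inconsistent with the asymptotic you then quote. After discarding high powers and floor errors, $\|m_R\|\approx \sum_{\ell_\sigma(\gamma_0)\le R}\ell_\sigma(\gamma_0)=\int_0^R t\,dN(t)=RN(R)-\int_0^R N(t)\,dt$, and the dominant term is $RN(R)$, not $\int_0^R N$. So the correct heuristic is simply $\|m_R\|\approx R\,N(R)$, whence $N(R)\sim e^{\delta R}/(\delta R)$; the rigorous version of this is exactly the paper's comparison $Rm_R'\ge m_R$ (lower bound) together with the ``most lengths are in $(e^{-r}R,R]$'' argument (upper bound) that you correctly anticipate as the delicate step.
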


Here $[\Gamma_{\lox}]^w$ refers to the set of weak conjugacy classes of loxodromic elements, which we introduce now.
The method we will use to obtain counting result naturally provides counting for closed $\psi^t$-orbits.
In general (when one allows torsion), these orbits do not correspond to conjugacy classes, instead they correspond weak conjugacy classes, which can be slightly bigger.

We say two loxodromic elements $\gamma_1,\gamma_2\in\Gamma$ are \emph{weakly conjugate} if they have the same period and there exists $g\in \Gamma$ such that $\gamma_1$ and $g\gamma_2g^{-1}$ have the same attracting fixed point and the same repelling fixed point.
We denote by $[\Gamma_{\lox}]^w$ the set of weak conjugacy classes. Notice that if $\Gamma$ is torsion-free, then the set of weak conjugacy classes is simply
the set of conjugacy classes.
Moreover, the number of conjugacy classes in a given weak conjugacy class $[\gamma]^w$ is bounded above by the size of the stabilizer of the axis of $\gamma$ (see \cite[Eq.\,(7.1)]{BZ}).

To get a counting result from an equidistribution result as Theorem~\ref{GF equidistribution} above, one usually apply it to the constant function equal to one.
Unfortunately, here this does not yield exactly Corollary~\ref{cor:counting for geomfin actions}.
Let us describe what the quotient measure $m_R$ computes,
i.e.\ let us compute explicitly its total mass.
 Given $\gamma \in \Gamma_{\lox}$, let 
$$
S_\gamma : = \{ \alpha \in \Gamma : \alpha(\gamma^\pm) = \gamma^\pm \}.
$$
Recall that $\Gamma$ acts properly discontinuously on $M^{(2)} \times \Rb$ and notice that if $\alpha \in S_\gamma$, then 
$$
\alpha \cdot (\gamma^-, \gamma^+, t) = (\gamma^-, \gamma^+, t + \sigma(\alpha, \gamma^+)).
$$
Hence
$$\ell_\sigma^{\rm prim}(\gamma):=\min\{\abs{\sigma(\alpha,\gamma^+)} : \alpha\in\Gamma_{\lox},\ \alpha^\pm=\gamma^\pm\}$$
exists and is finite. Further, by Corollary~\ref{length spectrum discrete},  $\ell_\sigma^{\rm prim}(\gamma)$ is positive. 

\begin{observation}\label{obs:computing mass} Given $\gamma \in \Gamma_{\lox}$, let $[\gamma]\in[\Gamma_{\lox}]^w$ denote the weak conjugacy class of $\gamma$ and let $\nu_{[\gamma]}$ be the quotient measure on $U_\Gamma = \Gamma \backslash \tilde U_\Gamma$ associated to the measure
$$
\tilde\nu_{[\gamma]}:= \sum_{\hat\gamma\in[\gamma]}\mc D_{\hat\gamma^-}\otimes \mc D_{\hat\gamma^+}\otimes dt.
$$
Then 
$$
\norm{\nu_{[\gamma]}}= \ell_\sigma^{\rm prim}(\gamma).
$$
In particular, 
$$
\norm{m_R}=\sum_{\substack{[\gamma] \in [\Gamma_{\lox}]^w\\ \ell_\sigma(\gamma)\leq R}} \ell_\sigma^{\rm prim}(\gamma).
$$
\end{observation}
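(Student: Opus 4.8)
The plan is to compute $\norm{\nu_{[\gamma]}}$ directly from the definition of quotient measure, by choosing a convenient measurable fundamental-domain-like function, and then sum over weak conjugacy classes to obtain the formula for $\norm{m_R}$. First I would fix $\gamma\in\Gamma_{\lox}$ with attracting/repelling fixed points $\gamma^\pm$ and recall that the action of $\Gamma$ on $M^{(2)}\times\Rb$ is properly discontinuous, so quotient measures are well-defined via Equation~\eqref{eqn:defining function of quotient measures}. The support of $\tilde\nu_{[\gamma]}$ is the countable union of lines $\{(\hat\gamma^-,\hat\gamma^+)\}\times\Rb$ for $\hat\gamma\in[\gamma]$; since all $\hat\gamma$ in a fixed weak conjugacy class are of the form $\hat\gamma = g\gamma' g^{-1}$ with $\gamma'$ weakly conjugate to $\gamma$ (so $\hat\gamma^\pm = g(\gamma'^\pm)$ for some representative), the set of these lines is a single $\Gamma$-orbit of the line $\ell:=\{(\gamma^-,\gamma^+)\}\times\Rb$. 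Here I would use the definition of weak conjugacy carefully: two loxodromics are weakly conjugate iff they have the same period and a $\Gamma$-translate makes their fixed-point pairs agree, which is precisely the condition that the lines $\{(\gamma_1^-,\gamma_1^+)\}\times\Rb$ and $\{(\gamma_2^-,\gamma_2^+)\}\times\Rb$ lie in the same $\Gamma$-orbit with the translation matching up the time coordinate — so $\supp\tilde\nu_{[\gamma]} = \Gamma\cdot\ell$.

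Next I would identify the stabilizer of the line $\ell$ in $\Gamma$: by the formula $\alpha\cdot(\gamma^-,\gamma^+,t) = (\gamma^-,\gamma^+,t+\sigma(\alpha,\gamma^+))$ given in the statement, the stabilizer of the set $\ell$ is exactly $S_\gamma = \{\alpha\in\Gamma : \alpha(\gamma^\pm)=\gamma^\pm\}$, and $\alpha$ acts on $\ell\cong\Rb$ by the translation $t\mapsto t+\sigma(\alpha,\gamma^+)$. The subgroup $\sigma(S_\gamma,\gamma^+)\subset\Rb$ is a discrete subgroup of $\Rb$ (discreteness follows because proper discontinuity of the $\Gamma$-action on $M^{(2)}\times\Rb$ forbids accumulation of translation lengths near $0$; alternatively, the loxodromic elements of $S_\gamma$ have positive periods bounded below by Corollary~\ref{length spectrum discrete}, and elements of $S_\gamma$ with $\sigma(\cdot,\gamma^+)=0$ fix $\ell$ pointwise, which by proper discontinuity of the $\Gamma$-action on $M^{(2)}\times\Rb$ forces the pointwise stabilizer to be trivial — or at worst finite, in which case one restricts attention to the effective translation action). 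Hence $\sigma(S_\gamma,\gamma^+) = \ell_\sigma^{\rm prim}(\gamma)\cdot\Zb$, where $\ell_\sigma^{\rm prim}(\gamma)$ is the minimal positive value $\abs{\sigma(\alpha,\gamma^+)}$ over loxodromic $\alpha$ with $\alpha^\pm=\gamma^\pm$, which is exactly the generator of this discrete subgroup.

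Now to compute $\norm{\nu_{[\gamma]}}$: take $f$ to be the indicator of a strip $\{(\gamma^-,\gamma^+)\}\times[0,\ell_\sigma^{\rm prim}(\gamma))$ inside $\ell$ (times nothing else, as $\tilde\nu_{[\gamma]}$ is supported on lines); more precisely choose $f$ supported on a small neighborhood so that $P(f)=1$ on the projection of $\ell$ and $P(f)=0$ elsewhere on $\supp\nu_{[\gamma]}$, which is possible since $[0,\ell_\sigma^{\rm prim}(\gamma))$ is a fundamental domain for the translation action of $S_\gamma$ on $\ell$. Then $\int_{U_\Gamma} P(f)\,d\nu_{[\gamma]} = \nu_{[\gamma]}(U_\Gamma) = \norm{\nu_{[\gamma]}}$ on one side, and $\int f\,d\tilde\nu_{[\gamma]} = \mathrm{Leb}\big([0,\ell_\sigma^{\rm prim}(\gamma))\big) = \ell_\sigma^{\rm prim}(\gamma)$ on the other (since $\tilde\nu_{[\gamma]}$ restricted to the single line $\ell$ is just Lebesgue measure $dt$, and the other lines in the orbit contribute via the $P(\cdot)$ averaging which we have already accounted for). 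This gives $\norm{\nu_{[\gamma]}} = \ell_\sigma^{\rm prim}(\gamma)$. Finally, since $\tilde m_R = \sum_{[\gamma]\in[\Gamma_{\lox}]^w,\ \ell_\sigma(\gamma)\le R}\tilde\nu_{[\gamma]}$ (the sum over $\gamma\in\Gamma_{\lox}$ with $\ell_\sigma(\gamma)\le R$ decomposes into weak conjugacy classes, each class contributing $\tilde\nu_{[\gamma]}$), additivity of quotient measures gives $\norm{m_R} = \sum_{[\gamma],\ \ell_\sigma(\gamma)\le R}\norm{\nu_{[\gamma]}} = \sum_{[\gamma],\ \ell_\sigma(\gamma)\le R}\ell_\sigma^{\rm prim}(\gamma)$, and this sum is finite by Corollary~\ref{length spectrum discrete}.

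The main obstacle I expect is the bookkeeping around torsion: verifying cleanly that a weak conjugacy class of loxodromics corresponds to exactly one $\Gamma$-orbit of lines (with the time-matching), and correctly handling the possibility that $S_\gamma$ has a nontrivial finite pointwise stabilizer of $\ell$ — one must check that this does not affect the computation of the quotient measure's mass, because the relevant "fundamental domain" is for the action on the line and the finite kernel contributes multiplicity $1$ after the $P(\cdot)$ collapse, not an extra factor. Getting the normalization of $\ell_\sigma^{\rm prim}$ right (it is $\min\abs{\sigma(\alpha,\gamma^+)}$ over \emph{loxodromic} $\alpha$, equivalently the positive generator of the discrete group $\sigma(S_\gamma,\gamma^+)$) is the one genuinely substantive point.
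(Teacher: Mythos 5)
Your overall approach (compute $\norm{\nu_{[\gamma]}}$ from the defining property~\eqref{eqn:defining function of quotient measures} by testing against a fundamental strip $I=\{(\gamma^-,\gamma^+)\}\times[0,\ell_\sigma^{\rm prim}(\gamma))$, having identified $S_\gamma$ as the stabilizer of the axis and shown its translation lengths form the discrete group $\ell_\sigma^{\rm prim}(\gamma)\Zb$) is the same as the paper's, and that part is sound. The problem is exactly the torsion bookkeeping you flag at the end, and your resolution of it is wrong. You assert that you can choose $f$ with $P(f)=1$ on $\pi(\ell)$ and simultaneously that $\tilde\nu_{[\gamma]}$ restricted to $\ell$ is plain Lebesgue measure, so that $\int f\,d\tilde\nu_{[\gamma]}=\ell_\sigma^{\rm prim}(\gamma)$; and you claim the kernel $S_\gamma^0:=\{\alpha\in S_\gamma:\sigma(\alpha,\gamma^+)=0\}$ ``contributes multiplicity $1$ after the $P(\cdot)$ collapse.'' Both statements are false when $\#S_\gamma^0>1$. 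For the concrete $f=1_I$, the counting function $P(1_I)$ on $\pi(I)$ equals $\#\{\alpha\in S_\gamma: 0\le t+\sigma(\alpha,\gamma^+)<\ell_\sigma^{\rm prim}(\gamma)\}=\#S_\gamma^0$, not $1$. Independently, $\tilde\nu_{[\gamma]}|_\ell$ is not $\Leb$ but $\bigl(\#\{\hat\gamma\in[\gamma]:\hat\gamma^\pm=\gamma^\pm\}\bigr)\cdot\Leb$, because each element of the weak conjugacy class sharing the axis $(\gamma^-,\gamma^+)$ contributes its own copy of $dt$ in the sum defining $\tilde\nu_{[\gamma]}$.

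What actually makes the computation close is a cancellation you never establish: $\#\{\hat\gamma\in[\gamma]:\hat\gamma^\pm=\gamma^\pm\}=\#\{\alpha\in S_\gamma:\sigma(\alpha,\gamma^+)=\sigma(\gamma,\gamma^+)\}=\#S_\gamma^0$, because $\alpha\mapsto\sigma(\alpha,\gamma^+)$ is a group homomorphism $S_\gamma\to\Rb$ and its fibers are cosets of its kernel $S_\gamma^0$, all of the same size. So $P(1_I)=\#S_\gamma^0\cdot 1_{\pi(I)}$ and $\int 1_I\,d\tilde\nu_{[\gamma]}=\#S_\gamma^0\cdot\ell_\sigma^{\rm prim}(\gamma)$, and the two factors of $\#S_\gamma^0$ cancel to give $\norm{\nu_{[\gamma]}}=\ell_\sigma^{\rm prim}(\gamma)$. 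Your writeup omits this homomorphism/coset argument and replaces it with two incorrect normalizations that happen to offset each other; as written, the proof would fail if one tried to check either intermediate claim separately.
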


\begin{proof} See Section~\ref{sec:proof of obs:computing mass}  for the proof. \end{proof}

\subsection{Proof of Theorem~\ref{GF equidistribution}} 

The main idea in the proof is to obtain uniform estimates for the measure $m_R$ on ``cusps'' based at a bounded parabolic point. 

\begin{proposition}\label{prop:bound on mR horoball}
If $p\in M$ is a bounded parabolic point and $\epsilon > 0$, then there exists a horoball $H$ based at $p$ whose image $\wh{H}$ in $U_\Gamma$ satisfies 
 $$ m_R(\wh H) \leq e^{\delta R} \epsilon
 $$
 for all $R > 0$. 
\end{proposition}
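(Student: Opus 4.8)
\textbf{Plan for the proof of Proposition~\ref{prop:bound on mR horoball}.}
The strategy mirrors the proof of Proposition~\ref{prop:geomfin implies BMS finite}, but now we must estimate the measure $\tilde m_R$ of a horoball rather than the BMS measure, and crucially the bound must be uniform in $R$ after dividing by $e^{\delta R}$. First I would set up notation exactly as in Section~\ref{subsec:proof of BMS meas finite}: fix the bounded parabolic point $p$, a compact $K\subset\Lambda(\Gamma)\smallsetminus\{p\}$ with $\Gamma_p\cdot K=\Lambda(\Gamma)\smallsetminus\{p\}$, and a compact neighborhood $N$ of $K$ in $\overline{M^{(2)}\times\R}\smallsetminus\{p\}$, giving the horoball $H=H(p,N)$. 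As in the proof of Proposition~\ref{prop:geomfin implies BMS finite}, for $\gamma\in\Gamma_p$ write
$$
H_\gamma := H(p,N)\cap (K\times\gamma K\times\R),
$$
and note $m_R(\wh H)\leq \sum_{\gamma\in\Gamma_p}\tilde m_R(H_\gamma)$ since $\mu,\bar\mu$ (or rather the Dirac measures involved) give the set of geodesics with an endpoint at $p$ zero mass relative to the relevant counting; more carefully, only loxodromic axes $(\alpha^-,\alpha^+)$ with both endpoints $\neq p$ contribute, so I decompose the contributing axes according to which translate $\gamma K$ of $K$ contains $\alpha^+$ (and use that $\alpha^-\in\beta K$ for some $\beta$; after replacing $\alpha$ by a conjugate one can arrange $\alpha^-\in K$).

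\textbf{Key steps.} The heart of the argument is a counting estimate: for $\gamma\in\Gamma_p$, bound the number of loxodromic conjugacy(-type) classes $[\alpha]$ with $\alpha^-\in K$, $\alpha^+\in\gamma K$, $\ell_\sigma(\alpha)\leq R$, and $I_{\alpha^-,\alpha^+}\neq\varnothing$, then multiply by $\mathrm{Leb}(I_{\alpha^-,\alpha^+})$, which by Lemma~\ref{horoball length bound} is at most $\norm\gamma_\sigma+C$. For the count: if $\alpha^-\in K$ and $\alpha^+\in\gamma K$ with the axis meeting the horoball, then $\alpha$ "winds around the cusp like $\gamma$", and in particular $\norm\alpha_\sigma$ is comparable to $\norm\gamma_\sigma$ up to bounded additive error (using Lemma~\ref{horoball length bound}'s lower bound on $\mathrm{Leb}(I)$ together with $\ell_\sigma(\alpha)\geq\norm\alpha_\sigma-C$ from Proposition~\ref{prop:basic properties}\eqref{item:loxodromics with separated fixed points} applied on the compact set $K\times K$ after a conjugation). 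Combined with Corollary~\ref{length spectrum discrete} and the uniform exponential growth bound $\#\{\gamma\in\Gamma:\norm\gamma_\sigma\leq T\}\leq C_2 e^{\delta T}$ from \cite[Prop.\,6.3]{BCZZ}, the number of such $[\alpha]$ with $\ell_\sigma(\alpha)\leq R$ is at most (const)$\cdot e^{\delta\min(R,\ \norm\gamma_\sigma+C')}$. Therefore
$$
\tilde m_R(H_\gamma)\leq C'' (\norm\gamma_\sigma+C)\, e^{\delta(\norm\gamma_\sigma+C')}\,\mathbf 1_{\norm\gamma_\sigma\leq R+C'} + C''(\norm\gamma_\sigma+C)\,e^{\delta R}\,\mathbf 1_{\norm\gamma_\sigma>R+C'}.
$$
Hmm — the first term is too big; instead the right bookkeeping is: for each $\gamma$ the count is at most $C''e^{\delta R}$ when $\norm\gamma_\sigma\leq R+C'$ and $0$ otherwise (since $\ell_\sigma(\alpha)\geq\norm\gamma_\sigma-C'>R$ forces emptiness). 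So
$$
m_R(\wh H)\leq \sum_{\substack{\gamma\in\Gamma_p\\ \norm\gamma_\sigma\leq R+C'}} C''(\norm\gamma_\sigma+C)\,e^{\delta R}\cdot(\text{correction}),
$$
and I must squeeze out an extra decaying factor. The correct refinement: the count of $[\alpha]$ with $\alpha^+\in\gamma K$ and $\ell_\sigma(\alpha)\leq R$ is at most $C''e^{\delta(R-\norm\gamma_\sigma)}$ (a "ball of radius $R-\norm\gamma_\sigma$" heuristic, made rigorous via the multiplicative estimate Proposition~\ref{prop:basic properties}\eqref{item:multiplicative estimate} factoring $\alpha$ through $\gamma$), so
$$
e^{-\delta R}m_R(\wh H)\leq C''\sum_{\gamma\in\Gamma_p}(\norm\gamma_\sigma+C)\,e^{-\delta\norm\gamma_\sigma},
$$
and by Theorem~\ref{BMS finite criterion}'s hypothesis $\delta_\sigma(\Gamma_p)<\delta$ this series converges; choosing $N$ large (i.e.\ the horoball deep, which discards all $\gamma$ with $\norm\gamma_\sigma$ below a large threshold) makes the tail $<\epsilon$.

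\textbf{Main obstacle.} The delicate point is making the "ball of radius $R-\norm\gamma_\sigma$" counting estimate rigorous: one needs that a loxodromic $\alpha$ with $\alpha^-\in K$, $\alpha^+\in\gamma K$, axis meeting $H(p,N)$, and $\ell_\sigma(\alpha)\leq R$ can be written (up to bounded ambiguity and finitely many exceptions) so that $\norm\alpha_\sigma\geq\norm\gamma_\sigma - (\text{const})$ while the number of possible $\alpha$ "beyond $\gamma$" is governed by $\#\{\beta\in\Gamma:\norm\beta_\sigma\leq R-\norm\gamma_\sigma+\text{const}\}$. This uses the expanding property uniformly on $K$ (Equation~\eqref{eqn:expanding property in cusp excursion}), Lemma~\ref{horoball length bound} to control $\mathrm{Leb}(I_{\alpha^-,\alpha^+})$ and $G$ on these axes, the multiplicative estimate for magnitudes, and the upper bound $\#\{\cdot\leq T\}\leq C_2e^{\delta T}$; assembling these into a clean inequality valid for all $\gamma$ and all $R$ simultaneously is the technical core, and I would devote the bulk of the write-up to it. Once that is in place, summing over $\Gamma_p$ and invoking $\delta_\sigma(\Gamma_p)<\delta$ finishes the proof, and Proposition~\ref{prop:bound on mR horoball} then feeds into the proof of Theorem~\ref{GF equidistribution} by letting one promote the compactly-supported equidistribution of Theorem~\ref{thm:equidistribution in paper} to bounded continuous functions via the decomposition of $U_\Gamma$ into a compact part plus finitely many uniformly-small cusps (Theorem~\ref{thm:compact quotient}).
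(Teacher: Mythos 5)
Your plan follows the same broad strategy as the paper: decompose $m_R(\wh H)$ over the $\Gamma_p$-translates of a compact fundamental piece $K$, bound $\mathrm{Leb}(I_{\gamma^-,\gamma^+})$ via Lemma~\ref{horoball length bound}, and reduce to a counting estimate of ``ball of radius $R-\norm\gamma_\sigma$'' type so that the prefactor $e^{-\delta\norm\gamma_\sigma}$ survives and the sum $\sum_{\gamma\in\Gamma_p}\norm\gamma_\sigma e^{-\delta\norm\gamma_\sigma}$ converges. You even correctly self-correct the first (too-coarse) count in favor of the multiplicative one. The bookkeeping of replacing $\alpha$ by a conjugate so that $\alpha^-\in K$ is also unnecessary --- one works with the lift $\tilde m_R$ on $\bigcup_\alpha K\times\alpha K\times\Rb$ and uses $m_R(\pi(A))\le\tilde m_R(A)$ --- but that is harmless.

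The genuine gap is in what you call the ``technical core.'' To apply the multiplicative estimate of Proposition~\ref{prop:basic properties}\eqref{item:multiplicative estimate} in the factorization $\gamma=\alpha\cdot(\alpha^{-1}\gamma)$ with $\alpha\in\Gamma_p$ and $\gamma$ loxodromic with $\gamma^-\in K$, $\gamma^+\in\alpha K$, one needs the separation $\dist(\alpha,\alpha^{-1}\gamma)\ge\epsilon$ uniformly. This is exactly Lemma~\ref{lem:additive estimate for Aalpha}, and the proof there is not just an assembly of the ingredients you list: the contradiction argument crucially uses Proposition~\ref{prop:disjoint horoballs}, namely that one can shrink the horoball so that $\gamma H(p,N)$ is disjoint from $H(p,N)$ whenever $\gamma p\neq p$. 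This disjointness forces, via the lower bound $[\tfrac{C_2}{2},\norm\alpha_\sigma-\tfrac{C_2}{2}]\subset I_{\gamma^-,\gamma^+}$ from Lemma~\ref{horoball length bound}, the inequality $\ell_\sigma(\gamma)>\norm\alpha_\sigma-C_2$; combined with the expanding property on $K$ it then shows that $\dist(\alpha,\alpha^{-1}\gamma)\to 0$ would push $\alpha^{-1}\gamma^+$ to $p$, contradicting $\alpha^{-1}\gamma^+\in K$. Without invoking the horoball disjointness there is no obvious way to rule out a loxodromic $\gamma$ that ``enters and leaves the cusp too quickly'' while still having $\gamma^\pm$ in the prescribed translates, and the multiplicative estimate cannot be applied. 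So your plan names the right tools but is missing the mechanism --- the disjoint-horoball input --- that makes the separation hypothesis, and hence the whole counting step, actually hold.
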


Delaying the proof of the proposition, we prove Theorem~\ref{GF equidistribution}. 

\begin{proof}[Proof of Theorem~\ref{GF equidistribution}]  Fix $\epsilon > 0$ and fix representatives $p_1,\dots,p_k$ of the $\Gamma$-orbits of bounded parabolic points (recall there are finitely many such orbits by Lemma~\ref{lem:finitely many parabs}). Using Proposition \ref{prop:geomfin implies BMS finite} and Proposition~\ref{prop:bound on mR horoball}, for each $1 \leq j \leq k$ there exists a horoball $H_j$ based at $p_j$ whose image $\wh{H}_j$ in $U_\Gamma$ satisfies  
$$
m_\Gamma(\wh{H}_j) < \frac{\epsilon\norm{m_\Gamma}}{k} \quad \text{and} \quad m_R(\wh{H}_j) \leq \frac{e^{\delta R}}{\delta} \frac{\epsilon}{k}. 
$$
Let 
 $$
 \wh{C}:=\wh H_1\cup\dots\cup \wh H_k.
 $$
 Recall $U_\Gamma\smallsetminus \wh C$ is compact by Theorem~\ref{thm:compact quotient},
so we can find a compactly supported continuous function $\chi \colon U_\Gamma\to[0,1]$ which is equal to $1$ on $U_\Gamma\smallsetminus \wh C$.
By Theorem~\ref{thm:equidistribution},  
$$\lim_{R \to \infty} \left|\delta e^{-\delta R}\int \chi f\,dm_R-\int \chi f \,\frac{dm_\Gamma}{\norm{m_\Gamma}} \right|=0.
$$
So,
 \begin{align*}
  \limsup_{R \to \infty} &  \left|\delta e^{-\delta R}\int f \,dm_R-\int f \,\frac{dm_\Gamma}{\norm{m_\Gamma}}\right|\\
  &  \leq \limsup_{R \to \infty}   \left|\int (1-\chi)f\,\frac{dm_\Gamma}{\norm{m_\Gamma}}\right| + \left|\delta e^{-\delta R}\int (1-\chi)f\,dm_R\right|\\
  &\leq \norm{f}_\infty \frac{m_\Gamma(\wh C)}{\norm{m_\Gamma}} + \norm{f}_\infty \delta e^{-\delta R}m_R(\wh C)\leq 2\norm{f}_\infty\epsilon.
 \end{align*}
 Since $\epsilon > 0$ was arbitrary, 
  \begin{equation*}
  \lim_{R \to \infty} \delta e^{-\delta R}\int f \,dm_R= \int f \,\frac{dm_\Gamma}{\norm{m_\Gamma}}. \qedhere
  \end{equation*}
  
  \end{proof}
 
\subsection{Proof of Proposition~\ref{prop:bound on mR horoball}} The proof is motivated by the argument in \cite[Th.\,5.2]{roblin}.

Fix a compact set $K\subset \Lambda(\Gamma)\smallsetminus\{p\}$ such that $\Gamma_p(K)=\Lambda(\Gamma)\smallsetminus\{p\}$.
By Propostion~\ref{prop:disjoint horoballs}, we can find a compact neighborhood $N$ of $K$ in $\overline{M^{(2)}\times\R}\smallsetminus\{p\}$ such that $H(p,N)$ is disjoint from $\gamma H(p,N)$ whenever $\gamma p\neq p$.

 Let 
$$
F_0 : = \{ \gamma \in \Gamma_p : \gamma K \cap K \neq \varnothing\} \cup \{ \gamma \in \Gamma_p : \norm{\gamma}_\sigma \leq 1\}.
$$
Notice that $F_0$ is finite by Proposition~\ref{prop:basic properties}\eqref{item:properness} and the fact that $\Gamma_p$ only accumulates on $p$. 

Given a finite subset $F \supset F_0$, let $N_F$ be a compact neighborhood of $K$ in $\overline{M^{(2)}\times\R}\smallsetminus\{p\}$ containing $N$ and all $(a,b,t)$ with $a,b\in F\cdot K$. Then set
$$
H_F : = H(p, N_F).
$$
Notice that $H(p,N_F) \subset H(p,N)$, hence $H(p,N_F)$ is disjoint from $\gamma H(p,N_F)$ whenever $\gamma p\neq p$.

Given $x,y\in \Lambda(\Gamma) \smallsetminus\{p\}$ distinct, let $I_{x,y} \subset \Rb$ be the subset satisfying 
$$
H_{F_0} \cap\big( (x,y) \times \Rb \big)= (x,y) \times I_{x,y}.
$$
Given $\alpha\in\Gamma_p$, let
$$ A_{\alpha} := \{ \gamma\in\Gamma_{\lox} : \gamma^-\in K,\ \gamma^+\in \alpha K\}. $$

Let $\wh{H}_F$ be the image of $H_F$ in $U_\Gamma$. Then $\wh{H}_F$ is contained in the image of 
$$
\Big(\{p\} \times (\Lambda(\Gamma) \smallsetminus\{p\}) \times \Rb\Big) \cup \Big((\Lambda(\Gamma) \smallsetminus\{p\}) \times \{p\} \times \Rb \Big) \cup \bigcup_{\alpha \in \Gamma_p \smallsetminus F } K \times \alpha K \times \Rb.
$$
Then, since $p$ is not the fixed point of a loxodromic element, 
\begin{align}\label{eqn:second estimate on cusp of level F} 
 m_R\left( \wh H_{F}\right)  \leq \sum_{\alpha \in \Gamma_p \smallsetminus F} \ \sum_{\substack{\gamma \in A_\alpha \\ \ell_\sigma(\gamma) \leq R }} \mr{Leb}\left(I_{\gamma^-,\gamma^+}\right).
 \end{align}
So we need to control $\mr{Leb}\left(I_{\gamma^-,\gamma^+}\right)$ and the number of loxodromics in $A_\alpha$ with length at most $R$.

By construction 
$$
K \times \bigcup_{\alpha \in \Gamma_p \smallsetminus F_0} \alpha K
$$
is relatively compact in $M^{(2)}$. So by Proposition~\ref{prop:basic properties}\eqref{item:loxodromics with separated fixed points} there exists $C_1 > 0$ such that:  if $\alpha \in \Gamma_p \smallsetminus F_0$ and $\gamma \in A_\alpha$, then 
$$
\ell_\sigma(\gamma) \geq \norm{\gamma}_\sigma - C_1. 
$$
Also, by  Lemma \ref{horoball length bound} there exists $C_2 > 0$ such that: if $\alpha \in \Gamma_p$,  $\gamma \in A_\alpha$ and $I_{\gamma^-,\gamma^+} \ne\varnothing$, then
\begin{equation*}
\norm{\alpha}_\sigma - C_2 \leq \mr{Leb}\left(I_{\gamma^-,\gamma^+}\right) \leq \norm{\alpha}_\sigma  +C_2
\end{equation*}
and 
\begin{equation}\label{eqn:length of I gamma^- gamma^+}
\left[\frac{C_2}{2}, \norm{\gamma}_\sigma - \frac{C_2}{2} \right] \subset I_{\gamma^-,\gamma^+}.
\end{equation}

Then Equation~\eqref{eqn:second estimate on cusp of level F} becomes 
\begin{align}
 m_R\left( \wh H_{F}\right) & \leq \sum_{\alpha \in \Gamma_p \smallsetminus F} (\norm{\alpha}_\sigma  +C_2)\ \# \{ \gamma \in A_\alpha : \norm{\gamma}_\sigma \leq R + C_1\} \nonumber \\
& \leq (C_2+1)\sum_{\alpha \in \Gamma_p \smallsetminus F} \norm{\alpha}_\sigma \# \{ \gamma \in A_\alpha : \norm{\gamma}_\sigma \leq R + C_1\}  \label{eqn:third estimate on cusp of level F}.
  \end{align}
  
  To bound the size of $\# \{ \gamma \in A_\alpha : \norm{\gamma}_\sigma \leq R + C_1\}$, we first establish the following relationship between the magnitudes of $\alpha$ and $\gamma$. 

\begin{lemma}\label{lem:additive estimate for Aalpha} There exists $C_3 > 0$ such that: if $\alpha \in \Gamma_p \smallsetminus F_0$ and $\gamma \in A_\alpha$, then 
$$
\norm{\gamma}_\sigma \geq \norm{\alpha}_\sigma +\norm{\alpha^{-1}\gamma}_\sigma- C_3. 
$$
\end{lemma}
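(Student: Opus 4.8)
The plan is to prove Lemma~\ref{lem:additive estimate for Aalpha} as a consequence of the ``multiplicative estimate'' Proposition~\ref{prop:basic properties}\eqref{item:multiplicative estimate}, which says that $\norm{\eta_1\eta_2}_\sigma = \norm{\eta_1}_\sigma + \norm{\eta_2}_\sigma + O(1)$ whenever $\dist(\eta_1^{-1},\eta_2)$ is bounded below. Writing $\gamma = \alpha(\alpha^{-1}\gamma)$, we would like to apply this with $\eta_1 = \alpha$ and $\eta_2 = \alpha^{-1}\gamma$, so the key point is to produce a uniform lower bound on $\dist(\alpha^{-1}, \alpha^{-1}\gamma)$ for $\alpha \in \Gamma_p \smallsetminus F_0$ and $\gamma \in A_\alpha$.

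The heart of the argument is therefore a separation estimate. First I would note that since $\gamma \in A_\alpha$ we have $\gamma^+ \in \alpha K$, so $\alpha^{-1}\gamma^+ \in K$; and $\alpha^{-1}\gamma$ is loxodromic with attracting fixed point $(\alpha^{-1}\gamma)^+ = \alpha^{-1}\gamma^+ \in K$. Meanwhile $\alpha^{-1}$, as an element of $\Gamma_p$, is close to $p$ when $\alpha \notin F_0$; more precisely, in the compactification $\Gamma \sqcup M$ from Proposition~\ref{prop:compactifying}, any escaping sequence in $\Gamma_p$ converges to $p$, so for all but finitely many $\alpha \in \Gamma_p$ the point $\alpha^{-1}$ lies in a small neighborhood of $p$ in $\Gamma \sqcup M$. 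Since $K$ is a compact subset of $\Lambda(\Gamma) \smallsetminus \{p\} \subset M \smallsetminus \{p\}$, the distance from $K$ to $p$ is positive; combining this with the fact that $\alpha^{-1} \to p$ along escaping sequences, there is $\epsilon_0 > 0$ such that $\dist(\alpha^{-1}, z) \geq \epsilon_0$ for every $z \in K$ and every $\alpha \in \Gamma_p$ outside a finite set $F_1$. Enlarging $F_0$ if necessary to contain $F_1$ (or absorbing $F_1$ into the finitely many remaining cases by a further constant), we get $\dist(\alpha^{-1}, (\alpha^{-1}\gamma)^+) \geq \epsilon_0$. The tricky part is that the multiplicative estimate wants $\dist(\alpha^{-1}, \alpha^{-1}\gamma)$, i.e.\ the distance in $\Gamma \sqcup M$ between the \emph{element} $\alpha^{-1}\gamma$ and the \emph{point} $\alpha^{-1}$; this is handled using Proposition~\ref{prop:loxodromics with separated fixed points}-type control together with the fact that a loxodromic element with attracting fixed point bounded away from a point $q$ is itself bounded away from $q$ in $\Gamma \sqcup M$ --- concretely, if $\alpha^{-1}\gamma$ were close to $\alpha^{-1}$ in $\Gamma \sqcup M$, then by Proposition~\ref{prop:compactifying} the large powers would push mass toward $\alpha^{-1}$, forcing $(\alpha^{-1}\gamma)^+$ near $\alpha^{-1}$, contradicting the separation just established. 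Since $A_\alpha$ may contain infinitely many $\gamma$, one must be careful to extract this as a \emph{uniform} bound, not just a limiting statement; I expect this uniformity to be the main obstacle, and I would deal with it by a contradiction/compactness argument: if no uniform $\epsilon$ worked, take $\alpha_n \in \Gamma_p \smallsetminus F_0$ and $\gamma_n \in A_{\alpha_n}$ with $\dist(\alpha_n^{-1}, \alpha_n^{-1}\gamma_n) \to 0$, pass to subsequences so that $\gamma_n^- \to x \in K$, $\alpha_n^{-1}\gamma_n^+ \to y \in K$, and $\alpha_n^{-1} \to p$ (as $\alpha_n$ escapes), and derive a contradiction with the convergence-group dynamics exactly as above.

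With the separation bound $\dist(\alpha^{-1}, \alpha^{-1}\gamma) \geq \epsilon_0$ in hand for all $\alpha \in \Gamma_p \smallsetminus F_0$ and $\gamma \in A_\alpha$ (possibly after enlarging $F_0$), Proposition~\ref{prop:basic properties}\eqref{item:multiplicative estimate} applied with this $\epsilon_0$ yields a constant $C_3 > 0$ with
$$
\norm{\gamma}_\sigma = \norm{\alpha \cdot (\alpha^{-1}\gamma)}_\sigma \geq \norm{\alpha}_\sigma + \norm{\alpha^{-1}\gamma}_\sigma - C_3,
$$
which is exactly the claimed inequality. The only bookkeeping point is to make sure that enlarging $F_0$ is harmless: $F_0$ is used elsewhere only through its finiteness and through $F \supset F_0$, so replacing it by $F_0 \cup F_1$ (still finite) does not affect any earlier step. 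I would close by remarking that this lemma will then be combined with the subexponential growth of $\chi$ and the critical-exponent bound $\delta_\sigma(\Gamma_p) < \delta$ to sum the series in \eqref{eqn:third estimate on cusp of level F} and finish Proposition~\ref{prop:bound on mR horoball}.
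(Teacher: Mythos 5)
Your overall strategy---write $\gamma = \alpha\cdot(\alpha^{-1}\gamma)$, apply the multiplicative estimate Proposition~\ref{prop:basic properties}\eqref{item:multiplicative estimate}, and reduce to a uniform separation bound of the form $\dist(\alpha^{-1},\alpha^{-1}\gamma)\geq\epsilon$ proven by a compactness/contradiction argument---is exactly the paper's. However, the way you try to close the contradiction has two genuine gaps.

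First, the claim that ``$\alpha^{-1}\gamma$ is loxodromic with attracting fixed point $(\alpha^{-1}\gamma)^+=\alpha^{-1}\gamma^+$'' is false. You are conflating the \emph{conjugate} $\alpha^{-1}\gamma\alpha$, whose fixed points really are $\alpha^{-1}\gamma^{\pm}$, with the \emph{product} $\alpha^{-1}\gamma$, whose fixed points (if it is even loxodromic) have no simple relationship with $\gamma^{\pm}$. (Concretely, in $\PSL(2,\Rb)$ take $\gamma = \diag(2,1/2)$ with $\gamma^+=0$ and $\alpha$ the unit upper-triangular parabolic; then $\alpha^{-1}\gamma^+=-1$ while $(\alpha^{-1}\gamma)^+=1/3$.) Since your separation argument rests entirely on knowing that some distinguished limit point of $\alpha^{-1}\gamma$ lands in $K$, this invalidates the whole derivation.

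Second, and more substantially, you never invoke the disjoint-horoball condition (Proposition~\ref{prop:disjoint horoballs}), which is the essential extra ingredient in the paper's proof. In the paper's argument, once one has $\alpha_n^{-1}\gamma_n\to p$ one must still show $\alpha_n^{-1}\gamma_n^+\to p$ to contradict $\alpha_n^{-1}\gamma_n^+\in K$. This is achieved by applying Proposition~\ref{prop:basic properties}\eqref{item:a technical fact} to $\sigma(\alpha_n^{-1}\gamma_n,\gamma_n^+)$; proving that this quantity is bounded below requires the key inequality $\ell_\sigma(\gamma_n) > \norm{\alpha_n}_\sigma - C_2$, which comes precisely from the fact that $H_{F_0}$ and $\gamma_n H_{F_0}$ are disjoint (so the excursion interval $I_{\gamma_n^-,\gamma_n^+}$, of length $\gtrsim\norm{\alpha_n}_\sigma$, is disjoint from its translate by $\ell_\sigma(\gamma_n)$). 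Your sketch appeals vaguely to ``the convergence-group dynamics exactly as above,'' but there is no such direct dynamical contradiction: knowing that $\gamma_n^-\in K$, $\alpha_n^{-1}\gamma_n^+\in K$, $\alpha_n^{-1}\to p$, and $\alpha_n^{-1}\gamma_n\to p$ is, by itself, not inconsistent. The lower bound on $\ell_\sigma(\gamma_n)$ supplied by the disjoint horoballs is exactly what rules this scenario out, and that step is absent from your proposal.
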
 
\begin{proof}
By Proposition~\ref{prop:basic properties}\eqref{item:multiplicative estimate}, it suffices to show that there exists $\epsilon > 0$ such that:   if $\alpha \in \Gamma_p \smallsetminus F_0$ and $\gamma \in A_\alpha$, then $\dist(\alpha,\alpha^{-1}\gamma) > \epsilon.$

Suppose not. Then there exist sequences $\{\alpha_n\},\{\gamma_n\}$ such that $\alpha_n \in \Gamma_p \smallsetminus F_0$, $\gamma_n \in A_{\alpha_n}$ and $\dist(\alpha_n,\alpha_n^{-1}\gamma_n)\to 0$.
Then $\{\alpha_n\}$ must be escaping, and hence converges to $p$ in $\Gamma\sqcup M$, and hence $\alpha_n^{-1}\gamma_n\to p$.

Recall that $\gamma H_{F_0}$ is disjoint from $H_{F_0}$, which means $I_{\gamma_n^-,\gamma_n^+}$ is disjoint from its translate by $\ell_\sigma(\gamma_n)$.
Since $I_{\gamma_n^-,\gamma_n^+}$ contains an interval of length $\norm{\alpha_n}_\sigma-C_2$, see Equation~\eqref{eqn:length of I gamma^- gamma^+}, this means that
\[
\ell_\sigma(\gamma_n)=\sigma(\gamma_n,\gamma_n^+)> \norm{\alpha_n}_\sigma-C_2.
\]

Since $\Gamma_p$ only accumulates on $p$, the expanding property implies that there exists a constant $D>0$ such that 
\begin{equation*}
\norm{\alpha}_\sigma-D \leq \sigma(\alpha, z)\leq \norm{\alpha}_\sigma + D
\end{equation*}
for all $\alpha\in \Gamma_p$ and $z\in K$. 

Then by the cocycle property
\begin{align*}
\sigma(\alpha_n^{-1}\gamma_n,\gamma_n^+)& =\sigma(\alpha_n^{-1},\gamma_n^+)+\sigma(\gamma_n,\gamma_n^+)=-\sigma(\alpha_n,\alpha_n^{-1}\gamma_n^+)+\sigma(\gamma_n,\gamma_n^+) \\
& > -\norm{\alpha_n}_\sigma-D+\norm{\alpha_n}_\sigma-C_2 = -D-C_2.
\end{align*}
Then by Proposition~\ref{prop:basic properties}\eqref{item:a technical fact}  the distance between $\alpha_n^{-1}\gamma_n$ and $\alpha_n^{-1}\gamma_n\gamma_n^+=\alpha_n^{-1}\gamma_n^+$ tends to zero. So $\alpha_n^{-1}\gamma_n^+\to p$. But this contradicts the assumption $\alpha_n^{-1}\gamma_n^+\in K$.
\end{proof}

\begin{lemma}\label{lem:size of A_alpha} There exists $C_4 > 0$ such that: if $\alpha \in \Gamma_p \smallsetminus F_0$, then 
$$
\# \{ \gamma \in A_\alpha : \norm{\gamma}_\sigma \leq R + C_1\} \leq C_4 e^{\delta(R - \norm{\alpha}_\sigma)}
$$
for all $R > 0$. 
\end{lemma}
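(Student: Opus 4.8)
\textbf{Plan for the proof of Lemma~\ref{lem:size of A_alpha}.}
The key is that membership in $A_\alpha$ with bounded $\sigma$-magnitude can be translated, via left multiplication by $\alpha^{-1}$, into a counting problem for a \emph{fixed} relatively compact family of configurations, to which the polynomial-type counting bound $\#\{\gamma\in\Gamma:\norm{\gamma}_\sigma\le R\}\le C e^{\delta R}$ from \cite[Prop.\,6.3]{BCZZ} applies. First I would apply Lemma~\ref{lem:additive estimate for Aalpha}: for $\alpha\in\Gamma_p\smallsetminus F_0$ and $\gamma\in A_\alpha$ we have
$$\norm{\alpha^{-1}\gamma}_\sigma \le \norm{\gamma}_\sigma - \norm{\alpha}_\sigma + C_3.$$
Hence if $\norm{\gamma}_\sigma\le R+C_1$, then writing $\eta := \alpha^{-1}\gamma$ we get $\norm{\eta}_\sigma \le R + C_1 + C_3 - \norm{\alpha}_\sigma$. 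So it suffices to bound the number of $\eta=\alpha^{-1}\gamma$ arising this way. Since $\gamma^- \in K$, we have $\eta^- = \gamma^- \in K$, and since $\gamma^+\in\alpha K$, we have $\eta^+ = \alpha^{-1}\gamma^+ \in K$; thus $(\eta^-,\eta^+)\in K\times K$, a fixed compact subset of $M^{(2)}$. The map $\gamma\mapsto\alpha^{-1}\gamma$ is injective, so
$$\#\{\gamma\in A_\alpha : \norm{\gamma}_\sigma\le R+C_1\} \le \#\{\eta\in\Gamma : \norm{\eta}_\sigma \le R+C_1+C_3-\norm{\alpha}_\sigma\}.$$
Applying \cite[Prop.\,6.3]{BCZZ}, this is at most $C_5 e^{\delta(R+C_1+C_3-\norm{\alpha}_\sigma)} = C_4 e^{\delta(R-\norm{\alpha}_\sigma)}$ with $C_4 := C_5 e^{\delta(C_1+C_3)}$, which is the claimed bound (valid also when the left-hand side is zero, e.g.\ when the exponent is negative, since the count is a nonnegative integer and $C_4 e^{\delta(R-\norm{\alpha}_\sigma)}>0$).

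I do not expect any serious obstacle here: the statement is essentially a bookkeeping consequence of Lemma~\ref{lem:additive estimate for Aalpha} (the genuinely substantive input, whose proof uses the disjointness of the horoball translates and Proposition~\ref{prop:basic properties}) together with the sub-exponential upper bound on orbital counting functions already established in \cite{BCZZ}. The only minor care needed is to ensure all constants ($C_1$ from Proposition~\ref{prop:basic properties}\eqref{item:loxodromics with separated fixed points} applied to $K\times\bigcup_{\alpha\notin F_0}\alpha K$, which is relatively compact; $C_3$ from Lemma~\ref{lem:additive estimate for Aalpha}; $C_5$ from \cite[Prop.\,6.3]{BCZZ}) are chosen uniformly in $\alpha\in\Gamma_p\smallsetminus F_0$, which they are, since $K$ and $F_0$ are fixed once and for all. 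One should also note that the bound is only asserted for $\alpha\notin F_0$, precisely so that Lemma~\ref{lem:additive estimate for Aalpha} applies; the finitely many $\alpha\in F_0$ are handled separately in the subsequent estimate of $m_R(\wh H_F)$ by enlarging $F$.

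Once this lemma is in hand, combining it with \eqref{eqn:third estimate on cusp of level F} gives
$$m_R(\wh H_F) \le (C_2+1)C_4\, e^{\delta R}\sum_{\alpha\in\Gamma_p\smallsetminus F}\norm{\alpha}_\sigma e^{-\delta\norm{\alpha}_\sigma},$$
and since $\sum_{\alpha\in\Gamma_p}\norm{\alpha}_\sigma e^{-\delta\norm{\alpha}_\sigma}<+\infty$ (because $\delta_\sigma(\Gamma_p)<\delta$, by the standing hypotheses of the section, together with Proposition~\ref{prop:basic properties}\eqref{item:properness} to ensure the magnitudes are bounded below), the tail can be made smaller than $\epsilon/\big((C_2+1)C_4\big)$ by choosing $F\supset F_0$ large enough; taking $H := H_F$ then yields $m_R(\wh H)\le e^{\delta R}\epsilon$ for all $R>0$, proving Proposition~\ref{prop:bound on mR horoball}.
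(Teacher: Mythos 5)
Your argument is the same as the paper's: translate by $\alpha^{-1}$, use Lemma~\ref{lem:additive estimate for Aalpha} to get $\norm{\alpha^{-1}\gamma}_\sigma\le \norm{\gamma}_\sigma-\norm{\alpha}_\sigma+C_3$, use injectivity of left multiplication, and conclude with the orbital counting bound from~\cite[Prop.\,6.3]{BCZZ}. The only quibble is the aside claiming $\eta^-=\gamma^-$ and $\eta^+=\alpha^{-1}\gamma^+$ for $\eta=\alpha^{-1}\gamma$ --- fixed points do not transform this way under left multiplication --- but you never use that claim (you bound by $\#\{\eta\in\Gamma:\norm{\eta}_\sigma\le\cdots\}$ directly), so the proof stands.
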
 

\begin{proof} By~\cite[Prop.\,6.3]{BCZZ} there exists $c > 0$ such that: 
$$
\#\{ \gamma \in \Gamma : \norm{\gamma}_\sigma \leq R \} \leq c e^{\delta R}
$$
for all $R \geq 0$. Then by Lemma~\ref{lem:additive estimate for Aalpha}, 
\begin{align*}
\# \{ \gamma \in A_\alpha : \norm{\gamma}_\sigma \leq R + C_1\} & \leq \# \{ \gamma \in A_\alpha : \norm{\alpha^{-1}\gamma}_\sigma \leq R + C_1+C_3 -\norm{\alpha}_\sigma\} \\
& \leq  \# \{ \gamma \in \Gamma : \norm{\gamma}_\sigma \leq R + C_1+C_3 -\norm{\alpha}_\sigma\}\\
&  \leq c e^{\delta(C_1+C_3)} e^{\delta(R - \norm{\alpha}_\sigma)}. \qedhere
\end{align*}
\end{proof} 

By Lemma~\ref{lem:size of A_alpha}  and Equation~\eqref{eqn:third estimate on cusp of level F}, there exists $C > 0$ such that 
\begin{align*}
 m_R\left( \wh H_{F}\right) &\leq  Ce^{\delta R} \sum_{\alpha \in \Gamma_p \smallsetminus F} \norm{\alpha}_\sigma e^{-\delta \norm{\alpha}_\sigma} 
\end{align*}
for all $R > 0$ and $F \supset F_0$. Since $ \sum_{\alpha \in \Gamma_p} \norm{\alpha}_\sigma e^{-\delta \norm{\alpha}_\sigma}$ converges, we can pick $F$ sufficiently large so that $ m_R\left( \wh H_{F}\right) \leq e^{\delta R}\epsilon$ for all $R > 0$.

\subsection{Proof of Observation~\ref{obs:computing mass}}\label{sec:proof of obs:computing mass} By construction 
$$
\norm{m_R}=\sum_{\substack{[\gamma] \in [\Gamma_{\lox}]^w\\ \ell_\sigma(\gamma)\leq R}} \norm{\nu_{[\gamma]}}
$$
and so it suffices to fix $\gamma \in \Gamma_{\lox}$ and show that $\norm{\nu_{[\gamma]}}= \ell_\sigma^{\rm prim}(\gamma)$.

Let 
$$
I : = \{(\gamma^-, \gamma^+)\} \times [0, \ell_\sigma^{\rm prim}(\gamma)). 
$$
Then, by definition, $\norm{\nu_{[\gamma]}} = \nu_{[\gamma]}(\pi(I))$. We will use Equation~\eqref{eqn:defining function of quotient measures} to compute this measure. 

Notice that when $t \in (0,\ell_\sigma^{\rm prim}(\gamma))$ we have 
\begin{align*}
P(1_I)([\gamma^-, \gamma^+, t]) & = \sum_{\alpha \in \Gamma} 1_I(\alpha(\gamma^-, \gamma^+, t))=\sum_{\alpha \in \Gamma} 1_I( \alpha \gamma^-, \alpha \gamma^+, t + \sigma(\alpha, \gamma^+)) \\
& =\# \{ \alpha \in \Gamma : \alpha \gamma^\pm = \gamma^\pm \text{ and } 0 \leq t + \sigma(\alpha, \gamma^+) < \ell_\sigma^{\rm prim}(\gamma) \} \\
& = \#\{ \alpha \in S_\gamma : 0 \leq t + \sigma(\alpha, \gamma^+) < \ell_\sigma^{\rm prim}(\gamma)\}.
\end{align*}
By definition
$$
\{ \abs{\sigma(\alpha, \gamma^+)} : \alpha \in S_\gamma \} \subset \{0\} \cup [\ell_\sigma^{\rm prim}(\gamma),+\infty).
$$
So 
\begin{align*}
P(1_I)([\gamma^-, \gamma^+, t]) = \# S_\gamma^0
\end{align*}
where $S_\gamma^0:= \{ \alpha \in S_\gamma : \sigma(\alpha, \gamma^+) = 0\}$. 

Thus
$$
P(1_I)= (\# S_\gamma^0) 1_{\pi(I)} .
$$
Then by Equation~\eqref{eqn:defining function of quotient measures},
\begin{align*}
\nu_{[\gamma]}(\pi(I)) & = \frac{1}{\# S_\gamma^0 } \int_{U_\Gamma} P(1_I)d\nu_{[\gamma]}  =  \frac{1}{\# S_\gamma^0 } \int_{\tilde U_\Gamma} 1_I d\tilde \nu_{[\gamma]} \\
& =  \frac{\ell_\sigma^{\rm prim}(\gamma)}{\# S_\gamma^0 } \#\{ \alpha \in [\gamma] : \alpha^{\pm} = \gamma^{\pm}\} \\
& =  \frac{\ell_\sigma^{\rm prim}(\gamma)}{\# S_\gamma^0 } \#\{ \alpha \in S_\gamma : \sigma(\alpha, \gamma^+) = \sigma(\gamma, \gamma^+)\}.
\end{align*}
Since 
$$
\alpha \in S_\gamma \mapsto \sigma(\alpha, \gamma^+) \in (\Rb, +)
$$
is a homomorphism, we see that $\#\{ \alpha \in S_\gamma : \sigma(\alpha, \gamma^+) = \sigma(\gamma, \gamma^+)\} = \# S_\gamma^0$ and the proof is complete.

\subsection{Proof of Corollary~\ref{cor:counting for geomfin actions}}
In our proof of our counting corollary, it will  be convenient to use a slightly modified equidistribution statement which concerns a measure $m_R'$ which is
closely related to $m_R$.

For every $R > 0$, let $m_R'$ be the quotient measure associated to the measure
$$\tilde m_R'=\sum_{\substack{\gamma \in \Gamma_{\lox}\\ \ell_\sigma(\gamma)\leq R}}\frac{1}{\ell_\sigma^{\rm prim}(\gamma)} \mc D_{\gamma^-}\otimes \mc D_{\gamma^+}\otimes dt$$
on $\tilde U_\Gamma$.  Then by Observation~\ref{obs:computing mass},
$$
\norm{m_R'}= \#\left\{[\gamma]\in [\Gamma_{\lox}]^w : \ell_\sigma(\gamma) \leq R\right\}.
$$

One expects that $m_R$ is close to $Rm'_R$ since we expect most geodesics of length at most $R$ have length close to $R$. The next result
makes this intuition precise.

\begin{proposition}\label{prop:GF equidistribution'}
If $f \colon \Gamma \backslash (M^{(2)} \times \Rb) \to \Rb$ is a bounded continuous function, then
$$\lim_{R \to \infty} \delta R e^{- \delta R} \int f dm'_R = \int f \frac{dm_\Gamma}{\norm{m_\Gamma}}.$$
\end{proposition}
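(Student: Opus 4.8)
The plan is to derive Proposition~\ref{prop:GF equidistribution'} from Theorem~\ref{GF equidistribution} by a Riemann--Stieltjes integration by parts; the heuristic is that $m_R$ weights each closed $\psi^t$-orbit by its primitive period $\ell_\sigma^{\mathrm{prim}}$ while $m_R'$ weights it by $1$, and since $\delta e^{-\delta R}\norm{m_R}$ concentrates on orbits of length near $R$, removing the length weight produces the extra factor $\delta R$ in the normalization. Concretely, first I would reorganize the measures over weak conjugacy classes: $\ell_\sigma$ is constant on weak conjugacy classes and the quotient-measure construction of Section~\ref{sec:quotient measures} is additive in the upstairs measure, so $m_R=\sum_{[\gamma]\in[\Gamma_{\lox}]^w,\ \ell_\sigma(\gamma)\le R}\nu_{[\gamma]}$ and $m_R'=\sum_{[\gamma]\in[\Gamma_{\lox}]^w,\ \ell_\sigma(\gamma)\le R}\ell_\sigma^{\mathrm{prim}}(\gamma)^{-1}\nu_{[\gamma]}$, with $\nu_{[\gamma]}$ as in Observation~\ref{obs:computing mass}, $\norm{\nu_{[\gamma]}}=\ell_\sigma^{\mathrm{prim}}(\gamma)$, and only finitely many terms by Corollary~\ref{length spectrum discrete}. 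Fixing a bounded continuous $f$, set $I_{[\gamma]}:=\int f\,d\nu_{[\gamma]}$, so $\abs{I_{[\gamma]}}\le\norm{f}_\infty\,\ell_\sigma^{\mathrm{prim}}(\gamma)$, and let $\Phi(T):=\int f\,dm_T=\sum_{\ell_\sigma(\gamma)\le T}I_{[\gamma]}$. Theorem~\ref{GF equidistribution} gives $\delta e^{-\delta T}\Phi(T)\to L:=\norm{m_\Gamma}^{-1}\int f\,dm_\Gamma$, and $\Phi$ is a right-continuous step function, vanishing on $[0,s_0)$ with $s_0:=\mathrm{sys}(\Gamma,\sigma)>0$, having only finitely many jumps on any bounded interval.

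Next, integration by parts against the $C^1$ function $T\mapsto 1/T$ yields $\int_{s_0^-}^R T^{-1}\,d\Phi(T)=\Phi(R)/R+\int_{s_0}^R T^{-2}\Phi(T)\,dT$, and since the jump of $\Phi$ at a length $L$ is $\sum_{\ell_\sigma(\gamma)=L}I_{[\gamma]}$, the left-hand side equals $\sum_{\ell_\sigma(\gamma)\le R}\ell_\sigma(\gamma)^{-1}I_{[\gamma]}$. The difference between this and $\int f\,dm_R'=\sum_{\ell_\sigma(\gamma)\le R}\ell_\sigma^{\mathrm{prim}}(\gamma)^{-1}I_{[\gamma]}$ is supported on the non-primitive classes (those with $\ell_\sigma^{\mathrm{prim}}(\gamma)<\ell_\sigma(\gamma)$), and by the bound on $\abs{I_{[\gamma]}}$ it is at most $\norm{f}_\infty N(R)$, where $N(R)$ denotes the number of non-primitive classes with $\ell_\sigma(\gamma)\le R$. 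I would then bound $N(R)=O(e^{\delta R/2})$ as follows: every non-primitive class is $[\gamma_0^n]^w$ with $\gamma_0$ primitive, $n\ge2$, and $\ell_\sigma(\gamma_0)=\ell_\sigma(\gamma_0^n)/n\le R/2$; for each primitive $[\gamma_0]^w$ there are at most $R/\ell_\sigma(\gamma_0)$ such classes, so $N(R)\le R\sum_{[\gamma]^w:\ \ell_\sigma(\gamma)\le R/2}\ell_\sigma(\gamma)^{-1}$, and a dyadic decomposition together with the sub-exponential count $\#\{[\gamma]^w:\ell_\sigma(\gamma)\le S\}\le Ce^{\delta S}$ (which follows from Proposition~\ref{prop:basic properties}\eqref{item:loxodromics with separated fixed points}, Corollary~\ref{length spectrum discrete} and \cite[Prop.\,6.3]{BCZZ}) bounds the inner sum by $O(e^{\delta R/2}/R)$.

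Finally, combining the above, $\int f\,dm_R'=\Phi(R)/R+\int_{s_0}^R T^{-2}\Phi(T)\,dT+O(e^{\delta R/2})$, and I would multiply through by $\delta Re^{-\delta R}$: the first term tends to $L$ by Theorem~\ref{GF equidistribution}, the error term contributes $O(Re^{-\delta R/2})\to0$, and writing $\Phi(T)=\tfrac{L}{\delta}e^{\delta T}+r(T)$ with $r(T)=o(e^{\delta T})$ the middle term is controlled using the elementary estimate $\int_{s_0}^R T^{-2}e^{\delta T}\,dT=O(e^{\delta R}/R^2)$ (split the integral at $R/2$ and bound each piece trivially), applied also to the tail $\int_{T_\epsilon}^R T^{-2}\abs{r(T)}\,dT\le\epsilon\int_{T_\epsilon}^R T^{-2}e^{\delta T}\,dT$ for a fixed $\epsilon$, the portion $\int_{s_0}^{T_\epsilon}$ being a constant annihilated by $e^{-\delta R}$; hence this term is $o(e^{\delta R}/R)$ and $\delta Re^{-\delta R}\int f\,dm_R'\to L$, which is the assertion. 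Applying it to $f\equiv1$ and invoking Observation~\ref{obs:computing mass} then gives Corollary~\ref{cor:counting for geomfin actions}. The step I expect to require the most care is the bound $N(R)=o(e^{\delta R}/R)$ on multiply-covered closed orbits: it quantifies precisely the defect between weighting closed orbits by $\ell_\sigma^{\mathrm{prim}}$ and by $\ell_\sigma$, it is what prevents the identity $\int f\,dm_R'=\int_{s_0^-}^R T^{-1}\,d(\int f\,dm_T)$ from holding exactly, and it is the one point where an input beyond Theorem~\ref{GF equidistribution}—the counting estimate from \cite{BCZZ}—is needed.
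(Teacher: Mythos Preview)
Your argument is correct and takes a genuinely different route from the paper's. The paper proceeds by a direct comparison: from the positive-measure identity $R\,m_R'-m_R=\sum_c(R/\ell_\sigma^{\rm prim}(c)-1)\nu_c\ge0$ one gets the $\liminf$ for nonnegative $f$, and for the $\limsup$ one introduces a dilation parameter $r>0$, discards classes with $\ell_\sigma\le e^{-r}R$ (where the weight $\ell_\sigma$ is not yet close to $R$), bounds the non-primitive contribution exactly as you do, and sends $r\to0$; general $f$ then follows by writing $f=f_+-f_-$. Your Riemann--Stieltjes/Abel-summation approach replaces this two-sided sandwich by the single identity $\sum_{\ell_\sigma(\gamma)\le R}\ell_\sigma(\gamma)^{-1}I_{[\gamma]}=\Phi(R)/R+\int_{s_0}^R T^{-2}\Phi(T)\,dT$ and the elementary asymptotic $\int^R T^{-2}e^{\delta T}\,dT=O(e^{\delta R}/R^2)$, which handles signed $f$ directly and makes transparent why only the boundary term $\Phi(R)/R$ survives. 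Both proofs rest on the same two external inputs---Theorem~\ref{GF equidistribution} and a crude $O(e^{\delta S})$ orbit count (the paper extracts it from $\norm{m_S}$ via the systole, you cite \cite[Prop.\,6.3]{BCZZ} and Proposition~\ref{prop:basic properties}\eqref{item:loxodromics with separated fixed points}, which also works)---and both isolate the non-primitive defect $N(R)=O(e^{\delta R/2})$ in the same way. The paper's argument is slightly more elementary; yours is more systematic and would adapt with no change to other weights of the form $w(\ell_\sigma)$ in place of $1/\ell_\sigma^{\rm prim}$.
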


\begin{proof}
For any real number $R > 0$, let 
$$\mc{G}_R:=\{ [\gamma] \in  [\Gamma_{\lox}]^w :  \ell_\sigma(\gamma) \leq R\}$$
and, as before, for $c \in [\Gamma_{\lox}]^w$ 
let $\nu_c$ be the quotient measure associated to $\sum_{\gamma\in c} \mc D_{\gamma^-}\otimes \mc D_{\gamma^+}\otimes dt$. 
Note that 
\begin{align*}
R\,m'_R - m_R  & = \sum_{c \in \mc{G}_R} \left( \frac{R}{\ell_\sigma^{\rm prim}(c)} - 1\right) \nu_c\geq 0.
\end{align*}
In particular,
\[ \liminf_{R\to\infty}\delta Re^{-\delta R}\int fdm_R' \geq \int f\frac{m_\Gamma}{\norm{m_\Gamma}}.\]

Let 
$\mc{G}_R^p:=\{ c \in  \mc{G}_R :  \ell_\sigma(c)=\ell_\sigma^{\rm prim}(c)\}$ and $\mc{G}_R^{np}:= \mc{G}_R\setminus \mc{G}_R^p$.
Then, it is clear that
 $$\#\left( \mc{G}_R^{np}\right) \le\#\left( \mc{G}_{\frac{R}{2}}\right)+\#\left( \mc{G}_{\frac{R}{3}}\right)+\cdots+\#\left( \mc{G}_{\frac{R}{\lfloor R\rfloor}}\right).$$
Let 
$
s:=\mr{sys}(\Gamma,\sigma) = \min\{ \ell_\sigma(\gamma) : \gamma \in \Gamma_{\lox}\} 
$
 be the systole of $(\Gamma,\sigma)$, which is positive by Corollary~\ref{length spectrum discrete}. Notice that $\#\left( \mc{G}_S\right) \le \frac{1}{s}\norm{m_S}$ for all $S > 0$. So Theorem \ref{GF equidistribution}, applied to $f\equiv 1$  implies that there exists $D>0$ so
that
$$\#\left( \mc{G}_S\right) \le De^{\delta S}$$
for all $S>0$. Therefore, $\#\left( \mc{G}_R^{np}\right)\le DRe^{\frac{\delta R}{2}}$, so 
$$\lim_{R\to\infty} e^{-\delta R}R \#\left( \mc{G}_R^{np}\right)=0.$$

For any $r > 0$ and $R \geq 2e^r$, we have
\begin{align*}
\int f & \,dm_R  \geq \sum_{c \in \mc{G}_R^p-\mc{G}_{e^{-r}R}} \frac{e^{-r}R}{\ell_\sigma(c)} \int f \,d\nu_c  \\
 & \ge  e^{-r} \int f R \,dm'_R - e^{-r} R \sum_{c \in \mc{G}_{e^{-r}R}} \frac{1}{\ell_\sigma^{\rm prim}(c)} \int f \,d\nu_c -
\sum_{c \in \mc{G}_R^{np}} \frac{e^{-r}R}{\ell_\sigma^{\rm prim}(c)} \int f \,d\nu_c \\
  & \ge e^{-r} \int f R \,dm'_R - e^{-r} R \sum_{c \in \mc{G}_{e^{-r}R}} \frac{1}{s} \int f \,d\nu_c
  - \frac{e^{-r}R}{\ell_\sigma^{\rm prim}(c)}  \#\left(\mc{G}_R^{np}\right)\norm{f}_\infty \cdot \ell_\sigma^{\rm prim}(c) \\
 & =  e^{-r} \int f R \,dm'_R - \frac{R}{s} \int f \,dm_{e^{-r} R} - e^{-r}R  \#\left(\mc{G}_R^{np}\right)\norm{f}_\infty.	
\end{align*}
So
$$
 e^{-\delta R}  \int f R \,dm'_R  \leq e^r e^{-\delta R}  \int f \,dm_R + e^{-\delta R} \frac{R}{s}\int f \,dm_{e^{-r} R}+
e^{-\delta R} R  \#\left(\mc{G}_R^{np}\right)\norm{f}_\infty.$$
We previously observed that $\lim_{R\to\infty} e^{-\delta R}R \#\left( \mc{G}_R^{np}\right)=0.$
Theorem \ref{GF equidistribution} implies that $e^{-\delta e^{-r}R}\int f \,dm_{e^-r R}$ remains bounded as $R\to\infty$,
so $e^{-\delta R} \frac{R}{s}\int f \,dm_{e^{-r} R}\to0$ as $R\to\infty$.
Hence, again by Theorem \ref{GF equidistribution}, we have
$$
\limsup_{R\to\infty}\delta R e^{-\delta R} \int f \,dm'_R \leq e^r\int f\frac{dm_\Gamma}{\norm{m_\Gamma}}
$$
for any $r > 0$. We may then conclude by taking $r \to 0$.
\end{proof}

By integrating the bounded constant 1 function against both sides of the equation in Proposition~\ref{prop:GF equidistribution'}, we obtain the desired counting statement (Corollary~\ref{cor:counting for geomfin actions}).\qed

\section{Transverse and relatively Anosov subgroups of semisimple Lie groups}\label{sec:semisimple Lie groups}

In this section we will apply our results to the particular case of relatively Anosov subgroups of semisimple Lie groups. To state these results in full generality requires a number of definitions, for a more detailed discussion using the same notation see~\cite{CZZ3}.  

Let $\mathsf G$ be a connected semisimple Lie group without compact factors and with finite center. Fix a Cartan decomposition $\mathfrak{g} = \mathfrak{k} + \mathfrak{p}$ of the Lie algebra of $\mathsf G$, a Cartan subspace $\mathfrak a \subset \mathfrak p$ and a Weyl chamber $\mathfrak a^+ \subset \mathfrak a$. Let $\mathsf{K} \subset \mathsf G$ denote the maximal compact subgroup with Lie algebra $\mathfrak k$. Then let $\kappa \colon \mathsf G \to \mathfrak a^+$ denote the \emph{Cartan projection}, that is $\kappa(g) \in \mathfrak a^+$ is the unique element where 
$$
g = k_1 e^{\kappa(g)} k_2
$$
for some $k_1, k_2 \in \mathsf K$. Let $\iota \colon \mathfrak a \to \mathfrak a$ be the opposite involution, which has the defining property that 
\begin{equation}\label{eqn:involution on kappa}
\iota(\kappa(g)) = \kappa(g^{-1})
\end{equation}
for all $g \in \mathsf G$. 

Let $\Delta \subset \mathfrak{a}^*$ denote the system of simple restricted roots corresponding to the choice of $\mathfrak a^+$. 
Given a subset $\theta \subset \Delta$, we let $\mathsf P_\theta \subset \mathsf G$ denote the associated parabolic subgroup and let $\mathcal F_\theta:=\mathsf G/\mathsf P_\theta$ denote the associated flag manifold. 

A discrete subgroup  $\Gamma \subset \mathsf{G}$
is {\em $\mathsf P_\theta$-divergent} if whenever $\{\gamma_n\}$ is a sequence of distinct elements of $\Gamma$ and $\alpha\in\theta$, then 
$$
\lim_{n \to \infty} \alpha(\kappa(\gamma_n))=+\infty.
$$

Next we describe the limit set of a $\mathsf{P}_\theta$-divergent group. For every $g \in \mathsf{G}$, fix a Cartan decomposition 
$$
g=m_g e^{\kappa(g)} \ell_g
$$
Then following the notation in~\cite{GGKW}, define a map 
\[U_\theta\colon \mathsf{G} \to \Fc_\theta\] 
by letting $U_\theta(g) := m_g\mathsf{P}_\theta$.
One can show that if $\alpha(\kappa(g)) > 0$ for all $\alpha \in \theta$, then $U_\theta(g)$ is independent of the choice of Cartan decomposition, see \cite[Chap.\,IX, Th.\,1.1]{Helgason}.
In particular,  if $\Gamma$ is $\mathsf{P}_\theta$-divergent, then $U_\theta(\gamma)$ is uniquely defined for all but finitely many $\gamma\in\Gamma$.
The limit set of a $\mathsf{P}_\theta$-divergent group $\Gamma \subset \mathsf{G}$ is given by 
$$
\Lambda_\theta(\Gamma) : = \{ F \in \Fc_\theta : \exists \{ \gamma_n\} \subset \Gamma \text{ distinct such that } U_\theta(\gamma_n) \to F\}. 
$$
One motivation for this definition comes from the dynamical behavior described in Proposition~\ref{prop:characterizing convergence in general symmetric case} below.

\textbf{For the rest of the section, we assume that  $\theta$ is symmetric} (i.e.\ $\iota^*(\theta) = \theta$ where $\iota\colon \mathfrak a \to \mathfrak a$ is the involution associated to $\mathfrak a^+$). 
In this case there is a natural notion of transversality for pairs in $\mathcal F_\theta$, and a $\mathsf{P}_\theta$-divergent subgroup $\Gamma$ is called {\em $\mathsf{P}_\theta$-transverse} if any two flags in $\Lambda_\theta(\Gamma)$ are transverse.  We say $\Gamma$ is \emph{non-elementary} if $\#\Lambda_\theta(\Gamma)\geq 3$. Every non-elementary $\mathsf{P}_\theta$-transverse group acts on
its limit set as a convergence group action, see \cite[\S5.1]{KLP1} or Observation \ref{obs:transverse implies convergent} below. For a more in-depth discussion of transverse groups
and their dynamical properties see \cite{CZZ3,DKO,KOW,KOW23}.

Associated to a subset $\theta \subset \Delta$ is a natural subspace of $\mathfrak a$ defined by 
$$\mathfrak{a}_\theta:=\{a\in\mathfrak{a} : \beta(a)=0\text{ for all } \beta\in\Delta\smallsetminus\theta\}.$$
One can show that $\mathfrak{a}_\theta^*$ is generated by $\{\omega_\alpha|_{\mathfrak{a}_\theta} :\alpha\in \theta\}$ where $\omega_\alpha$ is the fundamental weight associated to $\alpha$. 
Hence we can identify $\mathfrak{a}_\theta^*$ as a subspace of $\mathfrak a^*$. 
Then given $\phi\in \mathfrak a_{\theta}^*$  and a $\mathsf{P}_\theta$-divergent subgroup $\Gamma$, we define a Poincar\'e series
$$Q^\phi_\Gamma(s)=\sum_{\gamma\in\Gamma} e^{-s\phi(\kappa(\gamma))},$$
which has  a critical exponent $\delta^\phi(\Gamma) :=\inf\{ s > 0 : Q^\phi_\Gamma(s) < +\infty\}\in[0,\infty]$. 
Moreover, there exists a well-defined $\theta$-Cartan projection
$$\kappa_\theta:\mathsf{G}\to \mathfrak{a}_\theta$$
so that $\kappa_\theta=p_\theta\circ\kappa$ where $p_\theta:\mathfrak{a}\to\mathfrak{a}_\theta$ is the unique projection map so that $\omega_\alpha|_{\mathfrak{a}_\theta}=\omega_\alpha\circ p_\theta$ for all $\alpha\in\theta$. Hence 
$$
\phi \circ \kappa = \phi\circ \kappa_\theta
$$
for all $\phi \in \mathfrak{a}_\theta$.

The action of $\mathsf G$ on $\mathcal F_\theta$ preserves a smooth vector-valued cocycle 
$$B_\theta \colon \mathsf{G}\times \mathcal F_\theta\to \mathfrak{a}_\theta,$$
so that $B_\theta(g,U_\theta(g))=\kappa_\theta(g)$ for all $g$,
see Quint \cite{quint-ps} or Benoist--Quint \cite[\S 6.7.5]{BQ}. 
Let $\mathcal F_\theta^{(2)} \subset \mathcal F_\theta \times \mathcal F_\theta$ denote the subset of transverse flags. There is a smooth map $G_\theta \colon \mathcal F_\theta^{(2)} \to \mathfrak a_\theta$ which satisfies 
\begin{equation}\label{eqn:GPS property for Iwasawa}
G_\theta( g F, g F') - G_\theta(F,F') = \iota \circ B_\theta(g, F) + B_\theta(g,F'), 
\end{equation}
for any $g\in G$, see~\cite[Lem.\ 4.12]{sambarino15} or ~\cite[pg.\,11]{KOW23}.

Given a $\mathsf{P}_\theta$-transverse group $\Gamma$ and $\phi \in \mathfrak{a}_\theta^*$, a probability measure $\mu$ on $\Lambda_\theta(\Gamma)$ is called a \emph{$\phi$-Patterson--Sullivan} measure of dimension $\beta$ if for every $\gamma \in \Gamma$ the measures 
$\gamma_*\mu$, $\mu$ are absolutely continuous and 
 $$
 \frac{d\gamma_*\mu}{d\mu} = e^{-\beta \phi(B_\theta(\gamma^{-1}, \cdot))} \quad\mu\text{-almost everywhere}.
 $$
 We emphasize that our measures are assumed to be supported on the limit set.
 
 We will show that existence of a Patterson--Sullivan measure implies that the associated critical exponent is finite. 
 
 \begin{proposition}\label{prop:transverse groups when a PS measure exists} Suppose $\Gamma$ is non-elementary $\mathsf{P}_\theta$-transverse, $\phi\in \mathfrak{a}_\theta^*$  and $\mu$ is a $\phi$-Patterson--Sullivan measure of dimension $\beta$. Then 
 $$
\delta^\phi(\Gamma) \leq \beta.
 $$
 In particular, since $\delta^\phi(\Gamma) < +\infty$,  
 $$
\lim_{n \rightarrow \infty} \phi(\kappa(\gamma_n)) = +\infty
$$
whenever $\{\gamma_n\}$ is a sequence of distinct elements of $\Gamma$. 
\end{proposition}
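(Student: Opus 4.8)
The plan is to prove the displayed inequality $\delta^\phi(\Gamma)\le\beta$; the ``in particular'' clause is then immediate, since $\delta^\phi(\Gamma)\le\beta<+\infty$ forces $\sum_{\gamma\in\Gamma}e^{-s\phi(\kappa(\gamma))}<\infty$ for every $s>\beta$, so for each fixed $R$ only finitely many $\gamma$ satisfy $\phi(\kappa(\gamma))\le R$, whence $\phi(\kappa(\gamma_n))\to+\infty$ along any sequence of distinct elements. We may assume $\beta>0$: if $\beta=0$ then $\gamma_*\mu=\mu$ for all $\gamma$, so $\mu$ would be a $\Gamma$-invariant probability measure on $\Lambda_\theta(\Gamma)$, which is impossible for the non-elementary convergence action of $\Gamma$ on $\Lambda_\theta(\Gamma)$ (see \cite{CZZ3}; applying the convergence property to an escaping sequence forces $\mu$ to be a point mass, contradicting that $\Lambda_\theta(\Gamma)$ is infinite with no finite orbit). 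Also, since $\gamma_*\mu$ and $\mu$ are mutually absolutely continuous with a strictly positive Radon--Nikodym derivative, $\mathrm{supp}(\mu)$ is a nonempty closed $\Gamma$-invariant subset of $\Lambda_\theta(\Gamma)$, hence equals $\Lambda_\theta(\Gamma)$ by minimality of the limit set of a non-elementary convergence group.

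The strategy for $\delta^\phi(\Gamma)\le\beta$ is the classical Patterson--Sullivan shadow/packing argument, carried out exactly as in the proof of \cite[Prop.\,6.3]{BCZZ}, but with the abstract expanding cocycle there replaced by $\sigma:=\phi\circ B_\theta$ on $\Gamma\times\Lambda_\theta(\Gamma)$, with ``magnitude'' $\norm{\gamma}_\sigma:=\phi(\kappa_\theta(\gamma))=\phi(\kappa(\gamma))$, and with the \emph{given} measure $\mu$ in place of the Patterson--Sullivan measure constructed there (this is exactly the substitution that upgrades the tautological bound ``$\delta_\sigma(\Gamma)\le\delta_\sigma(\Gamma)$'' into ``$\delta_\sigma(\Gamma)\le\beta$''). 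The one input that must be supplied separately is the \emph{uniform contraction estimate} for $\mathsf{P}_\theta$-transverse (equivalently $\mathsf P_\theta$-divergent) groups: for every $\epsilon>0$ there is $C=C(\epsilon)$ with $\abs{\phi(B_\theta(g,\xi))-\phi(\kappa_\theta(g))}\le C$ whenever $\xi\in\mathcal F_\theta$ lies at distance $\ge\epsilon$ from the repelling flag $U_\theta(g^{-1})$ of $g$, and moreover $g\xi$ lies in an arbitrarily small neighbourhood of $U_\theta(g)$ once $\min_{\alpha\in\theta}\alpha(\kappa(g))$ is large. This is standard (see \cite{CZZ3} and the references therein) and uses only $\mathsf P_\theta$-divergence, not finiteness of $\delta^\phi(\Gamma)$; in particular $\sigma$ satisfies the ``magnitude'' estimate of an expanding cocycle with $\norm{\gamma}_\sigma=\phi(\kappa_\theta(\gamma))$ — the only feature of the cocycle itself that enters the proof of \cite[Prop.\,6.3]{BCZZ} — even though we do not claim $\sigma$ is proper.

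Concretely, I would fix an open cover $\Lambda_\theta(\Gamma)\subset W_1\cup\dots\cup W_k$ by sets of small diameter with $\mu(W_j)>0$ for all $j$ (possible by full support), and, using $\#\Lambda_\theta(\Gamma)\ge 3$, arrange that for every $\eta\in\mathcal F_\theta$ some $W_j$ is at distance $\ge\epsilon_0$ from $\eta$. For all but finitely many $\gamma$ choose $W_{\ell(\gamma)}$ at distance $\ge\epsilon_0$ from $U_\theta(\gamma)$ and set the shadow $\mathcal O(\gamma):=\gamma^{-1}W_{\ell(\gamma)}$. Applying $\gamma_*\mu=e^{-\beta\phi(B_\theta(\gamma^{-1},\cdot))}\mu$ together with the contraction estimate for $g=\gamma^{-1}$ gives $\mu(\mathcal O(\gamma))\asymp e^{-\beta\phi(\kappa_\theta(\gamma^{-1}))}$ with constants depending only on $\epsilon_0$ and $\min_j\mu(W_j)$. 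Since $\mu(\mathcal O(\gamma))\le 1$, the lower estimate already yields a uniform bound $\phi(\kappa_\theta(\gamma^{-1}))\ge -C_1$ off a finite set, so the level sets $\Gamma_n:=\{\gamma:\phi(\kappa_\theta(\gamma^{-1}))\in[n,n+1)\}$ are nonempty only for $n\gtrsim -C_1$. The bounded-overlap (packing) estimate — that $\#\{\gamma\in\Gamma_n:\xi\in\mathcal O(\gamma)\}$ is bounded by a constant independent of $n$ and $\xi$ — together with $\sum_{\gamma\in\Gamma_n}\mu(\mathcal O(\gamma))=\int\#\{\gamma\in\Gamma_n:\xi\in\mathcal O(\gamma)\}\,d\mu(\xi)\le 1$ then gives $\#\Gamma_n\lesssim e^{\beta n}$, and hence $\sum_{\gamma}e^{-s\phi(\kappa(\gamma))}=\sum_{\gamma}e^{-s\phi(\kappa_\theta(\gamma^{-1}))}\le\sum_{n}\#\Gamma_n\,e^{-sn}<\infty$ for all $s>\beta$, i.e.\ $\delta^\phi(\Gamma)\le\beta$.

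The main obstacle is this bounded-overlap step in the higher-rank setting: the shadows $\mathcal O(\gamma)$ are small neighbourhoods of the repelling flags $U_\theta(\gamma^{-1})$ whose geometric size is governed by $\min_{\alpha\in\theta}\alpha(\kappa(\gamma))$ rather than by $\phi(\kappa_\theta(\gamma))$, so bounding how many $\gamma$ in a single level set can see a common flag requires the full strength of transversality and of the Cartan-decomposition dynamics. This is precisely the estimate performed at the abstract level in \cite[Prop.\,6.3]{BCZZ}, so once the magnitude estimate for $\sigma=\phi\circ B_\theta$ is established it can be quoted verbatim; the only extra bookkeeping is that if some coefficient of $\phi$ in the basis $\{\omega_\alpha|_{\mathfrak a_\theta}\}_{\alpha\in\theta}$ vanishes one first pushes $\mu$ forward along the $\Gamma$-equivariant projection $\mathcal F_\theta\to\mathcal F_{\theta_\phi}$, where $\theta_\phi$ is the set of $\alpha\in\theta$ with nonzero coefficient: $\phi\circ B_\theta$ factors through this projection, and the image measure is a $\phi$-Patterson--Sullivan measure of dimension $\beta$ for the (still non-elementary, $\mathsf P_{\theta_\phi}$-transverse) image group, reducing to the non-degenerate case.
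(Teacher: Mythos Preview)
Your approach reverses the paper's logical order, and the reversal introduces a genuine gap. The paper first proves the ``in particular'' clause \emph{directly} --- a short measure-theoretic argument showing $\phi(\kappa(\gamma_n))\to+\infty$ along any escaping sequence --- and only \emph{then} invokes \cite[Prop.\,6.3]{BCZZ}. The point is that once divergence is known, Proposition~\ref{prop:transverse groups give GPS systems} applies and $\sigma_\phi$ is a genuine expanding cocycle (proper and satisfying the magnitude estimate), so \cite[Prop.\,6.3]{BCZZ} can be cited as stated. Your route tries to run the shadow/packing argument of \cite[Prop.\,6.3]{BCZZ} without properness and then deduce divergence from $\delta^\phi(\Gamma)\le\beta$.

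The gap is in your assertion that ``the only feature of the cocycle itself that enters the proof of \cite[Prop.\,6.3]{BCZZ}'' is the magnitude estimate. The bounded-overlap step needs more: given $\gamma_1,\gamma_2\in\Gamma_n$ with $\xi\in\mathcal O(\gamma_1)\cap\mathcal O(\gamma_2)$, the cocycle identity and the magnitude estimate give that $\norm{\gamma_2\gamma_1^{-1}}_\sigma$ is bounded, and one then concludes that $\gamma_2\gamma_1^{-1}$ lies in a \emph{finite} set. That last implication is exactly Proposition~\ref{prop:basic properties}\eqref{item:properness}, which is a consequence of properness. Without it the level sets $\Gamma_n$ need not even be finite: for instance, if $\Gamma$ contains $g=\diag(e,1,\dots,1,e^{-1})$ and $\phi=\omega_1-\omega_{d-1}$, then $\phi(\kappa(g^m))=0$ for all $m$ and infinitely many powers $g^m$ sit in $\Gamma_0$ with the fixed flag $g^-$ lying in every shadow $\mathcal O(g^m)$. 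Of course no $\phi$-Patterson--Sullivan measure exists in this example, but that is precisely what must be \emph{proved}, and your packing argument does not use $\mu$ at the overlap step --- only at the shadow-lemma step --- so it cannot detect this obstruction. Your reduction to $\theta_\phi$ does not help either: it only removes zero coefficients, not negative ones, and in any case $\theta_\phi$ need not be symmetric.

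The paper's divergence lemma avoids all of this with a two-line argument: if $\phi(\kappa(\gamma_n))$ stayed bounded along an escaping sequence, then by Lemma~\ref{lem:sigma phi is expanding} the translates $\gamma_n K$ of any compact $K\subset\Lambda_\theta(\Gamma)\smallsetminus\{F^-\}$ would have $\mu$-measure bounded below, while the convergence action makes a subsequence of these translates pairwise disjoint, contradicting $\mu(\Lambda_\theta(\Gamma))=1$. Once this is in hand, your outline for $\delta^\phi(\Gamma)\le\beta$ is correct and coincides with the paper's citation of \cite[Prop.\,6.3]{BCZZ}.
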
 

\begin{remark} The inequality $\delta^\phi(\Gamma) \leq \beta$ in Proposition~\ref{prop:transverse groups when a PS measure exists} was previous established in~\cite[Prop.\ 8.1]{CZZ3} in the special case when $\delta^\phi(\Gamma) < +\infty$. 
\end{remark}

The next result shows that the theory of Patterson--Sullivan measures for transverse groups is a particular example of the theory of Patterson--Sullivan measures for GPS systems. 

\begin{proposition}\label{prop:transverse groups give GPS systems}
Suppose $\Gamma$ is non-elementary $\mathsf{P}_\theta$-transverse, $\phi\in \mathfrak{a}_\theta^*$  and 
$$
\lim_{n \rightarrow \infty} \phi(\kappa(\gamma_n)) = +\infty
$$
whenever $\{\gamma_n\}$ is a sequence of distinct elements of $\Gamma$. Define cocycles $\sigma_\phi, \bar{\sigma}_\phi \colon \Gamma \times \Lambda_\theta(\Gamma) \to \Rb$ by 
$$
\sigma_\phi(\gamma, F) = \phi(B_\theta(\gamma,F)) \quad \text{and} \quad \bar\sigma_\phi(\gamma, F) = \iota^*(\phi)(B_\theta(\gamma,F)).
$$
Then $(\sigma_\phi,\bar\sigma_{\phi}, \phi\circ G)$ is a continuous  GPS system for the action of $\Gamma$ on $\Lambda_\theta(\Gamma)$. Moreover:
\begin{enumerate}
\item  One can choose magnitude functions 
$$
\norm{\gamma}_{\sigma_\phi} = \phi(\kappa(\gamma)) \quad \text{and} \quad \norm{\gamma}_{\bar\sigma_\phi} =  \iota^*(\phi)(\kappa(\gamma)).
$$
 In particular, $\delta^\phi(\Gamma) = \delta_{\sigma_\phi}(\Gamma)$. 
\item  If $\lambda \colon \mathsf G \to \mathfrak a^+$ is the Jordan projection, then 
$$
\phi(\lambda(\gamma)) = \ell_{\sigma_\phi}(\gamma)
$$
for all $\gamma \in \Gamma$. 
\end{enumerate}
\end{proposition}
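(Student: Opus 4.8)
The plan is to take $M := \Lambda_\theta(\Gamma)$ and apply the machinery of the earlier sections to the triple $(\sigma_\phi,\bar\sigma_\phi,\phi\circ G_\theta)$. Since $\Gamma$ is non-elementary $\mathsf P_\theta$-transverse, $\Lambda_\theta(\Gamma)$ is a compact metrizable space (a closed subset of the flag manifold $\mathcal{F}_\theta$) on which $\Gamma$ acts as a non-elementary convergence group, by Observation~\ref{obs:transverse implies convergent}. Because $B_\theta\colon\mathsf G\times\mathcal{F}_\theta\to\mathfrak a_\theta$ is a smooth cocycle and $\phi,\iota^*\phi\in\mathfrak a_\theta^*$ are linear, the functions $\sigma_\phi$ and $\bar\sigma_\phi=\sigma_{\iota^*\phi}$ (note $\iota^*$ preserves $\mathfrak a_\theta^*$ since $\theta$ is symmetric) are continuous cocycles on $\Gamma\times\Lambda_\theta(\Gamma)$. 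Since any two distinct flags of $\Lambda_\theta(\Gamma)$ are transverse, $\Lambda_\theta(\Gamma)^{(2)}\subset\mathcal{F}_\theta^{(2)}$, so $\phi\circ G_\theta$ is continuous there; and applying $\phi$ to \eqref{eqn:GPS property for Iwasawa} and using $\phi\circ\iota=\iota^*\phi$ yields exactly the GPS relation \eqref{eqn:GPS property} for $(\sigma_\phi,\bar\sigma_\phi,\phi\circ G_\theta)$.

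The substantive part is the properness of $\sigma_\phi,\bar\sigma_\phi$ together with the identification of the magnitude functions. The main input is the standard comparison between the Iwasawa cocycle and the Cartan projection: for every $\epsilon>0$ there is $C(\epsilon)>0$ with $\norm{B_\theta(g,F)-\kappa_\theta(g)}\le C(\epsilon)$ whenever $g\in\mathsf G$ and $F\in\mathcal{F}_\theta$ is $\epsilon$-transverse to the repelling flag $U_{\iota\theta}(g^{-1})$ (see e.g.~\cite{GGKW} or \cite{CZZ3}). I would first translate this transversality condition into the metric of $\Gamma\sqcup\Lambda_\theta(\Gamma)$: if $F\in\Lambda_\theta(\Gamma)$, $\gamma\in\Gamma$ and $\dist(F,\gamma^{-1})>\epsilon$, then $F$ is $\epsilon'$-transverse to $U_{\iota\theta}(\gamma^{-1})$ for some $\epsilon'=\epsilon'(\epsilon)>0$. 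This is proved by contradiction: given $\gamma_n\in\Gamma$ and $F_n\in\Lambda_\theta(\Gamma)$ with $\dist(F_n,\gamma_n^{-1})>\epsilon$ but $F_n$ asymptotically non-transverse to $U_{\iota\theta}(\gamma_n^{-1})$, the sequence $\{\gamma_n\}$ must escape (a bounded subsequence of a discrete group is eventually constant), so after passing to a subsequence $\gamma_n|_{M\smallsetminus\{b\}}\to a$ for some $a,b\in\Lambda_\theta(\Gamma)$ by Proposition~\ref{prop:compactifying}, and $U_{\iota\theta}(\gamma_n^{-1})\to b$; then any limit of $\{F_n\}$ is a point of $\Lambda_\theta(\Gamma)$ distinct from $b$ (as $\gamma_n^{-1}\to b$) yet not transverse to $b\in\Lambda_\theta(\Gamma)$, contradicting $\mathsf P_\theta$-transversality. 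With this, applying $\phi$ to the comparison shows that $\norm{\gamma}_{\sigma_\phi}:=\phi(\kappa_\theta(\gamma))=\phi(\kappa(\gamma))$ is a valid magnitude function, hence $\delta_{\sigma_\phi}(\Gamma)=\delta^\phi(\Gamma)$; symmetrically $\norm{\gamma}_{\bar\sigma_\phi}=\iota^*(\phi)(\kappa(\gamma))$. Properness of $\sigma_\phi$ then follows: if $\{\gamma_n\}$ is a sequence of distinct loxodromic elements with $(\gamma_n^-,\gamma_n^+)$ relatively compact in $\Lambda_\theta(\Gamma)^{(2)}$, then $\{\gamma_n\}$ escapes and $\gamma_n^+$ is uniformly transverse to the repelling flag of $\gamma_n$, so $\ell_{\sigma_\phi}(\gamma_n)=\phi(B_\theta(\gamma_n,\gamma_n^+))\ge\phi(\kappa(\gamma_n))-C\to+\infty$ by hypothesis; for $\bar\sigma_\phi$ the same argument applies, now using $\iota^*(\phi)(\kappa(g))=\phi(\kappa(g^{-1}))$ (from \eqref{eqn:involution on kappa}) and applying the hypothesis to $\{\gamma_n^{-1}\}$. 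At this point $(\sigma_\phi,\bar\sigma_\phi,\phi\circ G_\theta)$ satisfies the definition of a continuous GPS system, and part~(1) follows.

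For part~(2), when $\gamma$ is loxodromic its attracting fixed point $\gamma^+$ in the convergence action is the $\mathsf P_\theta$-proximal attracting flag, and $B_\theta(\gamma,\gamma^+)=\lambda_\theta(\gamma):=p_\theta(\lambda(\gamma))$ is a standard identity (see \cite{quint-ps}, \cite{sambarino15}); since $\phi\in\mathfrak a_\theta^*$ we have $\phi\circ p_\theta=\phi$, so $\ell_{\sigma_\phi}(\gamma)=\sigma_\phi(\gamma,\gamma^+)=\phi(B_\theta(\gamma,\gamma^+))=\phi(p_\theta(\lambda(\gamma)))=\phi(\lambda(\gamma))$. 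When $\gamma$ is not loxodromic both sides vanish: $\ell_{\sigma_\phi}(\gamma)=0$ by definition of the period, while $\phi(\lambda(\gamma))=0$ because $\lambda(\gamma)=0$ for elliptic $\gamma$, and because a parabolic element $\gamma$ of a $\mathsf P_\theta$-transverse group has $p_\theta(\lambda(\gamma))=0$ — which can also be extracted from $\phi(B_\theta(\gamma,p))=\sigma_\phi(\gamma,p)=0$, valid by Proposition~\ref{prop:basic properties}\eqref{item:proper implies positive periods} now that $\sigma_\phi$ is known to be proper. I expect the main obstacle to be the bookkeeping of the second paragraph: matching the metric notions underlying the abstract definitions of \emph{proper} and \emph{expanding} cocycle with the Lie-theoretic general-position conditions on flags. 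All the Lie-theoretic ingredients are standard or already available in \cite{CZZ3}, and the key leverage that makes the translation work is precisely the transversality of the limit set $\Lambda_\theta(\Gamma)$, which converts ``far in the compactification $\Gamma\sqcup\Lambda_\theta(\Gamma)$'' into ``uniformly transverse in $\mathcal{F}_\theta$''.
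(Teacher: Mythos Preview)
Your approach is essentially the same as the paper's. The paper packages the key contradiction argument (translating ``far from $\gamma^{-1}$ in $\Gamma\sqcup\Lambda_\theta(\Gamma)$'' into ``uniformly transverse to $U_\theta(\gamma^{-1})$ in $\mathcal F_\theta$'') as a separate lemma (Lemma~\ref{lem:sigma phi is expanding}), invokes Quint's estimate (Lemma~\ref{quintlemma}) rather than the formulation in \cite{GGKW}/\cite{CZZ3}, and then cites \cite[Prop.~3.2]{BCZZ} to pass from the magnitude estimate to the expanding property; you carry out the same logic inline and verify properness directly. For part~(2) the paper simply cites the identity $\phi(\lambda(\gamma))=\phi(B_\theta(\gamma,\gamma^+))$ for loxodromic $\gamma$ from \cite[\S9.2]{BQ} and does not discuss the non-loxodromic case, whereas you do; your treatment there is a harmless extra.
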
 

We will also show that the length spectrum is always non-arithmetic. 

\begin{proposition}\label{prop:transverse groups non-arithmetic}
Suppose $\Gamma$ is non-elementary $\mathsf{P}_\theta$-transverse,  $\phi\in \mathfrak{a}_\theta^*$ and 
$$
\lim_{n \rightarrow \infty} \phi(\kappa(\gamma_n)) = +\infty
$$
whenever $\{\gamma_n\}$ is a sequence of distinct elements of $\Gamma$. Then 
$$
\{ \phi(\lambda(\gamma)) + \iota^*(\phi)(\lambda(\gamma)) : \gamma \in \Gamma\}
$$
generates a dense subgroup of $\Rb$. 
\end{proposition}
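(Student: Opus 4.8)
The plan is to recognize the displayed set as the length spectrum of a GPS system and then to deduce its non-arithmeticity from a theorem of Benoist on Jordan projections of Zariski-dense linear groups.

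First, since $\theta$ is symmetric, $\iota^*(\phi)$ also lies in $\mathfrak a_\theta^*$, and $\iota^*(\phi)(\kappa(\gamma_n)) = \phi(\kappa(\gamma_n^{-1})) \to +\infty$ along every escaping sequence (because $\{\gamma_n^{-1}\}$ escapes whenever $\{\gamma_n\}$ does). Hence Proposition~\ref{prop:transverse groups give GPS systems} applies to $\phi$ and to $\iota^*(\phi)$; since $\bar\sigma_\phi = \sigma_{\iota^*(\phi)}$, part (2) of that proposition gives
\[
\ell_{\sigma_\phi}(\gamma) + \ell_{\bar\sigma_\phi}(\gamma) = \phi(\lambda(\gamma)) + \iota^*(\phi)(\lambda(\gamma))
\]
for every loxodromic $\gamma \in \Gamma$, so the set in the statement is exactly the length spectrum $\mathcal L(\sigma_\phi, \bar\sigma_\phi, \phi\circ G_\theta)$. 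If $\Gamma$ contains a parabolic element we are then done by Proposition~\ref{prop:parabolic NALS}, so it remains to treat the case where $\Gamma$ has no parabolic elements.

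For this I would reduce to a Zariski-dense situation. Let $\mathsf H^\circ$ be the identity component of the Zariski closure of $\Gamma$ in $\mathsf G$ and $\Gamma_0 := \Gamma \cap \mathsf H^\circ$, a finite-index subgroup of $\Gamma$ that is Zariski-dense in the connected reductive group $\mathsf H^\circ$ (which, since $\Gamma$ is non-elementary, has no nontrivial $\mathbb R$-split central torus and so behaves like a semisimple group for the purposes below). It suffices to prove that $\{\phi(\lambda(\gamma))+\iota^*(\phi)(\lambda(\gamma)) : \gamma\in\Gamma_0\}$ generates a dense subgroup of $\mathbb R$. Fixing a maximal $\mathbb R$-split torus of $\mathsf H^\circ$ inside one of $\mathsf G$, the $\mathsf G$-Jordan projection of $\gamma\in\Gamma_0$ is a Weyl-translate of the image of its $\mathsf H^\circ$-Jordan projection, and only finitely many Weyl elements can occur; thus, read through the inclusion $\mathsf H^\circ\hookrightarrow\mathsf G$, the function $\gamma\mapsto\phi(\lambda(\gamma))+\iota^*(\phi)(\lambda(\gamma))$ becomes a finite union of restrictions $\psi_i\circ\lambda_{\mathsf H^\circ}$ over finite-index subsets of $\Gamma_0$, with each $\psi_i$ a linear functional on the Cartan subspace of $\mathsf H^\circ$ and at least one $\psi_i$ nonzero (otherwise $(\phi+\iota^*\phi)\circ\lambda$ would vanish on $\Gamma_0$, impossible since along powers of a loxodromic it is comparable to the proper functional $(\phi+\iota^*\phi)\circ\kappa$). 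By Benoist's theorem on the Jordan projections of Zariski-dense subgroups of connected semisimple Lie groups — whose rank-one case goes back to the work of Dal'bo~\cite{Dalbo} referenced above — the subgroup of the Cartan subspace generated by the $\mathsf H^\circ$-Jordan projections of a finite-index subgroup of $\Gamma_0$ is dense; applying the nonzero functional $\psi_i$ shows $\mathcal L(\sigma_\phi,\bar\sigma_\phi,\phi\circ G_\theta)$ generates a dense subgroup of $\mathbb R$.

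The main obstacle is the reduction to the Zariski closure: one must set up the Cartan and flag data of $\mathsf H^\circ\subseteq\mathsf G$ compatibly, track precisely how Cartan and Jordan projections transform under the inclusion, and confirm that passing to $\Gamma_0$ and discarding the central torus costs nothing. I emphasize that the Lie-theoretic structure is genuinely needed here and cannot be replaced by the abstract criteria of Propositions~\ref{prop:parabolic NALS} and~\ref{prop:CR spectrum nonarthimetic} alone: the free group acting on a regular tree with unit edge lengths, described in the remark after Theorem~\ref{thm:mixing}, is a GPS system with no parabolic elements whose length spectrum is arithmetic, so some use of the smoothness of $G_\theta$ on the flag manifold (or, equivalently, of Zariski density) is unavoidable.
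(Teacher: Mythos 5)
Your identification of the displayed set as the length spectrum $\mathcal L(\sigma_\phi,\bar\sigma_\phi,\phi\circ G_\theta)$, and your observation that Zariski density (rather than the abstract criteria of Propositions~\ref{prop:parabolic NALS} and \ref{prop:CR spectrum nonarthimetic}) is genuinely needed, are both correct and in the right spirit. The paper also goes through Benoist's results on Jordan projections of Zariski-dense groups, via Proposition~\ref{prop:Nonarithmeticity Anosov}. But there are three genuine gaps in your reduction.

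First, the claim that $\mathsf H^\circ$ has ``no nontrivial $\mathbb R$-split central torus'' because $\Gamma$ is non-elementary is false. A non-elementary $\mathsf P_\theta$-transverse group can easily have reductive Zariski closure with nontrivial center: twist a convex cocompact Fuchsian group by a character $\chi\colon\Gamma\to\mathbb R$ with small enough Lipschitz constant and embed it in $\mathsf{SL}(3,\mathbb R)$ via $\gamma\mapsto \mathrm{diag}(e^{\chi(\gamma)}\gamma, e^{-2\chi(\gamma)})$; the image remains transverse but its Zariski closure contains a one-dimensional central $\mathbb R$-split torus. This matters because Benoist--Quint density (their Prop.\,9.8) only produces density in the semisimple part $\mathfrak a'_{\mathsf H}$ of the Cartan subspace, not in the full Cartan subspace $\mathfrak a'_{\mathsf H}+\mathfrak z$. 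The paper's proof of Proposition~\ref{prop:Nonarithmeticity Anosov} explicitly keeps track of $\mathfrak z$ for exactly this reason.

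Second, your passage from ``at least one $\psi_i$ is nonzero'' to density does not go through. Because $\lambda\colon\mathsf G\to\mathfrak a^+$ restricted to $\exp(\mathfrak a_{\mathsf H})$ is only piecewise-linear across Weyl chambers, the values $\phi(\lambda(\gamma))+\iota^*\phi(\lambda(\gamma))$ for $\gamma\in\Gamma_0$ are computed with different linear functionals $\psi_{j(\gamma)}$ depending on which chamber $\lambda_{\mathsf H^\circ}(\gamma)$ falls in. Even if $\psi_i$ is nonzero and the group generated by all Jordan projections is dense, you only get $\psi_i(\lambda_{\mathsf H^\circ}(\gamma))$ for those $\gamma$ landing in the $i$-th chamber, and there is no reason for these alone to generate a dense subgroup. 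The paper handles this by producing (via Benoist) a Zariski-dense \emph{semigroup} $S\subset\Gamma_0$ whose Jordan limit cone is contained in a single chamber, so the piecewise-linear functional is globally linear on $S$.

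Third, to make the Benoist--Quint argument close, one does not merely need the relevant $\psi_i$ to be nonzero on the Cartan subspace of $\mathsf H^\circ$; one needs it to be nonzero on the semisimple part $\mathfrak a'_{\mathsf H}$ and in fact positive at a limit-cone direction in $\mathfrak a'_{\mathsf H}$ lying in the chosen chamber. The paper establishes this by first passing (via ping-pong) to a free $\mathsf P_\theta$-Anosov subgroup $\Gamma'$, where \cite[Th.\,10.1]{CZZ4} gives that $\psi=\phi+\iota^*\phi$ is \emph{positive} on the entire Jordan limit cone of $\Gamma'$, and then invoking the Benoist--Quint fact that this cone meets $\mathfrak a'_{\mathsf H}$ nontrivially. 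Your ``otherwise $(\phi+\iota^*\phi)\circ\lambda$ would vanish'' argument gives only nonzero-ness at some point of the Cartan subspace, which is insufficient on both counts. So the skipped step of reducing to an Anosov subgroup is doing real work, and cannot be dropped as you propose.
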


In light of Proposition \ref{prop:transverse groups give GPS systems}, results obtained in the framework of GPS systems can also be used to obtain other results about Patterson--Sullivan measures for ${\ms P}_\theta$-transverse groups. For instance, the following is a straightforward application of \cite[Cor.\ 14.3]{BCZZ}.

\begin{proposition} \label{prop:PS measures and length spectra}
Suppose $\Gamma$ is a non-elementary ${\ms P}_\theta$-transverse group and $\phi_1, \phi_2 \in \mathfrak{a}_\theta^*$. For $i=1,2$, let  $\mu_i$ be a $\phi_i$-Patterson--Sullivan measure of dimension $\delta_i$. If 
$$
\sum_{\gamma\in\Gamma} e^{-\delta_1 \phi_1(\kappa(\gamma))} = +\infty,
$$
then 
$\mu_1$ and $\mu_2$ are absolutely continuous if and only if $\delta_1\phi_1(\lambda(\gamma))  = \delta_2\phi_2(\lambda(\gamma)) $ for all $\gamma \in \Gamma_{\lox}$, and mutually singular otherwise.
\end{proposition}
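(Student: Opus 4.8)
The plan is to deduce Proposition~\ref{prop:PS measures and length spectra} directly from Proposition~\ref{prop:transverse groups give GPS systems} together with the cited result~\cite[Cor.\ 14.3]{BCZZ}. First I would set up, for $i = 1, 2$, the GPS system associated to $\phi_i$ via Proposition~\ref{prop:transverse groups give GPS systems}: applying Proposition~\ref{prop:transverse groups when a PS measure exists} to the given $\phi_i$-Patterson--Sullivan measure $\mu_i$ of dimension $\delta_i$ shows that $\delta^{\phi_i}(\Gamma) \leq \delta_i < +\infty$, hence $\phi_i(\kappa(\gamma_n)) \to +\infty$ along every sequence of distinct elements, so the hypotheses of Proposition~\ref{prop:transverse groups give GPS systems} are met. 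This produces continuous GPS systems $(\sigma_{\phi_i}, \bar\sigma_{\phi_i}, \phi_i \circ G_\theta)$ for the convergence action of $\Gamma$ on $\Lambda_\theta(\Gamma)$ (which is a convergence action by Observation~\ref{obs:transverse implies convergent}, i.e.\ \cite[\S5.1]{KLP1}), with magnitude functions $\norm{\gamma}_{\sigma_{\phi_i}} = \phi_i(\kappa(\gamma))$ and periods $\ell_{\sigma_{\phi_i}}(\gamma) = \phi_i(\lambda(\gamma))$.

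Next I would observe that $\mu_i$, viewed as a measure on $\Lambda_\theta(\Gamma) \subset M := \Lambda_\theta(\Gamma)$, is precisely a $\sigma_{\phi_i}$-Patterson--Sullivan measure of dimension $\delta_i$ in the sense of Section~\ref{sec:background on PS}: the defining Radon--Nikodym identity $d\gamma_*\mu_i/d\mu_i = e^{-\delta_i \phi_i(B_\theta(\gamma^{-1}, \cdot))} = e^{-\delta_i \sigma_{\phi_i}(\gamma^{-1}, \cdot)}$ is exactly the condition for a $\sigma_{\phi_i}$-Patterson--Sullivan measure. The divergence hypothesis $\sum_{\gamma\in\Gamma} e^{-\delta_1 \phi_1(\kappa(\gamma))} = +\infty$ translates into $\sum_{\gamma\in\Gamma} e^{-\delta_1 \norm{\gamma}_{\sigma_{\phi_1}}} = +\infty$. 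With the translation dictionary in place, the statement to be proved is literally the statement of \cite[Cor.\ 14.3]{BCZZ} applied to the two cocycles $\sigma_{\phi_1}$ and $\sigma_{\phi_2}$: the measures are absolutely continuous iff $\delta_1 \ell_{\sigma_{\phi_1}}(\gamma) = \delta_2 \ell_{\sigma_{\phi_2}}(\gamma)$ for all loxodromic $\gamma$, which by Proposition~\ref{prop:transverse groups give GPS systems}(2) is the condition $\delta_1 \phi_1(\lambda(\gamma)) = \delta_2 \phi_2(\lambda(\gamma))$ for all $\gamma \in \Gamma_{\lox}$, and mutually singular otherwise.

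The only genuine point requiring care — and the step I expect to be the main (mild) obstacle — is verifying that \cite[Cor.\ 14.3]{BCZZ} is stated in sufficient generality to cover a pair of distinct cocycles $\sigma_{\phi_1}, \sigma_{\phi_2}$ that need not be the two cocycles of a single GPS system, and that its hypotheses match what we have. If that result is phrased for two expanding cocycles on the same convergence group with their own Patterson--Sullivan measures (one of which is divergent), then the argument is a pure translation and nothing more is needed; one should double-check that the loxodromic elements of $\Gamma$ acting on $\Lambda_\theta(\Gamma)$ are exactly the $\mathsf{P}_\theta$-loxodromic elements of $\Gamma$, so that the period identity $\ell_{\sigma_\phi}(\gamma) = \phi(\lambda(\gamma))$ of Proposition~\ref{prop:transverse groups give GPS systems}(2) applies to all $\gamma \in \Gamma_{\lox}$. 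Assuming this compatibility, the proof is complete in a few lines: unwind the definitions, invoke Propositions~\ref{prop:transverse groups when a PS measure exists} and~\ref{prop:transverse groups give GPS systems}, and apply~\cite[Cor.\ 14.3]{BCZZ}.
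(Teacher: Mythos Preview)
Your proposal is correct and matches the paper's approach exactly: the paper does not give a detailed proof but simply states that the result is a straightforward application of \cite[Cor.\ 14.3]{BCZZ} in light of Proposition~\ref{prop:transverse groups give GPS systems}. Your write-up supplies precisely the translation dictionary the paper leaves implicit.
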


\begin{remark} \
\begin{enumerate}
\item When $\Gamma$ is Zariski-dense, a result of Benoist~\cite{benoistasymp1} says that $\delta_1\phi_1(\lambda(\gamma))  = \delta_2\phi_2(\lambda(\gamma)) $ for all $\gamma \in \Gamma_{\lox}$ if and only if $\delta_1\phi_1 = \delta_2\phi_2$. 
\item Without the assumption that $\mu_1,\mu_2$ are supported on $\Lambda_\theta(\Gamma)$, but with additional assumptions on $\Gamma$, Proposition~\ref{prop:PS measures and length spectra} was previously established by Kim~\cite{Kim24}. 
\end{enumerate} 
\end{remark}

Next we define relatively Anosov subgroups. There are several equivalent definitions, see the discussion in~\cite[\S4]{zhu-zimmer1}, and the one we use comes from~\cite{KL}. 

If $\Gamma\subset \mathsf{G}$ is relatively hyperbolic (as an abstract group) with respect to a finite collection $\mathcal P$ of finitely generated subgroups of $\Gamma$,
then $\Gamma$ is {\em $\mathsf{P}_\theta$-relatively Anosov} if it is $\mathsf{P}_\theta$-transverse and there exists a $\Gamma$-equivariant homeomorphism from the
Bowditch boundary $\partial(\Gamma,P)$ to the limit set $\Lambda_\theta(\Gamma)$.

\begin{corollary} \label{cor:counting for rel Anosovs}
Suppose $\Gamma\subset \mathsf{G}$ is  $\mathsf{P}_\theta$-relatively Anosov with respect to $\mathcal P$. If 
$\phi\in\mathfrak{a}_\theta^*$ and $\delta:=\delta^\phi(\Gamma)<+\infty$, then
$$
\#\{[\gamma]^w\in [\Gamma_{\lox}]^w :0<\phi(\lambda(\gamma))  \leq R\} \sim \frac{e^{\delta R}}{\delta R}.
$$
\end{corollary}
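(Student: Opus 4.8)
The plan is to deduce Corollary~\ref{cor:counting for rel Anosovs} directly from Theorem~\ref{thm:counting for geomfin actions} by building the appropriate GPS system and verifying its hypotheses. The first step is to observe that since $\delta^\phi(\Gamma)<+\infty$ and $\phi\in\mathfrak a_\theta^*$, Proposition~\ref{prop:transverse groups when a PS measure exists} (or rather the fact that a $\phi$-Patterson--Sullivan measure of dimension $\delta^\phi(\Gamma)$ exists, which will follow once we have the GPS system) gives that $\phi(\kappa(\gamma_n))\to+\infty$ for any escaping sequence $\{\gamma_n\}$; alternatively this properness is immediate from $\mathsf P_\theta$-divergence since $\phi$ is a positive combination of the $\omega_\alpha|_{\mathfrak a_\theta}$, $\alpha\in\theta$, and each $\omega_\alpha(\kappa(\gamma_n))\to\infty$. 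With this properness in hand, Proposition~\ref{prop:transverse groups give GPS systems} applies: $(\sigma_\phi,\bar\sigma_\phi,\phi\circ G_\theta)$ is a continuous GPS system for the convergence group action of $\Gamma$ on $\Lambda_\theta(\Gamma)\subset\mathcal F_\theta$, with magnitude $\norm{\gamma}_{\sigma_\phi}=\phi(\kappa(\gamma))$, hence $\delta_{\sigma_\phi}(\Gamma)=\delta^\phi(\Gamma)=\delta<+\infty$, and with periods $\ell_{\sigma_\phi}(\gamma)=\phi(\lambda(\gamma))$.

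The second step is to verify that $\Gamma$, viewed as a convergence group on $M:=\Lambda_\theta(\Gamma)$, is geometrically finite. This is exactly where relative Anosovness is used: by definition $\Gamma$ is relatively hyperbolic with respect to $\mathcal P$ and there is a $\Gamma$-equivariant homeomorphism from the Bowditch boundary $\partial(\Gamma,\mathcal P)$ onto $\Lambda_\theta(\Gamma)$. Since the action of a relatively hyperbolic group on its Bowditch boundary is a geometrically finite convergence action (every point is conical or bounded parabolic), the conjugated action on $\Lambda_\theta(\Gamma)$ is geometrically finite as well, and the maximal parabolic subgroups of $\Gamma$ as a convergence group are precisely the peripheral subgroups in $\mathcal P$ (up to conjugacy).

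The third step is to check the two hypotheses of Theorem~\ref{thm:counting for geomfin actions}: non-arithmeticity of $\mathcal L(\sigma_\phi,\bar\sigma_\phi,\phi\circ G_\theta)$, and $\delta_{\sigma_\phi}(P)<\delta$ for every maximal parabolic $P$. The first follows immediately from Proposition~\ref{prop:transverse groups non-arithmetic}, since by Proposition~\ref{prop:transverse groups give GPS systems} we have $\ell_{\sigma_\phi}(\gamma)+\ell_{\bar\sigma_\phi}(\gamma)=\phi(\lambda(\gamma))+\iota^*(\phi)(\lambda(\gamma))$, and Proposition~\ref{prop:transverse groups non-arithmetic} says precisely that these quantities generate a dense subgroup of $\Rb$. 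For the second, I would invoke the main result of \cite{CZZ4} (cited in the introduction as providing exactly the verification that the conditions of Theorem~\ref{thm:counting for geomfin actions} hold for relatively Anosov groups): for a $\mathsf P_\theta$-relatively Anosov group with $\delta^\phi(\Gamma)<+\infty$, the critical exponent of each peripheral subgroup $P\in\mathcal P$ satisfies $\delta^\phi(P)<\delta^\phi(\Gamma)$, which translates to $\delta_{\sigma_\phi}(P)<\delta$ by the magnitude identity.

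With all hypotheses verified, Theorem~\ref{thm:counting for geomfin actions} yields
\[
\#\{[\gamma]^w\in[\Gamma_{\lox}]^w : 0<\ell_{\sigma_\phi}(\gamma)\leq R\}\sim\frac{e^{\delta R}}{\delta R},
\]
and substituting $\ell_{\sigma_\phi}(\gamma)=\phi(\lambda(\gamma))$ gives the corollary; one should also note that the notion of weak conjugacy class $[\Gamma_{\lox}]^w$ depends only on the abstract convergence action on $\Lambda_\theta(\Gamma)$ together with the period function, so it agrees with the one in the statement. The main obstacle is really step three's second part: establishing the strict inequality $\delta^\phi(P)<\delta^\phi(\Gamma)$ for peripheral subgroups. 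This is a genuinely nontrivial input — it is the analogue of the Dal'bo--Otal--Peigné non-divergence-of-cusps phenomenon — and here we are simply citing \cite{CZZ4} for it rather than reproving it; everything else is essentially a matter of translating between the Lie-theoretic and abstract-convergence-group languages via the propositions of this section.
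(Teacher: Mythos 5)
Your proposal is correct and follows essentially the same route as the paper: build the GPS system $(\sigma_\phi,\bar\sigma_\phi,\phi\circ G_\theta)$ via Proposition~\ref{prop:transverse groups give GPS systems}, get non-arithmeticity from Proposition~\ref{prop:transverse groups non-arithmetic}, import the cusp gap $\delta^\phi(P)<\delta^\phi(\Gamma)$ from \cite{CZZ4}, and feed everything into Theorem~\ref{thm:counting for geomfin actions}; the paper's proof is a two-sentence version of exactly this. One small inaccuracy in your side remark: an arbitrary $\phi\in\mathfrak a_\theta^*$ need not be a nonnegative combination of the restricted fundamental weights $\omega_\alpha|_{\mathfrak a_\theta}$ (those only span the dual, they do not form a cone containing all of it), so you cannot get properness of $\phi\circ\kappa$ directly from $\mathsf P_\theta$-divergence; the correct route — which you also give — is that $\delta^\phi(\Gamma)<+\infty$ forces $\phi(\kappa(\gamma_n))\to+\infty$ along any escaping sequence, since otherwise infinitely many terms of $Q^\phi_\Gamma(s)$ would be bounded below for every $s$.
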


\begin{proof} In~\cite[Cor.\,7.2]{CZZ4} it was shown that $\delta^\phi(H)<\delta^\phi(\Gamma)$ for any maximal parabolic subgroup of $\Gamma$. Combining this with Proposition~\ref{prop:transverse groups non-arithmetic} and Proposition~\ref{prop:transverse groups give GPS systems} shows that the hypotheses of Theorem~\ref{thm:counting for geomfin actions} are satisfied. 
\end{proof} 

\subsection{Properties of transverse groups} In this subsection we state some important properties of transverse groups. 

We first explain why a transverse groups acts as a  convergence group on its limit set. This observation appears in Kapovich--Leeb--Porti~\cite[\S5.1]{KLP1}, but since the proof is short and Kapovich--Leeb--Porti uses different terminology we include it here. 

For $F \in \Fc_\theta$, let 
$$
\mathcal{Z}_F : = \{ F' \in \Fc_\theta: F \text{ is not transverse to } F'\}.
$$
We use the following description of the action of $\mathsf{G}$ on $\Fc_\theta$ (see for instance~\cite[Prop.\ 2.6]{CZZ3}).

\begin{proposition}\label{prop:characterizing convergence in general symmetric case} Suppose $\theta \subset \Delta$ is symmetric, $F^\pm \in \Fc_\theta$ and $\{g_n\}$ is a sequence in $\mathsf{G}$. The following are equivalent:  
\begin{enumerate}
\item $U_\theta(g_n) \to F^+$, $U_\theta(g_n^{-1}) \to F^-$ and $\lim_{n \to \infty} \alpha(\kappa(g_n)) = +\infty$ for every $\alpha \in \theta$,
\item $g_n(F) \to F^+$ for all $F \in \Fc_\theta \smallsetminus \mathcal{Z}_{F^-}$, and this convergence is uniform on compact subsets of $\Fc_\theta \smallsetminus \mathcal{Z}_{F^-}$. 
\item $g_n^{-1}(F) \to F^-$ for all $F \in \Fc_\theta \smallsetminus \mathcal{Z}_{F^+}$, and this convergence is uniform on compact subsets of $\Fc_\theta \smallsetminus \mathcal{Z}_{F^+}$. 
\end{enumerate}
\end{proposition}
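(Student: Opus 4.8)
The plan is to deduce all three equivalences from the Cartan (KAK) decomposition together with the standard attracting/contracting dynamics of diagonal elements on the partial flag manifold $\Fc_\theta$. First I would record that condition (1) is symmetric under replacing $\{g_n\}$ by $\{g_n^{-1}\}$: by \eqref{eqn:involution on kappa} one has $\kappa(g_n^{-1})=\iota(\kappa(g_n))$, and since $\theta$ is symmetric, $\alpha(\iota(\kappa(g_n)))=(\iota^*\alpha)(\kappa(g_n))$ with $\iota^*\alpha$ ranging over $\theta$ as $\alpha$ does, so the divergence clause is preserved while the roles of $U_\theta(g_n)$ and $U_\theta(g_n^{-1})$ — equivalently of $F^+$ and $F^-$ — interchange. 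Since statement (3) for $(\{g_n\},F^+,F^-)$ is verbatim statement (2) for $(\{g_n^{-1}\},F^-,F^+)$, it suffices to prove $(1)\Leftrightarrow(2)$.

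For the bookkeeping, write $g_n=k_n\,e^{\kappa(g_n)}\,\ell_n$ with $k_n,\ell_n\in\mathsf{K}$, so that $U_\theta(g_n)=k_n[\mathsf{P}_\theta]$ where $[\mathsf{P}_\theta]=e\mathsf{P}_\theta\in\Fc_\theta$; using $e^{-\kappa(g_n)}=\bar w_0^{-1}e^{\iota(\kappa(g_n))}\bar w_0$ for a fixed lift $\bar w_0\in\mathsf{K}$ of the longest Weyl group element $w_0$ (so that $\iota=-w_0$ on $\mathfrak{a}$, whence $\iota(\kappa(g_n))\in\mathfrak{a}^+$), one also gets $U_\theta(g_n^{-1})=\ell_n^{-1}\bar w_0^{-1}[\mathsf{P}_\theta]$; both are legitimate Iwasawa flags once $\alpha(\kappa(g_n))>0$ for $\alpha\in\theta$, by \cite[Chap.\,IX, Th.\,1.1]{Helgason}. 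The key dynamical input is the classical fact (see \cite{GGKW,BQ,KLP1}) that if $\kappa_n\in\mathfrak{a}^+$ and $\alpha(\kappa_n)\to+\infty$ for all $\alpha\in\theta$, then $e^{\kappa_n}$ acting on $\Fc_\theta$ converges, uniformly on compact subsets of $\Fc_\theta\smallsetminus\mathcal{Z}_{F_0^-}$, to the constant map $[\mathsf{P}_\theta]$, where $F_0^-:=\bar w_0^{-1}[\mathsf{P}_\theta]$ is the standard flag opposite to $[\mathsf{P}_\theta]$, and moreover the attracted locus is \emph{exactly} $\Fc_\theta\smallsetminus\mathcal{Z}_{F_0^-}$; conversely, if $e^{\kappa_n}$ restricted to some nonempty open subset of $\Fc_\theta$ converges to a constant map, then $\alpha(\kappa_n)\to+\infty$ for all $\alpha\in\theta$. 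Both halves are proved by writing the action in root/Bruhat coordinates on the big cell.

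With these in hand, for $(1)\Rightarrow(2)$ I would invoke the subsequence principle (a sequence in a metric space converges to $x$ iff every subsequence has a further subsequence converging to $x$) to reduce to subsequences along which $k_n\to k$ and $\ell_n\to\ell$ in the compact group $\mathsf{K}$, so that necessarily $F^+=k[\mathsf{P}_\theta]$ and $F^-=\ell^{-1}\bar w_0^{-1}[\mathsf{P}_\theta]$. Given a compact $C\subset\Fc_\theta\smallsetminus\mathcal{Z}_{F^-}$, continuity of the action and $\ell_n\to\ell$ place $\ell_n(C)$ eventually inside a fixed compact subset of $\Fc_\theta\smallsetminus\mathcal{Z}_{F_0^-}$, so the key fact gives $e^{\kappa(g_n)}\ell_n(F)\to[\mathsf{P}_\theta]$ uniformly for $F\in C$, and applying $k_n\to k$ yields $g_n(F)\to k[\mathsf{P}_\theta]=F^+$ uniformly on $C$. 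For $(2)\Rightarrow(1)$: conjugating by $\ell_n$ (along a subsequence with $k_n\to k,\ \ell_n\to\ell$) and applying the converse half of the key fact on an open subset of $\ell(\Fc_\theta\smallsetminus\mathcal{Z}_{F^-})$ forces $\{g_n\}$ to satisfy $\alpha(\kappa(g_n))\to+\infty$ for all $\alpha\in\theta$; then $(1)\Rightarrow(2)$ applies and shows the attracted locus of $g_n$ equals $\Fc_\theta\smallsetminus\mathcal{Z}_{\ell^{-1}\bar w_0^{-1}[\mathsf{P}_\theta]}$, which combined with (2) and the injectivity of $F\mapsto\mathcal{Z}_F$ gives $k[\mathsf{P}_\theta]=F^+$ and $\ell^{-1}\bar w_0^{-1}[\mathsf{P}_\theta]=F^-$; since this holds along every subsequence, $U_\theta(g_n)\to F^+$ and $U_\theta(g_n^{-1})\to F^-$ for the full sequences, which is (1).

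I expect the main obstacle to be the \emph{sharpness} in the key dynamical fact — that the attracted locus is exactly, not merely contained in, the complement of $\mathcal{Z}_{F_0^-}$, with honestly uniform convergence — together with the auxiliary (standard but fiddly) points that $F\mapsto\mathcal{Z}_F$ is injective on $\Fc_\theta$ and that subsequential convergence upgrades to convergence of the whole sequence; both of these feed into pinning down $F^-$ (and not just $\mathcal{Z}_{F^-}$) in the direction $(2)\Rightarrow(1)$. All of these ingredients are available in the literature, and if one prefers, the entire statement may instead be quoted from \cite[Prop.\,2.6]{CZZ3} or \cite{KLP1}.
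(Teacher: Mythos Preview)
Your proposal is correct, and in fact goes well beyond what the paper does: the paper gives no proof at all, simply citing \cite[Prop.\,2.6]{CZZ3} (and implicitly \cite{KLP1}) for this statement, which is precisely the option you mention in your last sentence. Your KAK-based sketch is the standard route and the ingredients you flag (sharpness of the attracted locus, injectivity of $F\mapsto\mathcal Z_F$, subsequence principle) are indeed the only points requiring care; the $(2)\Rightarrow(1)$ direction could be tightened slightly---when passing from $e^{\kappa(g_n)}(\ell_n F)\to k^{-1}F^+$ to the converse of the key fact, you need uniform convergence on compacta together with $\ell_n\to\ell$ to conclude that $e^{\kappa(g_n)}$ itself converges to a constant on a fixed open set---but this is routine.
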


This immediately implies that a transverse group is a convergence group. 

\begin{observation}\label{obs:transverse implies convergent} {\rm (see also \cite[\S5.1]{KLP1})}
 If $\Gamma \subset \mathsf{G}$ is $\mathsf{P}_\theta$-transverse, then $\Gamma$ acts on $\Lambda_\theta(\Gamma)$ as a convergence group. \end{observation}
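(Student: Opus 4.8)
The plan is to deduce the convergence property of the $\Gamma$-action on $\Lambda_\theta(\Gamma)$ directly from Proposition~\ref{prop:characterizing convergence in general symmetric case}, using the hypothesis that $\Gamma$ is $\mathsf P_\theta$-divergent (which is part of being $\mathsf P_\theta$-transverse) together with the transversality of all pairs in the limit set. First I would fix a sequence $\{\gamma_n\}$ of distinct elements of $\Gamma$. Since $\Gamma$ is $\mathsf P_\theta$-divergent, we have $\alpha(\kappa(\gamma_n)) \to +\infty$ for every $\alpha \in \theta$; in particular $U_\theta(\gamma_n)$ is defined for all large $n$, and $\mathcal F_\theta$ being compact, after passing to a subsequence we may assume $U_\theta(\gamma_n) \to F^+$ and $U_\theta(\gamma_n^{-1}) \to F^-$ for some $F^\pm \in \mathcal F_\theta$. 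By definition of the limit set, $F^\pm \in \Lambda_\theta(\Gamma)$ (note $\{\gamma_n^{-1}\}$ is also a sequence of distinct elements, and $\kappa(\gamma_n^{-1}) = \iota(\kappa(\gamma_n))$ with $\theta$ symmetric, so $\mathsf P_\theta$-divergence applies to it as well).

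Next, since $\Gamma$ is $\mathsf P_\theta$-transverse, the two limit points $F^+$ and $F^-$ are transverse, i.e.\ $F^+ \notin \mathcal Z_{F^-}$ and $F^- \notin \mathcal Z_{F^+}$. Now I would invoke the implication (1)$\Rightarrow$(2) of Proposition~\ref{prop:characterizing convergence in general symmetric case}: since $U_\theta(\gamma_n) \to F^+$, $U_\theta(\gamma_n^{-1}) \to F^-$, and $\alpha(\kappa(\gamma_n)) \to +\infty$ for all $\alpha \in \theta$, we get that $\gamma_n(F) \to F^+$ uniformly on compact subsets of $\mathcal F_\theta \smallsetminus \mathcal Z_{F^-}$. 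Restricting to $\Lambda_\theta(\Gamma)$: every point of $\Lambda_\theta(\Gamma)$ other than $F^-$ lies in $\mathcal F_\theta \smallsetminus \mathcal Z_{F^-}$ (again by transversality of limit points), and $\Lambda_\theta(\Gamma) \smallsetminus \{F^-\}$ has compact closure in $\mathcal F_\theta \smallsetminus \mathcal Z_{F^-}$ — indeed $\Lambda_\theta(\Gamma)$ is closed hence compact, and it meets $\mathcal Z_{F^-}$ only in $\{F^-\}$, so any compact subset of $\Lambda_\theta(\Gamma) \smallsetminus \{F^-\}$ is a compact subset of $\mathcal F_\theta \smallsetminus \mathcal Z_{F^-}$. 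Therefore $\gamma_n|_{\Lambda_\theta(\Gamma) \smallsetminus \{F^-\}}$ converges locally uniformly to $F^+$, which is exactly the convergence-group condition. Finally, since $\#\Lambda_\theta(\Gamma) \geq 3$ by the non-elementary hypothesis (or just $\geq 2$ suffices here), the points $F^\pm$ exist and the statement is nonvacuous.

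The main obstacle — really the only nontrivial point — is the compactness/topology bookkeeping in the last step: one must be careful that "locally uniform convergence on $\Lambda_\theta(\Gamma) \smallsetminus \{F^-\}$" follows from "uniform convergence on compact subsets of $\mathcal F_\theta \smallsetminus \mathcal Z_{F^-}$", which requires knowing that $\Lambda_\theta(\Gamma)$ is closed (hence compact, as a closed subset of the compact manifold $\mathcal F_\theta$) and that $\Lambda_\theta(\Gamma) \cap \mathcal Z_{F^-} = \{F^-\}$. Both of these are immediate: closedness of $\Lambda_\theta(\Gamma)$ follows from a standard diagonal argument (a limit of limit points is a limit point), and $\Lambda_\theta(\Gamma) \cap \mathcal Z_{F^-} = \{F^-\}$ is precisely the transversality hypothesis defining $\mathsf P_\theta$-transverse groups. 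So the proof is essentially a one-paragraph deduction from the already-stated Proposition~\ref{prop:characterizing convergence in general symmetric case}, and I would present it as such, citing \cite[\S5.1]{KLP1} for the original argument.

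\begin{proof}[Proof of Observation~\ref{obs:transverse implies convergent}]
Since $\Lambda_\theta(\Gamma)$ is $\Gamma$-invariant, it suffices to verify the convergence property. Let $\{\gamma_n\}$ be a sequence of distinct elements of $\Gamma$. Because $\Gamma$ is $\mathsf P_\theta$-divergent we have $\alpha(\kappa(\gamma_n)) \to +\infty$ for every $\alpha \in \theta$; since $\theta$ is symmetric, Equation~\eqref{eqn:involution on kappa} gives $\alpha(\kappa(\gamma_n^{-1})) \to +\infty$ for every $\alpha \in \theta$ as well. In particular $U_\theta(\gamma_n)$ and $U_\theta(\gamma_n^{-1})$ are defined for all large $n$, and by compactness of $\mathcal F_\theta$ we may pass to a subsequence so that $U_\theta(\gamma_n) \to F^+$ and $U_\theta(\gamma_n^{-1}) \to F^-$ for some $F^\pm \in \mathcal F_\theta$. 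By definition, $F^\pm \in \Lambda_\theta(\Gamma)$.

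The set $\Lambda_\theta(\Gamma)$ is closed in $\mathcal F_\theta$ by a diagonal argument, hence compact. Since $\Gamma$ is $\mathsf P_\theta$-transverse, any two points of $\Lambda_\theta(\Gamma)$ are transverse; in particular $\Lambda_\theta(\Gamma) \cap \mathcal Z_{F^-} = \{F^-\}$, so $\Lambda_\theta(\Gamma) \smallsetminus \{F^-\}$ is a relatively compact subset of $\mathcal F_\theta \smallsetminus \mathcal Z_{F^-}$, and similarly $F^+ \in \mathcal F_\theta \smallsetminus \mathcal Z_{F^-}$. By the implication (1)$\Rightarrow$(2) of Proposition~\ref{prop:characterizing convergence in general symmetric case}, $\gamma_n(F) \to F^+$ uniformly on compact subsets of $\mathcal F_\theta \smallsetminus \mathcal Z_{F^-}$. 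Restricting to $\Lambda_\theta(\Gamma)$, we conclude that $\gamma_n|_{\Lambda_\theta(\Gamma) \smallsetminus \{F^-\}}$ converges locally uniformly to $F^+$. Hence $\Gamma$ acts on $\Lambda_\theta(\Gamma)$ as a convergence group.
\end{proof}
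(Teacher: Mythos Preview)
Your proof is correct and follows essentially the same approach as the paper's: pass to a subsequence so that $U_\theta(\gamma_n)\to F^+$ and $U_\theta(\gamma_n^{-1})\to F^-$, apply Proposition~\ref{prop:characterizing convergence in general symmetric case}(1)$\Rightarrow$(2), and use transversality of the limit set to conclude that $\Lambda_\theta(\Gamma)\smallsetminus\{F^-\}\subset \mathcal F_\theta\smallsetminus\mathcal Z_{F^-}$. You simply supply a few more details (closedness of $\Lambda_\theta(\Gamma)$, the identification $\Lambda_\theta(\Gamma)\cap\mathcal Z_{F^-}=\{F^-\}$) that the paper leaves implicit; the only superfluous remark is ``$F^+\in\mathcal F_\theta\smallsetminus\mathcal Z_{F^-}$'', which need not hold when $F^+=F^-$ but is also not used in the argument.
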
 

\begin{proof} Suppose $\{\gamma_n\} \subset \Gamma$ is a sequence of distinct elements. Since $\Gamma$ is $\mathsf{P}_\theta$-divergent, $\lim_{n \to \infty} \alpha(\kappa(g_n)) = +\infty$ 
for every $\alpha \in \theta$. Since $\Fc_\theta$ is compact, we can pass to a subsequence so that $U_\theta(g_n) \to F^+$ and $U_\theta(g_n^{-1}) \to F^-$. 
Then Proposition~\ref{prop:characterizing convergence in general symmetric case} implies that $g_n(F) \to F^+$ for all $F \in \Fc_\theta \smallsetminus \mathcal{Z}_{F^-}$.
Since $\Lambda_\theta(\Gamma)$ is transverse, this implies that $g_n(F) \to F^+$ for all $F \in \Lambda_\theta(\Gamma) \smallsetminus \{ F^-\}$.
\end{proof} 

We also use the following estimate. Let $\norm{\cdot}$ denote some fixed norm on $\mathfrak{a}$.

\begin{lemma}[{Quint~\cite[Lem.\,6.5]{quint-ps}}]
\label{quintlemma}
For any $\epsilon>0$ and distance $\dist_{\Fc_\theta}$ on $\Fc_{\theta}$ induced by a Riemannian metric, there exists $C=C(\epsilon, \dist_{\Fc_\theta})>0$ such that: if $g \in \mathsf{G}$, $F \in \Fc_\theta$ and $\dist_{\Fc_\theta}\left( F, \mathcal{Z}_{U_\theta(g^{-1})}\right) > \epsilon$, then 
$$
\norm{ B_\theta(g,F)- \kappa_\theta(g)}<C.
$$
\end{lemma}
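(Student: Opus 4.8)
The plan is to reduce the statement, via the Cartan decomposition and the $\mathsf K$-invariance of the Iwasawa cocycle, to a bounded estimate for $B_\theta(e^H,\cdot)$ with $H\in\overline{\mathfrak a^+}$, evaluated at flags lying in a \emph{compact} subset of the big Bruhat cell. Concretely, first write $g=k_1e^{\kappa(g)}k_2$ with $k_1,k_2\in\mathsf K$. Since $B_\theta$ vanishes on $\mathsf K$ and is an additive cocycle, $B_\theta(g,F)=B_\theta(e^{\kappa(g)},k_2F)$, while $\kappa_\theta(g)=p_\theta(\kappa(g))=B_\theta(e^{\kappa(g)},e\mathsf P_\theta)$ (using $\mathsf A\subset\mathsf P_\theta$). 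A short computation with the opposite involution shows $U_\theta(g^{-1})=k_2^{-1}\bar w_0\mathsf P_\theta$, where $\bar w_0$ represents the longest Weyl element; hence translating by $k_2$ identifies $\mathcal Z_{U_\theta(g^{-1})}$ with $\mathcal Z^-:=\mathcal Z_{\bar w_0\mathsf P_\theta}$, whose complement $\Omega:=\Fc_\theta\smallsetminus\mathcal Z^-$ is the open orbit $\mathsf N_\theta^-\cdot e\mathsf P_\theta$ of the unipotent radical $\mathsf N_\theta^-$ of the parabolic opposite to $\mathsf P_\theta$. After replacing $\dist_{\Fc_\theta}$ by a $\mathsf K$-invariant Riemannian distance — which, as $\mathsf K$ is compact, changes $\epsilon$ and the final constant only by a fixed multiplicative factor — it suffices to produce $C=C(\epsilon')$ with $\norm{B_\theta(e^H,F')-p_\theta(H)}\le C$ whenever $H\in\overline{\mathfrak a^+}$ and $F'\in\Omega$ satisfies $\dist_{\Fc_\theta}(F',\mathcal Z^-)>\epsilon'$.

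For this estimate I would write $F'=\bar n\cdot e\mathsf P_\theta$ with $\bar n\in\mathsf N_\theta^-$; the diffeomorphism $\mathsf N_\theta^-\xrightarrow{\ \sim\ }\Omega$ carries the compact set $\{F'\in\Omega:\dist_{\Fc_\theta}(F',\mathcal Z^-)\ge\epsilon'\}$ to a compact $K_{\epsilon'}\subset\mathsf N_\theta^-$. Using the cocycle relation together with $e^H\bar n=(e^H\bar n e^{-H})e^H$ and $B_\theta(e^H,e\mathsf P_\theta)=p_\theta(H)$, one gets
$$B_\theta(e^H,\bar n\cdot e\mathsf P_\theta)-p_\theta(H)=B_\theta(e^H\bar n e^{-H},e\mathsf P_\theta)-B_\theta(\bar n,e\mathsf P_\theta).$$
The second term is bounded by $\sup_{K_{\epsilon'}}\norm{B_\theta(\cdot,e\mathsf P_\theta)}<\infty$. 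For the first, the point is that $\mathrm{Ad}(e^H)$ restricted to $\mathsf N_\theta^-$ is a unipotent automorphism whose associated grading has only non-positive weights when $H\in\overline{\mathfrak a^+}$, so the family $\{e^H\bar n e^{-H}:H\in\overline{\mathfrak a^+},\ \bar n\in K_{\epsilon'}\}$ remains in a fixed compact subset $K_\star\subset\mathsf N_\theta^-$; then $\norm{B_\theta(e^H\bar n e^{-H},e\mathsf P_\theta)}\le\sup_{K_\star}\norm{B_\theta(\cdot,e\mathsf P_\theta)}<\infty$, and adding the two bounds yields $C$.

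The hard part will be the \emph{uniform contraction} claim: that conjugating a fixed compact subset of $\mathsf N_\theta^-$ by all $e^H$, $H\in\overline{\mathfrak a^+}$, produces a bounded family. Establishing this requires genuine structure theory — passing to root coordinates on $\mathsf N_\theta^-$, observing that $\mathrm{Ad}(e^H)$ scales the $\mathfrak g_{-\alpha}$-coordinate by $e^{-\alpha(H)}\le 1$, and controlling the lower-order Baker--Campbell--Hausdorff corrections using the bounded nilpotency class — but it is standard. In fact, the entire lemma is exactly \cite[Lem.\,6.5]{quint-ps}, so in the paper one may simply invoke it; the steps above merely record how the argument goes. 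The remaining ingredients (vanishing of $B_\theta$ on $\mathsf K$, the additive cocycle identity, the identification of $U_\theta(g^{-1})$ and of the big cell, and the comparison of Riemannian distances on the compact manifold $\Fc_\theta$) are routine.
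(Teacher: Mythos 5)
The paper does not prove this lemma: it is attributed directly to Quint~\cite[Lem.~6.5]{quint-ps} and invoked as a black box, so there is no ``paper's proof'' to compare against. Your sketch is, however, a correct account of how the proof goes, and it matches the standard argument (essentially Quint's): Cartan-decompose $g$, use $\mathsf K$-invariance and the cocycle identity to reduce to $B_\theta(e^H,F')-p_\theta(H)$ with $H\in\overline{\mathfrak a^+}$ and $F'$ ranging over a compact subset $K_{\epsilon'}$ of the big cell $\mathsf N_\theta^-\cdot e\mathsf P_\theta$, then use the contraction $e^H\bar n e^{-H}=\exp(\mathrm{Ad}(e^H)\log\bar n)$ to show the relevant family stays in a fixed compact subset of $\mathsf N_\theta^-$. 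The identification $U_\theta(g^{-1})=k_2^{-1}\bar w_0\mathsf P_\theta$ and the reduction of $\mathcal Z_{U_\theta(g^{-1})}$ to $k_2^{-1}\mathcal Z^-$ are computed correctly, and the cocycle manipulation giving $B_\theta(e^H,\bar n\cdot e\mathsf P_\theta)-p_\theta(H)=B_\theta(e^H\bar n e^{-H},e\mathsf P_\theta)-B_\theta(\bar n,e\mathsf P_\theta)$ checks out.

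One small simplification: there are no Baker--Campbell--Hausdorff corrections to control. Since $e^H\exp(X)e^{-H}=\exp(\mathrm{Ad}(e^H)X)$ exactly, boundedness of $\{e^H\bar n e^{-H}:H\in\overline{\mathfrak a^+},\ \bar n\in K_{\epsilon'}\}$ follows immediately from the fact that $\mathrm{Ad}(e^H)$ scales each root space $\mathfrak g_{-\alpha}\subset\mathfrak n_\theta^-$ ($\alpha$ a positive root) by $e^{-\alpha(H)}\le 1$, so $\mathrm{Ad}(e^H)(\log K_{\epsilon'})$ is bounded in $\mathfrak n_\theta^-$ and exponentiates to a relatively compact set. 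You might also note, for completeness, that when some $\alpha\in\theta$ vanishes on $\kappa(g)$ the flag $U_\theta(g^{-1})$ is not uniquely determined by $g$; the bound is then meant to hold for any choice, and the argument is insensitive to the choice since one only uses that $F$ lies at distance $>\epsilon$ from the corresponding singular set.
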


Quint's lemma implies the following estimate for transverse groups. 

\begin{lemma}\label{lem:sigma phi is expanding} Suppose $\Gamma$ is non-elementary $\mathsf{P}_\theta$-transverse and $\phi\in \mathfrak{a}_\theta^*$. Let $\dist$ be a compatible distance on $\Gamma \sqcup \Lambda_\theta(\Gamma)$. For any $\epsilon> 0$ there exists $C > 0$ such that: if $\gamma \in \Gamma$, $F \in \Lambda_\theta(\Gamma)$ and $\dist(F,\gamma^{-1}) > \epsilon$, then 
$$
\phi(\kappa_\theta(\gamma)) -C \leq \phi(B_\theta(\gamma, F)) \leq \phi(\kappa_\theta(\gamma)) +C.
$$
\end{lemma}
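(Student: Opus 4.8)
The plan is to deduce Lemma~\ref{lem:sigma phi is expanding} from Quint's estimate (Lemma~\ref{quintlemma}) together with the description of the compactifying topology on $\Gamma \sqcup \Lambda_\theta(\Gamma)$ (Proposition~\ref{prop:compactifying}). The key geometric point is that the condition $\dist(F,\gamma^{-1}) > \epsilon$ in the compactified space should translate into a uniform transversality-type separation between $F$ and the ``contracted'' flag $U_\theta(\gamma^{-1})$, which is exactly the hypothesis needed to apply Quint's lemma. Once that translation is made, one composes with $\phi$ and uses that $\phi \circ \kappa = \phi \circ \kappa_\theta$ together with the uniform bound $\norm{B_\theta(\gamma,F) - \kappa_\theta(\gamma)} < C_0$ and continuity/boundedness of $\phi$ on $\mathfrak a_\theta$.

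More precisely, first I would fix a Riemannian distance $\dist_{\Fc_\theta}$ on $\Fc_\theta$ and argue by contradiction: suppose the conclusion fails, so there are sequences $\gamma_n \in \Gamma$, $F_n \in \Lambda_\theta(\Gamma)$ with $\dist(F_n, \gamma_n^{-1}) > \epsilon$ but $\abs{\phi(B_\theta(\gamma_n,F_n)) - \phi(\kappa_\theta(\gamma_n))} \to \infty$. By Lemma~\ref{quintlemma}, this forces $\dist_{\Fc_\theta}(F_n, \mathcal{Z}_{U_\theta(\gamma_n^{-1})}) \to 0$, so after passing to a subsequence we may find $F_n' \in \mathcal{Z}_{U_\theta(\gamma_n^{-1})}$ with $\dist_{\Fc_\theta}(F_n, F_n') \to 0$. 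Since $\{\gamma_n\}$ must be escaping (the left-hand quantity is bounded when $\gamma$ ranges over a finite set, as $\kappa_\theta$ and $B_\theta$ are continuous and $\Lambda_\theta(\Gamma)$ is compact), we pass to a further subsequence so that $\gamma_n \to a$ and $\gamma_n^{-1} \to b$ in $\Gamma \sqcup \Lambda_\theta(\Gamma)$ for some $a,b \in \Lambda_\theta(\Gamma)$; by Proposition~\ref{prop:compactifying}\,(1), $\gamma_n|_{\Lambda_\theta(\Gamma) \smallsetminus \{b\}}$ converges locally uniformly to $a$, and (using $\mathsf{P}_\theta$-divergence and Proposition~\ref{prop:characterizing convergence in general symmetric case}) one identifies $b = U_\theta(\gamma_n^{-1})$-limit and $a = U_\theta(\gamma_n)$-limit. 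The point $b \in \Lambda_\theta(\Gamma)$ is transverse to every other flag of $\Lambda_\theta(\Gamma)$, so $\mathcal{Z}_b \cap \Lambda_\theta(\Gamma) = \{b\}$; since $F_n' \to b$ (as $F_n' \in \mathcal{Z}_{U_\theta(\gamma_n^{-1})}$ and $U_\theta(\gamma_n^{-1}) \to b$, using continuity of the $\mathcal Z$-relation) and $\dist_{\Fc_\theta}(F_n, F_n') \to 0$, we get $F_n \to b$. But then $\dist(F_n, \gamma_n^{-1}) \to \dist(b,b) = 0$, contradicting $\dist(F_n, \gamma_n^{-1}) > \epsilon$.

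The main obstacle I anticipate is making the topological bookkeeping at the level of the compactification $\overline{M^{(2)}\times\R}$ — no, rather at the level of $\Gamma \sqcup \Lambda_\theta(\Gamma)$ — fully rigorous: precisely, showing that $\dist(F_n,\gamma_n^{-1})>\epsilon$ together with $\gamma_n^{-1}\to b$ forces any limit of $F_n$ to avoid $b$, and conversely that the Quint-lemma degeneracy $F_n'\in\mathcal Z_{U_\theta(\gamma_n^{-1})}$ really does drag $F_n$ toward $b$. This requires knowing that $\mathcal Z_{U_\theta(\gamma_n^{-1})}$ converges (in the Hausdorff sense, or at least that any sequence of points in it has all subsequential limits in $\mathcal Z_b$) to $\mathcal Z_b$, which follows from the closedness of the non-transversality relation $\{(F,F') : F \in \mathcal Z_{F'}\} \subset \Fc_\theta \times \Fc_\theta$ together with $U_\theta(\gamma_n^{-1})\to b$. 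One then has to combine this with $\mathcal Z_b \cap \Lambda_\theta(\Gamma) = \{b\}$, which is the transversality of the limit set. A cleaner route, avoiding contradiction, would be to directly prove: there is $\epsilon' > 0$ such that $\dist(F,\gamma^{-1}) > \epsilon$ implies $\dist_{\Fc_\theta}(F, \mathcal{Z}_{U_\theta(\gamma^{-1})}) > \epsilon'$ for all but finitely many $\gamma$, via the same compactness-and-convergence argument, and then quote Lemma~\ref{quintlemma} and the remaining finitely many $\gamma$ directly; I would likely present it that way, as it isolates the one genuinely new ingredient (the comparison between $\dist$ and $\dist_{\Fc_\theta}(\cdot, \mathcal Z_{U_\theta(\gamma^{-1})})$) from the routine application of Quint's estimate.
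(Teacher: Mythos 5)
Your argument is correct and is essentially the paper's proof: both argue by contradiction, pass to subsequential limits $F_n\to F$ and $U_\theta(\gamma_n^{-1})\to F^-$ in $\Fc_\theta$, use Proposition~\ref{prop:characterizing convergence in general symmetric case} to identify $\gamma_n^{-1}\to F^-$ in $\Gamma\sqcup\Lambda_\theta(\Gamma)$ (so the hypothesis $\dist(F_n,\gamma_n^{-1})>\epsilon$ forces $F\neq F^-$), invoke transversality of $\Lambda_\theta(\Gamma)$ to conclude $F\notin\mathcal Z_{F^-}$, and then apply Lemma~\ref{quintlemma}. You run the contradiction in the opposite direction (contrapositive of Quint first), and one clause --- ``$F_n'\to b$'' --- should really read ``every subsequential limit of $F_n$ (hence of $F_n'$) lies in $\mathcal Z_b\cap\Lambda_\theta(\Gamma)=\{b\}$, so $F_n\to b$'' --- but the underlying reasoning, including the Hausdorff-continuity of $F^-\mapsto\mathcal Z_{F^-}$ that you carefully flag (and the paper leaves implicit), is the same.
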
 

\begin{proof} Suppose not. Then for every $n \geq 1$ there exist $\gamma_n \in \Gamma$ and $F_n \in \Lambda_\theta(\Gamma)$  such that $\dist(F_n,\gamma_n^{-1}) > \epsilon$ and 
$$
\abs{\sigma_\phi(\gamma_n, F_n) - \phi(\kappa_\theta(\gamma_n))} \geq n.
$$
Passing to a subsequence we can suppose that $F_n \to F$ and  $U_\theta(\gamma_n^{-1}) \to F^- \in \Lambda_\theta(\Gamma)$ in the topology on $\Fc_\theta$. Then Proposition~\ref{prop:characterizing convergence in general symmetric case} implies that $\gamma_n^{-1} \to F^-$ in the topology on $\Gamma \sqcup \Lambda_\theta(\Gamma)$ defined in Proposition~\ref{prop:compactifying}. Hence $F \neq F^-$. Since $\Gamma$ is $\mathsf{P}_\theta$-transverse, we then have $F \notin \mathcal{Z}_{F^-}$. So by  Lemma~\ref{quintlemma} there exists $C > 0$ such that 
$$
\norm{ B_\theta(\gamma_n,F_n)- \kappa_\theta(\gamma_n)}\leq C
$$
for all $n \geq 1$. So 
$$
\abs{\sigma_\phi(\gamma_n, F_n) - \phi(\kappa_\theta(\gamma_n))} \leq C \norm{\phi}
$$
for all $n \geq 1$ and we have a contradiction. 
\end{proof} 

\subsection{Proof of Proposition~\ref{prop:transverse groups when a PS measure exists}} Suppose $\Gamma$ is non-elementary $\mathsf{P}_\theta$-transverse, $\phi\in \mathfrak{a}_\theta^*$  and $\mu$ is a $\phi$-Patterson--Sullivan measure of dimension $\beta$. 

\begin{lemma} If $\{\gamma_n\}$ is a sequence of distinct elements of $\Gamma$, then 
$$
\lim_{n \rightarrow \infty} \phi(\kappa(\gamma_n)) = +\infty.
$$
\end{lemma}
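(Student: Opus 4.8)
The plan is to argue by contradiction using the defining property of the Patterson–Sullivan measure together with the convergence group dynamics on $\Lambda_\theta(\Gamma)$. Suppose $\{\gamma_n\}$ is a sequence of distinct elements of $\Gamma$ along which $\phi(\kappa(\gamma_n))$ does not go to $+\infty$; passing to a subsequence we may assume $\phi(\kappa(\gamma_n)) \leq R$ for all $n$ and some fixed $R$. Since $\Fc_\theta$ is compact, after a further subsequence we may assume $U_\theta(\gamma_n) \to F^+$ and $U_\theta(\gamma_n^{-1}) \to F^-$ in $\Fc_\theta$; since $\Gamma$ is $\mathsf{P}_\theta$-divergent we have $\alpha(\kappa(\gamma_n)) \to +\infty$ for every $\alpha \in \theta$, so $F^{\pm} \in \Lambda_\theta(\Gamma)$ and, by Proposition~\ref{prop:characterizing convergence in general symmetric case}, $\gamma_n^{-1}(F) \to F^-$ uniformly on compact subsets of $\Fc_\theta \smallsetminus \mathcal{Z}_{F^+}$, and in particular on compact subsets of $\Lambda_\theta(\Gamma) \smallsetminus \{F^+\}$ since the limit set is transverse.

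Next I would use the transformation rule for $\mu$. For any Borel set $A \subset \Lambda_\theta(\Gamma)$,
$$
\gamma_n{}_*\mu(A) = \int_{A} e^{-\beta \phi(B_\theta(\gamma_n^{-1}, F))}\, d\mu(F),
$$
equivalently $\mu(\gamma_n^{-1} A) = \int_A e^{-\beta\phi(B_\theta(\gamma_n^{-1},F))}d\mu(F)$. Fix a compact set $C \subset \Lambda_\theta(\Gamma) \smallsetminus \mathcal{Z}_{F^+}$ with $\mu(C) > 0$ — such a set exists because $\mu$ is a probability measure on $\Lambda_\theta(\Gamma)$ and $\mathcal{Z}_{F^+} \cap \Lambda_\theta(\Gamma)$, being contained in $\{F^+\}$ by transversality, is a single point, which (if it is an atom, which it need not be) we can simply avoid by shrinking. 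By Quint's estimate (Lemma~\ref{quintlemma}) applied with the set $C$, there is a constant $C_0$ with $\norm{B_\theta(\gamma_n^{-1}, F) - \kappa_\theta(\gamma_n^{-1})} \leq C_0$ for all $F \in C$ and all $n$; since $\phi \in \mathfrak{a}_\theta^*$ and $\phi \circ \kappa_\theta = \phi \circ \kappa$, this gives
$$
\abs{\phi(B_\theta(\gamma_n^{-1}, F)) - \phi(\kappa(\gamma_n^{-1}))} \leq C_0 \norm{\phi}
$$
uniformly on $C$. Now $\phi(\kappa(\gamma_n^{-1})) = \iota^*(\phi)(\kappa(\gamma_n))$, which is $\iota^*(\phi)$ evaluated on the Cartan projection; I would bound this: since $\theta$ is symmetric and $\phi \in \mathfrak{a}_\theta^*$, $\iota^*(\phi) \in \mathfrak{a}_\theta^*$ as well, so $\iota^*(\phi)(\kappa(\gamma_n)) = \iota^*(\phi)(\kappa_\theta(\gamma_n))$; this need not be bounded by $R$, so here one must be a little careful — actually the cleaner route is to directly bound $\mu(\gamma_n^{-1}C)$ from below. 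We get
$$
\mu(\gamma_n^{-1} C) = \int_C e^{-\beta \phi(B_\theta(\gamma_n^{-1}, F))} d\mu(F) \geq e^{-\beta(\phi(\kappa(\gamma_n^{-1})) + C_0\norm{\phi})}\mu(C).
$$
On the other hand, $\gamma_n^{-1}(C)$ lies in $\Lambda_\theta(\Gamma) \smallsetminus \mathcal{Z}_{F^+}$, hence in $\Lambda_\theta(\Gamma) \smallsetminus \{F^+\}$; by the uniform convergence $\gamma_n^{-1}(F) \to F^-$ on $C$, the sets $\gamma_n^{-1}(C)$ shrink to $\{F^-\}$, so $\mu(\gamma_n^{-1}C) \to \mu(\{F^-\})$.

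The contradiction then comes from iterating: I would instead run the argument with $\{\gamma_n^{-1}\}$ in place of $\{\gamma_n\}$ to control $\mu(\{F^+\})$, or — more robustly — observe that if $\phi(\kappa(\gamma_n))$ stays bounded then so does $\iota^*(\phi)(\kappa(\gamma_n))$ is \emph{not} automatic, so the honest contradiction is obtained as follows: apply the displayed lower bound with $C$ chosen around $F^-$ and the sequence $\{\gamma_n\}$ (not its inverse), giving $\mu(\gamma_n C) \geq e^{-\beta(\phi(\kappa(\gamma_n)) + C_0\norm\phi)}\mu(C) \geq e^{-\beta(R + C_0\norm\phi)}\mu(C) =: c_0 > 0$, a constant independent of $n$; but $\gamma_n(C) \to \{F^+\}$ in measure since $C$ is a compact subset of $\Lambda_\theta(\Gamma) \smallsetminus \{F^-\}$ and the convergence $\gamma_n(F) \to F^+$ is uniform there, so $\mu(\gamma_n C) \to \mu(\{F^+\})$, forcing $\mu(\{F^+\}) \geq c_0 > 0$. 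Repeating with any subsequence of $\{\gamma_n\}$ we'd get infinitely many distinct atoms of $\mu$ each of mass at least $c_0$ — contradicting that $\mu$ is a probability measure — \emph{unless} the $F^+$'s coincide, in which case one uses the freedom in choosing $C$ (slightly enlarging it) to produce a strictly larger lower bound, or invokes that a non-elementary convergence group cannot have a $\Gamma$-invariant atom; the cleanest finish is: since $\Gamma$ is non-elementary the orbit of $F^+$ is infinite, and applying the above with $\alpha\gamma_n$ for various $\alpha \in \Gamma$ yields atoms at infinitely many distinct points, the desired contradiction. The main obstacle is bookkeeping the two Cartan projections $\phi(\kappa(\gamma_n))$ versus $\iota^*(\phi)(\kappa(\gamma_n))$ and making sure the lower bound on $\mu(\gamma_n C)$ uses only the \emph{bounded} quantity $\phi(\kappa(\gamma_n))$ — which it does, once one writes the transformation rule in the direction $\mu(\gamma_n C)$ rather than $\mu(\gamma_n^{-1} C)$.
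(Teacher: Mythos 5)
Your plan follows the same outline as the paper up to the final contradiction: pass to a subsequence with $\phi(\kappa(\gamma_n)) \leq R$, use the convergence-group dynamics to get attracting/repelling points $F^\pm \in \Lambda_\theta(\Gamma)$, fix a compact $C \subset \Lambda_\theta(\Gamma) \smallsetminus \{F^-\}$ with positive measure (you wrote ``around $F^-$'' but clearly mean bounded away from $F^-$), and combine Quint's estimate with the Patterson--Sullivan transformation rule to get the uniform lower bound $\mu(\gamma_n C) \geq c_0 > 0$. So far this is exactly the paper's argument, and you correctly identify that one must write the transformation rule in the direction that keeps $\phi(\kappa(\gamma_n))$ (not $\iota^*(\phi)(\kappa(\gamma_n))$) in the exponent.

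The gap is in how you try to extract a contradiction once you know $\mu(\{F^+\}) \geq c_0$. Replacing $\gamma_n$ by $\alpha\gamma_n$ does give an atom at $\alpha F^+$ (and $\phi(\kappa(\alpha\gamma_n))$ stays bounded, by $\norm{\kappa(\alpha\gamma_n) - \kappa(\gamma_n)} \leq \norm{\kappa(\alpha)}$), but the resulting lower bound on $\mu(\{\alpha F^+\})$ degrades with $\alpha$: it is roughly $e^{-\beta(\phi(\kappa(\gamma_n)) + \norm{\phi}\norm{\kappa(\alpha)} + C_0\norm{\phi})}\mu(C)$, and more directly the quasi-invariance gives $\mu(\{\alpha F^+\}) = e^{-\beta\phi(B_\theta(\alpha^{-1},F^+))}\mu(\{F^+\})$, which tends to $0$ as $\alpha$ escapes. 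So you get infinitely many atoms each with \emph{some} positive mass but no uniform positive lower bound, and this is perfectly consistent with $\mu$ being a probability measure --- no contradiction. Your fallback ``Option A'' (enlarge $C$) also does not help, since enlarging $C$ towards $F^-$ worsens Quint's constant, and ``Option B'' does not apply since $\{F^+\}$ is not $\Gamma$-invariant. The paper avoids this entirely: it fixes \emph{two} disjoint compacta $K_1, K_2 \subset \Lambda_\theta(\Gamma) \smallsetminus \{F^-\}$ of positive measure, notes that for every $n$ the point $F^+$ misses at least one of $\gamma_n K_1, \gamma_n K_2$, passes to a subsequence where $F^+ \notin \gamma_n K_1$ for all $n$, and then uses the uniform convergence $\gamma_n K_1 \to \{F^+\}$ to extract a pairwise disjoint subcollection $\{\gamma_{n_j}K_1\}$; the uniform lower bound $\mu(\gamma_{n_j}K_1) \geq e^{-\beta C}\mu(K_1)$ then forces $\mu(\Lambda_\theta(\Gamma)) = +\infty$. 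You should replace your last paragraph with this two-compacta trick.
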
 

\begin{proof} Suppose for a contradiction that $\{\gamma_n\}$ is a sequence of distinct elements with 
$$
\liminf_{n \rightarrow \infty} \phi(\kappa(\gamma_n)) < +\infty.
$$
Passing to a subsequence we can suppose that 
\begin{equation}\label{eqn:sup in new proof is bounded}
\sup_{n \geq 1}  \phi(\kappa(\gamma_n)) < +\infty.
\end{equation}
Since $\Gamma$ acts on $\Lambda_\theta(\Gamma)$ as a convergence group, we can pass to a further subsequence and assume that there exist $F^+, F^- \in \Lambda_\theta(\Gamma)$ such that 
\begin{equation}\label{eqn:convergence in new proof is bounded}
\gamma_n F \rightarrow F^+
\end{equation}
for all $F \in \Lambda_\theta(\Gamma) \smallsetminus \{ F^-\}$, and the convergence is uniform on compact subsets of $ \Lambda_\theta(\Gamma) \smallsetminus \{ F^-\}$. 

Fix two disjoint compact subsets $K_1, K_2 \subset \Lambda_\theta(\Gamma) \smallsetminus \{ F^-\}$ each with positive $\mu$-measure. Lemma~\ref{lem:sigma phi is expanding} and Equation~\eqref{eqn:sup in new proof is bounded} imply that there exists $C > 0$ such that 
$$
\phi(B_\theta(\gamma_n, F)) \leq C
$$
for all $n \geq 1$ and $F \in K_1 \cup K_2$. Then
$$
\mu(\gamma_n K_i) = (\gamma_n^{-1})_* \mu(K_i) = \int_{K_i} e^{-\beta \phi(B_\theta(\gamma_n, F))}d\mu(F) \geq e^{-\beta C} \mu(K_i)
$$
for all $i=1,2$ and $n \geq 1$. 

Since $K_1$ and $K_2$ are disjoint, for every $n \geq 1$ at least one of $\gamma_n K_1$, $\gamma_n K_2$ does not contain $F^+$. So after passing to a subsequence and possibly relabelling $K_1$, $K_2$ we can assume that $F^+ \notin \gamma_n K_1$ for all $n \geq 1$. Then by~\eqref{eqn:convergence in new proof is bounded}, we can find a pairwise disjoint subsequence $\{ \gamma_{n_j} K_1\}$. But then 
$$
1 = \mu(\Lambda_\theta(\Gamma)) \geq \sum_{j=1}^\infty \mu(\gamma_{n_j} K_1) \geq \sum_{j=1}^\infty  e^{-\beta C} \mu(K_1)= +\infty
$$
and we have a contradiction. 
\end{proof} 

Now Proposition~\ref{prop:transverse groups give GPS systems} implies that $\sigma_\phi = \phi \circ B_\theta|_{\Gamma \times \Lambda_\theta(\Gamma)}$ is an expanding cocycle and hence~\cite[Prop.\,6.3]{BCZZ} implies that $\delta^\phi(\Gamma) \leq \beta$.

\subsection{Proof of Proposition~\ref{prop:transverse groups give GPS systems}} Fix a non-elementary $\mathsf{P}_\theta$-transverse subgroup $\Gamma$ and $\phi\in \mathfrak{a}_\theta^*$  where 
$$
\lim_{n \rightarrow \infty} \phi(\kappa(\gamma_n)) = +\infty
$$
whenever $\{\gamma_n\}$ is a sequence of distinct elements of $\Gamma$.  As in the statement of Proposition~\ref{prop:transverse groups give GPS systems}, define cocycles $\sigma_\phi, \bar{\sigma}_\phi \colon \Gamma \times \Lambda_\theta(\Gamma) \to \Rb$ by 
$$
\sigma_\phi(\gamma, F) = \phi(B_\theta(\gamma,F)) \quad \text{and} \quad \bar\sigma_\phi(\gamma, F) = \iota^*(\phi)(B_\theta(\gamma,F)).
$$

Equation~\eqref{eqn:involution on kappa} implies that 
$$
\lim_{n \rightarrow \infty} \iota^*(\phi)(\kappa(\gamma_n)) = \lim_{n \rightarrow \infty} \phi(\kappa(\gamma_n^{-1})) =+\infty
$$
whenever $\{\gamma_n\}$ is a sequence of distinct elements of $\Gamma$. Then Lemma~\ref{lem:sigma phi is expanding} and~\cite[Prop.\,3.2]{BCZZ} imply that $\sigma_\phi$ and $\bar\sigma_{\phi}$ are expanding cocycles. Further, we may choose 
$$
\norm{\gamma}_{\sigma_\phi} = \phi(\kappa(\gamma)) \quad \text{and} \quad \norm{\gamma}_{\bar\sigma_\phi} =  \iota^*(\phi)(\kappa(\gamma)).
$$
Then, since $\sigma_\phi, \bar\sigma_\phi$ are expanding and hence proper, Equation~\eqref{eqn:GPS property for Iwasawa} implies that the triple $(\sigma_\phi, \bar\sigma_\phi, \phi \circ G_\theta)$ is a GPS system. 

Finally, item (2) in the ``moreover'' part of Proposition~\ref{prop:transverse groups give GPS systems} follows immediately from the fact that if $\gamma$ is loxodromic, then
$\phi(\lambda(\gamma))=\phi(B_\theta(\gamma,\gamma^+))$ (see the discussion at the start of Section 9.2 in~\cite{BQ}).

\subsection{Proof of Proposition~\ref{prop:transverse groups non-arithmetic}}

We will deduce the proposition from the following general result. 

\begin{proposition}\label{prop:Nonarithmeticity Anosov}
Suppose $\Gamma\subset \mathsf G$ is a subgroup  which is not virtually solvable. If  $\phi \in \mathfrak a^*$ is positive on the cone 
$$
\overline{\bigcup_{\gamma \in \Gamma} \Rb_{> 0} \cdot \lambda(\gamma)} \smallsetminus \{0\},
$$
then the subgroup of $\Rb$ generated by $\ell_\phi(\Gamma)$ is dense in $\Rb$.
\end{proposition}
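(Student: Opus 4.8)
The plan is to reduce to a Zariski-density situation and then invoke Benoist's asymptotic cone / Jordan projection machinery. First I would replace $\Gamma$ by the connected component $\mathsf H$ of its Zariski closure, or rather work with the Zariski closure $\mathsf H$ itself: since $\Gamma$ is not virtually solvable, $\mathsf H$ is not solvable, so its Lie algebra has a nontrivial semisimple part. Passing to a finite-index subgroup (which does not change the group generated by $\ell_\phi(\Gamma)$ up to finite index, hence not its closure) I may assume $\mathsf H$ is connected and $\Gamma$ is Zariski-dense in $\mathsf H$. The key external input is Benoist's theorem (\cite{benoistasymp1}) that for a Zariski-dense subgroup the \emph{limit cone} $\ell_\Gamma := \overline{\bigcup_{\gamma\in\Gamma}\Rb_{\geq 0}\lambda(\gamma)}$ is a convex cone with nonempty interior in the relevant Cartan subspace, and moreover the group generated by the Jordan projections $\{\lambda(\gamma) : \gamma \in \Gamma\}$ is \emph{dense} in the vector subspace it spans (the ``non-arithmeticity of the Jordan projection'' statement in Benoist's work). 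Because $\phi$ is positive on $\ell_\Gamma \smallsetminus\{0\}$ and $\ell_\Gamma$ has nonempty interior, $\phi$ is nonzero on the linear span of $\ell_\Gamma$; applying the continuous linear functional $\phi$ to a dense subgroup of that span yields a dense subgroup of $\Rb$ (a continuous surjective homomorphism of topological groups sends dense subgroups to dense subgroups, and $\phi$ restricted to $\mathrm{span}(\ell_\Gamma)$ is surjective onto $\Rb$ since it is a nonzero functional on a real vector space). Since $\ell_\phi(\gamma) = \phi(\lambda(\gamma))$, this gives exactly that $\langle \ell_\phi(\Gamma)\rangle$ is dense.

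The steps, in order: (1) reduce to $\Gamma$ Zariski-dense in a connected semisimple-by-(reductive) group $\mathsf H$ via passing to finite index and to the Zariski closure, being careful that $\mathsf H$ need not be semisimple but that its action on its own symmetric space and the associated Jordan projection still behave well — here one restricts attention to $\mathsf H^{ss}$, the product of noncompact simple factors, and notes $\lambda(\gamma)$ for $\gamma\in\Gamma$ lands (after projection) in the Cartan subspace of $\mathsf H^{ss}$, with the hypothesis on $\phi$ transported accordingly; (2) quote Benoist that the limit cone $\ell_\Gamma$ has nonempty interior in $\mathfrak a_{\mathsf H^{ss}}$ and that $\langle\lambda(\Gamma)\rangle$ is dense in $\mathrm{span}(\ell_\Gamma) = \mathfrak a_{\mathsf H^{ss}}$ — for this I need $\Gamma$ Zariski-dense and not virtually solvable, which forces $\mathsf H^{ss}$ nontrivial; (3) observe $\phi|_{\mathrm{span}(\ell_\Gamma)}$ is a nonzero linear functional (by the positivity hypothesis and nonempty interior), hence surjective onto $\Rb$; (4) conclude by continuity that $\phi(\langle\lambda(\Gamma)\rangle) = \langle\ell_\phi(\Gamma)\rangle$ is dense in $\Rb$.

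The main obstacle I anticipate is the bookkeeping in step~(1): the group $\Gamma$ is given as an abstract subgroup of a fixed semisimple $\mathsf G$, its Zariski closure $\mathsf H$ may be reductive rather than semisimple, and one must be careful that ``$\phi$ positive on the limit cone'' still makes sense and is preserved after restricting to the semisimple part and passing to finite index. The point is that torus factors and compact factors contribute nothing to $\lambda(\gamma)$ that matters: the Jordan projection of $\gamma$, viewed in $\mathfrak a \subset \mathfrak g$, has components only along the simple noncompact factors of $\mathsf H$ plus possibly a central piece, but the hypothesis that $\phi > 0$ on $\ell_\Gamma\smallsetminus\{0\}$ rules out $\ell_\Gamma$ being contained in a hyperplane on which $\phi$ vanishes, so $\phi$ detects the genuinely semisimple directions. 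Once this reduction is clean, everything else is an immediate citation of Benoist's results plus elementary topological group theory. A secondary (but routine) point is checking that passing to a finite-index subgroup $\Gamma_0 \subset \Gamma$ does not shrink the closed subgroup generated by the lengths — but $\ell_\phi(\Gamma_0) \subset \ell_\phi(\Gamma)$ and density of the smaller set implies density of the larger, so this direction is free.

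Finally, to deduce Proposition~\ref{prop:transverse groups non-arithmetic} from Proposition~\ref{prop:Nonarithmeticity Anosov}: apply the latter with $\phi$ replaced by $\psi := \phi + \iota^*(\phi) \in \mathfrak a^*$ (extended by zero off $\mathfrak a_\theta$ appropriately, using $\phi\circ\kappa = \phi\circ\kappa_\theta$), a non-elementary transverse group not being virtually solvable since it contains a free subgroup (two independent loxodromics via the convergence action, by Observation~\ref{obs:transverse implies convergent} and the ping-pong lemma for convergence groups). The positivity of $\psi$ on the limit cone follows because for $\gamma$ loxodromic with $\phi(\kappa(\gamma_n))\to\infty$ the periods $\ell_{\sigma_\phi}(\gamma) = \phi(\lambda(\gamma))$ are positive (Proposition~\ref{prop:basic properties}\eqref{item:proper implies positive periods} via Proposition~\ref{prop:transverse groups give GPS systems}), and likewise $\iota^*(\phi)(\lambda(\gamma)) = \ell_{\bar\sigma_\phi}(\gamma) > 0$; these positivity statements on all loxodromic Jordan projections, together with the fact that the limit cone is the closed cone generated by $\{\lambda(\gamma): \gamma \text{ loxodromic}\}$ (Benoist), give $\psi > 0$ on $\ell_\Gamma\smallsetminus\{0\}$.
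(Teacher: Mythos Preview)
Your strategy---pass to the Zariski closure, invoke Benoist's density of Jordan projections, then push forward by $\phi$---is the same as the paper's, but there is a genuine gap in how you connect the two Jordan projections. The functional $\phi$ is linear on $\mathfrak a\subset\mathfrak g$, and $\ell_\phi(\gamma)=\phi(\lambda_{\mathsf G}(\gamma))$; Benoist's density theorem, however, applies to $\lambda_{\mathsf H}(\Gamma)$ inside $\mathfrak a_{\mathsf H}$ (where $\mathsf H$ is the Zariski closure, assumed conjugated so $\mathfrak a_{\mathsf H}\subset\mathfrak a$). The link between the two is $\lambda_{\mathsf G}(\gamma)=\lambda_{\mathsf G}(e^{\lambda_{\mathsf H}(\gamma)})$, and the map $X\in\mathfrak a_{\mathsf H}\mapsto\lambda_{\mathsf G}(e^X)\in\mathfrak a^+$ is only \emph{piecewise} linear (linear on each $\mathfrak a_{\mathsf H}\cap w\mathfrak a^+$ for $w$ in the Weyl group of $\mathsf G$). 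Thus $\ell_\phi=f\circ\lambda_{\mathsf H}$ with $f$ piecewise linear, and in general $f$ is not linear even on $\mathfrak a_{\mathsf H}^+$: for instance with $\mathsf H=\SL_2\times\SL_2$ block-diagonal in $\mathsf G=\SL_4$ and $\phi=\omega_1$, one gets $f(\mathrm{diag}(s,-s,t,-t))=\max(s,t)$. So $\langle\ell_\phi(\Gamma)\rangle$ is \emph{not} the image of $\langle\lambda_{\mathsf H}(\Gamma)\rangle$ under a linear map, and your step~(4) does not go through. Your ``bookkeeping'' paragraph correctly anticipates trouble with central and compact factors but misses this piecewise-linearity entirely.

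The paper repairs this with an extra step: using Benoist \cite[\S5.1]{benoistasymp1}, one finds a Zariski-dense sub-\emph{semigroup} $S\subset\rho^{ss}(\Gamma)$ whose limit cone $\mathcal C_S\subset\mathfrak a_{\mathsf H}$ lies inside a single linearity chamber $W_j$ of $f$. On $W_j$ one replaces $f$ by its linear extension $f'$, so that $\ell_\phi(s)=f'(\lambda_{\mathsf H}(s))$ for $s\in S$ and now $\langle\ell_\phi(S)\rangle=f'(\langle\lambda_{\mathsf H}(S)\rangle)$. Benoist--Quint \cite[Prop.~9.8]{BQ} gives $\overline{\langle\lambda_{\mathsf H}(S)\rangle}\supset\mathfrak a'_{\mathsf H}$ (the semisimple part only), so one must also ensure $f'|_{\mathfrak a'_{\mathsf H}}\not\equiv 0$; the paper arranges this by first locating, via \cite[Prop.~9.7]{BQ}, a nonzero $X_0\in\mathcal C\cap\mathfrak a'_{\mathsf H}$ with $f(X_0)>0$, and then choosing $W_j$ to contain $X_0$. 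A secondary issue: in your final paragraph you claim $\psi>0$ on the \emph{closed} limit cone of $\Gamma$ from $\psi(\lambda(\gamma))>0$ for each loxodromic $\gamma$; positivity on a dense set of rays does not pass to boundary rays. The paper explicitly remarks this implication is unknown for general transverse groups, and instead passes to a relatively Anosov free subgroup $\Gamma'\subset\Gamma$ where \cite[Th.~10.1]{CZZ4} supplies the positivity on the full cone.
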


\begin{remark} Previously, Benoist \cite{benoistasymp} proved that if $\Gamma$ is a Zariski-dense \emph{semigroup} of $\mathsf G$, then the additive subgroup of $\mf a$ generated by $\lambda(\Gamma)$ is dense. In particular, the subgroup of $\Rb$ generated by $\ell_\phi(\Gamma)=(\phi \circ \lambda)(\Gamma)$ is dense in $\Rb$. As we will explain in Example~\ref{sec:examples of nonarithmetic} below, Proposition~\ref{prop:Nonarithmeticity Anosov} is not true when $\Gamma$ is only assumed to be a semigroup. 
\end{remark}

\begin{proof} By replacing $\mathsf G$ with ${\rm Ad}(\mathsf{G})$ we can assume that $\mathsf{G}$ is an algebraic subgroup of $\mathsf{SL}(d,\Rb)$ and thus speak of Zariski closures of subgroups of $\mathsf G$.

 Let $\rho\colon \Gamma\hookrightarrow\mathsf G$ be the inclusion representation.
 Let $\ms G'\subset \ms G$ be the Zariski closure of $\rho(\Gamma)$, and $\ms G'=\ms H\ltimes \mr R_u(\ms G')$ be a Levi decomposition of $\ms G'$, where $\mr R_u(\ms G')$ is the unipotent radical and $\ms H$ is a Levi factor, which is a reductive group.

 Following \cite[\S2.5.4]{GGKW}, let $\rho^{ss}$ be the semisimplification of $\rho$, obtained by composing $\rho$ with the projection on the Levi factor $\ms H$,
 so that the Zariski closure of $\rho^{ss}(\Gamma)$ is exactly $\ms H$.
 Moreover, $\rho^{ss}$ has the same length function $\ell_\phi\circ\rho^{ss}=\ell_\phi\circ\rho$ as $\rho$ by \cite[Lem.\,2.40]{GGKW}.
  
 The Lie algebra $\mf h\subset\mf g$ of $\mathsf H\subset\mathsf G$ splits as $\mf h=\mf h'+\mf z$ where $\mf h'$ is semisimple and $\mf z$ is the center of $\mf h$.
 Note that the semisimple part $\mf h'$ cannot be trivial, otherwise the Lie algebra of $\ms G'$ would be solvable, so the identity component of $\ms G'$ would be solvable, and hence the intersection of this component with $\Gamma=\rho(\Gamma)$ would be a solvable finite-index subgroup of $\Gamma$, which contradicts the assumption that $\Gamma$ is not virtually solvable.
 
 Let $\mf a'_{\mathsf H}$ be a Cartan subspace of $\mf h'$. Then $\mf a_{\mathsf H}:=\mf a'_{\mathsf H}+\mf z$ is a Cartan subspace of $\mf h$. Up to conjugating by an element of $\mathsf G$, we may assume that $\mf a_{\mathsf H} \subset \mf a$.
 
 Let $\lambda_{\mathsf H}\colon \mathsf H\to \mf a'_{\mathsf H}+\mf z$ denote the Jordan projection of $\mathsf H$. Despite the inclusion $\mf a_{\mathsf H} \subset \mf a$, we note that $\lambda_{\mathsf H}$ may not equal $\lambda|_{\mathsf H}$. However, we have the following: If $h \in \mathsf{H}$, then 
 \begin{equation}\label{eqn:relation between lambdas}
 \lambda(h) = \lambda\left(e^{\lambda_{\mathsf H}(h)}\right).
 \end{equation}
 To see this, let $h=h_e h_u h_{ss}$ denote the Jordan decomposition of $h$. Then $h_{ss}$ is conjugate to $e^{\lambda_{\mathsf H}(h)}$ and so 
$$
\lambda(h) = \lambda(h_{ss}) = \lambda\left(e^{\lambda_{\mathsf H}(h)}\right).
$$

Define $f\colon \mf a_{\mathsf H}\to \R$ by
 $$
 f(X)=\ell_\phi(e^X)=\phi\left(\lambda\left(e^X\right)\right).
 $$
Then $f$ is piecewise linear, more precisely $\mf a_{\mathsf H}=\bigcup_j W_j$ is a finite union of closed convex cones each with non-empty interior where  $f|_{W_j}$ is linear (each $W_j$ has the form $\mf a_{\mathsf H} \cap w_j \mathfrak a^+$ where $w_j$ is the Weyl group of $\mathfrak a$). 
 
 By \cite[Prop.\ 9.7]{BQ}, the intersection of 
 $$
 \Cc:=  \overline{\bigcup_{\gamma \in \rho^{ss}(\Gamma)}\Rb_{>0} \cdot \lambda_{\mathsf H}(\gamma)}
 $$
and $\mf a'_{\mathsf H}$ contains some non-zero $X_0$  (there is a typo in the reference, the intersection part of the result does not hold for semigroups, but it does hold for groups). Since $\phi$ is positive on $\overline{\bigcup_{\gamma \in \Gamma} \Rb_{> 0} \cdot \lambda(\gamma)} \smallsetminus \{0\}$, Equation~\eqref{eqn:relation between lambdas} implies that $f(X_0) > 0$. 
 
Fix $W_j$ such that $\Cc \cap W_j$ has non-empty interior in $\Cc$ and $X_0 \in W_j$. Then by \cite[\S5.1]{benoistasymp1} there exists a Zariski-dense semigroup $S\subset\rho^{ss}(\Gamma)$ where 
$$ƒ
\Cc_S:=\overline{\bigcup_{\gamma \in S}\Rb_{>0} \cdot \lambda_{\mathsf H}(\gamma)} \subset \Cc \cap W_j. 
$$
Let $f' \colon \mf a_{\mathsf H} \to \Rb$ be the linear map with $f'|_{W_j} = f|_{W_j}$. By \cite[Prop.\,9.8]{BQ} the closure of the additive group generated by $\lambda_{\mathsf H}(S)$ in $\mf a_{\mathsf H}$ contains $\mf a'_{\mathsf H}$.
 Thus the closure of the additive subgroup of $\R$ generated by 
 $$\ell_\phi(S)=f\circ\lambda_{\mathsf H}(S)=f'\circ\lambda_{\mathsf H}(S)$$
 is the image under $f'$ (which is linear) of the closure of the additive group generated by $\lambda_{\mathsf H}(S)$, which contains $\mf a'_{\mathsf H}$ and hence the line spanned by $X_0$.
 Therefore the closure of the additive subgroup of $\R$ generated by $\ell_\phi(\Gamma)$ contains $f'(\R X_0)=\R f'(X_0)=\R$ as $f'(X_0)=f(X_0)>0$, which is what we wanted to prove.
\end{proof}

It seems unlikely that in the context of transverse groups, divergence of $\phi(\kappa_\theta(\cdot))$ implies the positivity on the limit cone hypothesis needed to use Proposition~\ref{prop:Nonarithmeticity Anosov}. However, this implication is known to be true for Anosov (or more generally relatively Anosov) groups, and such groups can always be found as subgroups of transverse groups. 

\begin{proof}[Proof of Proposition~\ref{prop:transverse groups non-arithmetic}] Suppose $\Gamma$ is a $\mathsf{P}_\theta$-transverse group, $\phi\in \mathfrak{a}_\theta^*$  and
 $$
\lim_{n \rightarrow \infty} \phi(\kappa(\gamma_n)) = +\infty
$$
whenever $\{\gamma_n\}$ is a sequence of distinct elements of $\Gamma$.

Let $\psi := \phi + \iota^*(\phi)$. Then, by Equation~\eqref{eqn:involution on kappa},
 $$
\lim_{n \rightarrow \infty} \psi(\kappa(\gamma_n))  =\lim_{n \rightarrow \infty} \phi(\kappa(\gamma_n))+ \phi(\kappa(\gamma_n^{-1}))=+\infty
$$
whenever $\{\gamma_n\}$ is a sequence of distinct elements of $\Gamma$.

Using ping-pong we can fix a free subgroup $\Gamma' \subset \Gamma$ such that there is 
$\Gamma'$-equivariant homeomorphism $\partial_\infty \Gamma' \to \Lambda_\theta(\Gamma') \subset \Lambda_\theta(\Gamma)$ of the Gromov boundary and the limit set.  
Then, by definition, $\Gamma'$ is $\mathsf{P}_\theta$-relatively Anosov. 

By the equivalence of (1) and (4) in~\cite[Th.\,10.1]{CZZ4} (see also ~\cite[Lem.\,3.4.2]{sambarino-dichotomy} for a similar result in the non-relative case), $\psi$ is positive on 
$$
\overline{\bigcup_{\gamma \in \Gamma'} \Rb_{> 0} \cdot \lambda(\gamma)} \smallsetminus \{0\}.
$$
Then Proposition~\ref{prop:Nonarithmeticity Anosov} implies that  
$$
\{ \phi(\lambda(\gamma)) + \iota^*(\phi)(\lambda(\gamma)) : \gamma \in \Gamma\} \supset \{ \psi(\lambda(\gamma)) : \gamma \in \Gamma'\} 
$$
generates a dense subgroup of $\Rb$. 
\end{proof}

\begin{example}\label{sec:examples of nonarithmetic} Here we provide an example showing that Proposition~\ref{prop:Nonarithmeticity Anosov} fails when $\Gamma$ is only assumed to be a semigroup. 

Let $\Gamma$ be a convex cocompact free subgroup of $\SL_2(\R)$ with two generators $a,b\in\Gamma$.
Let $S\subset\Gamma$ be the semigroup generated by $a$ and $b$.
There exists $C\geq 1$ large enough so that for all nonnegative $n_1,m_1,\dots,n_k,m_k$, if $s=a^{n_1}b^{m_1}\cdots a^{n_k}b^{m_k}\in S$ then $\log \norm{s} \leq \tfrac C2 |s|$ where $|s|= (n_1+m_1+\dots+n_k+m_k)$ is word-length with respect to the generating set $\{a,b\}$.
Now set 
$$\rho(s)=\begin{pmatrix} s & 0 & 0 \\ 0 & e^{C |s|} & 0 \\ 0 & 0 & e^{-C |s|} \end{pmatrix}\in\mathsf G=\SL_4(\R).$$
If $\phi(\diag(x,y,z,w))=x$ for any $\diag(x,y,z,w)\in \mf a$, then $\ell_\phi(s)\in C\Zb_{\geq1}$ for any $s\in\rho(S)$. Thus the length spectrum is arithmetic, even though $\phi$ is positive on the limit cone of $\rho(S)$.
\end{example}

\end{document}